\newtheorem{lemma}{Lemma}[section]
\newtheorem{definition}[lemma]{Definition}
\newtheorem{proposition}[lemma]{Proposition}
\newtheorem{theorem}[lemma]{Theorem}
\newtheorem{corollary}[lemma]{Corollary}
\newtheorem{example}{Example}
\newtheorem{remark}{Remark}
\newtheorem{question}{Question}
\newtheorem{conjecture}{Conjecture}
\numberwithin{equation}{section}
\DeclareFixedFont{\Acknowledgment}{OT1}{cmr}{bx}{n}{14pt}
\begin{document}

\title{\begin{bf} On the deformation of ball packings\end{bf}}
\author{Huabin Ge, Wenshuai Jiang, Liangming Shen}
\maketitle
\begin{abstract}
In this paper, we study the geometric aspects of ball packings on $(M,\mathcal{T})$, where $\mathcal{T}$ is a triangulation on a 3-manifold $M$. We introduce a combinatorial Yamabe invariant $Y_{\mathcal{T}}$, depending on the topology of $M$ and the combinatoric of $\mathcal{T}$. We prove that $Y_{\mathcal{T}}$ is attainable if and only if there is a constant curvature packing, and the combinatorial Yamabe problem can be solved by minimizing Cooper-Rivin-Glickenstein functional. We then study the combinatorial Yamabe flow introduced by Glickenstein \cite{G0}-\cite{G2}. We first prove a small energy convergence theorem which says that the flow would converge to a constant curvature metric if the initial energy is close in a quantitative way to the energy of a constant curvature metric. We shall also prove: although the flow may develop singularities in finite time, there is a natural way to extend the solution of the flow so as it exists for all time. Moreover, if the triangulation $\mathcal{T}$ is regular (that is, the number of tetrahedrons surrounding each vertex are all equal), then the combinatorial Yamabe flow converges exponentially fast to a constant curvature packing.
\end{abstract}
\tableofcontents

\section{Introduction}
\subsection{Background and Preliminaries}
Circle packing builds a connection between combinatoric and geometry. It was used by Thurston \cite{T1} to construct hyperbolic 3-manifolds or 3-orbifolds. Inspired by Thurston's work, Cooper and Rivin \cite{CR} studied the deformation of ball packings, which are the three dimensional analogues of circle packings. Glickenstein \cite{G1}\cite{G2} then introduced a combinatorial version of Yamabe flow based on Euclidean triangulations coming form ball packings. Besides their works, there is very little known about the deformation of ball packings. In this paper, we shall study the deformation of ball packings.

Let $M$ be a closed 3-manifold with a triangulation $\mathcal{T}=\{\mathcal{T}_0,\mathcal{T}_1,\mathcal{T}_2,\mathcal{T}_3\}$, where the symbols $\mathcal{T}_0,\mathcal{T}_1,\mathcal{T}_2,\mathcal{T}_3$ represent the sets of vertices, edges, faces and tetrahedrons respectively. In the whole paper, we denote by $\{ij\}$, $\{ijk\}$ and $\{ijkl\}$ a particular edge, triangle and tetrahedron respectively in the triangulation, while we denote by $\{i,j,\cdots\}$ a particular set with elements $i$, $j, \cdots$. The symbol $(M, \mathcal{T})$ will be referred as a triangulated manifold in the following. All the vertices are ordered one by one, marked by $1,\cdots,N$, where $N=|\mathcal{T}_0|$ is the number of vertices. We use $i\thicksim j$ to denote that the vertices $i$, $j$ are adjacent if there is an edge $\{ij\}\in\mathcal{T}_1$ with $i$, $j$ as end points.
For a triangulated three manifold $(M,\mathcal{T})$, Cooper and Rivin \cite{CR} constructed a piecewise linear metric by ball packings. A ball packing (also called sphere packing in other literatures) is a map $r:\mathcal{T}_0\rightarrow (0,+\infty)$ such that the length between vertices $i$ and $j$ is $l_{ij}=r_{i}+r_{j}$ for each edge $\{ij\}\in \mathcal{T}_1$, and each combinatorial tetrahedron $\{ijkl\}\in\mathcal{T}_3$ with six edge lengthes $l_{ij},l_{ik},l_{il},l_{jk},l_{jl},l_{kl}$ forms an Euclidean tetrahedron. Geometrically, a ball packing $r=(r_1,\cdots,r_N)$ attaches to each vertex $i\in\mathcal{T}_0$ a ball $S_i$ with $i$ as center and $r_i$ as radius, and if $i\thicksim j$, the two balls $S_i$ and $S_j$ are externally tangent. Cooper and Rivin \cite{CR} called the Euclidean tetrahedrons generated in this way \emph{conformal} and proved that an Euclidean tetrahedron is conformal if and only if there exists a unique ball tangent to all of the edges of the tetrahedron. Moreover, the point of tangency with the edge $\{ij\}$ is of distance $r_i$ to the vertex $i$. Denote
\begin{equation}\label{nondegeneracy condition}
Q_{ijkl}=\left(\frac{1}{r_{i}}+\frac{1}{r_{j}}+\frac{1}{r_{k}}+\frac{1}{r_{l}}\right)^2-
2\left(\frac{1}{r_{i}^2}+\frac{1}{r_{j}^2}+\frac{1}{r_{k}^2}+\frac{1}{r_{l}^2}\right).
\end{equation}
The classical Descartes' circle theorem, also called Soddy-Gossett theorem (for example, see \cite{CR}), says that four circles in the plance of radii $r_i$, $r_j$, $r_k$, $r_l$ are externally tangent if and only if $Q_{ijkl}=0$. This case is often called Apollonian circle packings. It is surprising that Apollonian circle packings are closely related to number theory \cite{Bourgain}\cite{Bour-Kon} and hyperbolic geometry \cite{K-Oh}. Glickenstein \cite{G1} pointed out that a combinatorial tetrahedron $\{ijkl\}\in \mathcal{T}_3$ configured by four externally tangent balls with positive radii $r_{i}$, $r_{j}$, $r_{k}$ and $r_{l}$ can be realized as an Euclidean tetrahedron if and only if $Q_{ijkl}>0$. Denote $\mathcal{M}_{\mathcal{T}}$ by the space of all ball packings, then it can be expressed as a subspace of $\mathds{R}^N_{>0}$:
\begin{equation}
 \mathcal{M}_{\mathcal{T}}=\left\{\;r\in\mathds{R}^N_{>0}\;\big|\;Q_{ijkl}>0, \;\forall \{ijkl\}\in \mathcal{T}_3\;\right\}.
\end{equation}
Cooper and Rivin proved that $ \mathcal{M}_{\mathcal{T}}$ is a simply connected open subset of $\mathds{R}^N_{>0}$. It is a cone, but not convex. For each ball packing $r$, there corresponds a combinatorial scalar curvature $K_{i}$ at each vertex $i\in\mathcal{T}_0$, which was introduced by Cooper and Rivin \cite{CR}. Denote $\alpha_{ijkl}$ as the solid angle at the vertex $i$ in the Euclidean tetrahedron $\{ijkl\}\in \mathcal{T}_3$, then the combinatorial scalar curvature at $i$ is defined as
\begin{equation}\label{Def-CR curvature}
K_{i}= 4\pi-\sum_{\{ijkl\}\in \mathcal{T}_3}\alpha_{ijkl},
\end{equation}
where the sum is taken over all tetrahedrons in $\mathcal{T}_3$ with $i$ as one of its vertices. They also studied the deformation of ball packings and proved a locally rigidity result about the combinatorial curvature $K=(K_1,\cdots,K_N)$, namely, a conformal tetrahedron cannot be deformed while keeping the solid angles fixed. Or say, the combinatorial curvature map
$$K:\mathcal{M}_{\mathcal{T}}\to \mathds{R}^N,$$
up to scaling, is locally injective. Recently, Xu \cite{Xu} showed that $K$ is injective globally.

Given the triangulation $\mathcal{T}$, Cooper and Rivin consider $\mathcal{M}_{\mathcal{T}}$ as an analogy of the smooth conformal class $\{e^fg:f\in C^{\infty}(M)\}$ of a smooth Riemannian metric $g$ on $M$. Inspired by this observation, Glickenstein \cite{G1} posed the following combinatorial Yamabe problem:\\[8pt]
\noindent
\textbf{Combinatorial Yamabe Problem:} \emph{Is there a ball packing with constant combinatorial scalar curvature in the combinatorial conformal class $\mathcal{M}_{\mathcal{T}}$? How to find it?}\\[8pt]
To approach this problem, Glickensteinp introduced a combinatorial Yamabe flow
\begin{equation}\label{Def-Flow-Glickenstein}
\frac{dr_i}{dt}=-K_ir_i,
\end{equation}
aiming to deform the ball packings to one with constant (or prescribed) scalar curvature. The prototype of (\ref{Def-Flow-Glickenstein}) is Chow and Luo's combinatorial Ricci flow \cite{CL1} and Luo's combinatorial Yamabe flow \cite{L1} on surfaces. Following Chow, Luo and Glickenstein's pioneering work, the first author of this paper and his collaborates Jiang, Xu, Zhang, Ma, Zhou also introduced and studied several combinatorial curvature flows in \cite{Ge}-\cite{GZX}.

We must emphasize that combinatorial curvature flows are quite different with their smooth counterparts. It is well known that the solutions to smooth normalized Yamabe (Ricci, Calabi) flows on surfaces exit for all time $t\geq0$ and converges to metrics with constant curvature. However, as numerical simulations indicate, some combinatorial versions of surface Yamabe (Ricci, Calabi) flows may collapse in finite time. Worse still, even one may extend the flows to go through collapsing, the extended flows may not converge and may develop singularities at infinity. We suggest the readers to see Luo's combinatorial Yamabe flow \cite{L1}\cite{GJ1}, and Chow-Luo's combinatorial Ricci flows with inversive distance circle packings \cite{CL1}\cite{GJ2}-\cite{GJ4}. Since all definitions of combinatorial curvature flows are based on triangulations, singularities occur exactly when some triangles or tetrahedrons collapse. Since the triangulation deeply influences the behavior of the combinatorial flows, it takes much more effort to deal with combinatorial curvature flows.

In view of the trouble it caused in the study of surface combinatorial curvature flows, one can imagine the difficulties one will meet in the study of 3d-combinatorial curvature flows. Very little is known until now about those flows. In this paper, we will show the global convergence of the (extended) combinatorial Yamabe flow for regular triangulations. As far as we know, this is the first global convergence result for 3d-combinatorial Yamabe flows.



\subsection{Main results}
\label{section-main-result}
Cooper and Rivin first pintroduced the ``total curvature functional" $\mathcal{S}=\sum_{i=1}^N K_i r_i$. Glickenstein then considered the following ``average scalar curvature" functional
\begin{equation}
\lambda(r)=\frac{\sum_{i=1}^NK_ir_i}{\sum_{i=1}^Nr_i}, \;r\in\mathcal{M}_{\mathcal{T}},
\end{equation}
we call which the ``Cooper-Rivin-Glickenstein functional" in the paper. In the following, we abbreviate it as \emph{CRG-functional}. Let the \emph{combinatorial Yamabe invariant} $Y_{\mathcal{T}}$ be defined as (see Definition \ref{Def-alpha-normalize-Regge-functional})
\begin{equation}
Y_{\mathcal{T}}=\inf_{r\in \mathcal{M}_{\mathcal{T}}} \lambda(r).
\end{equation}
We call $Y_{\mathcal{T}}$ is attainable if the CRG-functional $\lambda(r)$ has a minimum in $\mathcal{M}_{\mathcal{T}}$. Our first result is a combination of Theorem \ref{Thm-Q-min-iff-exist-const-curv-metric} and Theorem \ref{corollary-Q-2}:
\begin{theorem}\label{Thm-main-1-yamabe-invarint}
The following are all mutually equivalent.
\begin{enumerate}
  \item The Combinatorial Yamabe Problem is solvable;
  \item $Y_{\mathcal{T}}$ is attainable in $ \mathcal{M}_{\mathcal{T}}$, i.e. the CRG-functional $\lambda$ has a global minimum in $ \mathcal{M}_{\mathcal{T}}$;
  \item The CRG-functional $\lambda$ has a local minimum in $ \mathcal{M}_{\mathcal{T}}$;
  \item The CRG-functionpal $\lambda$ has a critical point in $ \mathcal{M}_{\mathcal{T}}$.
\end{enumerate}
\end{theorem}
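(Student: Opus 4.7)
The plan is to close the cycle $(1)\Rightarrow(2)\Rightarrow(3)\Rightarrow(4)\Rightarrow(1)$. The middle two implications $(2)\Rightarrow(3)\Rightarrow(4)$ are immediate --- a global minimum is a local minimum, and a local minimum of a smooth function on the open set $\mathcal{M}_{\mathcal{T}}$ is a critical point --- so the real work lies in $(4)\Rightarrow(1)$ and $(1)\Rightarrow(2)$.

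For $(4)\Rightarrow(1)$, the crucial input is the Cooper--Rivin variational identity $\partial\mathcal{S}/\partial r_i = K_i$ for $\mathcal{S}(r)=\sum_j K_j r_j$. I would first re-derive this from the 3D Schl\"afli identity $\sum_{\{a,b\}} l_{ab}\,d\theta_{ab}=0$, together with $l_{ab}=r_a+r_b$ and the spherical Gauss--Bonnet relation $\alpha_a = \theta_{ab}+\theta_{ac}+\theta_{ad}-\pi$, which combine to give $\sum_a r_a\,d\alpha_a = 0$ on each tetrahedron and hence $\sum_i r_i\,dK_i = 0$ globally. Plugging this into $\lambda = \mathcal{S}/T$ with $T=\sum_j r_j$ yields the clean formula
\begin{equation*}
\frac{\partial \lambda}{\partial r_i} = \frac{K_i - \lambda}{T}.
\end{equation*}
Therefore $\nabla\lambda(r)=0$ forces $K_i(r)\equiv \lambda(r)$ at every vertex, i.e.\ $r$ solves the Combinatorial Yamabe Problem; the converse $(1)\Rightarrow(4)$ reads off the same identity.

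The remaining step $(1)\Rightarrow(2)$ is where the genuine difficulty lies, and I would invoke Theorem \ref{Thm-Q-min-iff-exist-const-curv-metric} and Theorem \ref{corollary-Q-2}. The strategy behind them is to exploit that $\lambda$ is scale-invariant --- so it descends to the projective quotient $\mathcal{M}_{\mathcal{T}}/\mathds{R}_{>0}$ --- and that the Hessian of $\mathcal{S}$ (which is symmetric with the scaling ray in its kernel, by $\partial\mathcal{S}/\partial r_i = K_i$) behaves like a positive semidefinite form on a suitably scale-normalized slice. The chief obstacle is the non-convexity of $\mathcal{M}_{\mathcal{T}}$: although a simply connected cone, it excludes all configurations with $Q_{ijkl}\le 0$, so one cannot invoke convex analysis directly, and in principle a minimizing sequence could escape to the stratum $\{Q_{ijkl}=0\}$ where a tetrahedron degenerates. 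Controlling this boundary behaviour --- showing that along any minimizing sequence approaching such degenerations $\lambda$ is forced to grow, so that the infimum is in fact attained at an interior critical point --- is precisely what the two cited theorems accomplish, and once they are granted, every critical point automatically realises $Y_{\mathcal{T}}$, closing the loop.
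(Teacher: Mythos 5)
Your cycle $(2)\Rightarrow(3)\Rightarrow(4)\Rightarrow(1)\Rightarrow(2)$ is the right architecture and matches the paper's: the two easy implications are as you say, and your derivation of $\partial_{r_i}\lambda=(K_i-\lambda)/\|r\|_{l^1}$ from the Schl\"affli identity (via $\sum_{j\thicksim i}\beta_{ij}=\alpha_i+\pi$ and hence $\sum_i r_i\,dK_i=0$) is exactly the content of Lemma \ref{Lemma-Lambda-semi-positive}, Lemma \ref{lemma-const-curv-equl-cirtical-point} and the appendix, so $(4)\Leftrightarrow(1)$ is fine. You also correctly identify $(1)\Rightarrow(2)$ as the hard step and cite the right results, Theorem \ref{Thm-Q-min-iff-exist-const-curv-metric} and Theorem \ref{corollary-Q-2}.

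However, your description of \emph{how} those results handle the non-convexity of $\mathcal{M}_{\mathcal{T}}$ is not what the paper does, and the mechanism you sketch would not work. You propose a coercivity argument: show that $\lambda$ ``is forced to grow'' along minimizing sequences approaching the degenerate stratum $\{Q_{ijkl}=0\}$, so the infimum is attained in the interior. But $|\lambda(r)|\leq\|K\|_{l^\infty}\leq c(\mathcal{T})$ uniformly on all of $\mathcal{M}_{\mathcal{T}}$, so $\lambda$ has no growth at the boundary to exploit; indeed the authors explicitly remark at the end of Section \ref{section-regular-converge} that the obstruction to stronger convergence results is precisely that $\widetilde{\mathcal{S}}$ is \emph{not} proper on the normalized slice. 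The actual argument in Theorem \ref{corollary-Q-2} avoids boundary analysis entirely: using the Bobenko--Pinkall--Springborn/Luo extension machinery (Lemma \ref{lemma-luo's-extension} and Theorem \ref{thm-tuta-S-C1-convex}), $\mathcal{S}$ extends to a convex $C^1$ functional $\widetilde{\mathcal{S}}$ on the \emph{convex} set $\mathds{R}^N_{>0}$, strictly convex near $\hat{r}$ on the slice $\{\|r\|_{l^1}=\mathrm{const}\}$; the constant-curvature packing $\hat{r}$ is then a critical point of a globally convex function on a convex domain and hence its unique global minimum there, which restricts to a global minimum of $\mathcal{S}$ (and, by scale invariance, of $\lambda$) on $\mathcal{M}_{\mathcal{T}}$. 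So the non-convexity of $\mathcal{M}_{\mathcal{T}}$ is bypassed by enlarging the domain to a convex one, not by controlling degenerating minimizing sequences. Since you defer to the cited theorems your proof is not formally broken, but the standalone strategy you describe for them is not viable and should be replaced by the convex-extension argument.
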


One has an explicit formula for the gradient and the second derivative of $\mathcal{S}(r)$. This is helpful from the practical point of view, because it allows one to use more powerful algorithms to minimize $\mathcal{S}(r)$ under the constraint condition $\sum_{i\in V}r_i=1$ and thus solve the Combinatorial Yamabe Problem. An alternative way to approach the Combinatorial Yamabe Problem is to consider the following combinatorial Yamabe flow
\begin{equation}\label{Def-norm-Yamabe-Flow}
\frac{dr_i}{dt}=(\lambda-K_i)r_i,
\end{equation}
which is a normalization of Glickenstein's combinatorial Yamabe flow (\ref{Def-Flow-Glickenstein}). We will show that (see Proposition \ref{prop-converg-imply-const-exist}) if $r(t)$, the unique solution to (\ref{Def-norm-Yamabe-Flow}), exists for all time $t\geq0$ and converges to a ball packing $\hat{r}\in\mathcal{M}_{\mathcal{T}}$, then $\hat{r}$ has constant curvature. Hence (\ref{Def-norm-Yamabe-Flow}) provides a natural way to get the solution of the Combinatorial Yamabe Problem. In practice, one may design algorithms to solve the flow equation (\ref{Def-norm-Yamabe-Flow}) and further the Combinatorial Yamabe Problem, see \cite{GeMa} for example.

The solution $\{r(t)\}_{0\leq t<T}$ to (\ref{Def-norm-Yamabe-Flow}) may collapse in finite time, where $0<T\leq\infty$ is the maximal existence time of $r(t)$. Here ``collapse" means $T$ is finite. Because $\lambda-K_i$ have no definition outside $\mathcal{M}_{\mathcal{T}}$, the collapsing happens exactly when $r(t)$ touches the boundary of $\mathcal{M}_{\mathcal{T}}$. More precisely, there exists a sequence of times $t_n\rightarrow T$, and a conformal tetrahedron $\{ijkl\}$ so that $Q_{ijkl}(r(t_n))\rightarrow0$. Geometrically, the geometric tetrahedron $\{ijkl\}$ collapses. That is, the six edges of $\{ijkl\}$ with lengths $l_{ij},l_{ik},l_{il},l_{jk},l_{jl},l_{kl}$ could no more form the edges of any Euclidean tetrahedron as $t_n\rightarrow T$. To prevent finite time collapsing, we introduce a topological-combinatorial invariant (see Definition \ref{Def-chi-invariant})
\begin{equation}
\chi(\hat{r},\mathcal{T})=\inf\limits_{\gamma\in{\mathbb{S}}^{N-1};\|\gamma\|_{l^1}=0\;}\sup\limits_{0\leq t< a_\gamma}\lambda(\hat{r}+t\gamma).
\end{equation}
where $\hat{r}$ is a constant curvature ball packing, and $a_{\gamma}$ is the least upper bound of $t$ such that $\hat{r}+t\gamma\in \mathcal{M}_{\mathcal{T}}$ for all $0\le t<a_\gamma$. By controlling the initial CRG-functional $\lambda(r(0))$, we can prevent finite time collapsing. In fact, we have the following result:

\begin{theorem}
\label{thm-intro-small-energ-converg}
Assume there exists a constant curvature ball packing $\hat{r}$ and
\begin{equation}
\lambda(r(0))\leq\chi(\hat{r},\mathcal{T}).
\end{equation}
Then $r(t)$ exists on $[0,\infty)$ and converges exponentially fast to a constant curvature packing.
\end{theorem}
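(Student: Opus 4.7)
The strategy is to use $\lambda$ as a Lyapunov function for (\ref{Def-norm-Yamabe-Flow}), invoke the invariant $\chi(\hat r,\mathcal{T})$ to trap the orbit in a compact subset of $\mathcal{M}_\mathcal{T}$, identify the $\omega$-limit set via Theorem \ref{Thm-main-1-yamabe-invarint}, and linearize at $\hat r$ for the rate. The preliminary step is to record that $\sum_i r_i$ is preserved along (\ref{Def-norm-Yamabe-Flow}), since $\sum_i(\lambda-K_i)r_i=\lambda\sum_ir_i-\sum_iK_ir_i=0$; after rescaling $\hat r$ by the scale invariance of $K$ and $\lambda$, the slice $\Sigma:=\{r\in\mathcal{M}_\mathcal{T}:\sum_ir_i=\sum_ir_i(0)\}$ is invariant and contains $\hat r$. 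Combined with the Cooper--Rivin formula $\nabla\mathcal{S}=K$ (equivalently the 3D Schl\"afli-type identity $\sum_ir_i\,dK_i=0$), a direct computation yields
\begin{equation*}
\frac{d}{dt}\lambda(r(t))=-\frac{1}{\sum_ir_i}\sum_ir_i(K_i-\lambda)^2\leq 0,
\end{equation*}
with equality precisely at constant-curvature packings, so $\lambda$ is a strict Lyapunov function on $\Sigma$.

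The crucial and most delicate step is preventing $r(t)$ from reaching $\partial\mathcal{M}_\mathcal{T}$. By the Lyapunov property, $\lambda(r(t))\leq\lambda(r(0))\leq\chi(\hat r,\mathcal{T})$ throughout the maximal interval of existence, so $r(t)$ lies in the closed set $\Sigma\cap\{\lambda\leq\chi(\hat r,\mathcal{T})\}$. Let $W$ denote the connected component of this set that contains $r(0)$; by continuity the orbit stays in $W$ while it exists. The key is to show $W$ is compactly contained in $\mathcal{M}_\mathcal{T}$. Note first that $\hat r\in W$: by Theorem \ref{Thm-main-1-yamabe-invarint}, $\hat r$ minimizes $\lambda$ on $\mathcal{M}_\mathcal{T}$, so $\lambda(\hat r)\leq\lambda(r(0))\leq\chi(\hat r,\mathcal{T})$, and a continuity-and-connectedness argument matches components. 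If some limit point $r_\ast\in\overline W$ lay on $\partial\mathcal{M}_\mathcal{T}$, then the straight segment $\hat r+t(r_\ast-\hat r)/\|r_\ast-\hat r\|$, being a zero-sum ray in $\Sigma$ along which $\lambda\leq\chi(\hat r,\mathcal{T})$ up to $\partial\mathcal{M}_\mathcal{T}$, would contradict the inf--sup definition of $\chi(\hat r,\mathcal{T})$ (after a suitable small perturbation to handle non-attained infima). Hence $W\Subset\mathcal{M}_\mathcal{T}$ and $T=\infty$.

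With global existence established on the compact invariant set $W$, the $\omega$-limit set of $r(t)$ is non-empty and consists of zeros of the right-hand side of (\ref{Def-norm-Yamabe-Flow}), i.e.\ constant-curvature packings in $\Sigma$; by Theorem \ref{Thm-main-1-yamabe-invarint} combined with the global injectivity of $K$ due to Xu \cite{Xu}, $\hat r$ is the only such point in $\Sigma$, so $r(t)\to\hat r$. For the exponential rate I would linearize (\ref{Def-norm-Yamabe-Flow}) at $\hat r$: the linearization on $T_{\hat r}\Sigma$ is, up to a positive factor, $-\partial K/\partial r$, whose negative-definiteness on $T_{\hat r}\Sigma$ is equivalent to strict positive-definiteness of the Hessian of $\lambda|_\Sigma$ at $\hat r$. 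Verifying this non-degeneracy is the main analytic obstacle: I would derive it by combining Cooper--Rivin's local rigidity (injectivity of $dK$ modulo scaling) with Theorem \ref{Thm-main-1-yamabe-invarint} (which makes $\hat r$ a minimizer, ruling out non-positive eigenvalues on the slice). Given a spectral gap $\mu>0$, a standard Lyapunov estimate $\frac{d}{dt}\|r(t)-\hat r\|^2\leq-\mu\|r(t)-\hat r\|^2$ in a neighborhood of $\hat r$ (which the orbit enters in finite time by the convergence just proved) yields exponential decay at rate $\mu/2$.
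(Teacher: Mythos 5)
Your overall architecture (the Lyapunov function $\lambda$ with $\lambda'(t)=-\|r\|_{l^1}^{-1}\sum_ir_i(K_i-\lambda)^2\le 0$, trapping the orbit via $\chi(\hat r,\mathcal{T})$, identification of the limit as a constant-curvature packing, and linearization for the rate) is the same as the paper's, which proves this statement as Theorem \ref{Thm-xi-invariant-imply-converg} via Theorem \ref{Thm-nosingular-imply-converg} and Lemma \ref{Thm-3d-isolat-const-alpha-metric}. However, your trapping step has a genuine gap. You work with the \emph{closed} sublevel set $\{\lambda\le\chi(\hat r,\mathcal{T})\}$ in the slice and claim that its component $W$ containing $r(0)$ and $\hat r$ is compactly contained in $\mathcal{M}_\mathcal{T}$. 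That is false in general: if the infimum in $\chi(\hat r,\mathcal{T})=\inf_\gamma\sup_{0\le t<a_\gamma}\lambda(\hat r+t\gamma)$ is attained by some direction $\gamma_0$, then the whole ray $\{\hat r+t\gamma_0:0\le t<a_{\gamma_0}\}$ satisfies $\lambda\le\chi(\hat r,\mathcal{T})$, is connected to $\hat r$, hence lies in $W$, and its closure meets $\partial\mathcal{M}_\mathcal{T}$. Correspondingly, your claimed contradiction is not one: producing a segment from $\hat r$ to a boundary point along which $\lambda\le\chi(\hat r,\mathcal{T})$ does not contradict $\chi$ being an infimum of suprema --- it merely exhibits a direction attaining that infimum. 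A contradiction appears only if the supremum along that segment is \emph{strictly} below $\chi(\hat r,\mathcal{T})$, and the hypothesis $\lambda(r(0))\le\chi(\hat r,\mathcal{T})$ does not give strictness by itself; the parenthetical ``small perturbation'' does not repair this.

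The missing ingredient is the dichotomy the paper uses: either $K_i(r(0))=\lambda(r(0))$ for all $i$, in which case $r(0)$ already has constant curvature and the solution is stationary (done); or $\lambda'(0)<0$, in which case $\lambda(r(t))<\lambda(r(0))\le\chi(\hat r,\mathcal{T})$ strictly for all $t>0$, so for $t\ge\epsilon$ the orbit lies in a sublevel set $\{\lambda\le c\}$ with $c<\chi(\hat r,\mathcal{T})$. For such a strict sublevel set your segment argument does close up (one needs the convexity of the extended functional $\tilde\lambda$ from Theorem \ref{thm-tuta-S-C1-convex} to propagate the bound $\lambda\le c$ along the segment from $\hat r$ to a putative boundary limit and to identify $\sup_t\lambda(\hat r+t\gamma)$ with the radial boundary limit), and then every direction would be forced to have $\sup\le c<\chi(\hat r,\mathcal{T})$ along some ray, a genuine contradiction with the definition of $\chi$. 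With that repair, the remainder of your argument --- nonempty $\omega$-limit set consisting of constant-curvature packings, uniqueness up to scaling, and exponential convergence from the negative-definiteness of $-\Sigma\Lambda$ restricted to the slice --- matches the paper's Theorem \ref{Thm-nosingular-imply-converg} and Lemma \ref{Thm-3d-isolat-const-alpha-metric}.
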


It's remarkable that the constant curvature packing $\hat{r}$ is unique up to scaling, since the curvature map $K$ is global injective up to scaling by \cite{CR}\cite{Xu} (that is, if $K(r)=K(r')$, then $r'=cr$ for some $c>0$). Hence in the above theorem, $r(t)$ should converges to some packing $c\hat{r}$, which is a scaling of $\hat{r}$. Since $\|r(t)\|_{l^1}=\sum_ir_i(t)$ is invariant along (\ref{Def-norm-Yamabe-Flow}), $c$ can be determined by $c\|\hat{r}\|_{l^1}=\|r(0)\|_{l^1}$.

The above theorem is essentially a ``small energy convergence" result. It is based on the fact that there exists a constant curvature packing $\hat{r}$, and moreover, $\hat{r}$ is stable. Inspired by the extension idea introduced by Bobenko, Pinkall and Springborn \cite{Bobenko}, systematically developed by Luo \cite{L2}, Luo and Yang \cite{LuoYang}, and then widely used by Ge and Jiang \cite{GJ1}-\cite{GJ4}, Ge and Xu \cite{ge-xu} and Xu \cite{Xu}, we provide an extension way to handle finite time collapsing. Given four balls $S_1$, $S_2$, $S_3$ and $S_4$ with radii $r_1$, $r_2$, $r_3$ and $r_4$. Let $l_{ij}=r_i+r_j$ be the length of the edge $\{ij\}$, $i,j\in\{1,2,3,4\}$. In case $Q_{1234}>0$, which means that the six edges of $\{1234\}$ with lengths $l_{12},l_{13},l_{14},l_{23},l_{24},l_{34}$ form the edges of an Euclidean tetrahedron, denote $\tilde{\alpha}_{ijkl}$ by the real solid angle at the vertex $i\in\{1,2,3,4\}$. In case $Q_{1234}\leq0$, those $l_{12},l_{13},l_{14},l_{23},l_{24},l_{34}$ can not form the edge lengths of any Euclidean tetrahedron. Denote $\tilde{\alpha}_{ijkl}=2\pi$ if the ball $S_i$ go through the gap between the three mutually tangent balls $S_j$, $S_k$ and $S_l$, where $\{i,j,k,l\}=\{1,2,3,4\}$, while denote $\tilde{\alpha}_{ijkl}=0$ otherwise. The construction shows that $\tilde{\alpha}_{ijkl}$ is an extension of the solid angle $\alpha_{ijkl}$, which is defined only for those $r_1$, $r_2$, $r_3$ and $r_4$ so that $Q_{1234}>0$. We call $\tilde{\alpha}_{ijkl}$ the extended solid angle. It is defined on $\mathds{R}^4_{>0}$ and is continuous by Lemma \ref{lemma-xu-extension} in Section \ref{section-extend-solid-angle}. Using the extended solid angle $\tilde{\alpha}$, we naturally get $\widetilde{K}$, a continuously extension of the curvature $K$, by
\begin{equation}\label{Def-extend-curvature}
\widetilde{K}_{i}= 4\pi-\sum_{\{ijkl\}\in \mathcal{T}_3}\tilde{\alpha}_{ijkl}.
\end{equation}
As a consequence, the CRG-functional $\lambda$ extends naturally to $\tilde{\lambda}=\sum_i \widetilde{K}_ir_i/\sum_ir_i$, which is called the \emph{extended CRG-functional}.

\begin{theorem}\label{Thm-main-2-extend-norm-Yamabe-flow}
We can extend the combinatorial Yamabe flow (\ref{Def-norm-Yamabe-Flow}) to the following
\begin{equation}
\label{Def-introduct-extend-norm-Yamabe-flow}
\frac{dr_i}{dt}=(\tilde{\lambda}-\widetilde{K}_i)r_i
\end{equation}
so that any solution to the above extended flow (\ref{Def-introduct-extend-norm-Yamabe-flow}) exists for all time $t\geq 0$.
\end{theorem}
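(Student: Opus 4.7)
The plan is to deduce global existence by combining short-time solvability, provided by continuity of the extended vector field, with uniform two-sided a priori bounds on each $r_i(t)$ over every compact subinterval of the maximal existence interval. By Lemma~\ref{lemma-xu-extension} each extended solid angle $\tilde{\alpha}_{ijkl}$, and hence each $\widetilde{K}_i$, is continuous on $\mathds{R}^N_{>0}$. Therefore $\tilde{\lambda}$ and the right-hand side $F_i(r)=(\tilde{\lambda}-\widetilde{K}_i)r_i$ are continuous on $\mathds{R}^N_{>0}$, and Peano's theorem produces a $C^1$ solution on some maximal interval $[0,T)$ for any initial datum $r(0)\in\mathds{R}^N_{>0}$.

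Next I would establish an $\ell^1$-conservation law by a direct computation:
\begin{equation*}
\frac{d}{dt}\sum_i r_i=\tilde{\lambda}\sum_i r_i-\sum_i\widetilde{K}_i r_i=0,
\end{equation*}
which is immediate from the definition of $\tilde{\lambda}$. This already gives the uniform upper bound $r_i(t)\leq\|r(0)\|_{\ell^1}$ on $[0,T)$. For a uniform lower bound, I would use the fact that the extension is designed so that $\tilde{\alpha}_{ijkl}\in[0,2\pi]$ everywhere; summing over the at most $|\mathcal{T}_3|$ tetrahedra at $i$ yields $|\widetilde{K}_i|\leq C(\mathcal{T})$, and consequently $|\tilde{\lambda}|\leq C(\mathcal{T})$, where $C(\mathcal{T})$ depends only on the combinatorics. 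Writing the flow in logarithmic form $\frac{d}{dt}\ln r_i=\tilde{\lambda}-\widetilde{K}_i$ and integrating over $[0,t]$ then gives
\begin{equation*}
r_i(0)e^{-2C(\mathcal{T})t}\leq r_i(t)\leq r_i(0)e^{2C(\mathcal{T})t},
\end{equation*}
so $r(t)$ remains in a compact subset of $\mathds{R}^N_{>0}$ on every finite time interval.

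Since $r(t)$ cannot exit the open positive cone in finite time, the standard continuation principle for ODEs with continuous right-hand side forces $T=\infty$. The conceptual heart of the argument is that the extension is tailored so that no quantity appearing in the flow blows up as some $Q_{ijkl}\to 0$: although $K_i$ can jump across $\partial\mathcal{M}_{\mathcal{T}}$, the bound $\tilde{\alpha}_{ijkl}\in[0,2\pi]$ keeps $\widetilde{K}_i$ universally bounded, which is exactly what drives the logarithmic estimate above. The main genuine difficulty one could worry about is that $\widetilde{K}$ is only continuous, not Lipschitz, across the degeneracy locus $\{Q_{ijkl}=0\}$, so uniqueness of solutions through a collapse time is not to be expected; however, the theorem only asserts global existence, and every Peano solution starting in $\mathds{R}^N_{>0}$ persists there for all positive time by the preceding bounds.
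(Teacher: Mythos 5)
Your proposal is correct and follows essentially the same route as the paper: continuity of $\widetilde{K}_i$ (hence of the right-hand side) gives short-time existence via Peano, the uniform bound $|\tilde{\lambda}-\widetilde{K}_i|\leq c(\mathcal{T})$ yields the two-sided estimate $r_i(0)e^{-c(\mathcal{T})t}\leq r_i(t)\leq r_i(0)e^{c(\mathcal{T})t}$, and the ODE continuation theorem then forces $T=+\infty$. The extra observations you add (the $\ell^1$-conservation law and the remark that uniqueness may fail since $\widetilde{K}$ is only continuous across $\partial\mathcal{M}_{\mathcal{T}}$) are both correct and also appear elsewhere in the paper, but are not needed for the argument.
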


In the following, we call every $r\in\mathcal{M}_\mathcal{T}$ a \emph{real ball packing}, and call every $r\in\mathds{R}^N_{>0}\setminus\mathcal{M}_\mathcal{T}$ a \emph{virtual ball packing}. If we mention a ball packing in this paper, we always mean a real ball packing. It can be shown (see Theorem \ref{thm-extend-flow-converg-imply-exist-const-curv-packing}) that if $\{r(t)\}_{t\geq0}$, a solution to (\ref{Def-introduct-extend-norm-Yamabe-flow}), converges to some $r_{\infty}\in\mathds{R}^N_{>0}$, then $r_{\infty}$ has constant (extended) curvature, either real or virtual. Conversely, if we assume the triangulation $\mathcal{T}$ is \emph{regular} (also called vertex transitive), i.e. a triangulation such that the same number of tetrahedrons meet at every vertex, one can deform any packing to a real one with constant curvature along the extended flow (\ref{Def-introduct-extend-norm-Yamabe-flow}).

\begin{theorem}\label{Thm-main-3-converg-to-const-curv}
Assume $\mathcal{T}$ is regular. Then the solution $r(t)$ to the extended flow (\ref{Def-introduct-extend-norm-Yamabe-flow}) converges exponentially fast to a real packing with constant curvature as $t$ goes to $+\infty$.
\end{theorem}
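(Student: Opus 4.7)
The plan is to use the extended CRG-functional $\tilde{\lambda}$ as a Lyapunov function and exploit the vertex-transitive symmetry of $\mathcal{T}$ together with the small-energy Theorem~\ref{thm-intro-small-energ-converg} to show that the flow converges to the uniform packing. For a regular triangulation with $t$ tetrahedra meeting at each vertex, the uniform packing $\hat{r}_*=c(1,\ldots,1)$ (with $c=\|r(0)\|_{l^1}/N$) yields regular Euclidean tetrahedra whose solid angle at every vertex is a common $\alpha_0$, so $K_i(\hat{r}_*)=4\pi-t\alpha_0$ is vertex-independent and $\hat{r}_*\in\mathcal{M}_{\mathcal{T}}$ is the unique real constant-curvature packing on the slice $\{\|r\|_{l^1}=cN\}$ by \cite{CR}\cite{Xu}. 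Since $\widetilde{\mathcal{S}}=\sum_i\widetilde{K}_i r_i$ continues to serve as a potential for $\widetilde{K}$ on $\mathds{R}^N_{>0}$ (via the extension of Section~\ref{section-extend-solid-angle}) and $\|r(t)\|_{l^1}$ is conserved (immediate from $\sum_i(\tilde{\lambda}-\widetilde{K}_i)r_i=0$), one computes along the flow
$$\frac{d\widetilde{\mathcal{S}}}{dt}=\sum_i\widetilde{K}_i\frac{dr_i}{dt}=-\sum_i(\widetilde{K}_i-\tilde{\lambda})^2 r_i\le 0,$$
with equality iff $\widetilde{K}$ is constant across vertices. Hence $\tilde{\lambda}(r(t))$ is non-increasing.

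Next I would establish precompactness of $\{r(t)\}_{t\ge 0}$ in $\mathds{R}^N_{>0}$. The upper bound $r_i(t)\le\|r(0)\|_{l^1}$ is automatic from conservation. For the lower bound, pass to log-radii $u_i=\ln r_i$, in which the flow reads $\dot{u}_i=\tilde{\lambda}-\widetilde{K}_i$. When $r_i$ becomes sufficiently small relative to its neighbours, every tetrahedron incident to $i$ becomes virtual with the tiny ball $S_i$ slipping through the gap of the other three mutually tangent balls, so $\tilde{\alpha}_{ijkl}\to 2\pi$ for each such tet and consequently $\widetilde{K}_i\to 4\pi-2\pi t$. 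Since $\tilde{\lambda}$ is a weighted average of the bounded $\widetilde{K}_j$'s dominated by the non-collapsing vertices, one obtains $\tilde{\lambda}-\widetilde{K}_i\ge 2\pi(t-2)-o(1)>0$ (using $t\ge 3$), so $\dot u_i>0$ and $r_i$ bounces back, ruling out any sequence $r_i(t_n)\to 0$.

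With compactness in hand, LaSalle's invariance principle applied to $\widetilde{\mathcal{S}}$ forces every $\omega$-limit point of $r(t)$ to be a (possibly virtual) constant-$\widetilde{K}$ packing. Using vertex-transitivity of $\mathcal{T}$ together with the global rigidity of $K$ on $\mathcal{M}_{\mathcal{T}}$ \cite{CR}\cite{Xu} and a direct case analysis of the two virtual solid-angle values $\{0,2\pi\}$ on the constraint slice, one identifies $\hat{r}_*$ as the only candidate $\omega$-limit; connectedness of the $\omega$-limit set then yields $r(t)\to\hat{r}_*$. Finally, once $r(t)$ enters the neighbourhood $\{\tilde{\lambda}\le\chi(\hat{r}_*,\mathcal{T})\}$, Theorem~\ref{thm-intro-small-energ-converg} takes over and delivers the exponential rate.

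I expect the main obstacle to be the quantitative bouncing-back argument: one must verify carefully that, when $r_i$ becomes small, $\widetilde{K}_i$ really does fall below $\tilde{\lambda}$ by a definite margin, which requires tracking the transition from the real to the virtual regime of every tetrahedron at $i$ and controlling $\tilde{\lambda}$ in terms of the non-collapsing vertices. A secondary difficulty is excluding competing virtual constant-$\widetilde{K}$ equilibria as possible $\omega$-limits; in both steps the regularity (vertex-transitivity) of $\mathcal{T}$ is doing essential work.
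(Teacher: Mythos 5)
Your overall architecture (Lyapunov functional $\widetilde{\mathcal S}$, precompactness, LaSalle, identification of the limit via rigidity, then local exponential stability) matches the paper's, and the first, third and fourth steps are essentially sound given the results of Sections 4--5 (the extended Schl\"{a}ffli formula for $d\widetilde{\mathcal S}/dt$, the alternativeness Theorem \ref{thm-extend-xu-rigid} to exclude virtual equilibria, and Lemma \ref{Thm-3d-isolat-const-alpha-metric} for the rate). The problem is the step you yourself flag as the main obstacle: the lower bound on the radii. Your ``bouncing back'' argument has a genuine gap. First, it is not true that every tetrahedron incident to a vertex $i$ with small $r_i$ becomes virtual with $\tilde\alpha_{ijkl}\to 2\pi$: if a neighbouring radius $r_j$ decays at a comparable rate, $Q_{ijkl}$ can remain positive (note $Q_{ijkl}\approx -\bigl(1/r_i-1/r_j\bigr)^2+O(1/r_i)$ in that regime), so $\widetilde K_i$ need not drop to $4\pi-2\pi d$ and no definite sign for $\dot u_i$ follows. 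Second, even granting that limit, your margin $\tilde\lambda-\widetilde K_i\geq 2\pi(t-2)-o(1)$ implicitly assumes $\tilde\lambda\gtrsim 0$, whereas the only a priori bound is $\tilde\lambda\geq\min_j\widetilde K_j\geq 4\pi-2\pi d$, which exactly cancels the limit of $\widetilde K_i$ and leaves no positive gap. Third, a bounce-back at a threshold defined ``relative to the neighbours'' is a moving target: conservation of $\|r\|_{l^1}$ forbids all radii from collapsing, but a proper subset can collapse together, dragging the threshold to zero with them. This is precisely the difficulty the authors point out in Section 5.4 (``the main difficulty is how to show at the minimum $r_i(t)$, there is a tetrahedron going to degenerate''), and it is why their Conjecture \ref{conjec-differ-small-11} remains open.

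The paper avoids this entirely. It proves two comparison principles valid for \emph{all} packings, real or virtual, not just near-degenerate ones: if $r_j$ is maximal in a tetrahedron then $\tilde\alpha_j\leq\bar\alpha$ (Lemma \ref{Lemma-compare-regular-1}, via concavity of $\sum_i(\tilde\alpha_i-\bar\alpha)r_i$), and if $r_i$ is minimal then $\tilde\alpha_i\geq\bar\alpha$ (Lemma \ref{Lemma-compare-regular-2}, via the signs of $\partial\alpha_i/\partial r_j$ and a continuity/deformation argument). For a regular triangulation both curvature sums at the globally minimal vertex $i$ and the globally maximal vertex $j$ have the same number $d$ of terms, so $\widetilde K_j-\widetilde K_i=\sum\tilde\alpha_i-\sum\tilde\alpha_j\geq d\bar\alpha-d\bar\alpha=0$, whence $\frac{d}{dt}\bigl(\min_{p,q}r_p/r_q\bigr)\geq 0$. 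This Harnack-type monotonicity, combined with conservation of $\|r\|_{l^1}$, gives precompactness in $\mathds{R}^N_{>0}$ with no degeneration analysis at all. If you want to rescue your write-up, replace the bounce-back paragraph with these two comparison principles; the rest of your outline then goes through.
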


Generally, we can use an extended topological-combinatorial invariant $\tilde{\chi}(\hat{r},\mathcal{T})$ to control (without assuming $\mathcal{T}$ regular) $\tilde{\lambda}(r(0))$ and further control the behavior of the extended flow (\ref{Def-introduct-extend-norm-Yamabe-flow}) so that it converges to a real packing $\hat{r}$ with constant curvature. See Theorem \ref{Thm-tuta-xi-invariant-imply-converg} for more details.
Inspired by the the proof of Theorem \ref{Thm-main-3-converg-to-const-curv}, the following conjecture seems to be true.
\begin{conjecture}
Let $d_i$ be the vertex degree at $i$. Assume $|d_i-d_j|\leq 10$ for each $i,j\in V$. Then there exists a real or virtual ball packing with constant curvature.
\end{conjecture}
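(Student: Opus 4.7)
The plan is to produce a constant extended curvature packing as an accumulation point of the extended combinatorial Yamabe flow (\ref{Def-introduct-extend-norm-Yamabe-flow}), following the strategy of Theorem \ref{Thm-main-3-converg-to-const-curv} but replacing exact regularity by the quantitative hypothesis $|d_i-d_j|\leq 10$. Concretely, I would run the extended flow from the uniform initial packing $r(0)=(1,\dots,1)$. At this starting point every combinatorial tetrahedron is realised as a regular Euclidean tetrahedron, so the extended curvatures are explicit: $\widetilde{K}_i(r(0))=4\pi-d_i\alpha^{\ast}$ with $\alpha^{\ast}=\arccos(23/27)$ the solid angle at a vertex of a regular tetrahedron, and the initial value of the extended CRG-functional is
\begin{equation*}
\tilde\lambda(r(0))=4\pi-\bar d\,\alpha^{\ast},
\end{equation*}
where $\bar d$ is the average vertex degree. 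By Theorem \ref{Thm-main-2-extend-norm-Yamabe-flow} the trajectory exists for all $t\geq 0$, and by the scale invariance that preserves $\|r(t)\|_{l^1}$ it stays on the simplex $\Sigma_N=\{r\in\mathds{R}^N_{>0}:\sum_i r_i=N\}$.

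The decisive step is a quantitative "no-collapse" estimate showing that the trajectory remains in a compact subset of $\mathds{R}^N_{>0}$. I would analyse $\tilde\lambda$ along any hypothetical sequence $r(t_n)\in\Sigma_N$ whose limit lies in $\partial\mathds{R}^N_{\geq 0}$. Let $S\subset\mathcal{T}_0$ denote the set of indices at which $r_i\to 0$. By Lemma \ref{lemma-xu-extension} the extended solid angles $\tilde\alpha_{ijkl}$ at vertices $i\in S$ tend to $2\pi$ whenever $\{ijkl\}\subset S$, while the solid angles on tetrahedra that straddle $S$ and $\mathcal{T}_0\setminus S$ tend to $0$ at their non-collapsing vertices. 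Tallying these contributions yields an explicit lower bound for $\liminf_n\tilde\lambda(r(t_n))$ in terms of the degrees $\{d_i\}_{i\in S}$ and of the combinatorial boundary of $S$; the hypothesis $|d_i-d_j|\leq 10$ is what one needs for this lower bound to strictly exceed the initial value $4\pi-\bar d\,\alpha^{\ast}$, which by the monotonicity of $\tilde\lambda$ along the flow (the same Lyapunov structure that drives Theorem \ref{Thm-main-3-converg-to-const-curv}) contradicts collapse. Precompactness in hand, any accumulation point is a zero of the right-hand side of (\ref{Def-introduct-extend-norm-Yamabe-flow}) and therefore, by the extended analogue of Proposition \ref{prop-converg-imply-const-exist}, a (real or virtual) constant extended curvature packing.

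The main obstacle is precisely this boundary estimate. The constant $10$ is expected to be sharp for an explicit geometric threshold, obtained by comparing the worst-case gain of $2\pi$ per collapsed tetrahedron at vertices of $S$ against the worst-case loss of $\alpha^{\ast}$ per tetrahedron at vertices outside $S$, uniformly over all subsets $S\subset\mathcal{T}_0$ and all configurations of their combinatorial boundary. Executing this case analysis rigorously, and verifying that $10$ is indeed the correct constant rather than some nearby explicit number, is where the real content of the conjecture sits; it will likely require a refined asymptotic description of $\tilde\alpha$ near the degenerate strata of $\mathds{R}^{4}_{>0}\setminus\{Q_{1234}>0\}$, together with a combinatorial averaging argument over the collapsed set $S$ that leverages the near-regularity of $\mathcal{T}$ in a way that the pointwise bound $|d_i-d_j|\leq 10$ makes quantitative.
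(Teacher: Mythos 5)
You should first be aware that the statement you are proving is stated in the paper as a \emph{conjecture} (Conjecture \ref{conjec-differ-small-11}); the authors do not prove it. What they offer is an explicitly ``intuitive but not so rigorous'' sketch: run the extended flow (\ref{Def-Flow-extended}), and if some $r_i(t)\to 0$ with $r_i$ minimal and $r_p$ maximal, use the two comparison principles (Lemmas \ref{Lemma-compare-regular-1} and \ref{Lemma-compare-regular-2}) plus the putative existence of a degenerating tetrahedron at $i$ to get $\sum\tilde\alpha_i-\sum\tilde\alpha_p\geq 2\pi+(d_i-1)\bar\alpha-d_p\bar\alpha\geq 2\pi-11\bar\alpha>0$, so that $r_i/r_p$ cannot decrease --- contradicting $r_i\to 0$. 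They state plainly that the missing step is to show that a tetrahedron at the \emph{minimal} vertex actually degenerates, and they only manage to prove the weaker theorem with all degrees $\leq 11$. So there is no ``paper's proof'' to match; the honest question is whether your plan closes the gap, and it does not.

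Your route is genuinely different --- you replace the Harnack-type ratio $\min_p r_p/\max_q r_q$ by the Lyapunov functional $\tilde\lambda$ and try to rule out collapse by a boundary lower bound on $\tilde\lambda$ compared against $\tilde\lambda(r(0))=4\pi-\bar d\,\alpha^{\ast}$ --- but this introduces new difficulties rather than removing the old one. First, your tally of boundary contributions is wrong as stated: a tetrahedron with two or more vertices in the collapsing set $S$ need not become virtual (e.g.\ if $r_i=r_j\to 0$ with $r_k,r_l$ fixed then $Q_{ijkl}\to+\infty$), and the limiting solid angles at its non-collapsing vertices depend on the relative rates of decay, so Lemma \ref{lemma-ri-small-imply-Di} and the dichotomy $\tilde\alpha\in\{0,2\pi\}$ do not classify the boundary strata for you. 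Second, and more structurally, $\tilde\lambda$ is an $r$-weighted \emph{average} of the $\widetilde K_i$, in which the collapsing vertices carry vanishing weight and the non-collapsed tetrahedra can still contribute solid angles anywhere in $[0,2\pi]$; the hypothesis $|d_i-d_j|\leq 10$ controls the \emph{spread} of degrees, which enters the paper's heuristic only through the pointwise difference $\widetilde K_p-\widetilde K_i$ at the extremal vertices together with the numerical fact $2\pi>11\bar\alpha$. It is not clear that any bound on an averaged quantity like $\tilde\lambda$ near the boundary sees this spread at all, so there is no reason to expect the threshold $10$ to emerge from your comparison, and you yourself defer exactly this step. Finally, even granting precompactness, passing from a subsequential limit to a constant-curvature packing needs the argument of Theorem \ref{Thm-tuta-xi-invariant-imply-converg} (monotonicity and boundedness of $\widetilde{\mathcal S}$ along the flow, Proposition \ref{prop-extend-flow-descending}), not merely the statement that accumulation points are zeros of the vector field. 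In short: your proposal is a plausible alternative strategy, but the decisive estimate is missing in your write-up just as the decisive degeneration claim is missing in the paper's own sketch, so the conjecture remains open on both routes.
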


However, we can prove the following theorem, which builds a deep connection between the combinatoric of $\mathcal{T}$ and the geometry of $M$.
\begin{theorem}
If each vertex degree is no more than $11$, there exists a real or virtual ball packing with constant curvature.
\end{theorem}

In a future paper \cite{Ge-hua}, the techniques of this paper will be extended to hyperbolic ball packings, the geometry of which is somewhat different from Euclidean case. If all vertex degrees are no more than $22$, there are no hyperbolic ball packings, real or virtual, with zero curvature. However, there always exists a ball packing (may be virtual) with zero curvature if all degrees are no less than $23$. It is amazing that these combinatoric-geometric results can be obtained by studying a hyperbolic version of the combinatorial Yamabe flow (\ref{Def-Flow-Glickenstein}). At the last of this section, we raise a question as follows, which is our ultimate aim:
\begin{question}
Characterize the image set $K(\mathcal{M}_\mathcal{T})$ of the curvature map $K$, and $\widetilde{K}(\mathds{R}^N_{>0})$ of the extended curvature map $\widetilde{K}$ using the combinatorics of $\mathcal{T}$ and the topology of $M$.
\end{question}

For compact surface with circle packings, Thurston \cite{T1} completely solved the above question. He characterized the image set of $K$ by a class of combinatorial and topological inequalities. However, we don't know how to approach it in three dimension.

The paper is organized as follows. In Section 2, we introduce some energy functionals and the combinatorial Yamabe invariant. In Section 3, we first prove a ``small energy convergence" result. We then introduce a topological-combinatorial invariant to control the convergence behavior of the normalized flow. In Section 4, we extend the definition of solid angles and then combinatorial curvatures. Using the extended energy functional, we solve the Combinatorial Yamabe Problem. In Section 5, we study the extended flow. We shall prove the long-term existence of solutions, and prove any solution converge to a constant curvature real packing for regular triangulations. In the Appendix, we give the Schl\"{a}ffli formula and a proof of a Lemma used in the paper.

\section{Combinatorial functionals and invariants}
\subsection{Regge's Einstein-Hilbert functional}
Given a smooth Riemannian manifold $(M, g)$, let $R$ be the smooth scalar curvature, then the smooth Einstein-Hilbert functional $\mathcal{E}(g)$ is defined by
$$\mathcal{E}(g)=\int_MR d\mu_g.$$
It had been extensively studied since its relation to general relativity, the Yamabe problem and geometric curvature flows. In order to quantize gravity, Regge \cite{Re} first suggested to consider a discretization of the smooth Einstein-Hilbert functional, which is called Regge's Einstein-Hilbert functional by Champion, Glickenstein and Young \cite{CGY}. Regge's Einstein-Hilbert functional is usually called Einstein-Hilbert action in Regge calculus by physicist. It is closely related to the gravity theory for simplicial geometry. See \cite{AMM,MM,Miler} for more description.

We look back Regge's formulation briefly. For a compact 3-dimensional manifold $M^3$  with a triangulation $\mathcal{T}$, a piecewise flat metric is a map $l:E\rightarrow (0,+\infty)$ such that for every tetrahedron $\tau=\{ijkl\}\in \mathcal{T}_3$,
the tetrahedron $\tau$ with edges lengths $l_{ij},l_{ik},l_{il},l_{jk},l_{jl},l_{kl}$ can be realized as a geometric tetrahedron in Euclidean space. For any Euclidean tetrahedron $\{ijkl\}\in \mathcal{T}_3$, the dihedral angle at edge $\{ij\}$ is denoted by $\beta_{ij,kl}$. If an edge is in the interior of the triangulation, the discrete Ricci curvature at this edge is $2\pi$ minus the sum of dihedral angles at the edge. More specifically, denote $R_{ij}$ as the discrete Ricci curvature at an edge $\{ij\}\in \mathcal{T}_1$, then
\begin{equation}
R_{ij}=2\pi-\sum_{\{ijkl\}\in\mathcal{T}_3}\beta_{ij,kl},
\end{equation}
where the sum is taken over all tetrahedrons with $\{ij\}$ as one of its edges. If this edge is on the boundary of the triangulation, then the curvature should be $R_{ij}=\pi-\sum_{\{i,j,k,l\}\in T}\beta_{ij,kl}$. Using the discrete Ricci curvature, Regge's Einstein-Hilbert functional can be expressed as
\begin{equation}
\mathcal{E}(l)=\sum_{i\thicksim j}R_{ij}l_{ij},
\end{equation}
where the sum is taken over all edges $\{ij\}\in \mathcal{T}_1$. It is noticeable that $\{l^2\}$, the space of all admissible piecewise flat metrics parameterized by $l_{ij}^2$, is a nonempty connected open convex cone. This was proved by the first author Ge of the paper, Mei and Zhou \cite{GMZ} and Schrader \cite{Sch} independently. In a forthcoming paper \cite{GJ5}, we will use this observation to prove that the space of all perpendicular ball packings (any two intersecting balls intersect perpendicularly) is the whole space $\mathds{R}^N_{>0}$. Compare that the space of tangential ball packings (i.e. the ball packings considered in this paper) $\mathcal{M}_{\mathcal{T}}$ is non-convex. In the following, we will see that non-convex of the set $\mathcal{M}_{\mathcal{T}}$ is the main difficulty to make a local result global.

\subsection{Cooper and Rivin's ``total scalar curvature" functional}
\label{section-cooper-rivin-func}
For Euclidean triangulation coming from ball packings, Cooper and Rivin \cite{CR} introduced and carefully studied the following ``total scalar curvature" functional
\begin{equation}
\label{def-cooper-rivin-funct}
\mathcal{S}(r)=\sum_{i=1}^N K_i r_i, \;r\in \mathcal{M}_{\mathcal{T}}.
\end{equation}
Using this functional, they proved that the combinatorial conformal structure cannot be deformed (except by scaling) while keep the solid angles fixed, or equivalently, the set of conformal structures with prescribed solid angles are discrete. They further used this result to prove that the geometry of ball packing of the ball $\mathbb{S}^3$ whose nerve is a triangulation $\mathcal{T}$ is rigid up to M\"obius transformations. The following result is our observation.
\begin{proposition}
Given a triangulated manifold $(M^3, \mathcal{T})$, each real ball packing $r\in \mathcal{M}_{\mathcal{T}}$ induces a piecewise flat metric $l$ with $l_{ij}=r_i+r_j$. Moreover, $\mathcal{E}(l)=\mathcal{S}(r)$.
\end{proposition}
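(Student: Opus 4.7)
\medskip

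\noindent\textbf{Proof proposal.} The first claim is essentially unpacking definitions: if $r\in\mathcal{M}_{\mathcal{T}}$ then $Q_{ijkl}>0$ for every tetrahedron $\{ijkl\}\in\mathcal{T}_3$, so by the criterion of Glickenstein cited earlier the six numbers $l_{ab}=r_a+r_b$ realize $\{ijkl\}$ as a Euclidean tetrahedron. Doing this simultaneously for all tetrahedra of $\mathcal{T}$ produces the required piecewise flat metric $l$ with $l_{ij}=r_i+r_j$. So all the work is in the identity $\mathcal{E}(l)=\mathcal{S}(r)$.

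The plan is to expand both $\mathcal{E}(l)$ and $\mathcal{S}(r)$ and to reorganize the sum so that each side becomes a sum over vertices of $r_i$ times the same angular expression. For the left hand side,
\begin{equation*}
\mathcal{E}(l)=\sum_{\{ij\}\in\mathcal{T}_1}R_{ij}(r_i+r_j)=\sum_{i=1}^{N}r_i\sum_{j\thicksim i}R_{ij}=\sum_{i=1}^{N}r_i\left(2\pi d_i-\sum_{j\thicksim i}\sum_{\{ijkl\}\ni\{ij\}}\beta_{ij,kl}\right),
\end{equation*}
where $d_i$ is the vertex degree. For the right hand side, the key input is the elementary formula that expresses a Euclidean solid angle as the (spherical) area of the link triangle,
\begin{equation*}
\alpha_{ijkl}=\beta_{ij,kl}+\beta_{ik,jl}+\beta_{il,jk}-\pi,
\end{equation*}
which follows from applying Girard's theorem to the spherical triangle cut out on a small sphere centered at the vertex $i$. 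Summing over tetrahedra containing $i$ and regrouping by edges through $i$, I obtain
\begin{equation*}
\sum_{\{ijkl\}\ni i}\alpha_{ijkl}=\sum_{j\thicksim i}\sum_{\{ijkl\}\ni\{ij\}}\beta_{ij,kl}-\pi\,t_i,
\end{equation*}
where $t_i$ counts tetrahedra incident to $i$. Substituting into the definition $K_i=4\pi-\sum\alpha_{ijkl}$ gives $K_i=4\pi+\pi t_i-\sum_{j\thicksim i}\sum_{\{ijkl\}\ni\{ij\}}\beta_{ij,kl}$.

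Comparing with the expression for $\mathcal{E}(l)$, the identity $\mathcal{E}(l)=\mathcal{S}(r)$ reduces to the purely combinatorial relation $2\pi d_i=4\pi+\pi t_i$, i.e.\ $t_i=2d_i-4$. This is the only nontrivial (and also the only potentially obstructing) step, and it is supplied by looking at the link of the interior vertex $i$: it is a triangulation of the $2$-sphere whose vertices, edges and faces count edges, triangles and tetrahedra of $\mathcal{T}$ incident to $i$. Euler's formula $d_i-e_i+t_i=2$ together with the triangle relation $2e_i=3t_i$ yields exactly $t_i=2d_i-4$, which closes the computation.

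Assembling the three ingredients---the realizability of the piecewise flat metric, the link formula $\alpha=\sum\beta-\pi$, and the Euler relation $t_i=2d_i-4$---produces $\mathcal{E}(l)=\sum_i K_i r_i=\mathcal{S}(r)$, as required. I expect the only delicate point to be stating the link formula cleanly (and recording the convention for dihedral angles), since everything else is bookkeeping.
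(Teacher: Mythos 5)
Your proof is correct and follows essentially the same route as the paper: the identity $\mathcal{E}(l)=\mathcal{S}(r)$ is reduced to the vertex-wise relation $K_i=\sum_{j\thicksim i}R_{ij}$ followed by the same regrouping of the double sum over incident edges. The only difference is that the paper simply cites this relation from Glickenstein (noting it follows from ``Euler's characteristic formula for balls''), whereas you supply the proof in full via Girard's theorem for the solid angle, $\alpha_{ijkl}=\beta_{ij,kl}+\beta_{ik,jl}+\beta_{il,jk}-\pi$, and the Euler/double-counting relations $d_i-e_i+t_i=2$, $2e_i=3t_i$ for the link sphere --- which is exactly the argument the paper is alluding to, so your write-up is a self-contained version of the same proof.
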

\begin{proof}
Glickenstein \cite{G4} observed that for each vertex $i\in V$,
$$K_i=\sum_{j:j\thicksim i}R_{ij},$$
which can be proved by using Euler's characteristic formula for balls. Then it follows
$$\sum_{i=1}^NK_ir_i=\sum_{i=1}^N\sum_{j:j\thicksim i}R_{ij}r_i=\sum_{j\thicksim i}R_{ij}(r_i+r_j)=\sum_{j\thicksim i}R_{ij}l_{ij}.$$
\end{proof}

\begin{lemma}\label{Lemma-Lambda-semi-positive}
(\cite{CR, Ri, G2})
Given a triangulated manifold $(M^3, \mathcal{T})$. For Cooper and Rivin's functional $\mathcal{S}=\sum K_ir_i$, the classical Schl\"{a}ffli formula says that $d\mathcal{S}=\sum K_idr_i$. This implies $\partial_{r_i}\mathcal{S}=K_i$, or
\begin{equation}
\nabla_r\mathcal{S}=K,
\end{equation}
in collum vector form. If we denote
\begin{equation}
\Lambda=Hess_r\mathcal{S}=
\frac{\partial(K_{1},\cdots,K_{N})}{\partial(r_{1},\cdots,r_{N})},
\end{equation}
then $\Lambda$ is positive semi-definite with rank $N-1$ and
the kernel of $\Lambda$ is the linear space spanned by the vector $r$.
\end{lemma}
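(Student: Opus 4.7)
The plan is to handle the three assertions in sequence: $\nabla_r \mathcal{S}=K$, positive semi-definiteness of $\Lambda$, and the precise structure of its kernel.

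First, differentiating $\mathcal{S}=\sum_i K_i r_i$ directly gives $d\mathcal{S}=\sum_i K_i\,dr_i+\sum_i r_i\,dK_i$. The classical three-dimensional Schl\"affli formula (to be recorded in the Appendix) asserts exactly $d\mathcal{S}=\sum_i K_i\,dr_i$, so $\partial_{r_i}\mathcal{S}=K_i$ and $\nabla_r\mathcal{S}=K$. Equivalently, Schl\"affli encodes that $\sum_i r_i\,dK_i=0$, which is the infinitesimal form of the scale invariance of the solid angles.

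Second, I would decompose $\Lambda$ tetrahedron-by-tetrahedron. Since $K_i=4\pi-\sum_{\{ijkl\}\ni i}\alpha_{ijkl}$, one has
\[
\Lambda=\sum_{\tau\in \mathcal{T}_3}\Lambda_\tau,
\]
where $\Lambda_\tau$ is the $N\times N$ matrix supported on the four indices of $\tau=\{ijkl\}$ whose nontrivial $4\times 4$ block $\Lambda_\tau|_\tau$ has entries $-\partial\alpha_{v,\tau}/\partial r_w$ for $v,w\in\tau$. The local rigidity result of Cooper--Rivin and Rivin (see \cite{CR,Ri,G2}) asserts that $\Lambda_\tau|_\tau$ is symmetric, positive semi-definite, of rank exactly $3$, with one-dimensional kernel spanned by the local radius vector $(r_i,r_j,r_k,r_l)$. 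Summing positive semi-definite quadratic forms shows $\Lambda$ is itself positive semi-definite.

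Third, for the kernel. That $\Lambda r=0$ follows from scale invariance: each $\alpha_{ijkl}$ depends only on the Euclidean shape of $\tau$ and is homogeneous of degree $0$ in $(r_i,r_j,r_k,r_l)$, so Euler's relation gives $\Lambda_\tau|_\tau\cdot(r_i,r_j,r_k,r_l)^T=0$, and summation yields $\Lambda r=0$. Conversely, if $\Lambda v=0$, then
\[
0=v^T\Lambda v=\sum_{\tau}v|_\tau^T\,\Lambda_\tau|_\tau\,v|_\tau,
\]
each summand non-negative, forcing $v|_\tau\in\ker\Lambda_\tau|_\tau=\mathds{R}\cdot(r_i,r_j,r_k,r_l)$ on every $\tau$. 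Thus on each tetrahedron $v_s=c_\tau r_s$ for some scalar $c_\tau$. Adjacent tetrahedra share at least one vertex, so the constants match across faces, and connectedness of $\mathcal{T}$ propagates $c_\tau$ to a single global constant $c$. Hence $v=cr$, so $\ker\Lambda=\mathds{R}\cdot r$ and $\operatorname{rank}\Lambda=N-1$.

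The main obstacle is the local semi-definiteness statement for a single conformal tetrahedron; this is a classical but genuinely nontrivial computation, carried out using the Schl\"affli identity within one tetrahedron together with the special geometry of the edge-tangent insphere characterising conformality. The global step is then just a routine connectedness-plus-bilinear-form assembly.
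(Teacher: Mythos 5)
The paper does not actually prove this lemma: it cites \cite{CR, Ri, G2} for the semi-definiteness, rank and kernel statements, and only the Schl\"affli computation $d\bigl(\sum_i K_i r_i\bigr)=\sum_i K_i\,dr_i$ appears in Appendix 6.1, exactly as in your first step. Your proposal is correct and supplies the standard assembly argument that the paper leaves implicit: writing $\Lambda=\sum_\tau \Lambda_\tau$ with each $4\times 4$ block $-\partial\alpha_{v,\tau}/\partial r_w$ symmetric (by the per-tetrahedron Schl\"affli identity), positive semi-definite of rank $3$ with kernel $\mathds{R}\cdot(r_i,r_j,r_k,r_l)$ (Euler's relation for the degree-$0$ homogeneous solid angles gives the inclusion, and Cooper--Rivin's local rigidity gives equality), then summing the quadratic forms and propagating the per-tetrahedron kernel constant $c_\tau$ through shared vertices of a connected triangulation, using $r_s>0$ to cancel. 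The only genuinely nontrivial input --- that the single-tetrahedron block is positive semi-definite of rank exactly $3$ --- is outsourced to the same references the paper cites, so your proof is honest about where the real work lives; note that \cite{Ri} exists precisely because the original argument in \cite{CR} for this local statement needed correction, so the citation set is the right one. The only cosmetic caveat is that your "adjacent tetrahedra share at least one vertex" step should invoke the connectedness of the incidence graph of tetrahedra of a triangulated connected manifold, which is automatic here.
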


Glickenstein \cite{G1} calculated the entries of matrix $\Lambda$ in detail, and found a new dual structure for conformal tetrahedrons. It's also his insight to elaborate $\Lambda$ as a type of combinatorial Laplace operator and to derive a discrete maximum principle for his curvature flow (\ref{Def-Flow-Glickenstein}). For each $x\in\mathds{R}^N$, denote $\|x\|_{l^1}=\sum_{i=1}^N|x_i|$. The following lemma was essentially stated by Cooper and Rivin in their pioneering study of ball packings \cite{CR}. Since the proof is elementary and is irrelevant with this paper, we postpone it to the appendix.
\begin{lemma}\label{Lemma-Lambda-positive}
(\cite{CR})
Cooper and Rivin's functional $\mathcal{S}$ is strictly convex on $ \mathcal{M}_{\mathcal{T}}\cap \{r\in\mathds{R}^N_{>0}:\|r\|_{l^1}=1\}$.
If we restrict $\mathcal{S}$ to the hyperplane $\{x\in\mathds{R}^N:\|x\|_{l^1}=1\}$, its Hessian $\Lambda'$ is strictly positive definite (the concrete meaning of $\Lambda'$ can be seen in Appendix \ref{appendix-2}).
\end{lemma}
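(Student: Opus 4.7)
My plan is to reduce the strict convexity statement on the slice to the transversality between the one-dimensional kernel of the ambient Hessian $\Lambda$ and the tangent space of the constraint hyperplane. All the real content is already packaged in Lemma \ref{Lemma-Lambda-semi-positive}, so the argument will be essentially linear-algebraic.

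First I would fix any point $r\in\mathcal{M}_{\mathcal{T}}\cap\{\|r\|_{l^1}=1\}$ and identify the tangent space to the affine hyperplane $H=\{x\in\mathds{R}^N:\sum_i x_i=1\}$ at $r$ with the linear subspace $V=\{v\in\mathds{R}^N:\sum_i v_i=0\}$. The induced Hessian of $\mathcal{S}|_H$ at $r$ is then just the quadratic form $(v,w)\mapsto v^{T}\Lambda w$ restricted to $V$; picking a concrete basis such as $\{e_i-e_N\}_{i=1}^{N-1}$ of $V$ produces the $(N-1)\times(N-1)$ matrix $\Lambda'$ described in Appendix~\ref{appendix-2}, but positive definiteness is independent of the choice of basis, so I would not bother with coordinates.

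Next I would verify the one transversality statement that does all of the work. By Lemma \ref{Lemma-Lambda-semi-positive}, $\ker\Lambda=\mathds{R}\,r$. Every point of the slice satisfies $\sum_i r_i=1\neq 0$, whereas $V$ is cut out by $\sum_i v_i=0$; hence $r\notin V$ and $\mathds{R}\,r\cap V=\{0\}$. Combined with $\Lambda\succeq 0$, this forces $v^{T}\Lambda v>0$ for every nonzero $v\in V$, so $\Lambda'$ is strictly positive definite at $r$. Since $r$ was arbitrary and the slice is an open subset of $H$, the induced Hessian is positive definite throughout, which yields strict convexity of $\mathcal{S}$ along every line segment contained in $\mathcal{M}_{\mathcal{T}}\cap H$.

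I do not expect a serious obstacle here; the one point that needs care is the non-convexity of $\mathcal{M}_{\mathcal{T}}$ itself. The phrase ``strictly convex on $\mathcal{M}_{\mathcal{T}}\cap\{\|r\|_{l^1}=1\}$'' must therefore be read as local strict convexity, or equivalently strict convexity along any admissible segment, rather than global convexity on this possibly disconnected open slice. Conceptually, the radial direction $r$, which lies in $\ker\Lambda$ because $\mathcal{S}$ is $1$-homogeneous (Euler's identity gives $\Lambda r=0$), is precisely the single direction killed by imposing the linear normalization $\|r\|_{l^1}=1$; the remaining $N-1$ directions of positive semi-definiteness then become genuine positive definiteness on the hyperplane slice.
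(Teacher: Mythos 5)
Your proposal is correct and is essentially the paper's own argument in coordinate-free form: the paper's Appendix \ref{appendix-2} proof conjugates by an orthogonal matrix $A$ so that the restricted Hessian becomes $B^T\Lambda B$ with the columns of $B$ spanning $\{v:\sum_i v_i=0\}$, and then derives the same contradiction you do, namely that a null vector of the restricted form would force $Bx=cr\in\ker\Lambda$ while $\alpha^TBx=0$ contradicts $\sum_i r_i\neq 0$. Your remark that the conclusion should be read as local strict convexity (strict convexity along segments contained in the non-convex slice) is also consistent with how the paper uses the lemma.
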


\subsection{Glickenstein's ``average scalar curvature" functional}
If $\hat{r}\in \mathcal{M}_{\mathcal{T}}$ is a real ball packing with constant curvature, then the combinatorial scalar curvature $K_i(\hat{r})$ at each vertex $i\in V$ equals to a constant $\mathcal{S}(\hat{r})/\|\hat{r}\|_{l^1}$. Inspired by this, Glickenstein suggested to consider the following ``average scalar curvature"
\begin{equation}\label{Def-3d-Yamabe-functional}
\lambda(r)=\frac{\mathcal{S}}{\|r\|_{l^1}}=\frac{\sum_{i=1}^NK_ir_i}{\sum_{i=1}^Nr_i}, \ \ r\in \mathcal{M}_{\mathcal{T}}.
\end{equation}
Note this functional is called the Cooper-Rivin-Glickenstein functional and abbreviated as the CRG-functional in Section \ref{section-main-result}.

For a Riemannian manifold $(M, g)$, the smooth average scalar curvature is $\frac{\int_M R d\mu_g}{\int_M d\mu_g}$. For a triangulated manifold $(M^3,\mathcal{T})$, we consider $r_i$ as a volume element, which is a combinatorial analogue of $d\mu_g$. In this sense, $\|r\|_{l^1}$ is an appropriate combinatorial volume of a ball packing $r$, and Glickenstein's ``average scalar curvature" functional (\ref{Def-3d-Yamabe-functional}) is an appropriate combinatorial analogue of the smooth average scalar curvature $\frac{\int_M R d\mu_g}{\int_M d\mu_g}$. Note that the functional $\lambda(r)=\mathcal{S}/\|r\|_{l^1}$ is also a normalization of Cooper and Rivin's functional $\mathcal{S}$, and to some extent, looks like a combinatorial version of smooth normalized Einstein-Hilbert functional $\frac{\int_M R d\mu_g}{(\int_M d\mu_g)^\frac{1}{3}}$.

It is remarkable that the CRG-functional (\ref{Def-3d-Yamabe-functional}) can be generalized to $\alpha$ order. In fact, the first author of this paper Ge and Xu \cite{GX3} once defined the $\alpha$-functional
\begin{equation}\label{Def-3d-alpha-Yamabe-functional}
\lambda_{\alpha}(r)=\frac{\mathcal{S}}{\|r\|_{\alpha+1}}=\frac{\sum_{i=1}^NK_ir_i}{\big(\sum_{i=1}^Nr_i^{\alpha+1}\big)^{\frac{1}{\alpha+1}}}, \ \ r\in \mathcal{M}_{\mathcal{T}}.
\end{equation}
for each $\alpha\in \mathds{R}$ with $\alpha\neq-1$. Then the CRG-functional $\lambda(r)$ is in fact the $0$-functional defined above. The major aim for introducing the $\alpha$-functional is to study the $\alpha$-curvature $R_{\alpha,i}=K_i/r_i^{\alpha}$. The critical points of the $\alpha$-functional are exactly the ball packings with constant $\alpha$-curvature. The first author of this paper Ge and his collaborators explained carefully the motivation to study the $\alpha$-curvature, and particularly the $\alpha=2$ case, see \cite{GeMa}\cite{GX1}-\cite{GX4}. We will follow up the deformation of ball packings towards the constant (or prescribed) $\alpha$-curvatures in the subsequent studies.

\subsection{The combinatorial Yamabe invariant}
Inspired by the above analogy analysis, we introduce some combinatorial invariants here.

\begin{definition}\label{Def-alpha-normalize-Regge-functional}
The combinatorial Yamabe invariant with respect to $\mathcal{T}$ is defined as
\begin{equation}\label{def-Y-T}
Y_{\mathcal{T}}=\inf_{r\in \mathcal{M}_{\mathcal{T}}} \lambda(r),
\end{equation}
where the combinatorial Yamabe constant of $M$ is defined as $Y_{M}=\sup\limits_{\mathcal{T}}\inf\limits_{r\in \mathcal{M}_{\mathcal{T}}} \lambda(r).$
\end{definition}

For a fixed triangulation $\mathcal{T}$, all $K_i$ are uniformly bounded by the topology of $M$ and the combinatorics of $\mathcal{T}$ by the definition (\ref{Def-CR curvature}). Note $|\lambda(r)|\leq\|K\|_{l^\infty}$ for every ball packing $r\in\mathcal{M}_{\mathcal{T}}$. Hence $Y_{\mathcal{T}}$ is well defined and is a finite number. It depends on the triangulation $\mathcal{T}$ and $M$. Moreover, $Y_M$ is also well defined, but we don't know whether it is finite. One can design algorithms to calculate $Y_{\mathcal{T}}$ by minimizing $\lambda$ in $\mathcal{M}_{\mathcal{T}}$. However, we don't know how to design algorithms to get $Y_M$.

\begin{lemma}
\label{lemma-const-curv-equl-cirtical-point}
Let $r\in\mathcal{M}_{\mathcal{T}}$ be a real ball packing. Then $r$ has constant combinatorial scalar curvature if and only if it is a critical point of the CRG-functional $\lambda(r)$.
\end{lemma}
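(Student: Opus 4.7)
The plan is to compute $\nabla\lambda$ explicitly using the Schl\"afli formula, and read off the equivalence directly.

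First I would write $\lambda(r)=\mathcal{S}(r)/\|r\|_{l^1}$ and apply the quotient rule. For each vertex $j$,
\begin{equation*}
\frac{\partial\lambda}{\partial r_j}=\frac{\partial_{r_j}\mathcal{S}}{\|r\|_{l^1}}-\frac{\mathcal{S}}{\|r\|_{l^1}^2}\cdot\frac{\partial\|r\|_{l^1}}{\partial r_j}.
\end{equation*}
By Lemma \ref{Lemma-Lambda-semi-positive} (Schl\"afli's formula), $\partial_{r_j}\mathcal{S}=K_j$, and since $r\in\mathcal{M}_{\mathcal{T}}\subset\mathds{R}^N_{>0}$, $\|r\|_{l^1}=\sum_i r_i$ so $\partial_{r_j}\|r\|_{l^1}=1$. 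Therefore
\begin{equation*}
\frac{\partial\lambda}{\partial r_j}=\frac{K_j-\lambda(r)}{\|r\|_{l^1}}.
\end{equation*}

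From this identity the lemma is immediate in both directions. If $K$ is constant, then $K_j=\mathcal{S}/\|r\|_{l^1}=\lambda(r)$ for every $j$, so every component of $\nabla\lambda$ vanishes and $r$ is a critical point. Conversely, if $\nabla\lambda(r)=0$, then $K_j=\lambda(r)$ for all $j$, and since $\lambda(r)$ does not depend on $j$, the curvature $K$ is constant (with common value $\lambda(r)$).

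There is essentially no obstacle here beyond checking that $\|r\|_{l^1}$ is differentiable with derivative $1$ in each coordinate on $\mathds{R}^N_{>0}$, which is trivial because $r_i>0$ so $|r_i|=r_i$. The only conceptual content is the Schl\"afli formula $\nabla_r\mathcal{S}=K$, which is already recorded in Lemma \ref{Lemma-Lambda-semi-positive}; the rest is one line of calculus.
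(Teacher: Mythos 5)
Your proof is correct and follows exactly the route the paper takes: the paper's proof of this lemma consists precisely of the identity $\partial_{r_i}\lambda=(K_i-\lambda)/\|r\|_{l^1}$, which you derive via the quotient rule and the Schl\"afli formula $\nabla_r\mathcal{S}=K$ from Lemma \ref{Lemma-Lambda-semi-positive}. You have simply filled in the one-line computation the paper leaves to the reader.
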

\begin{proof}
This can be shown easily from
\begin{equation}
\partial_{r_i}\lambda=\frac{K_i-\lambda}{\|r\|_{l^1}}.
\end{equation}
\end{proof}

By Lemma \ref{Lemma-Lambda-positive}, the constant curvature ball packings are isolated in
$\mathcal{M}_{\mathcal{T}}\cap \{r\in\mathds{R}^N: \sum_{i=1}^Nr_i=1\}$.
Equivalently, except for a scaling of radii, one cannot deform a ball packing continuously so that its curvature maintains constant. Recently, Xu \cite{Xu} proved the ``\emph{global rigidity}" of ball packings, i.e. the curvature map $K:V\to \mathds{R}^N, i\mapsto K_i$ is injective if one ignores the scalings of ball radii.
Thus a ball packing is determined by its curvature up to scaling. As a consequence, the ball packing with constant curvature (if it exists) is unique up to scaling. Note Xu's global rigidity can be derived from Theorem \ref{thm-extend-xu-rigid} directly. Until we proved Theorem \ref{thm-extend-xu-rigid}, we will not assume the global rigidity of $K$ a priori to prove the results in this paper so as to make the paper self-contained. Xu's global rigidity shows the uniqueness of the packing with constant curvature. The Combinatorial Yamabe Problem asks whether there exists a ball packing with constant curvature and how to find it (if it exists)? We give a glimpse into this problem with the help of the CRG-functional and the combinatorial Yamabe invariant.

\begin{theorem}\label{Thm-Q-min-iff-exist-const-curv-metric}
Consider the following four descriptions:
\begin{description}
  \item[(1)] There exists a real ball packing $\hat{r}$ with constant curvature.
  \item[(2)] The CRG-functional $\lambda(r)$ has a local minimum in $\mathcal{M}_{\mathcal{T}}$.
  \item[(3)] The CRG-functional $\lambda(r)$ has a global minimum in $\mathcal{M}_{\mathcal{T}}$.
  \item[(4)] The combinatorial Yamabe invariant $Y_{\mathcal{T}}$ is attainable by some real ball packing.
\end{description}
Then $(3)\Leftrightarrow(4)\Rightarrow(1)\Leftrightarrow(2)$. As a consequence, we get $\lambda(\hat{r})\geq Y_{\mathcal{T}}$ for any ball packing $\hat{r}$ with constant curvature.
\end{theorem}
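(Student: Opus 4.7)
The proof splits naturally into three tasks: (a) the equivalence $(3)\Leftrightarrow(4)$, (b) the implications $(3)\Rightarrow(2)\Rightarrow(1)$ via the critical-point criterion, and (c) the converse $(1)\Rightarrow(2)$ via restricted strict convexity. First I would dispatch $(3)\Leftrightarrow(4)$ by simply unpacking the definition $Y_{\mathcal{T}}=\inf_{r\in\mathcal{M}_{\mathcal{T}}}\lambda(r)$: saying this infimum is attained by some real ball packing is exactly the statement that $\lambda$ has a global minimum on $\mathcal{M}_{\mathcal{T}}$. The step $(3)\Rightarrow(2)$ is immediate since any global minimum is a local one, and $(2)\Rightarrow(1)$ uses Lemma \ref{lemma-const-curv-equl-cirtical-point}: a local minimum is a critical point, and the formula $\partial_{r_i}\lambda=(K_i-\lambda)/\|r\|_{l^1}$ then forces $K_i=\lambda$ uniformly in $i$, so $\hat r$ has constant curvature.

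The only step with real content is $(1)\Rightarrow(2)$. My plan is to exploit the scale invariance $\lambda(cr)=\lambda(r)$ together with Lemma \ref{Lemma-Lambda-positive}. Given a constant-curvature packing $\hat r$, I rescale so that $\|\hat r\|_{l^1}=1$ and restrict attention to the affine slice $H=\{r\in\mathds{R}^N_{>0}:\|r\|_{l^1}=1\}$, on which $\lambda$ coincides with Cooper--Rivin's functional $\mathcal{S}$. By Lemma \ref{lemma-const-curv-equl-cirtical-point}, $\hat r$ is a critical point of $\lambda$, hence of $\mathcal{S}|_{H\cap\mathcal{M}_{\mathcal{T}}}$. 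Lemma \ref{Lemma-Lambda-positive} then gives that the Hessian of $\mathcal{S}|_H$ is strictly positive definite at $\hat r$, so $\hat r$ is a strict local minimum of $\mathcal{S}|_H$, equivalently of $\lambda|_H$. Pulling back via the diffeomorphism $r\mapsto(\|r\|_{l^1},\,r/\|r\|_{l^1})$ of $\mathds{R}^N_{>0}$ with $(0,\infty)\times H$, and using $\lambda(cr)=\lambda(r)$, I conclude that an entire ray $\{c\hat r:c>0\}$ is locally minimizing, so in particular $\hat r$ itself is a local minimum in $\mathcal{M}_{\mathcal{T}}$. The concluding inequality $\lambda(\hat r)\geq Y_{\mathcal{T}}$ is then tautological from $Y_{\mathcal{T}}=\inf\lambda$.

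The main obstacle I anticipate is bookkeeping rather than substantive: since $\mathcal{M}_{\mathcal{T}}$ itself is not convex (emphasized at the end of Section 2.1), I cannot upgrade Lemma \ref{Lemma-Lambda-positive} to any global convexity statement, and must carefully use that the Hessian information it supplies is pointwise/infinitesimal, made useful only after fixing the scale via the slice $H$. This local-versus-global distinction is precisely why the theorem only asserts $(1)\Leftrightarrow(2)$ rather than $(1)\Rightarrow(3)$: the non-convexity of $\mathcal{M}_{\mathcal{T}}$ obstructs turning the critical-point-cum-local-minimum $\hat r$ into a global minimum without further hypotheses, and correspondingly explains why the combinatorial Yamabe invariant $Y_{\mathcal{T}}$ may fail to be attained even when a constant-curvature packing exists.
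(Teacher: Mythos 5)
Your proof is correct and follows essentially the same route as the paper: $(3)\Leftrightarrow(4)$ by unwinding the definition of $Y_{\mathcal{T}}$, $(2)\Rightarrow(1)$ from the gradient formula $\partial_{r_i}\lambda=(K_i-\lambda)/\|r\|_{l^1}$, and $(1)\Rightarrow(2)$ by restricting to the slice $\{\|r\|_{l^1}=1\}$ where Lemma \ref{Lemma-Lambda-positive} makes the functional strictly convex at the critical point $\hat r$ --- the paper packages this as strict convexity of $\mathcal{S}_c=\mathcal{S}-c\sum_i r_i$ with Hessian $\Lambda$, which is the same computation, and your explicit use of scale invariance to pass from a local minimum on the slice to one in $\mathcal{M}_{\mathcal{T}}$ is a point the paper glosses over. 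The only caveat concerns your closing speculation: the paper later proves $(2)\Rightarrow(3)$ via the convex $C^1$-extension (Theorem \ref{corollary-Q-2}), so $Y_{\mathcal{T}}$ is in fact attained whenever a constant-curvature packing exists; the non-convexity of $\mathcal{M}_{\mathcal{T}}$ obstructs only the direct local-to-global argument at this stage, not the conclusion itself.
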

\begin{remark}
Later we will prove ``$(2)\Rightarrow(3)$". Then it follows that $\lambda(\hat{r})=Y_{\mathcal{T}}$ for any real ball packing $\hat{r}$ with constant curvature.
\end{remark}
\begin{proof}
(3) and (4) say the same thing. They both imply (2). We prove $(1)\Leftrightarrow(2)$ below.

$(1)\Rightarrow(2)$: Let $\hat{r}\in \mathcal{M}_{\mathcal{T}}$ be a real ball packing with constant curvature $c=K_i(\hat{r})$ for all $i\in V$. Since $K$ is scaling invariant, we may assume $\|\hat{r}\|_{l^1}=1$. Consider the following functional
$$\mathcal{S}_c=\mathcal{S}-c\sum_{i=1}^Nr_i=\sum_{i=1}^N(K_i-c)r_i.$$
By the Schl\"{a}fli formula $\sum_{j\thicksim i}l_{ij}d\beta_{ij}=0$, or by Lemma \ref{Lemma-Lambda-semi-positive}, we obtain $\partial_{r_i}\mathcal{S}_c=K_i-c$. This implies that $\hat{r}$ is a critical point of the functional $\mathcal{S}_c$. Further note $Hess_r\mathcal{S}_c=\Lambda$, then it follows that $\mathcal{S}_c$ is strictly convex when restricted to the hyperplane $\{r\in\mathds{R}^N: \|r\|_{l^1}=1\}$. Hence $\hat{r}$ is a local minimum point.

$(2)\Rightarrow(1)$: Assume $\hat{r}\in \mathcal{M}_{\mathcal{T}}$ is a local minimum point of the CRG-functional $\lambda(r)$, then it is a critical point of $\lambda(r)$. Let $\hat{K}$ be the curvature at $\hat{r}$, and $\hat{\lambda}$ be the CRG-functional at $\hat{r}$. From $\partial_{r_i}\lambda=\|r\|^{-1}_{l^1}(K_i-\lambda)$, we see $\hat{K}_i=\hat{\lambda}$ for every $i\in V$. Hence $\hat{r}$ is a real ball packing with constant curvature.
\end{proof}

The above $(1)$, $(2)$, $(3)$ and $(4)$ are all equivalent. Indeed, we will prove ``$(1)\Rightarrow(3)$" in Section \ref{subsection-converg-to-const} (see Corollary \ref{corollary-Q-2}). The global ridigity ($K$ is globally injective up to scaling) tells a global result, however, we cannot use this directly to derive that a local minimum point $\hat{r}$ is a global one. The difficulty comes from that the combinatorial conformal class $\mathcal{M}_{\mathcal{T}}$ is not convex. It is the main trouble to make a local result global. Let us take a look at the procedures from Guo \cite{Guo} to Luo \cite{L2}, from Luo \cite{L1} to Bobenko, Pincall and Springborn \cite{Bobenko}, from Cooper and Rivin \cite{CR} to Xu \cite{Xu}, the results of which are all form local to global. The former says the curvature map $K$, in three different settings, is locally injective, while the later says $K$ is globally injective. These works are all based on an extension technique, which will be formulated carefully in Section \ref{section-extend-K}.

Assume the equivalence between $(1)$, $(2)$, $(3)$ and $(4)$. If any one happens, then the real ball packing $\hat{r}$ with constant curvature (if exists) is the unique (up to scaling) minimum of the CRG-functional $\lambda(r)$. One can design algorithms to minimize $\lambda(r)$, or to minimize $\mathcal{S}(r)$ under the constraint condition $\sum_{i\in V}r_i=1$. If $\lambda$ really has a global minimum in $\mathcal{M}_{\mathcal{T}}$, then this minimum is exactly $Y_{\mathcal{T}}$, and as a consequence, the Combinatorial Yamabe Problem is solvable. Otherwise, the Combinatorial Yamabe Problem has no solution.



\section{A combinatorial Yamabe flow with normalization}
\subsection{A normalization of Glickenstein's flow}
Set $u_i=\ln r_i$, and $u=(u_1,\cdots, u_N)$. Then Glickenstein's flow (\ref{Def-Flow-Glickenstein}) can be abbreviated as an ODE $\dot{u}=-K$.
The critical point (also called stable point) of this ODE is a real packing $r^*$ with $K(r^*)=0$.
Thus it makes good sense to use Glickenstein's flow (\ref{Def-Flow-Glickenstein}) if one wants to deform the combinatorial scalar curvatures to zero.
However, Glickenstein's flow (\ref{Def-Flow-Glickenstein}) is not appropriate to deform the combinatorial scalar curvature to a general constant $\mathcal{S}/\|r\|_{l^1}$ (recall we have shown that, if $K_i\equiv c$ where $c$ is a constant, then $c$ equals to $\mathcal{S}/\|r\|_{l^1}$). It is suitable to consider the following normalization of Glickenstein's flow.

\begin{definition}
Given a triangulated 3-manifold $(M, \mathcal{T})$. The normalized combinatorial Yamabe flow is
\begin{equation}\label{Def-r-normal-Yamabe flow}
\frac{dr_i}{dt}=(\lambda-K_i)r_i.
\end{equation}
\end{definition}

The normalized combinatorial Yamabe flow (\ref{Def-r-normal-Yamabe flow}) owes to Glickenstein. It was first introduced in Glickenstein's thesis \cite{G0}. 
With the help of the coordinate change $u_i=\ln r_i$, we rewrite (\ref{Def-r-normal-Yamabe flow}) as the following autonomous ODE system
\begin{equation}\label{Def-u-normal-Yamabe flow}
\frac{du_i}{dt}=\lambda-K_i.
\end{equation}

We explain the meaning of ``normalization". This means that the normalized flow (\ref{Def-r-normal-Yamabe flow}) and the un-normalized flow (\ref{Def-Flow-Glickenstein}) differ only by a change of scale in space. Let $t, r, K$ denote the variables for the flow (\ref{Def-Flow-Glickenstein}), and $t, \tilde{r}, \tilde{K}$ for the flow (\ref{Def-r-normal-Yamabe flow}).
Suppose $r(t), t\in [0,T),$ is a solution of (\ref{Def-Flow-Glickenstein}). Set
$\tilde{r}(t)=\varphi(t)r(t)$, where
$$\varphi(t)=e^{\int_0^t\lambda(r(s))ds}.$$
Then we have $\tilde{K}(t)=K(t)$ and $\tilde{\lambda}(t)=\lambda(t)$. Then it follows $\frac{d\tilde{r_i}}{dt}=(\tilde{\lambda}-\tilde{K_i}(t))\tilde{r_i}(t)$.
Conversely, if $\tilde{r}(t), t\in [0,T),$ is a solution of (\ref{Def-r-normal-Yamabe flow}), set
$r(t)=e^{-\int_0^t\tilde{\lambda}(\tilde{r}(s))ds}\tilde{r}(t)$. Then it is easy to check that $d r_i/dt=-K_ir_i$.

In the space $\mathcal{M}_{\mathcal{T}}$, the coefficient $r_i(\lambda-K_i)$ as a function of $r=(r_1,\cdots,r_N)$ is smooth and hence
locally Lipschitz continuous. By Picard theorem in classical ODE theory, the normalized flow (\ref{Def-r-normal-Yamabe flow}) has
a unique solution $r(t)$, $t\in[0,\epsilon)$ for some $\epsilon>0$. As a consequence, we have
\begin{proposition}
Given a triangulated 3-manifold $(M, \mathcal{T})$, for any initial packing $r(0)\in\mathcal{M}_{\mathcal{T}}$, the solution $\{r(t):0\leq t<T\}\subset\mathcal{M}_{\mathcal{T}}$ to the normalized flow (\ref{Def-r-normal-Yamabe flow})
uniquely exists on a maximal time interval $t\in[0, T)$ with $0<T\leq+\infty$.
\end{proposition}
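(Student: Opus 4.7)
The plan is to invoke standard ODE theory, with the main preparatory observation being that the vector field $F_i(r)=r_i(\lambda(r)-K_i(r))$ is smooth on the open set $\mathcal{M}_{\mathcal{T}}\subset\mathds{R}^N_{>0}$. For any $r\in\mathcal{M}_{\mathcal{T}}$, every tetrahedron $\{ijkl\}$ satisfies $Q_{ijkl}(r)>0$, so it is realized as a nondegenerate Euclidean tetrahedron, and its solid angles $\alpha_{ijkl}$ depend real-analytically on $(r_i,r_j,r_k,r_l)$. Hence $K_i(r)$, and therefore $\lambda(r)=\mathcal{S}/\|r\|_{l^1}$ and $F_i(r)$, are smooth (in particular locally Lipschitz) on $\mathcal{M}_{\mathcal{T}}$.

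Given $r(0)\in\mathcal{M}_{\mathcal{T}}$, the Picard--Lindel\"{o}f theorem (already cited in the paragraph above the proposition) provides some $\epsilon>0$ and a unique $C^1$ solution $r:[0,\epsilon)\to\mathcal{M}_{\mathcal{T}}$ of (\ref{Def-r-normal-Yamabe flow}). To obtain a maximal solution I would consider the family
\begin{equation*}
\mathcal{F}=\Bigl\{\,(I_\alpha,r^\alpha)\ :\ I_\alpha=[0,T_\alpha)\text{ with }T_\alpha>0,\ r^\alpha:I_\alpha\to\mathcal{M}_{\mathcal{T}}\text{ solves }(\ref{Def-r-normal-Yamabe flow})\text{ with }r^\alpha(0)=r(0)\,\Bigr\},
\end{equation*}
which is nonempty by the previous step. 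Local uniqueness implies that any two members of $\mathcal{F}$ agree on their common domain: the set where they agree is closed in the intersection by continuity, open by local uniqueness applied at any point of agreement, and contains $0$, hence equals the whole intersection. Define $T=\sup_{\alpha}T_\alpha\in(0,+\infty]$ and glue the $r^\alpha$ into a single solution $r:[0,T)\to\mathcal{M}_{\mathcal{T}}$; this is well defined by the agreement just shown and is clearly a solution.

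Finally I would check that $[0,T)$ is maximal: if $r$ could be extended as a solution in $\mathcal{M}_{\mathcal{T}}$ past $T$, then by local existence applied at that extension one would obtain some $(I_\beta,r^\beta)\in\mathcal{F}$ with $T_\beta>T$, contradicting the definition of $T$. Uniqueness of the maximal solution follows from the same local-uniqueness argument used for gluing. No step here is delicate; the only point that really uses the geometry rather than pure ODE theory is the smoothness of $K$ on $\mathcal{M}_{\mathcal{T}}$, which is exactly what the definition of $\mathcal{M}_{\mathcal{T}}$ via $Q_{ijkl}>0$ was designed to guarantee. The genuinely interesting question of whether $T=+\infty$ or $T<+\infty$ (finite-time collapse when $r(t)$ approaches $\partial\mathcal{M}_{\mathcal{T}}$) is deferred to later sections and is not part of the present statement.
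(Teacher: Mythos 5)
Your proposal is correct and follows essentially the same route as the paper: the paper simply notes that $r_i(\lambda-K_i)$ is smooth, hence locally Lipschitz, on the open set $\mathcal{M}_{\mathcal{T}}$, invokes the Picard theorem for local existence and uniqueness, and states the maximal-interval conclusion as an immediate consequence. You have merely written out in full the standard gluing argument that the paper leaves implicit.
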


We give some other elementary properties related to the normalized flow (\ref{Def-r-normal-Yamabe flow}).

\begin{proposition}\label{prop-V-descending}
Along the normalized flow (\ref{Def-r-normal-Yamabe flow}), $\|r(t)\|_{l^1}$ is invariant. Both the Cooper-Rivin functional $\mathcal{S}(r)$
and the CRG-functional $\lambda(r)$ are descending.
\end{proposition}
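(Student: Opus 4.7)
The plan is to do three direct computations, exploiting the Schl\"{a}ffli formula $\nabla_r\mathcal{S}=K$ from Lemma \ref{Lemma-Lambda-semi-positive} together with the Cauchy--Schwarz inequality. Everything follows from unwinding definitions; no deep machinery is needed.

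First I would verify invariance of $\|r(t)\|_{l^1}$. Differentiating under the flow gives
\begin{equation*}
\frac{d}{dt}\|r(t)\|_{l^1}=\sum_{i=1}^N\frac{dr_i}{dt}=\sum_{i=1}^N(\lambda-K_i)r_i=\lambda\|r\|_{l^1}-\mathcal{S},
\end{equation*}
which vanishes because $\lambda=\mathcal{S}/\|r\|_{l^1}$ by definition of the CRG-functional.

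Next I would show that $\mathcal{S}(r(t))$ is non-increasing. By the Schl\"{a}ffli formula $\partial_{r_i}\mathcal{S}=K_i$, so
\begin{equation*}
\frac{d\mathcal{S}}{dt}=\sum_{i=1}^N K_i\,\frac{dr_i}{dt}=\sum_{i=1}^N K_i(\lambda-K_i)r_i=\lambda\mathcal{S}-\sum_{i=1}^N K_i^2 r_i.
\end{equation*}
Writing $\lambda\mathcal{S}=\mathcal{S}^2/\|r\|_{l^1}=\bigl(\sum_i K_i r_i\bigr)^2\big/\sum_i r_i$ and applying Cauchy--Schwarz to the vectors $(K_i\sqrt{r_i})$ and $(\sqrt{r_i})$, one has $\bigl(\sum_i K_i r_i\bigr)^2\le\bigl(\sum_i K_i^2 r_i\bigr)\bigl(\sum_i r_i\bigr)$, so $\lambda\mathcal{S}\le\sum_i K_i^2 r_i$ and hence $d\mathcal{S}/dt\le 0$. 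Equality holds exactly when all $K_i$ are equal, i.e.\ at a constant-curvature packing, which is the expected critical-point characterization from Lemma \ref{lemma-const-curv-equl-cirtical-point}.

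Finally, since $\lambda(r)=\mathcal{S}(r)/\|r\|_{l^1}$ and the denominator is constant along the flow by the first step, the monotonicity of $\lambda$ is an immediate consequence of the monotonicity of $\mathcal{S}$. There is no real obstacle here; the only place one has to be slightly careful is making sure the Cauchy--Schwarz step is written correctly (both $r_i>0$ throughout the flow, so the square roots are well defined), and quoting Schl\"{a}ffli in the right form.
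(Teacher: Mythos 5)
Your proposal is correct and follows essentially the same route as the paper: both rest on $\partial_{r_i}\mathcal{S}=K_i$ from the Schl\"{a}ffli formula and a direct differentiation along the flow. The only cosmetic difference is that the paper differentiates $\lambda$ first and exhibits $\lambda'(t)=-\|r\|_{l^1}^{-1}\sum_i r_i(K_i-\lambda)^2\le 0$ as an explicit sum of squares (then gets $\mathcal{S}$ descending from the constancy of $\|r\|_{l^1}$), whereas you differentiate $\mathcal{S}$ first and obtain the same inequality $\lambda\mathcal{S}\le\sum_i K_i^2 r_i$ via Cauchy--Schwarz; the two are algebraically identical.
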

\begin{proof}
The conclusions follow by direct calculations $d\|r(t)\|_{l^1}/dt=0$ and
\begin{align*}
\frac{d\lambda(r(t))}{dt}&=\sum_i\partial_{r_i}\lambda(r(t))\frac{dr_i(t)}{dt}\\
&=\sum_ir_i(\lambda-K_i)\|r\|_{l^1}^{-1}(K_i-\lambda)\\
&=-\|r\|_{l^1}^{-1}\sum_ir_i(K_i-\lambda)^2\leq0.
\end{align*}
\end{proof}

\begin{proposition}\label{prop-G-bound}
The CRG-functional $\lambda(r)$ is uniformly bounded (by the information of the triangulation $\mathcal{T}$). The Cooper-Rivin functional $\mathcal{S}(r)$ is bounded along the normalized flow (\ref{Def-r-normal-Yamabe flow}).
\end{proposition}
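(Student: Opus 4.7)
The plan is to handle the two assertions in sequence, with the second being a quick corollary of the first combined with Proposition \ref{prop-V-descending}.

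For the uniform boundedness of $\lambda(r)$, the idea is to bound $K_i$ pointwise in terms of the combinatorics of $\mathcal{T}$ alone. Recall that $K_i = 4\pi - \sum_{\{ijkl\}\in\mathcal{T}_3} \alpha_{ijkl}$, where the sum ranges only over tetrahedra containing $i$ as a vertex. Denote by $d_i$ the number of such tetrahedra (a purely combinatorial quantity fixed by $\mathcal{T}$). Since each solid angle $\alpha_{ijkl}$ in a Euclidean tetrahedron satisfies $0 < \alpha_{ijkl} < 2\pi$, we immediately get the two-sided estimate
\begin{equation*}
4\pi - 2\pi d_i \;<\; K_i \;<\; 4\pi.
\end{equation*}
Setting $C_{\mathcal{T}} = \max_i \max\{|4\pi - 2\pi d_i|,\, 4\pi\}$, we obtain $\|K\|_{l^\infty} \leq C_{\mathcal{T}}$ uniformly on $\mathcal{M}_{\mathcal{T}}$. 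The bound on $\lambda$ then follows from the trivial estimate
\begin{equation*}
|\lambda(r)| \;=\; \frac{\big|\sum_i K_i r_i\big|}{\sum_i r_i} \;\leq\; \frac{\sum_i |K_i| r_i}{\sum_i r_i} \;\leq\; \|K\|_{l^\infty} \;\leq\; C_{\mathcal{T}},
\end{equation*}
which is independent of $r\in\mathcal{M}_{\mathcal{T}}$.

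For the boundedness of $\mathcal{S}$ along the normalized flow, I would simply observe that $\mathcal{S}(r) = \lambda(r)\,\|r\|_{l^1}$ and invoke the invariance of $\|r\|_{l^1}$ along (\ref{Def-r-normal-Yamabe flow}) from Proposition \ref{prop-V-descending}. Setting $V_0 = \|r(0)\|_{l^1}$, we get $|\mathcal{S}(r(t))| \leq C_{\mathcal{T}} V_0$ for all $t$ in the existence interval. (Alternatively, one could note that Proposition \ref{prop-V-descending} already gives that $\mathcal{S}$ is non-increasing, hence bounded above by $\mathcal{S}(r(0))$, while the same product expression provides a uniform lower bound.)

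There is no substantive obstacle here; the only mild subtlety is justifying the strict bound $\alpha_{ijkl} < 2\pi$ (equivalently, that solid angles of non-degenerate Euclidean tetrahedra are uniformly bounded away from $2\pi$ once the combinatorics is fixed). This is a classical fact about Euclidean tetrahedra and does not require any information beyond $\mathcal{T}$. Note that the bound on $\lambda$ is genuinely combinatorial and holds everywhere on $\mathcal{M}_{\mathcal{T}}$, while the bound on $\mathcal{S}$ depends additionally on the initial datum through the conserved quantity $\|r(0)\|_{l^1}$; this distinction accounts for the slightly different phrasing of the two assertions in the proposition.
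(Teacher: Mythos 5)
Your proposal is correct and follows essentially the same route as the paper: the paper likewise bounds $|\lambda(r)|\leq\|K\|_{l^{\infty}}\leq c(\mathcal{T})$ using only the combinatorics of $\mathcal{T}$, and then bounds $|\mathcal{S}(r(t))|\leq\|K\|_{l^{\infty}}\sum_i r_i(0)$ via the invariance of $\|r\|_{l^1}$ from Proposition \ref{prop-V-descending}. Your explicit estimate $4\pi-2\pi d_i<K_i<4\pi$ merely spells out the constant $c(\mathcal{T})$ that the paper leaves implicit; note also that the crude bound $\alpha_{ijkl}<2\pi$ needs no uniformity away from $2\pi$, since a Euclidean tetrahedron lies in a half-space through each of its vertices.
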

\begin{proof}
There is a constant $c(\mathcal{T})>0$, depending only on the information of $M$ and $\mathcal{T}$, such that for all $r\in \mathcal{M}_{\mathcal{T}}$,
$$|\lambda(r)|\leq\|K\|_{l^{\infty}}\leq c(\mathcal{T}).$$
Along the normalized flow (\ref{Def-r-normal-Yamabe flow}), by Proposition \ref{prop-V-descending} we have $$|\mathcal{S}(r(t))|\leq\|K\|_{l^{\infty}}\sum_ir_i(t)\leq\|K\|_{l^{\infty}}\sum_ir_i(0).$$
\end{proof}

\begin{proposition}\label{prop-converg-imply-const-exist}
Let $r(t)$ be the unique solution of the normalized flow (\ref{Def-r-normal-Yamabe flow}). If $r(t)$ exists for all time $t\geq0$ and converges to a real ball packing $r_\infty\in\mathcal{M}_{\mathcal{T}}$, then $r_\infty$ has constant combinatorial scalar curvature.
\end{proposition}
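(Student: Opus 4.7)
The plan is to exploit the monotonicity of $\lambda$ along the flow, established in Proposition \ref{prop-V-descending}:
$$\frac{d\lambda(r(t))}{dt} = -\frac{1}{\|r(t)\|_{l^1}}\sum_i r_i(t)\bigl(K_i(r(t)) - \lambda(r(t))\bigr)^2 \leq 0.$$
Combined with the uniform lower bound on $\lambda$ from Proposition \ref{prop-G-bound}, this shows $\lambda(r(t))$ is monotone decreasing and bounded, hence converges to a finite limit. By the convergence hypothesis and the continuity of $\lambda$ on $\mathcal{M}_{\mathcal{T}}$, that limit is exactly $\lambda(r_\infty)$.

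Next, set $F(r) := \|r\|_{l^1}^{-1}\sum_i r_i\bigl(K_i(r) - \lambda(r)\bigr)^2 \geq 0$, so that $\dot\lambda(t) = -F(r(t))$. The key observation is that $F$ is continuous on $\mathcal{M}_{\mathcal{T}}$ (since each solid angle $\alpha_{ijkl}$ varies smoothly in $r$ inside $\mathcal{M}_{\mathcal{T}}$), and by hypothesis $r(t) \to r_\infty \in \mathcal{M}_{\mathcal{T}}$, so $\dot\lambda(t) \to -F(r_\infty)$ as $t\to\infty$.

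I then argue by contradiction: if $F(r_\infty) > 0$, then $\dot\lambda(t) \leq -F(r_\infty)/2$ for all sufficiently large $t$, which would force $\lambda(r(t)) \to -\infty$ and contradict the uniform lower bound on $\lambda$. Hence $F(r_\infty) = 0$. Since each $r_{\infty,i} > 0$ (as $r_\infty \in \mathcal{M}_{\mathcal{T}} \subset \mathds{R}^N_{>0}$) and every summand in $F(r_\infty)$ is nonnegative, the vanishing of $F(r_\infty)$ forces $K_i(r_\infty) = \lambda(r_\infty)$ for every vertex $i$, i.e. $r_\infty$ has constant combinatorial scalar curvature.

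The argument is essentially free of obstacles; the only subtle ingredient is the continuity of $F$ at the limit point, which relies crucially on $r_\infty$ lying in the open set $\mathcal{M}_{\mathcal{T}}$ rather than on its boundary $\partial\mathcal{M}_{\mathcal{T}}$. This is precisely the role of the hypothesis excluding collapse, and it anticipates the need to work with the extended curvature $\widetilde K$ later in the paper when the limit is allowed to be virtual.
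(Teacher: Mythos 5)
Your proof is correct. It differs from the paper's argument in the mechanism used to show that the vector field vanishes at the limit. The paper applies the mean value theorem coordinate-wise to $u_i=\ln r_i$ on the intervals $[n,n+1]$: since $u_i(t)$ converges, the increments $u_i(n+1)-u_i(n)=u_i'(t_n)=\lambda(r(t_n))-K_i(r(t_n))$ tend to zero along a sequence $t_n\uparrow\infty$, and continuity of $K_i$ and $\lambda$ at $r_\infty$ then gives $K_i(r_\infty)=\lambda(r_\infty)$ directly for each $i$. You instead run the standard Lyapunov-dissipation argument on the single scalar quantity $\lambda$: from $\dot\lambda=-F(r(t))$ with $F\ge 0$ continuous, convergence of $r(t)$ forces $F(r_\infty)=0$ (else $\lambda$ would decrease linearly to $-\infty$, contradicting Proposition \ref{prop-G-bound}), and you then need the extra unpacking step that $F(r_\infty)=0$ together with $r_{\infty,i}>0$ kills every summand. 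Both routes are sound; the paper's is marginally more elementary in that it never needs the explicit dissipation identity, only the sign of $\dot u_i$'s limit, whereas yours concentrates the analysis in one scalar ODE and transfers without change to any flow admitting a monotonicity formula (in particular to the extended flow of Theorem \ref{thm-extend-flow-converg-imply-exist-const-curv-packing}, where your closing remark about replacing $K$ by $\widetilde K$ is exactly the right observation).
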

\begin{proof}
This is a standard conclusion in classical ODE theory. Here we prove it directly. Since all $K_i(r)$ are continuous functions of $r$, $K_i(r(t))$ converges to $K_i(r_\infty)$ as $t$ goes to infinity. From Proposition \ref{prop-V-descending} and Proposition \ref{prop-G-bound}, the CRG-functional $\lambda(r(t))$ is descending and bounded below, hence converges to $\lambda(r_\infty)$.
By the mean value theorem of differential, there is a sequence of times $t_n\uparrow+\infty$, such that
$$u_i(n+1)-u_i(n)=u_i'(t_n)=\lambda(r(t_n))-K_i(r(t_n)).$$
Hence $K_i(r(t_n))\to\lambda(r_\infty)$. This leads to $K_i(r_\infty)=\lambda(r_\infty)$ for each $i\in V$. Hence $r_\infty$ has constant combinatorial scalar curvature.
\end{proof}
\begin{remark}
If the normalized combinatorial Yamabe flow (\ref{Def-r-normal-Yamabe flow}) converges, then Proposition \ref{prop-converg-imply-const-exist} says that the Combinatorial Yamabe Problem is solvable.
\end{remark}


\subsection{Singularities of the solution}
Let $\{r(t):0\leq t<T\}$ be the unique solution to the normalized flow (\ref{Def-r-normal-Yamabe flow}) on a right maximal time interval $[0, T)$ with $0<T\leq+\infty$. If the solution $r(t)$ do not converge, we call $r(t)$ develops singularities at time $T$. By Proposition \ref{prop-converg-imply-const-exist}, if there exists no ball packing with constant curvature, then $r(t)$ definitely develops singularities at $T$.
Numerical simulations show that the solution $r(t)$ may develop singularities even when the constant curvature ball packings exist. To study the long-term existence and convergence of the solutions of the normalized flow (\ref{Def-r-normal-Yamabe flow}), we need to classify the solutions according to the singularities it develops.

Intuitively, when singularities develops, $r(t)$ touches the boundary of $\mathcal{M}_{\mathcal{T}}$ as $t\uparrow T$. Roughly speaking, the boundary of $\mathcal{M}_{\mathcal{T}}$ can be classified into three types. The first type is ``\emph{0 boundary}". $r(t)$ touches the ``0 boundary" means that there exists a sequence of times $t_n\uparrow T$ and a vertex $i\in V$ so that $r_i(t_n)\to 0$. The second type is ``$+\infty$ boundary". $r(t)$ touches the ``\emph{$+\infty$ boundary}" means that there exists $t_n\uparrow T$ and a vertex $i\in V$ so that $r_i(t_n)\to +\infty$. The last type is ``\emph{tetrahedron collapsing boundary}". For this case, there exists $t_n\uparrow T$ and a tetrahedron $\{ijkl\}\in T$, such that the inequality $Q_{ijkl}>0$ does not hold any more as $n\to+\infty$. At first glance the limit behavior of $r(t)$ as $t\uparrow T$ may be mixed of the three types and may be very complicated. We show that in any finite time interval, $r(t)$ never touches the ``0 boundary" and ``$+\infty$ boundary".

\begin{proposition}
\label{prop-no-finite-time-I-singula}
The normalized flow (\ref{Def-r-normal-Yamabe flow}) will not touch the ``0 boundary" and ``$+\infty$ boundary" in finite time.
\end{proposition}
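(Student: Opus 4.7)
The plan is to work in the logarithmic coordinates $u_i=\ln r_i$, where the flow takes the autonomous form $\dot u_i=\lambda-K_i$ (equation \eqref{Def-u-normal-Yamabe flow}). The whole argument then reduces to showing that the right-hand side is uniformly bounded on all of $\mathcal{M}_{\mathcal{T}}$ by a constant that depends only on $\mathcal{T}$.

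First I would observe that each solid angle $\alpha_{ijkl}$ lies in $[0,2\pi]$ by definition, so from \eqref{Def-CR curvature} one immediately gets the a priori bound $|K_i(r)|\le 4\pi+2\pi\, d_i$, where $d_i$ is the number of tetrahedra incident to vertex $i$. Combined with Proposition \ref{prop-G-bound}, this yields a constant $C=C(\mathcal{T})>0$, depending only on $M$ and $\mathcal{T}$, such that $|\lambda(r)-K_i(r)|\le C$ for every $r\in\mathcal{M}_{\mathcal{T}}$ and every $i$. Integrating $\dot u_i=\lambda-K_i$ on $[0,t]$ then gives
\begin{equation*}
|u_i(t)-u_i(0)|\le C\,t,\qquad 0\le t<T,
\end{equation*}
so that
\begin{equation*}
r_i(0)\,e^{-Ct}\le r_i(t)\le r_i(0)\,e^{Ct}
\end{equation*}
on any finite subinterval of $[0,T)$. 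In particular no $r_i(t)$ can tend to $0$ in finite time, which rules out touching the ``$0$ boundary''.

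For the ``$+\infty$ boundary'' one has an even stronger statement available for free: by Proposition \ref{prop-V-descending}, $\|r(t)\|_{l^1}$ is conserved along the normalized flow, hence $r_i(t)\le\|r(0)\|_{l^1}$ for all $t$ in $[0,T)$ regardless of whether $T$ is finite. So in fact the solution never touches the ``$+\infty$ boundary'' at all, and a fortiori not in finite time. There is no real obstacle here; the only point to watch is that the bound on $K_i$ must be uniform over $\mathcal{M}_{\mathcal{T}}$ (not just along the flow), and this uses nothing beyond the geometric fact that solid angles of Euclidean tetrahedra are bounded by $2\pi$, which holds right up to the boundary of $\mathcal{M}_{\mathcal{T}}$.
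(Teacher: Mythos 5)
Your proof is correct and follows essentially the same route as the paper: both rest on the uniform bound $|\lambda-K_i|\le c(\mathcal{T})$ (which you justify explicitly via the solid-angle bound $\alpha_{ijkl}\in[0,2\pi]$) and the resulting two-sided exponential estimate $r_i(0)e^{-Ct}\le r_i(t)\le r_i(0)e^{Ct}$. Your additional observation that the conservation of $\|r(t)\|_{l^1}$ already excludes the ``$+\infty$ boundary'' for all time is a nice strengthening, but it does not change the substance of the argument.
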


\begin{proof}
Note for all vertex $i\in V$, $|\lambda-K_i|$ are uniformly bounded by a constant $c(\mathcal{T})>0$, which depends only on the information of the triangulation. Hence
$$r_i(0)e^{-c(\mathcal{T})t}\leq r_i(t)\leq r_i(0)e^{c(\mathcal{T})t},$$
which implies $r_i(t)$ can not go to zero or $+\infty$ in finite time.
\end{proof}

Using Glickenstein's monotonicity condition \cite{G2}, which reads as
\begin{equation}\label{Def-monotonicity}
r_i\le r_j ~\mbox{ if and only if } K_i\le K_j
\end{equation}
for a tetrahedron $\{ijkl\}$, we can prove the following proposition which is essentially due to Glickenstein.

\begin{proposition}
\label{prop-glick-mc-condition}
Consider the normalized flow (\ref{Def-r-normal-Yamabe flow}) on a given $(M^3,\mathcal{T})$, assume the maximum existence time is $T$. If for all $t\in [0,T)$ and each tetrahedron, $r(t)$ satisfies the monotonicity condition (\ref{Def-monotonicity}), then $T=\infty$.
\end{proposition}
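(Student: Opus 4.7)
The plan is, by Proposition~\ref{prop-no-finite-time-I-singula}, that the radii $r(t)$ cannot touch the ``$0$'' or ``$+\infty$'' boundary in finite time, so if $T < \infty$ the only remaining source of singularity is tetrahedron collapsing, i.e.\ some tetrahedron $\tau = \{ijkl\}$ together with a sequence $t_n \uparrow T$ such that $Q_\tau(r(t_n)) \to 0^+$. I will rule this out by showing that the monotonicity assumption controls the ratios among the four radii of any tetrahedron uniformly on $[0,T)$.

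The key calculation is the following: for any two vertices $a, b$ in a common tetrahedron, the normalized flow (\ref{Def-r-normal-Yamabe flow}) yields
$$\frac{d}{dt}\ln\frac{r_a(t)}{r_b(t)} = (\lambda - K_a) - (\lambda - K_b) = K_b - K_a.$$
By the monotonicity condition (\ref{Def-monotonicity}), whenever $r_a \leq r_b$ we have $K_a \leq K_b$, so the derivative above is non-negative; thus every small-over-large pairwise ratio within each tetrahedron is non-decreasing along the flow. Setting $R^\tau(t) := \max_{a\in\tau} r_a(t) / \min_{a\in\tau} r_a(t)$, a standard envelope/Danskin argument --- $\ln R^\tau$ is Lipschitz with a.e.\ derivative $K_{a^*} - K_{b^*} \leq 0$, where $a^*$ and $b^*$ are the current argmin and argmax vertices in $\tau$ --- shows that $R^\tau(t)$ is non-increasing, and in particular $R^\tau(t) \leq R^\tau(0)$ for all $t \in [0,T)$.

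Combining this ratio bound with Proposition~\ref{prop-no-finite-time-I-singula}, the four radii $(r_i, r_j, r_k, r_l)(t)$ stay in a fixed compact subset of $\mathds{R}_{>0}^4$ for $t \in [0,T)$, and every pairwise small-over-large ratio is bounded below by its initial value, so the four radii ``equalize'' monotonically. Using the scale-invariance of $\textrm{sgn}(Q_\tau)$, the normalized ratios $\rho_a = r_a/\max_b r_b \in (0,1]$ stay in a compact region lying uniformly inside the initial connected component of $\{Q_\tau > 0\}$; indeed, a direct computation gives $\partial Q_\tau/\partial r_a > 0$ at the smallest $r_a$ near the degeneracy locus, so the monotonically-equalizing dynamics pushes $Q_\tau$ away from zero rather than toward it. Hence there exists $\delta > 0$ with $Q_\tau(r(t)) \geq \delta$ on $[0,T)$, contradicting $Q_\tau(r(t_n)) \to 0$, and so $T = \infty$.

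The main obstacle is the final step, where one must convert the ratio bound into a uniform positive lower bound on $Q_\tau$: the compact set cut out by ``bounded radii and bounded max/min ratio'' could in principle intersect the degeneracy locus $\{Q_\tau = 0\}$, and excluding this requires the sharper information that the ratio-equalizing dynamics cannot steer the trajectory toward the Soddy--Descartes configuration. This geometric analysis of the monotonicity regime, essentially due to Glickenstein \cite{G2}, is the key ingredient that closes the argument.
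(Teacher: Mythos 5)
Your opening reduction (only tetrahedron collapse can occur in finite time, by Proposition \ref{prop-no-finite-time-I-singula}) matches the paper, and your observation that $\frac{d}{dt}\ln(r_{a}/r_{b})=K_b-K_a$ makes the min-over-max ratio within each tetrahedron non-decreasing under (\ref{Def-monotonicity}) is correct. But, as you yourself concede, this ratio bound cannot close the argument: the degeneracy locus $\{Q_\tau=0\}$ contains configurations with $\max r/\min r$ as small as $(2/\sqrt{3}-1)^{-1}=3+2\sqrt{3}\approx 6.46$, so the sublevel set $\{R^\tau\le R^\tau(0)\}$ will in general meet $\{Q_\tau=0\}$, and nothing in the ratio monotonicity prevents the trajectory from reaching it. The step you offer to repair this --- ``$\partial Q_\tau/\partial r_a>0$ at the smallest $r_a$, so the equalizing dynamics pushes $Q_\tau$ away from zero'' --- is not a proof. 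The sign of $\frac{dQ}{dt}=\sum_a \frac{\partial Q}{\partial r_a}r_a(\lambda-K_a)$ depends on all four partial derivatives (three of which are negative at the min configuration) paired with the four velocities $\lambda-K_a$, whose individual signs are uncontrolled since $\lambda$ is a global average over the whole triangulation. Knowing that one partial derivative is positive and that the radii are ``equalizing'' does not determine the sign of this sum. This decisive step is exactly what you defer to Glickenstein, so the proposal as written has a genuine gap at its core.

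The paper closes this gap by a direct barrier computation on $\{Q=0\}$ that your sketch does not reproduce. Since $Q$ is homogeneous of degree $-2$ in $r$, Euler's identity gives $\sum_a \frac{\partial Q}{\partial r_a}r_a=-2Q$, which vanishes when $Q=0$; this kills the $\lambda$-term in $\frac{dQ}{dt}$ and, after subtracting the multiple $K_i\sum_a\frac{\partial Q}{\partial r_a}r_a=0$ (with $i$ the min vertex), leaves
$\frac{dQ}{dt}=-\sum_{a\neq i}\frac{\partial Q}{\partial r_a}(K_a-K_i)r_a$.
Each factor $\frac{\partial Q}{\partial r_a}$ with $a\neq i$ is negative and each $K_a-K_i$ is non-negative by (\ref{Def-monotonicity}), so $\frac{dQ}{dt}\ge 0$, with equality forcing all $K_a$ equal, hence all $r_a$ equal, hence $Q=8/r_i^2>0$ --- a contradiction that upgrades the inequality to $\frac{dQ}{dt}>0$ on $\{Q=0\}$. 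This is the ``sharper information'' your last paragraph asks for; without carrying it out, the proof is incomplete. (Your ratio-monotonicity observation is a nice independent consequence of (\ref{Def-monotonicity}) --- it is essentially the Harnack-type estimate the paper later uses in Section \ref{section-regular-converge} --- but it is neither needed for nor sufficient for this proposition.)
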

\begin{proof}
We argue by contradiction. Assume $T<\infty$. By Proposition \ref{prop-no-finite-time-I-singula}, the flow (\ref{Def-r-normal-Yamabe flow}) will not touch the ``0 boundary" and ``$+\infty$ boundary" in finite time. So we only need to get rid of $r(t)$ touching the ``tetrahedron collapsing boundary" case. We use the assumption (\ref{Def-monotonicity}). We follow the method used in \cite{G2}. We just need to show $Q_{ijkl}>0$ for every tetrahedron $\{ijkl\}$. Denote $Q=Q_{ijkl}$ without fear of confusion. To show $Q_{ijkl}>0$, we only need to show that $Q=0$ implies $\frac{dQ}{dt}> 0$. By direct calculation,
 \begin{equation}\label{derivative_of_Q}
 \frac{\partial Q}{\partial r_i}=-\frac{2}{r_i^2}\left(\frac{1}{r_j}+\frac{1}{r_k}+\frac{1}{r_l}-\frac{1}{r_i}\right).
 \end{equation}
 Then arguing as \cite{G2},
 we have $$2Q=-\left(\frac{\partial Q}{\partial r_i} r_i+\frac{\partial Q}{\partial r_j} r_j+\frac{\partial Q}{\partial r_k} r_k+\frac{\partial Q}{\partial r_l} r_l\right).$$
If $Q=0$, then
$$\frac{\partial Q}{\partial r_i} r_i+\frac{\partial Q}{\partial r_j} r_j+\frac{\partial Q}{\partial r_k} r_k+\frac{\partial Q}{\partial r_l} r_l=0.$$
Along the normalized flow (\ref{Def-r-normal-Yamabe flow}), we have
\begin{align*}
\frac{dQ}{dt}&=\frac{\partial Q}{\partial r_i} \frac{dr_i}{dt}+\frac{\partial Q}{\partial r_j} \frac{dr_j}{dt}+\frac{\partial Q}{\partial r_k} \frac{dr_k}{dt}+\frac{\partial Q}{\partial r_l} \frac{dr_l}{dt}\\
&=\frac{\partial Q}{\partial r_i}r_i(Y_{\mathcal{T}}-K_i)+\frac{\partial Q}{\partial r_j}r_j(Y_{\mathcal{T}}-K_j)+\frac{\partial Q}{\partial r_k} r_k(Y_{\mathcal{T}}-K_k)+\frac{\partial Q}{\partial r_l}r_l(Y_{\mathcal{T}}-K_l)\\
&=-\left(\frac{\partial Q}{\partial r_i} K_i r_i+\frac{\partial Q}{\partial r_j}K_j r_j+\frac{\partial Q}{\partial r_k} K_k r_k+\frac{\partial Q}{\partial r_l} K_l r_l\right)\\
&=-\left(\frac{\partial Q}{\partial r_j}(K_j-K_i) r_j+\frac{\partial Q}{\partial r_k}( K_k-K_i) r_k+\frac{\partial Q}{\partial r_l} (K_l-K_i) r_l\right).
\end{align*}
If $r_i$ is the minimum, then by (\ref{derivative_of_Q}), $\frac{\partial Q}{\partial r_j}< 0$ for $j\ne i$.
So the assumption (\ref{Def-monotonicity}) implies that $\frac{dQ}{dt}\ge 0$ for $Q=0$,  and $\frac{dQ}{dt}=0$ if and only if
$$K_i=K_j=K_k=K_l.$$
Using the assumption (\ref{Def-monotonicity}) again, we have $r_i=r_j=r_k=r_l$, but in this case $Q=\frac{8}{r_i^2}>0$. So $\frac{dQ}{dt}>0$ at  $Q=0$, which is a contradiction. Thus we have $T=\infty$.
\end{proof}

\subsection{Small energy convergence}
Let $\{r(t)\}_{t\geq0}$ be the unique solution to the normalized flow (\ref{Def-r-normal-Yamabe flow}). We call it \emph{nonsingular}
if $\{r(t)\}_{t\geq0}$ is compactly supported in $\mathcal{M}_{\mathcal{T}}$. A nonsingular solution implies that there exists a ball packing with constant curvature in $\mathcal{M}_{\mathcal{T}}$. Furthermore, the nonsingular solution converges to a ball packing with constant curvature. In this subsection, we shall prove this fact with the help of a stability result,{ which says that if the initial ball packing $r(0)$ is very close to a ball packing with constant curvature, then $r(t)$ converges to a ball packing with constant curvature. } It is obvious that if the metrics are close then their energy $\lambda$ would be close. Inversely, we will show that if the energy $\lambda(r(0))$ is close to the energy of a constant curvature metric, then the flow would converge exponentially to a constant curvature metric, which we call it "small energy convergence" or "energy gap". In fact, we will introduce a combinatorial invariant $\chi(\hat{r},\mathcal{T})$ to give a quantitative description of the smallness, see also Theorem \ref{Thm-xi-invariant-imply-converg}.

\begin{lemma}\label{Lemma-ODE-asymptotic-stable}
(\cite{P1}) Let $\Omega\subset \mathds{R}^n$ be an open set, $f\in C^1(\Omega,\mathds{R}^n)$. Consider an autonomous ODE system
$$\dot{x}=f(x),~~~x\in\Omega.$$
Assuming $x^*\in\Omega$ is a critical point of $f$, i.e. $f(x^*)=0$. If all the eigenvalues of $Df(x^*)$ have negative real part, then $x^*$ is an asymptotically stable point. More specifically, there exists a neighbourhood $U\subset \Omega$ of $x^*$, such that for any initial $x(0)\in U$, the solution $x(t)$ to equation $\dot{x}=f(x)$ exists for all time $t\geq0$ and converges exponentially fast to $x^*$.
\end{lemma}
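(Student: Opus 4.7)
The plan is to prove this by linearizing at the equilibrium and running a Lyapunov direct argument. First I would shift the equilibrium to the origin by setting $y = x - x^*$; Taylor expanding $f$ about $x^*$ yields $\dot y = A y + g(y)$ with $A = Df(x^*)$ and $|g(y)| = O(|y|^2)$ as $y \to 0$. The hypothesis supplies a uniform spectral gap: there exists $\mu > 0$ such that $\mathrm{Re}(\lambda) < -\mu$ for every eigenvalue $\lambda$ of $A$.

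The core step is to produce a quadratic Lyapunov function. By Lyapunov's matrix lemma (the only nontrivial algebraic input), since $A$ is stable, the integral $P = \int_0^\infty e^{A^T s} e^{A s}\, ds$ converges to a symmetric positive definite matrix satisfying $A^T P + P A = -I$. Set $V(y) = y^T P y$ and differentiate along trajectories:
\[
\dot V = y^T(A^T P + P A) y + 2 y^T P g(y) = -|y|^2 + 2 y^T P g(y).
\]
Because $|g(y)| = o(|y|)$, there is a ball $U = \{|y| < \delta\}$ on which the error term is dominated by $\tfrac12 |y|^2$, giving $\dot V \leq -\tfrac12 |y|^2 \leq -c V(y)$ with $c = 1/(2 \lambda_{\max}(P))$.

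From the differential inequality $\dot V \leq -c V$, Gronwall yields $V(y(t)) \leq V(y(0)) e^{-ct}$, hence the exponential decay estimate $|y(t)| \leq C e^{-ct/2} |y(0)|$. Global-in-time existence is automatic: the Lyapunov estimate traps the trajectory in $U$ once it starts there, so finite-time blow-up or escape from $\Omega$ is ruled out by the standard ODE continuation principle. The main obstacle, if one can call it that, is the matrix Lyapunov lemma providing $P$; everything else reduces to Taylor expansion, an elementary quadratic-form estimate, and Gronwall. An alternative route would be to bypass $P$ and work directly with the bound $\|e^{A t}\| \leq C e^{-\mu t/2}$ via variation of parameters and a contraction/fixed-point argument, but the Lyapunov approach is shorter and delivers both stability and exponential decay in one stroke.
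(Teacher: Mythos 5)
The paper does not prove this lemma at all: it is quoted verbatim from Pontryagin's textbook \cite{P1} as a classical fact, so there is no in-paper argument to compare against. Your proposal is the standard Lyapunov-function proof of the linearization theorem and it is correct in substance. Two small points deserve tightening. First, with $f$ only assumed $C^1$, Taylor expansion gives the remainder $g(y)=o(|y|)$ as $y\to 0$ (continuity of $Df$), not $|g(y)|=O(|y|^2)$ as you first write; since your quadratic-form estimate only invokes the $o(|y|)$ bound, the argument survives, but the $O(|y|^2)$ claim should be dropped or justified by extra regularity. Second, the trapping step needs the neighbourhood $U$ to be a sublevel set $\{V<\rho\}$ contained in the ball $\{|y|<\delta\}$ rather than the Euclidean ball itself: $\dot V\leq -cV$ forces $V$ to decrease, which keeps the trajectory inside an ellipsoidal sublevel set of $V$, but does not by itself prevent exit from a round ball whose boundary cuts across level sets of $V$. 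With $U$ chosen as such a sublevel set, forward invariance, global existence, and the exponential rate all follow exactly as you describe, and the lemma is proved.
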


\begin{lemma}\label{Thm-3d-isolat-const-alpha-metric}
(Stability of critical metric)
Given a triangulated manifold $(M^3, \mathcal{T})$, assume $\hat{r}\in\mathcal{M}_{\mathcal{T}}$ is a ball packing with constant curvature, then $\hat{r}$ is an asymptotically stable point of the normalized flow (\ref{Def-r-normal-Yamabe flow}). Thus if the initial real ball packing $r(0)$ deviates from $\hat{r}$ not so much, the solution $\{r(t)\}$ to the normalized flow (\ref{Def-r-normal-Yamabe flow}) exists for all time $t\geq 0$ and converges exponentially fast to the constant curvature packing $c\hat{r}$, where $c>0$ is some constant so that $c\|\hat{r}\|_{l^1}=\|r(0)\|_{l^1}$.
\end{lemma}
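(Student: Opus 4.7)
The plan is to apply Lemma \ref{Lemma-ODE-asymptotic-stable} to the normalized flow, after restricting it to the $(N-1)$-dimensional hypersurface
\[
\Sigma = \{r \in \mathcal{M}_{\mathcal{T}} : \|r\|_{l^1} = \|r(0)\|_{l^1}\},
\]
which is invariant under the flow by Proposition \ref{prop-V-descending}, so as to quotient out the scaling degeneracy built into the curvature map $K$. Passing to logarithmic coordinates $u_i = \ln r_i$, the flow (\ref{Def-r-normal-Yamabe flow}) becomes the autonomous system $\dot{u}_i = \lambda - K_i$, and $\hat{u} = \ln\hat{r}$ is a critical point because $K_i(\hat{r}) = \lambda(\hat{r})$ for all $i$.

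The first step is to compute the Jacobian $J$ of $u \mapsto \lambda - K$ at $\hat{u}$. Using $\partial_{r_j}\lambda = (K_j - \lambda)/\|r\|_{l^1}$, the derivative of $\lambda$ in $u$-coordinates vanishes at $\hat{u}$, and together with $\partial_{u_j}K_i = \hat r_j \Lambda_{ij}$ this gives $J = -\Lambda D$ with $D = \mathrm{diag}(\hat{r}_1,\dots,\hat{r}_N)$. The second step is the spectral analysis. Via the conjugation $J = D^{-1/2}\bigl(-D^{1/2}\Lambda D^{1/2}\bigr)D^{1/2}$, the Jacobian $J$ is similar to the symmetric, negative semi-definite matrix $-D^{1/2}\Lambda D^{1/2}$; by Lemma \ref{Lemma-Lambda-semi-positive} the latter has rank $N-1$ with kernel spanned by $D^{-1/2}\hat{r}$. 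Hence $J$ has a simple zero eigenvalue whose eigenvector is $\mathbf{1} \in \mathbb{R}^N$, together with $N-1$ strictly negative eigenvalues.

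The third step is to restrict to $\Sigma$. The vector $\mathbf{1}$ generates the scaling direction $r \mapsto e^t r$, which is transversal to $\Sigma$ since $\langle \mathbf{1}, \hat{r}\rangle = \|\hat{r}\|_{l^1} > 0$; meanwhile the tangent space $T_{\hat{u}}\Sigma = \{v : \sum_i \hat{r}_i v_i = 0\}$ is $J$-invariant because $\Lambda$ is symmetric with $\Lambda\hat{r}=0$. Consequently, $J|_{T_{\hat{u}}\Sigma}$ has all $N-1$ eigenvalues strictly negative, and Lemma \ref{Lemma-ODE-asymptotic-stable} applied to the restricted ODE on the smooth manifold $\Sigma$ produces a neighborhood of the unique critical point $c\hat{r}\in\Sigma$ (with $c\|\hat{r}\|_{l^1} = \|r(0)\|_{l^1}$) from which every trajectory exists for all $t\ge0$ and converges exponentially to $c\hat{r}$. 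The main obstacle is the zero eigenvalue of $J$ forced by the scaling invariance of $K$, which precludes asymptotic stability in $\mathcal{M}_{\mathcal{T}}$ itself; resolving it requires precisely the conservation law $\|r(t)\|_{l^1} = \|r(0)\|_{l^1}$ to cut the dimension down to $N-1$ as above.
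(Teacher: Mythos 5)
Your proposal is correct and follows essentially the same route as the paper: linearize at the critical point, observe that the Jacobian ($-\Lambda D$ in your $u$-coordinates, $-\Sigma\Lambda$ in the paper's $r$-coordinates — similar matrices) is conjugate to the symmetric negative semi-definite matrix $-D^{1/2}\Lambda D^{1/2}$, invoke Lemma \ref{Lemma-Lambda-semi-positive} for its rank and kernel, and apply Lemma \ref{Lemma-ODE-asymptotic-stable}. Your explicit restriction to the invariant hypersurface $\Sigma$ and verification that $T_{\hat{u}}\Sigma$ is $J$-invariant is a more careful rendering of the step the paper dispatches by appealing to scaling invariance ("we may consider the eigenvalues ... are all negative"), but it is the same idea.
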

\begin{proof}
Set the right hand side of the flow (\ref{Def-r-normal-Yamabe flow}) as $\Gamma_i(r)=(\lambda-K_i)r_i$, $1\leq i\leq N$. Then the normalized flow (\ref{Def-r-normal-Yamabe flow}) can be written as $\dot{r}=\Gamma(r)$, which is an autonomy ODE system. Differentiate $\Gamma(r)$ at $r^*$,
\begin{equation*}
D_r\Gamma|_{r^*}=
\frac{\partial(\Gamma_{1},\cdots,\Gamma_{N})}{\partial(r_{1},\cdots,r_{N})}\Bigg|_{r^*}=
\left(
\begin{array}{ccccc}
 {\frac{\partial \Gamma_1}{\partial r_1}}& \cdot & \cdot & \cdot &  {\frac{\partial\Gamma_1}{\partial r_N}} \\
 \cdot & \cdot & \cdot & \cdot & \cdot \\
 \cdot & \cdot & \cdot & \cdot & \cdot \\
 \cdot & \cdot & \cdot & \cdot & \cdot \\
 {\frac{\partial \Gamma_N}{\partial r_1}}& \cdot & \cdot & \cdot &  {\frac{\partial \Gamma_N}{\partial r_N}}
\end{array}
\right)_{\hat{r}}=-\left(\Sigma\Lambda\right)|_{\hat{r}},
\end{equation*}
where $\Sigma=diag\{r_1,\cdots,r_N\}$. Because
$$-\Sigma\Lambda=-\Sigma^{\frac{1}{2}}\Sigma^{\frac{1}{2}}\Lambda \Sigma^{\frac{1}{2}}\Sigma^{-\frac{1}{2}}\sim \Sigma^{\frac{1}{2}}\Lambda \Sigma^{\frac{1}{2}},$$
$-\Sigma\Lambda$ has an eigenvalue $0$ and $N-1$ negative eigenvalues. Note the normalized flow (\ref{Def-r-normal-Yamabe flow}) is scaling invariant, which means any scaling $cr(t)$ ($c>0$ is a constant) of the solution $r(t)$ is also a solution to (\ref{Def-r-normal-Yamabe flow}) (perhaps with different initial value). Hence we may consider the eigenvalues of $-\Sigma\Lambda$ along (\ref{Def-r-normal-Yamabe flow}) are all negative. By Lemma \ref{Lemma-ODE-asymptotic-stable}, $\hat{r}$ is an asymptotically stable point of the normalized flow (\ref{Def-r-normal-Yamabe flow}). Hence we get the conclusion above.
\end{proof}


\begin{theorem}\label{Thm-nosingular-imply-converg}
Given a triangulated manifold $(M^3, \mathcal{T})$, assume the normalized flow (\ref{Def-r-normal-Yamabe flow}) has a nonsingular solution $r(t)$, then there is a ball packing with constant curvature. Furthermore, $r(t)$ converges exponentially fast to a ball packing with constant curvature as $t$ goes to infinity.
\end{theorem}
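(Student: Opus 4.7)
The plan is to carry out a Lyapunov/LaSalle-style analysis for the normalized flow and then upgrade sequential convergence to full exponential convergence via the stability result of Lemma \ref{Thm-3d-isolat-const-alpha-metric}. First, I would exploit the compactness assumption: since $\{r(t)\}_{t\geq 0}$ is precompactly contained in $\mathcal{M}_{\mathcal{T}}$, the $\omega$-limit set $\omega(r(0))$ is a nonempty compact subset of $\mathcal{M}_{\mathcal{T}}$, and every $r_i(t)$ stays bounded away from $0$ and $\infty$. Combining this with Propositions \ref{prop-V-descending} and \ref{prop-G-bound}, the CRG-functional $\lambda(r(t))$ is nonincreasing and bounded below, so it has a finite limit $\lambda_\infty$, and the dissipation identity
\[
\frac{d\lambda}{dt}=-\|r\|_{l^1}^{-1}\sum_i r_i(K_i-\lambda)^2
\]
integrates to a finite quantity over $[0,\infty)$.

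Second, I would show that every $\omega$-limit point $r_\infty\in\omega(r(0))$ has constant curvature. Pick $t_n\to\infty$ with $r(t_n)\to r_\infty\in\mathcal{M}_{\mathcal{T}}$; by continuity of $\lambda$, $\lambda(r_\infty)=\lambda_\infty$. Consider the shifted flows $r^{(n)}(s):=r(t_n+s)$, which are themselves solutions of (\ref{Def-r-normal-Yamabe flow}) with initial data converging to $r_\infty$. Continuous dependence on initial conditions (valid since the vector field is smooth on $\mathcal{M}_{\mathcal{T}}$ and $r_\infty$ lies in its interior) yields $r^{(n)}(s)\to\phi_s(r_\infty)$ uniformly on compact $s$-intervals, where $\phi_s$ denotes the flow map. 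Since $\lambda(\phi_s(r_\infty))=\lambda_\infty$ for every $s\geq 0$, the derivative $d\lambda/dt$ vanishes identically along the orbit through $r_\infty$. The explicit dissipation formula then forces $K_i(r_\infty)=\lambda(r_\infty)$ for every vertex $i\in V$, so $r_\infty$ is a real ball packing of constant curvature. In particular, constant curvature packings exist.

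Third, I would upgrade subsequential convergence to full exponential convergence. By Lemma \ref{Thm-3d-isolat-const-alpha-metric}, the constant curvature packing $r_\infty$ is an asymptotically stable equilibrium of (\ref{Def-r-normal-Yamabe flow}): there exists a neighborhood $U$ of $r_\infty$ in $\mathcal{M}_{\mathcal{T}}$ such that any trajectory originating in $U$ converges exponentially fast to the unique scaling of $r_\infty$ with the correct $\ell^1$-norm. For $n$ large enough, $r(t_n)\in U$; by uniqueness of ODE solutions, the trajectory $\{r(t)\}_{t\geq t_n}$ coincides with the flow starting at $r(t_n)$, which therefore converges exponentially fast to $c\,r_\infty$, where $c>0$ is determined by $c\|r_\infty\|_{l^1}=\|r(0)\|_{l^1}$ (using that $\|r(t)\|_{l^1}$ is conserved). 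Hence the full trajectory $r(t)$ converges exponentially to a constant curvature packing.

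The main conceptual obstacle is step two: one must rule out the scenario in which $\lambda$ decays to a limit without the individual differences $K_i-\lambda$ being forced to zero. This is precisely the LaSalle mechanism, and the key technical point is that the orbit remains in a compact subset of the open set $\mathcal{M}_{\mathcal{T}}$ where the flow is smooth, so that continuous dependence on initial data applies to the shifted trajectories and lets us propagate the constancy of $\lambda$ along the limiting orbit. The exponential rate in step three is then essentially free, inherited from the linearized-stability argument underlying Lemma \ref{Thm-3d-isolat-const-alpha-metric}.
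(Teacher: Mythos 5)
Your proposal is correct and follows essentially the same strategy as the paper: treat $\lambda$ as a Lyapunov functional, extract a constant-curvature $\omega$-limit point from the precompact orbit, and then invoke the asymptotic stability of Lemma \ref{Thm-3d-isolat-const-alpha-metric} to upgrade subsequential convergence to full exponential convergence. The only (harmless) difference is in the middle step, where you run the full LaSalle invariance argument via continuous dependence on initial data, whereas the paper more simply selects times $t_n\uparrow\infty$ with $\lambda'(t_n)\to 0$ and uses the continuity of the dissipation rate $r\mapsto -\|r\|_{l^1}^{-1}\sum_i r_i(K_i-\lambda)^2$ to conclude that it vanishes at the limit point.
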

\begin{proof}
Since $r(t)$ is nonsingular, the right maximal existence time of $r(t)$ is $T=+\infty$. Set $\lambda(t)=\lambda(r(t))$. One can easily distinguish whether $\lambda$ is a function of $r$ or is a function of $t$ in the context without leading to any confusion. In the proof of Proposition \ref{prop-V-descending} we have derived
\begin{equation}\label{S't}
\lambda'(t)=-\frac{1}{\|r\|_{l^1}}\sum_ir_i(K_i-\lambda)^2\leq0.
\end{equation}
By Proposition \ref{prop-G-bound}, $\lambda(t)$ is bounded from below and hence converges to a number $\lambda(+\infty)$. Hence there exists a sequence $t_n\uparrow +\infty$ such that $\lambda'(t_n)\rightarrow 0$. Nonsingular solution means that $\overline{\{r(t)\}}$ stays in a compact subset of $\mathcal{M}_{\mathcal{T}}$. There is a ball packing $r_{\infty}\in\mathcal{M}_{\mathcal{T}}$, and a subsequence of $t_n$, which is still denoted as $t_n$, such that $r(t_n)\rightarrow r_{\infty}$. By (\ref{S't}), $\lambda'(t)$ is a continuous function of $r$. It follows that $\lambda'(t_n)\rightarrow \lambda'(r_{\infty})$ and then $\lambda'(r_{\infty})=0$. Substituting $\lambda'(r_{\infty})=0$ into (\ref{S't}), we see that the ball packing $r_{\infty}$ has constant curvature. Moreover, for some sufficient big $t_{n_0}$, $r(t_{n_0})$ is very close to $r_{\infty}$. Then by the Lemma \ref{Thm-3d-isolat-const-alpha-metric}, the solution $\{r(t)\}_{t\ge t_{n_0}}$ converges exponentially fast to $r_{\infty}$. Hence the original solution $\{r(t)\}_{t\ge 0}$ converges exponentially fast to $r_{\infty}$ too, which is a ball packing with constant curvature.
\end{proof}

\begin{definition}
\label{Def-chi-invariant}
Given a triangulated manifold $(M^3, \mathcal{T})$, assume there exists a real ball packing $\hat{r}$ with constant curvature. We introduce a combinatorial invariant with respect to the triangulation $\mathcal{T}$ as
\begin{equation}
\chi(\hat{r},\mathcal{T})=\inf\limits_{\gamma\in{\mathbb{S}}^{N-1};\|\gamma\|_{l^1}=0\;}\sup\limits_{0\leq t< a_\gamma}\lambda(\hat{r}+t\gamma),
\end{equation}
where $a_{\gamma}$ is the least upper bound of $t$ such that $\hat{r}+t\gamma\in \mathcal{M}_{\mathcal{T}}$ for all $0\le t<a_\gamma$.
\end{definition}


Let us denote $\mathcal{M}_{\hat{r}}$ by the star-shaped subset
$\{\hat{r}+t\gamma \in \mathcal{M}_{\mathcal{T}}: \text{ for } 0\leq t<a_\gamma \text{ and } \gamma \in \mathbb{S}^{N-1} \text{ such that $\|\gamma\|_{l^1}=0$ }\}$ of the hyperplane $\{r:\|r\|_{l^1}=\|\hat{r}\|_{l^1}\}$, where $a_\gamma$ is defined as the above definition. Since $\mathcal{M}_{\mathcal{T}}$ is not convex, the subset $\mathcal{M}_{\hat{r}}$ might be not equal to $\mathcal{M}_{\mathcal{T}}\cap \{r: \|r\|_{l^1}=\|\hat{r}\|_{l^1}\}$. By the convexity of the extended functional $\tilde{\lambda}$ in Theorem \ref{thm-tuta-S-C1-convex} and the scaling invariant of $\lambda$, we see
\begin{equation}
\label{jiang-observe}
\lambda(r)\ge \chi(\hat{r},\mathcal{T}).
\end{equation}
for all $r\in \mathcal{M}_{\mathcal{T}}\setminus \mathcal{M}_{\hat{r}}$.
(However, we will never use the above inequality in this paper. We give it here just for a better understanding of $\chi(\hat{r},\mathcal{T})$).

Let $\delta>0$ be any number so that $B(\hat{r},\delta)$ is compactly contained in $\mathcal{M}_{\mathcal{T}}$. Consider the restricted functional $\mathcal{S}(r), \;r\in B(\hat{r},\delta)\cap \{r: ||r||_{l^1}=||\hat{r}||_{l^1}\}$
as a function of $N-1$ variables. It is strictly convex and has a unique critical point at $\hat{r}$. Hence it is strictly increasing along any segment $\hat{r}+t\xi$, $t\in[0,1]$, $\xi\in\partial B(\hat{r},\delta)\cap \{r: ||r||_{l^1}=||\hat{r}||_{l^1}\}$. Let $\mathcal{S}(r')$
be the minimum of $\{\mathcal{S}(r):r\in \partial B(\hat{r},\delta), \|r\|_{l^1}=\|\hat{r}\|_{l^1}\}$, where $r'\in\partial B(\hat{r},\delta)\cap \{r: ||r||_{l^1}=||\hat{r}||_{l^1}\}$. Then by the analysis above and Theorem \ref{Thm-Q-min-iff-exist-const-curv-metric}, it follows that
\begin{equation}
\chi(\hat{r},\mathcal{T})\geq\frac{\mathcal{S}(r')}{\|r'\|_{l^1}}>\frac{\mathcal{S}(\hat{r})}{\|\hat{r}\|_{l^1}}\geq Y_{\mathcal{T}}.
\end{equation}

At first glance, the invariant $\chi(\hat{r},\mathcal{T})$ depends on the existence of a ball packing $\hat{r}$ with constant curvature, or say, $\chi(\hat{r},\mathcal{T})$ depends on the geometric information of the ball packings. However, by the uniqueness of constant curvature packings and Theorem \ref{Thm-Q-min-iff-exist-const-curv-metric}, all information of $\hat{r}$ (such as the existence or non-existence, the uniqueness, the analytical properties) are completely determined by $M$ and $\mathcal{T}$. To this extent, $\hat{r}$ is determined by the topological information of $M$ and the combinatorial information of $\mathcal{T}$. Hence the invariant $\chi(\hat{r},\mathcal{T})$ may be considered as a pure combinatorial-topological invariant. Using the combinatorial-topological invariant $\chi(\hat{r},\mathcal{T})$, we give a sufficient condition to guarantee the long time existence and the convergence of the flow (\ref{Def-r-normal-Yamabe flow}).

\begin{theorem}[Energy gap]\label{Thm-xi-invariant-imply-converg}
Given a triangulated manifold $(M^3, \mathcal{T})$, let $\hat{r}$ be a real ball packing with constant curvature. Let the initial packing be $r(0)\in\mathcal{M}_{\mathcal{T}}$. Assume
\begin{equation}
\label{ge-observe}
\lambda(r(0))\leq\chi(\hat{r},\mathcal{T}).
\end{equation}
Then the solution $r(t)$ to (\ref{Def-r-normal-Yamabe flow}) exists for all time $t\geq 0$ and converges exponentially fast to a real packing with constant curvature.
\end{theorem}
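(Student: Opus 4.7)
The strategy is to show that the orbit $\{r(t)\}_{t \geq 0}$ is nonsingular---i.e., is contained in a compact subset of $\mathcal{M}_{\mathcal{T}}$---and then invoke Theorem~\ref{Thm-nosingular-imply-converg} to conclude exponential convergence to a constant-curvature packing.

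First, using the scaling invariance of both the flow and $\lambda$, I would reduce to $\|r(0)\|_{l^1} = \|\hat{r}\|_{l^1}$; Proposition~\ref{prop-V-descending} then confines the trajectory to the hyperplane $H := \{r \in \mathds{R}^N_{>0} : \|r\|_{l^1} = \|\hat{r}\|_{l^1}\}$ and makes $\lambda(r(t))$ monotonically decreasing. If $r(0) = \hat{r}$ we are done; otherwise the descent $d\lambda/dt = -\|r\|_{l^1}^{-1}\sum_i r_i(K_i - \lambda)^2$ is strict, so for any fixed $t_0 > 0$ one has $c := \lambda(r(t_0)) < \chi(\hat{r},\mathcal{T})$, and I work with the sub-level set $L_c := \{r \in \mathcal{M}_{\mathcal{T}} \cap H : \lambda(r) \leq c\}$, which contains the tail $\{r(t)\}_{t \geq t_0}$.

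The core step is proving $L_c$ is compactly contained in $\mathcal{M}_{\mathcal{T}}$. Suppose for contradiction that a sequence $r_n \in L_c$ converges to some $r_* \in \partial \mathcal{M}_{\mathcal{T}} \cap H$ (this is the only relevant failure mode, since $L_c$ lies in the compact simplex $H \cap \{r \geq 0\}$). By the continuity of the extended functional $\tilde{\lambda}$ on $\mathds{R}^N_{>0}$ (Section~\ref{section-extend-solid-angle}), we obtain $\tilde{\lambda}(r_*) = \lim_n \lambda(r_n) \leq c < \chi$. I would then establish the reverse inequality $\tilde{\lambda}(r_*) \geq \chi$ as follows: along the ray $t \mapsto \hat{r} + t\gamma_*$ with $\gamma_* := (r_* - \hat{r})/\|r_* - \hat{r}\|$, the critical-point property of $\hat{r}$ for $\lambda|_H$ (Lemma~\ref{lemma-const-curv-equl-cirtical-point}) together with the strict convexity of $\mathcal{S}|_H$ (Lemma~\ref{Lemma-Lambda-positive}) force $\lambda$ to be strictly increasing outward from $\hat{r}$, and hence $\lim_{t \to a_{\gamma_*}^-} \lambda(\hat{r} + t\gamma_*) \geq \chi$ by the very definition of $\chi$. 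Pushing past the exit point $t = a_{\gamma_*}$ by invoking the convexity of the extended $\tilde{\lambda}$ (Theorem~\ref{thm-tuta-S-C1-convex}) and its minimum at $\hat{r}$ within $H$, one obtains $\tilde{\lambda}(r_*) \geq \tilde{\lambda}(\hat{r} + a_{\gamma_*} \gamma_*) \geq \chi$, contradicting $\tilde{\lambda}(r_*) \leq c < \chi$.

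The main obstacle is precisely this boundary analysis: the trajectory could \emph{a priori} approach $\partial \mathcal{M}_{\mathcal{T}}$ through a collapsing tetrahedron, and controlling $\lambda$ in that limit genuinely requires the continuous and convex extension $\tilde{\lambda}$ built in Section~\ref{section-extend-solid-angle}---no purely intrinsic argument on $\mathcal{M}_{\mathcal{T}}$ alone seems to rule out the subtle possibility that the tail of $r(t)$ spirals toward the boundary while $\lambda$ keeps decreasing. Once $L_c$ is known to be compact in $\mathcal{M}_{\mathcal{T}}$, the tail $\{r(t)\}_{t \geq t_0}$ is nonsingular, and Theorem~\ref{Thm-nosingular-imply-converg} delivers exponential convergence of $r(t)$ to a constant-curvature packing; by the global rigidity of $K$ up to scaling (Xu~\cite{Xu}) the limit must be the unique scaling $c\hat{r}$ with $c\|\hat{r}\|_{l^1} = \|r(0)\|_{l^1}$.
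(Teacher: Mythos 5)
Your overall strategy is exactly the paper's: show that strict descent of $\lambda$ forces the orbit into a subset compactly contained in $\mathcal{M}_{\mathcal{T}}$ and then invoke Theorem \ref{Thm-nosingular-imply-converg}. The paper's own proof is far terser---it simply asserts that once $\lambda(r(t))$ drops strictly below $\chi(\hat r,\mathcal{T})$ the solution ``never touches the boundary of $\mathcal{M}_{\mathcal{T}}$''---so your sub-level-set argument via the continuity and convexity of $\tilde\lambda$ is a genuine filling-in of the step the paper leaves implicit, and the ray mechanism ($\tilde\lambda$ nondecreasing outward from $\hat r$ along each admissible direction $\gamma$, hence $\tilde\lambda\ge\chi$ at and beyond the first exit point $\hat r+a_\gamma\gamma$) is the right one.

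There is one loose end. Your parenthetical ``this is the only relevant failure mode'' dismisses too quickly the possibility that $r_n\in L_c$ converges to a point $r_*$ with some coordinate $r_{*,i}=0$. Such a point is not in $\mathds{R}^N_{>0}$, so $\tilde\lambda(r_*)$ is undefined and the identity $\tilde\lambda(r_*)=\lim_n\lambda(r_n)$ that launches your contradiction is unavailable; note that Corollary \ref{coro-collasp-flow} treats $r_i(t_n)\to0$ as a genuine infinite-time degeneration mode, so it must be excluded rather than ignored. It can be excluded: since $\lambda(r_n)\le c<\chi$, monotonicity of $\tilde\lambda$ along rays forces the whole segment from $\hat r$ to $r_n$ to lie in $\mathcal{M}_{\mathcal{T}}$ (otherwise $\lambda(r_n)\ge\chi$); passing to the limit, points $\hat r+t\gamma_*$ with $t$ close to $\|r_*-\hat r\|$ would then lie in $\overline{\mathcal{M}_{\mathcal{T}}}$ while having $i$-th coordinate near $0$ and the remaining coordinates bounded below, which is impossible since $Q_{ijkl}\to-\infty$ as $r_i\to0$ for any tetrahedron containing $i$. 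With that case closed your proof is complete; your final appeal to Xu's global rigidity to identify the limit as $c\hat r$ is harmless but goes beyond the statement, and the paper deliberately postpones that identification until the extension machinery of Section \ref{section-extend-CR-funct} is available.
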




\begin{remark}
This theorem could be considered as a ``small energy convergence". Moreover, we give a precise gap bound estimate for the ``small energy convergence". One can compare our theorem with the standard ``small energy convergence theorem" in the smooth setting, for example the Calabi flow \cite{ToWe} and the $L^2$ curvature flow \cite{Str}.
\end{remark}

\begin{proof}
Assume $\lambda(r(0))\leq\chi(\hat{r},\mathcal{T})$. Denote $\lambda(t)=\lambda(r(t))$. Recall (\ref{S't}) says
\begin{equation*}
\lambda'(t)=-\frac{1}{\|r\|_{l^1}}\sum_ir_i(K_i-\lambda)^2\leq0.
\end{equation*}
If $K_i(r(0))=\lambda(0)$ for all $i\in V$, then $r(0)$ is a real packing with constant curvature. It follow that $r(t)\equiv r(0)$ is the unique solution to the flow (\ref{Def-r-normal-Yamabe flow}). This leads to the conclusion directly. If there is a vertex $i$ so that $K_i(r(0))\neq\lambda(0)$, then $\lambda'(0)<0$. Hence then $\lambda(r(t))$ is strictly descending along the flow (\ref{Def-r-normal-Yamabe flow}) for at least a small time interval $t\in[0,\epsilon)$. Thus $r(t)$ will never touches the boundary of $\mathcal{M}_{\mathcal{T}}$ along the flow (\ref{Def-r-normal-Yamabe flow}). By classical ODE theory, the solution $r(t)$ exists for all time $t\in[0,+\infty)$. Moreover, $\{r(t)\}_{t\geq0}\subset\subset \mathcal{M}_{\mathcal{T}}$, that is, $\{r(t)\}_{t\geq0}$ is compactly supported in $\mathcal{M}_{\mathcal{T}}$.
By Theorem \ref{Thm-nosingular-imply-converg}, there exists a real ball packing $r_{\infty}$ with constant curvature so that $r(t)$ converges exponentially fast to $r_{\infty}$. Thus we get the conclusion.
\end{proof}


By this time, we have not enough knowledge to show $r_{\infty}$ is actually a scaling of $\hat{r}$, unless we acknowledge that the constant curvature packing is unique (up to scaling) which will be derived after we introduce the extension technique in Section \ref{section-extend-CR-funct}. However, by a more subtle argument, we can prove: if further assume $\lambda(r(0))<\chi(\hat{r},\mathcal{T})$, then $r(t)$ converges exponentially fast to $r_{\infty}$, which is a scaling of $\hat{r}$ (so as $\|r_{\infty}\|_{l^1}=\|r(0)\|_{l^1}$).

Note Theorem \ref{Thm-xi-invariant-imply-converg} is established under the framework that there exists a constant curvature ball packing $\hat{r}$ in $\mathcal{M}_{\mathcal{T}}$. Theorem \ref{Thm-nosingular-imply-converg} implies that, if there is no any constant curvature ball packings, then the solution $r(t)$ to (\ref{Def-r-normal-Yamabe flow}) touches the boundary of $\mathcal{M}_{\mathcal{T}}$. More specifically, we have


\begin{corollary}\label{coro-collasp-flow}
Given a triangulated manifold $(M^3, \mathcal{T})$, let $\{r(t)\}_{0\leq t<T}$ be the unique maximal solution to the normalized flow (\ref{Def-r-normal-Yamabe flow}). Assume there is no any constant curvature ball packings, then there exists a time sequence $t_n\to T$ such that
\begin{enumerate}
  \item[(1)] if $T<+\infty$, then $Q_{ijkl}(r(t_n))\to 0$ for some tetrahedron $\{ijkl\}$;
  \item[(2)] if $T=+\infty$, then either $Q_{ijkl}(r(t_n))\to 0$ for some tetrahedron $\{ijkl\}$, or $r_i(t_n)\to 0$ for some vertex $i$.
\end{enumerate}
\end{corollary}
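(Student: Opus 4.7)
The plan is to split into the two cases and in each case work by contradiction (or equivalently, by exhausting the maximality of $T$). The three boundary modes of $\mathcal{M}_{\mathcal{T}}$ identified in the preceding subsection---$r_i\to 0$, $r_i\to +\infty$, and $Q_{ijkl}\to 0$---are the only possible modes of degeneration, and the $+\infty$ mode is automatically precluded by the conservation law $\|r(t)\|_{l^1}=\|r(0)\|_{l^1}$ from Proposition \ref{prop-V-descending}, which forces $r_i(t)\le \|r(0)\|_{l^1}$ for every $i$ and every $t\ge 0$. This is why the $+\infty$ branch need not appear in the conclusion.

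For part (1), I would assume $T<+\infty$. Standard ODE continuation says that maximality of $T$ is equivalent to the orbit $\{r(t):0\le t<T\}$ leaving every compact subset of $\mathcal{M}_{\mathcal{T}}$ as $t\uparrow T$; pick $t_n\uparrow T$ witnessing this failure of compact containment. By compactness considerations in $\mathds{R}^N$, after passing to a subsequence either some $r_i(t_n)$ tends to $0$ or to $+\infty$, or some $Q_{ijkl}(r(t_n))$ tends to $0$. The $r_i\to 0$ and $r_i\to +\infty$ alternatives are ruled out in finite time by Proposition \ref{prop-no-finite-time-I-singula}, so only $Q_{ijkl}(r(t_n))\to 0$ remains, yielding the claim.

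For part (2), I would assume $T=+\infty$ and argue by contradiction, supposing no sequence $t_n\to\infty$ realizes either alternative in the statement. Then for every tetrahedron $\{ijkl\}$ one has $\liminf_{t\to\infty} Q_{ijkl}(r(t))>0$, and for every vertex $i$ one has $\liminf_{t\to\infty} r_i(t)>0$. Combined with the uniform upper bound $r_i(t)\le \|r(0)\|_{l^1}$, this shows that $\{r(t)\}_{t\ge T_0}$ lies in a compact subset of $\mathcal{M}_{\mathcal{T}}$ for some large $T_0$; continuity of $r$ on $[0,T_0]$ with values in the open set $\mathcal{M}_{\mathcal{T}}$ yields the same conclusion on $[0,T_0]$. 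Hence the full orbit $\{r(t)\}_{t\ge 0}$ is compactly supported in $\mathcal{M}_{\mathcal{T}}$, i.e.\ the solution is nonsingular in the sense used in Theorem \ref{Thm-nosingular-imply-converg}. That theorem then produces a constant curvature ball packing, contradicting the standing hypothesis of the corollary.

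The proof is really a bookkeeping assembly of tools already developed---the flow cannot break on the ``$0$'' or ``$+\infty$'' boundary in finite time, the $L^1$ norm is a conserved quantity, and a nonsingular solution must converge to a constant curvature packing---so I do not anticipate a genuinely hard step. The only subtlety worth flagging is to make sure that the negation of the disjunctive conclusion in (2) produces \emph{uniform} positive lower bounds on all $Q_{ijkl}$ and all $r_i$ simultaneously; this is immediate because the sets of tetrahedra and vertices are finite, so one may take the minimum of finitely many positive numbers.
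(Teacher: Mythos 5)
Your argument is correct and is essentially the paper's own (implicit) proof: the corollary is obtained there as the contrapositive of Theorem \ref{Thm-nosingular-imply-converg} (a nonsingular solution yields a constant curvature packing), with the ``$0$'' and ``$+\infty$'' boundary modes excluded in finite time by Proposition \ref{prop-no-finite-time-I-singula} and the ``$+\infty$'' mode excluded for all time by the conservation of $\|r(t)\|_{l^1}$. Your additional care in extracting uniform lower bounds on the finitely many $Q_{ijkl}$ and $r_i$ when negating the conclusion of part (2) is exactly the right bookkeeping.
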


\section{The extended curvature and functionals}
To get global convergence of the normalized Yamabe flow (\ref{Def-r-normal-Yamabe flow}), we require that its solution $r(t)$ exists for all time $t\in[0,\infty)$ at least. However, as our numerical experiments show, $r(t)$ may collapse in finite time. To prevent finite time collapsing, Glickenstein's monotonicity condition (\ref{Def-monotonicity}) seems useful, but it is too strong to be satisfied. Although (\ref{ge-observe}) guarantees the convergence of $r(t)$, it can't deal with more general case such as $r(0)\in \mathcal{M}_{\mathcal{T}}\setminus \mathcal{M}_{\hat{r}}$ by (\ref{jiang-observe}). We provide a method to extend $r(t)$ so as it always exists for all time in this section.

\subsection{Packing configurations by four tangent balls}
\label{section-extend-K}
In this section and the next Section \ref{section-extend-solid-angle}, $r=(r_1,r_2,r_3,r_4)\in\mathds{R}^4_{>0}$ means a point in $\mathds{R}^4_{>0}$.
Recall the definition of $Q_{ijkl}$ in (\ref{nondegeneracy condition}), from which we can derive the expression of $Q_{1234}$.

Let $\tau=\{1234\}$ be a combinatorial tetrahedron which contains only combinatorial information but not any geometric information. By definition, the combinatorial information of $\tau$ is a vertex set $\{1,2,3,4\}$, an edge set $\{\{12\},\{13\},\{14\},\{23\},\{24\},\{34\}\}$, a face set $\{\{123\},\{124\},\{134\},\{234\}\}$ and a tetrahedron set $\{\{1234\}\}$. For any $r=(r_1,r_2,r_3,r_4)\in\mathds{R}^4_{>0}$, endow each edge $\{ij\}$ in the edge set with an edge length $l_{ij}=r_i+r_j$. If $Q_{1234}>0$, then the six edges of $\{1234\}$ with lengths $l_{12},l_{13},l_{14},l_{23},l_{24},l_{34}$ form the edges of an Euclidean tetrahedron. In this case, we call $\tau=\{1234\}$ a \emph{real tetrahedron}.
Otherwise, $Q_{1234}\leq0$, and we call $\tau=\{1234\}$ a \emph{virtual tetrahedron} (in other words, $\tau$ degenerates). For a real tetrahedron $\tau=\{1234\}$,
denote $\alpha_{i}$ by the solid angle at each vertex $i\in\{1,2,3,4\}$. All real tetrahedrons can be considered as the following proper subset of $\mathds{R}^4_{>0}$,
\begin{equation}
\Omega_{1234}=\left\{(r_1,r_2,r_3,r_4)\in\mathds{R}^4_{>0}:Q_{1234}>0\right\}.
\end{equation}
Obviously, $\Omega^{-1}_{1234}=\{(r_1^{-1},\cdots,r_4^{-1}):(r_1,r_2,r_3,r_4)\in\Omega_{1234}\}$ is an open convex cone in $\mathds{R}^4_{>0}$. Hence $\Omega_{1234}$,
the homeomorphic image of $\Omega^{-1}_{1234}$, is simply-connected with peicewise analytic boundary.

Denote $\{i,j,k,l\}=\{1,2,3,4\}$ and place three balls $S_j$, $S_k$ and $S_l$, externally tangent to each other on the plane, with radii $r_j$, $r_k$ and $r_l>0$. Let $S_i$ be the fourth ball with radius $r_i>0$. If $r_i$ is very small and is very closed to $0$, then obviously
$$\frac{1}{r_{i}}>\frac{1}{r_{j}}+\frac{1}{r_{k}}+\frac{1}{r_{l}}+
2\sqrt{\frac{1}{r_{j}r_k}+\frac{1}{r_{j}r_l}+\frac{1}{r_{k}r_l}}.$$
Hence it follows
$$\left(\frac{1}{r_{i}}-\left(\frac{1}{r_{j}}+\frac{1}{r_{k}}+\frac{1}{r_{l}}\right)\right)^2
>4\left(\frac{1}{r_{j}r_k}+\frac{1}{r_{j}r_l}+\frac{1}{r_{k}r_l}\right)$$
and further $Q_{1234}<0$. Geometrically, the fourth ball $S_i$ goes through the gap between the other three mutually tangent balls. Let the radius $r_i$ increases gradually to one with
$$\frac{1}{r_{i}}=\frac{1}{r_{j}}+\frac{1}{r_{k}}+\frac{1}{r_{l}}+
2\sqrt{\frac{1}{r_{j}r_k}+\frac{1}{r_{j}r_l}+\frac{1}{r_{k}r_l}}.$$
By this time $Q_{1234}=0$. Geometrically, the fourth ball $S_i$ is in the gap between the other three mutually tangent balls, and is externally tangent to them all. Denote
$$f_i(r_j,r_k,r_l)=\left(\frac{1}{r_{j}}+\frac{1}{r_{k}}+\frac{1}{r_{l}}+
2\sqrt{\frac{1}{r_{j}r_k}+\frac{1}{r_{j}r_l}+\frac{1}{r_{k}r_l}}\,\right)^{-1}$$
and the $i$-th virtual tetrahedron space (abbreviated as ``\emph{$i$-th virtual space}") by
\begin{equation}
D_i=\{(r_1,r_2,r_3,r_4)\in\mathds{R}_{>0}^4:0<r_i\leq f_i(r_j,r_k,r_l)\}.
\end{equation}
Note $D_i$ is contractible and hence is simply-connected.

\begin{lemma}\label{lemma-Di-imply-ri-small}
In the $i$-th virtual space $D_i$, one have $r_i<\min\{r_j,r_k,r_l\}$.
\end{lemma}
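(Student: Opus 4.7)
The plan is to reduce the claim to the estimate $f_i(r_j,r_k,r_l) < \min\{r_j,r_k,r_l\}$, since by the definition of $D_i$ we already have $r_i \leq f_i(r_j,r_k,r_l)$, and any strict upper bound on $f_i$ will immediately yield the strict inequality we want.

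To establish $f_i(r_j,r_k,r_l) < \min\{r_j,r_k,r_l\}$, I would simply invert $f_i$ and read off the inequality term-by-term. By definition,
\begin{equation*}
\frac{1}{f_i(r_j,r_k,r_l)} = \frac{1}{r_j} + \frac{1}{r_k} + \frac{1}{r_l} + 2\sqrt{\frac{1}{r_j r_k} + \frac{1}{r_j r_l} + \frac{1}{r_k r_l}}.
\end{equation*}
Fix any $m \in \{j,k,l\}$. The right-hand side contains the term $1/r_m$ together with three other strictly positive summands, so $1/f_i > 1/r_m$, i.e.\ $f_i < r_m$. Taking $m$ to be the index achieving $\min\{r_j,r_k,r_l\}$ gives the desired bound.

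Combining with $r_i \leq f_i(r_j,r_k,r_l)$ on $D_i$ finishes the proof. There is no real obstacle here; the statement is just reading off positivity after inverting $f_i$, and the only thing to be careful about is that each of $1/r_j,1/r_k,1/r_l$ genuinely appears as a separate positive summand in $1/f_i$, so the inequality is strict even without invoking the square-root term.
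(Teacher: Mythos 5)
Your proposal is correct and is essentially identical to the paper's own proof: the paper likewise starts from $1/r_i \geq 1/f_i(r_j,r_k,r_l) = 1/r_j + 1/r_k + 1/r_l + 2\sqrt{\cdots}$ and reads off the strict inequality from the positivity of the remaining summands. Nothing further is needed.
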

\begin{proof}
One can get the conclusion easily from
\begin{equation*}
\frac{1}{r_{i}}\geq \frac{1}{f_{i}(r_j,r_k,r_l)}
=\frac{1}{r_{j}}+\frac{1}{r_{k}}+\frac{1}{r_{l}}+
2\sqrt{\frac{1}{r_{j}r_k}+\frac{1}{r_{j}r_l}+\frac{1}{r_{k}r_l}}.
\end{equation*}
\end{proof}

Because any two numbers of $r_1$, $r_2$, $r_3$ and $r_4$ can't be strictly minimal simultaneously, we obviously have the following corollary.
\begin{corollary}
The virtual space $D_1$, $D_2$, $D_3$ and $D_4$ are mutually disjoint.
\end{corollary}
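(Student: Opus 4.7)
The plan is to proceed by contradiction, exploiting the strict inequality provided by Lemma \ref{lemma-Di-imply-ri-small}. Suppose toward a contradiction that there exist distinct indices $i, j \in \{1,2,3,4\}$ and a point $r = (r_1, r_2, r_3, r_4) \in D_i \cap D_j$.

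Apply Lemma \ref{lemma-Di-imply-ri-small} with the role of the ``fourth ball'' played by $S_i$: since $r \in D_i$, one obtains $r_i < \min\{r_a : a \neq i\}$, and in particular $r_i < r_j$. Now apply the same lemma with the role of the fourth ball played by $S_j$: since $r \in D_j$, one obtains $r_j < \min\{r_a : a \neq j\}$, and in particular $r_j < r_i$. The two inequalities $r_i < r_j$ and $r_j < r_i$ are incompatible, which forces $D_i \cap D_j = \emptyset$ whenever $i \neq j$.

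There is essentially no obstacle here: once Lemma \ref{lemma-Di-imply-ri-small} is established, the disjointness is a pure set-theoretic consequence of the fact that a linearly ordered set $\{r_1, r_2, r_3, r_4\}$ admits at most one strict minimum. One should, however, be careful to read the lemma's conclusion as a strict inequality (which it is, because the defining inequality $r_i \leq f_i(r_j, r_k, r_l)$ of $D_i$ combined with the arithmetic-geometric manipulation in the lemma's proof yields a strict bound $r_i < r_a$ for each $a \neq i$); without strictness one could only conclude $r_i \leq r_j$ and $r_j \leq r_i$, giving $r_i = r_j$, which would not produce a contradiction. Provided one has the strict form, the argument terminates immediately.
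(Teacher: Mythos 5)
Your proof is correct and is essentially the paper's own argument: the paper likewise observes that, by Lemma \ref{lemma-Di-imply-ri-small}, membership in $D_i$ forces $r_i$ to be the strict minimum of $\{r_1,r_2,r_3,r_4\}$, and two distinct indices cannot both be strict minima. Your additional remark about why strictness holds (the defining inequality of $D_i$ gives $1/r_i \geq 1/r_j + 1/r_k + 1/r_l + 2\sqrt{\cdots} > 1/r_a$ for each $a\neq i$) is accurate and matches the lemma's proof.
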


\begin{lemma}\label{lemma-er-ze-yi}
Assume $r_i>0$ is the minimum of $r_1$, $r_2$, $r_3$ and $r_4$. Then the inequality
\begin{equation*}
\frac{1}{r_{i}}\leq\frac{1}{r_{j}}+\frac{1}{r_{k}}+\frac{1}{r_{l}}-
2\sqrt{\frac{1}{r_{j}r_k}+\frac{1}{r_{j}r_l}+\frac{1}{r_{k}r_l}}.
\end{equation*}
will never happen. In other words, if the inequality holds true, then $r_i>\min\{r_j,r_k,r_l\}$.
\end{lemma}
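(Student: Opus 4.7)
The plan is to reduce the inequality to a clean algebraic statement about three positive reals and verify it by elementary estimation. First I would rename $a=1/r_j$, $b=1/r_k$, $c=1/r_l$, so the claimed inequality reads $1/r_i\le a+b+c-2\sqrt{ab+ac+bc}$. Since $r_i$ is the minimum of $r_1,r_2,r_3,r_4$, we have $1/r_i\ge \max\{a,b,c\}$. Hence it suffices to show
\begin{equation*}
\max\{a,b,c\}>a+b+c-2\sqrt{ab+ac+bc}\qquad\text{whenever }a,b,c>0.
\end{equation*}
This already removes $r_i$ from the picture and will immediately imply both the original statement and its contrapositive form (``if the inequality holds then $r_i>\min\{r_j,r_k,r_l\}$'').

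Next I would assume without loss of generality that $a=\max\{a,b,c\}$, so that the target inequality becomes $2\sqrt{ab+ac+bc}>b+c$. Since both sides are positive this is equivalent, after squaring, to
\begin{equation*}
4ab+4ac+4bc>(b+c)^2=b^2+2bc+c^2,
\end{equation*}
that is, $4ab+4ac+2bc>b^2+c^2$. The bounds $a\ge b$ and $a\ge c$ give $4ab\ge 4b^2>b^2$ and $4ac\ge 4c^2>c^2$, and adding these (together with $2bc>0$) gives the strict inequality. Tracing back, $2\sqrt{ab+ac+bc}>b+c$, so $a+b+c-2\sqrt{ab+ac+bc}<a\le 1/r_i$, contradicting the supposed inequality.

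There is no real obstacle here; the only subtlety is to make sure the inequality is \emph{strict} so that the ``never happens'' conclusion is genuine. That strictness is forced by the fact that all four radii are strictly positive (so $b,c>0$), which is exactly the standing hypothesis $r\in\mathds{R}^4_{>0}$. The contrapositive form in the last sentence of the statement is then immediate: if the displayed inequality did hold, then $r_i$ could not be the minimum, so $r_i>\min\{r_j,r_k,r_l\}$.
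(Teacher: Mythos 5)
Your proof is correct and follows essentially the same route as the paper's: both use $1/r_i\ge\max\{1/r_j,1/r_k,1/r_l\}$ to reduce the claim to $2\sqrt{ab+ac+bc}>b+c$ with $a$ the largest of the three reciprocals, and then verify this by squaring and an elementary estimate. The only difference is cosmetic (you bound $4ab\ge 4b^2$ and $4ac\ge 4c^2$ termwise, while the paper uses $2a\ge b+c$ to reach the contradiction $(b-c)^2\ge 2(b+c)^2$).
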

\begin{proof}
Assume the above inequality holds true. Note its right hand side is symmetric with respect to $r_j$, $r_k$ and $r_l$. We may assume $r_j\leq r_k\leq r_l$.
Then from $1/r_i\geq 1/r_j$, we get
$$\frac{1}{r_{k}}+\frac{1}{r_{l}}\geq
2\sqrt{\frac{1}{r_{j}r_k}+\frac{1}{r_{j}r_l}+\frac{1}{r_{k}r_l}}.$$
Taking square and note $2/r_j\geq 1/r_{k}+1/r_{l}$, we see
$$\frac{1}{r_{k}^2}+\frac{1}{r_{l}^2}-\frac{2}{r_{k}r_l}\geq
\frac{4}{r_{j}r_k}+\frac{4}{r_{j}r_l}\geq2\left(\frac{1}{r_{k}}+\frac{1}{r_{l}}\right)^2,$$
which is a contradiction.
\end{proof}

\begin{lemma}\label{lemma-ri-small-imply-Di}
If $Q_{1234}\leq0$, then $\{r_1,r_2,r_3, r_4\}$ have a strictly minimal value. Moreover, if $\{r_1,r_2,r_3, r_4\}$ attains its strictly minimal value at $r_i$ for some $i\in\{1,2,3,4\}$, then $r\in D_i$.
\end{lemma}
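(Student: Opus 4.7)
The plan is to fix an index $i\in\{1,2,3,4\}$ at which $\{r_1,r_2,r_3,r_4\}$ attains its (a priori possibly non-strict) minimum, show that this forces $r\in D_i$, and then invoke Lemma~\ref{lemma-Di-imply-ri-small} to upgrade the minimum to a strict one. Both conclusions of the lemma then follow simultaneously.

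The key algebraic observation is that $Q_{1234}$, read as a polynomial in $a:=1/r_i$ with $b:=1/r_j$, $c:=1/r_k$, $d:=1/r_l$ held fixed, is a concave quadratic. A direct expansion of \eqref{nondegeneracy condition} collects as
\begin{equation*}
Q_{1234}=-a^2+2a(b+c+d)-\bigl[(b^2+c^2+d^2)-2(bc+bd+cd)\bigr],
\end{equation*}
whose discriminant simplifies to $16(bc+bd+cd)>0$, so its two roots in $a$ are exactly
\begin{equation*}
a_\pm=(b+c+d)\pm 2\sqrt{bc+bd+cd}.
\end{equation*}
Hence $Q_{1234}\leq 0$ is equivalent to the dichotomy $a\geq a_+$ or $a\leq a_-$. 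Translating back, the first case is exactly $r_i\leq f_i(r_j,r_k,r_l)$, i.e.\ $r\in D_i$; the second case is precisely the inequality appearing in the hypothesis of Lemma~\ref{lemma-er-ze-yi}.

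Now fix $i_0\in\{1,2,3,4\}$ with $r_{i_0}=\min_k r_k$. Since $Q_{1234}\leq 0$, one of the above alternatives must hold at $i=i_0$. The alternative $a\leq a_-$ is ruled out by Lemma~\ref{lemma-er-ze-yi}, whose conclusion $r_{i_0}>\min\{r_j,r_k,r_l\}$ would contradict $r_{i_0}$ being the overall minimum. Therefore $r\in D_{i_0}$, and Lemma~\ref{lemma-Di-imply-ri-small} strengthens this to $r_{i_0}<\min\{r_j,r_k,r_l\}$, proving at one stroke that the minimum is strictly attained at $i_0$ and that $r\in D_{i_0}$. The only calculation of real substance is the discriminant simplification, and no further obstacle is anticipated; the delicate point about ties is handled for free by the chain ``minimum $\Rightarrow$ $r\in D_{i_0}$ $\Rightarrow$ strict minimum''.
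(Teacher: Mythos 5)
Your proposal is correct and follows essentially the same route as the paper: both treat $Q_{1234}\leq 0$ as a concave quadratic inequality in $1/r_i$ at a minimizing index, obtain the two root alternatives, discard the lower-root branch via Lemma~\ref{lemma-er-ze-yi}, and conclude $r\in D_i$ with strictness supplied by Lemma~\ref{lemma-Di-imply-ri-small}. Your write-up is slightly more explicit about the discriminant computation and about upgrading a possibly tied minimum to a strict one, but the argument is the same.
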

\begin{proof}
We may assume $r_i\leq r_j\leq r_k\leq r_l$. It's easy to express $Q_{1234}\leq0$ as the following
$$\frac{1}{r_{i}^2}-\frac{2}{r_i}\left(\frac{1}{r_{j}}+\frac{1}{r_{k}}+\frac{1}{r_{l}}\right)\geq
\left(\frac{1}{r_{j}}+\frac{1}{r_{k}}+\frac{1}{r_{l}}\right)^2-2\left(\frac{1}{r_{j}^2}+\frac{1}{r_{k}^2}+\frac{1}{r_{l}^2}\right).$$
Solving the above inequality, we get either
$$\frac{1}{r_{i}}\leq\frac{1}{r_{j}}+\frac{1}{r_{k}}+\frac{1}{r_{l}}-
2\sqrt{\frac{1}{r_{j}r_k}+\frac{1}{r_{j}r_l}+\frac{1}{r_{k}r_l}}$$
or
$$\frac{1}{r_{i}}\geq\frac{1}{r_{j}}+\frac{1}{r_{k}}+\frac{1}{r_{l}}+
2\sqrt{\frac{1}{r_{j}r_k}+\frac{1}{r_{j}r_l}+\frac{1}{r_{k}r_l}}.$$
The first case will never happen by Lemma \ref{lemma-er-ze-yi}. Obviously, the second case implies
$r_i<\min\{r_j,r_k,r_l\}$, and in this case $r\in D_i$ obviously. 
\end{proof}

From Lemma \ref{lemma-Di-imply-ri-small} and Lemma \ref{lemma-ri-small-imply-Di}, we derive that, under the assumption $Q_{1234}\leq0$, the four radius $r_1$, $r_2$, $r_3$ and $r_4$ have a strict minimal value. Moreover, $r_i$ is a strictly minimum if and only if $r$ lies in the $i$-th virtual space $D_i$. This fact leads to the following observation
\begin{equation}
\mathds{R}_{>0}^4-\Omega_{1234}=D_1\,\dot{\cup}\,D_2\,\dot{\cup}\,D_3\,\dot{\cup}\,D_4,
\end{equation}
where the symbol ``$\dot{\cup}$" means ``disjoint union". As a consequence, we can classify all (real or virtual) tetrahedrons $r\in\mathds{R}_{>0}^4$ as follows:
\begin{itemize}
  \item If $Q_{1234}(r)>0$, then $r\in \Omega_{1234}$ and $r$ makes $\{1234\}$ a real tetrahedron.
  \item If $Q_{1234}(r)\leq0$, then $r$ is virtual packing. At this time,
          \begin{itemize}
            \item either $$\frac{1}{r_{i}}\geq\frac{1}{r_{j}}+\frac{1}{r_{k}}+\frac{1}{r_{l}}+
                  2\sqrt{\frac{1}{r_{j}r_k}+\frac{1}{r_{j}r_l}+\frac{1}{r_{k}r_l}}.$$
                  in this case, $r_i$ is the strictly minimum and hence $r\in D_i$;
            \item or $$\frac{1}{r_{i}}\leq \frac{1}{r_{j}}+\frac{1}{r_{k}}+\frac{1}{r_{l}}-
                  2\sqrt{\frac{1}{r_{j}r_k}+\frac{1}{r_{j}r_l}+\frac{1}{r_{k}r_l}}.$$
                  in this case, $r_i>\min\{r_j,r_k,r_l\}$ by Lemma \ref{lemma-er-ze-yi}. Moreover, since the right hand side of the above inequality is positive, we further get
                     \begin{itemize}
                       \item either $$\frac{1}{\sqrt{r_l}}>\frac{1}{\sqrt{r_j}}+\frac{1}{\sqrt{r_k}}.$$
                             in this case, $r_l$ is the strictly minimum and hence $r\in D_l$;
                       \item or $$\frac{1}{\sqrt{r_l}}<\Big|\frac{1}{\sqrt{r_j}}-\frac{1}{\sqrt{r_k}}\Big|.$$
                             in this case, one can show $r_j\neq r_k$ and further
                               \begin{itemize}
                                 \item if $r_j>r_k$, then $r_k$ is the strictly minimum and hence $r\in D_k$;
                                 \item if $r_j<r_k$, then $r_j$ is the strictly minimum and hence $r\in D_j$.
                               \end{itemize}
                     \end{itemize}
          \end{itemize}
\end{itemize}

\subsection{A $C^0$-extension of the solid angles}
\label{section-extend-solid-angle}
The solid angle is initially defined for real tetrahedrons. There is a natural way to extend its definition to even virtual tetrahedrons. We explain this procedure in this section.

It seems that Bobenko, Pinkall, Springborn \cite{Bobenko} first introduced the extension methods. If an Euclidean or hyperbolic triangle degenerates (that is, the three side lengthes still positive, but do not satisfy the triangle inequalities anymore), the angle opposite the side that is too long is defined to be $\pi$, while the other two angles are defined to be $0$. Using this method, they established a variational principle connecting surprisingly Milnor's Lobachevsky volume function of decorated hyperbolic ideal tetrahedrons and Luo's discrete conformal changes \cite{L1}. Luo \cite{L2} systematically developed their extension idea and proved some rigidity results related to inversive distance circle packings and discrete conformal factors. See \cite{GJ1}-\cite{GJ4} for more example. The extension of dihedral angles in a $3$-dimensional decorated ideal (or hyper-ideal) hyperbolic polyhedral first appeared in Luo and Yang's work \cite{LuoYang}. They proved the rigidity of hyperbolic cone metrics on $3$-manifolds which are isometric gluing of ideal and hyper-ideal tetrahedra in hyperbolic spaces.

As to the conformal tetrahedron configured by four ball packings, Xu \cite{Xu} gave a natural extension of solid angles. More precisely, if $r_1$, $r_2$, $r_3$ and $r_4$ satisfy $Q_{1234}>0$, then the real tetrahedron $\{1234\}$ is embedded in an Euclidean space. Denote $\alpha_i$ by the solid angle at a vertex $i\in\{1,2,3,4\}$ and define $\tilde\alpha_i=\alpha_i$. If $r_1$, $r_2$, $r_3$ and $r_4$ satisfy $Q_{1234}\leq0$, then the tetrahedron $\{1234\}$ is virtual. By Lemma \ref{lemma-ri-small-imply-Di}, $\{r_1,r_2,r_3, r_4\}$ have a strictly minimal value at a vertex $r_i$ for some $i\in\{1,2,3,4\}$, and $r\in D_i$. Geometrically, this is exactly the case that the ball $S_i$ go through the gap between the three mutually tangent balls $S_j$, $S_k$ and $S_l$.
Define $\tilde\alpha_i=2\pi$ and the other three solid angles to be $0$. By this, any real solid angle $\alpha_i$ (defined on $\Omega_{1234}$) is extended to the generalized solid angle $\tilde\alpha_i$ (defined on $\mathds{R}^4_{>0}$).
Xu (Lemma 2.6, \cite{Xu}) showed that this extension is continuous. The argument there relies on heavily geometric intuition. We give an alternative analytic proof here, which is more rigorous.

\begin{lemma}
\label{lemma-xu-extension}
For each vertex $i\in\{1,2,3,4\}$, the extended solid angle $\tilde{\alpha}_i$, defined on $\mathds{R}^4_{>0}$, is a continuous extension of $\alpha_i$.
\end{lemma}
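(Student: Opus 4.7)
The strategy is to split $\mathds{R}^4_{>0}$ along the partition just established,
\[
\mathds{R}^4_{>0}=\Omega_{1234}\,\dot{\cup}\,D_1\,\dot{\cup}\,D_2\,\dot{\cup}\,D_3\,\dot{\cup}\,D_4,
\]
and verify continuity region by region and then across their common boundary. On the open set $\Omega_{1234}$ the six edge lengths $l_{ij}=r_i+r_j$ realize a genuine Euclidean tetrahedron whose geometry (and hence whose solid angles) depends analytically on $r$, so $\tilde{\alpha}_i=\alpha_i$ is continuous there; on the interior of each $D_j$, $\tilde{\alpha}_i$ is the constant $2\pi$ (if $j=i$) or $0$ (otherwise), so continuity is trivial. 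Since the five pieces are pairwise disjoint and each $D_j$ satisfies $r_j<\min_{k\neq j}r_k$ strictly (even on $\partial D_j$, by the formula for $f_j$), the only nontrivial verification is continuity at points of the common boundary, which is contained in $\{Q_{1234}=0\}\cap\mathds{R}^4_{>0}$.

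Fix such a boundary point $r^*$. By the classification preceding the lemma and the strict inequality just noted, $r^*\in\partial D_j$ for exactly one index $j$, with $r^*_j<\min_{k\neq j}r^*_k$; WLOG take $j=1$. Approach from inside $D_1$ already yields the target values $\tilde{\alpha}_1=2\pi$ and $\tilde{\alpha}_k=0$ for $k\neq 1$, so the entire lemma reduces to showing
\[
\lim_{n\to\infty}\alpha_1(r^{(n)})=2\pi,\qquad \lim_{n\to\infty}\alpha_k(r^{(n)})=0\;(k\neq 1),
\]
along any sequence $r^{(n)}\in\Omega_{1234}$ with $r^{(n)}\to r^*$. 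For each such $n$ I realize the corresponding Euclidean tetrahedron by placing the centers $c_2^{(n)},c_3^{(n)},c_4^{(n)}$ in the fixed $xy$-plane and $c_1^{(n)}$ at height $h_n>0$ above; the vanishing of the Cayley--Menger determinant forces $h_n\to 0$, and the configuration $(c_i^{(n)})$ converges to a planar, mutually tangent Apollonian (Descartes) configuration $(c_i^*)$.

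The key geometric input, which I expect to be the main obstacle, is that in this limiting planar Apollonian configuration $c_1^*$ sits in the \emph{interior} of the triangle $c_2^*c_3^*c_4^*$; equivalently, $S_1^*$ is the ``inner Soddy'' disc sitting in the central curvilinear gap, not an outer tangent disc. Granted this, the solid-angle limits are a direct spherical-geometry computation. For $\alpha_1(r^{(n)})$: any fixed direction $\vec v$ with $\vec v\cdot\hat z<0$ eventually enters the tetrahedron, because the intersection of the ray from $c_1^{(n)}$ with the $xy$-plane tends to $c_1^*$, an interior point of the base triangle; hence the indicator function of the cone into the tetrahedron converges pointwise on $S^2\setminus\{\vec v\cdot\hat z=0\}$ to the indicator of the open lower hemisphere, and dominated convergence gives $\alpha_1(r^{(n)})\to 2\pi$. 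For $\alpha_k(r^{(n)})$ with $k\neq 1$: the four vertices become coplanar, so on the unit sphere based at $c_k^{(n)}$ the three vertices of the spherical triangle representing $\alpha_k$ all approach a common great circle, forcing its spherical area to $0$. These limits match the values of $\tilde{\alpha}_i$ on $D_1$, closing the continuity check.
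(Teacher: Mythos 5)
Your proof is correct, and its skeleton is exactly the paper's: split $\mathds{R}^4_{>0}$ into $\Omega_{1234}$ and the four virtual spaces $D_1,\dots,D_4$, note that continuity is automatic on each open piece, and reduce to boundary points, which by Lemma \ref{lemma-Di-imply-ri-small} lie on $\partial D_j$ for a unique $j$ with $r_j$ strictly minimal in a whole neighbourhood, so that any approaching sequence lies either in $D_j$ (where $\tilde{\alpha}$ already takes the limiting values) or in $\Omega_{1234}$. Where you genuinely diverge is the key limit $\alpha_j\to2\pi$, $\alpha_k\to0$ along sequences in $\Omega_{1234}$: the paper simply quotes Glickenstein's Proposition 6 of \cite{G2} (reproduced as Lemma \ref{Lemma-Glicken-bdr-converge}), whereas you re-prove it by flattening the tetrahedron onto its base plane, identifying the limit configuration as the Descartes one in which $S_j$ is the inner Soddy ball (so $c_j^*$ is interior to the triangle of the other three centers, since $\partial D_j$ is exactly the ``$+$'' branch of Descartes' relation and Lemma \ref{lemma-er-ze-yi} rules out the other branch), and then computing the limiting solid angles. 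This buys a self-contained argument at the cost of two details you leave implicit: that $Q_{1234}\to0$ with all radii bounded away from $0$ and $\infty$ forces the volume, hence the height $h_n$, to $0$; and that the area of the spherical triangle giving $\alpha_k$, $k\neq j$, really tends to $0$ in the coplanar limit --- the cleanest justification is Girard's formula $\alpha_k=\theta_{kj}+\theta_{ks}+\theta_{kt}-\pi$, valid here because all four faces stay uniformly nondegenerate (the lengths $r_p+r_q$ always satisfy strict triangle inequalities), with the dihedral angles at $c_k$ converging to $0,0,\pi$ and those at $c_j$ to $\pi,\pi,\pi$. Both points are routine to fill in, so your proposal stands; the paper's route is shorter only because the hard geometric content is outsourced to \cite{G2}.
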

\begin{proof}
Obviously, $\tilde{\alpha}_i$ is an extension of $\alpha_i$. It is continuous (in fact, $C^{\infty}$-smooth) in $\Omega_{1234}$ since $\alpha_i$ is. It is a constant and hence is continuous in the interior of $D_1$, $D_2$, $D_3$ and $D_4$. Fix an arbitrary point $x=(x_1,x_2,x_3,x_4)\in \partial D_i$, where the boundary is taken with respect to the topology of $\mathds{R}^4_{>0}$. By Lemma \ref{lemma-Di-imply-ri-small}, one have $x_i<\min\{x_j,x_k,x_l\}$. Choose a small open neighborhood $U_x\subset\mathds{R}^4_{>0}$ of $x$, such that $r_i<\min\{r_j,r_k,r_l\}$ for each $r\in U_x$. For any sequence $\{r^{(n)}\}\subset U_x$ with $r^{(n)}\rightarrow x$, if $r^{(n)}$ is contained in $D_i$ then $\tilde{\alpha}_i(r^{(n)})=2\pi$; if $r^{(n)}$ is not in $D_i$, then by Lemma \ref{Lemma-Glicken-bdr-converge} below, $\alpha_i(r^{(n)})$ goes to $2\pi$. Hence there always holds $\tilde{\alpha}_i(r^{(n)})\rightarrow2\pi$, which implying that $\tilde{\alpha}_i$ is continuous at $x$. Thus $\tilde{\alpha}_i$ is continuous on $\partial D_i$. Similarly, one can show that $\tilde{\alpha}_i$ is continuous on $\partial D_j$, $\partial D_k$ and $\partial D_l$. Then it follows that $\tilde{\alpha}_i$ is continuous on $\mathds{R}^4_{>0}$.
\end{proof}

\begin{lemma}\label{Lemma-Glicken-bdr-converge}
(Glickenstein \cite{G2}, Proposition 6) If $Q_{1234}\rightarrow0$ without any of the $r_i$ going to $0$, then one solid angle goes to $2\pi$ and the others go to $0$. The solid angle $\alpha_i$ which goes to $2\pi$ corresponds to $r_i$ being the minimum.
\end{lemma}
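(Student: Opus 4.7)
My plan is to combine a compactness argument with the explicit Soddy--Descartes limiting geometry. Consider any sequence $r^{(n)}=(r_1^{(n)},\dots,r_4^{(n)})$ with $Q_{1234}(r^{(n)})\to 0$ and all $r_i^{(n)}$ bounded away from $0$. Passing to a subsequence, assume $r^{(n)}\to \bar r$ with $\bar r_i>0$ and $Q_{1234}(\bar r)=0$. The Cayley--Menger determinant for the Euclidean tetrahedron $T(r)$ with edge lengths $l_{ij}=r_i+r_j$ is a positive multiple of $r_1 r_2 r_3 r_4\,Q_{1234}$, so the signed volume $V(r^{(n)})$ tends to $0$. Up to rigid motion, the tetrahedra $T(r^{(n)})$ therefore converge to a flat limit tetrahedron $\bar T$ whose four vertices lie in a common plane $P$ and realise four pairwise externally tangent coplanar balls with the limiting radii.

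I would next apply Lemma~\ref{lemma-ri-small-imply-Di} at the boundary value $Q_{1234}=0$ to extract a unique index $i^{*}$ with $\bar r_{i^{*}}=\min_m \bar r_m$ together with the Soddy equality
$$\frac{1}{\sqrt{\bar r_{i^{*}}}}\;=\;\frac{1}{\sqrt{\bar r_j}}+\frac{1}{\sqrt{\bar r_k}}+\frac{1}{\sqrt{\bar r_l}},$$
which places the ball $S_{i^{*}}$ precisely in the curvilinear triangular gap bounded by the three mutually tangent larger balls $S_j,S_k,S_l$ in $P$. A direct barycentric-coordinate computation using this identity shows that $\bar v_{i^{*}}$ lies in the \emph{open} interior of the triangle $\bar v_j\bar v_k\bar v_l$, while each of the three larger centers lies strictly outside the triangle spanned by the other three.

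The solid-angle conclusion is then a thin-slab calculation near the flat limit. For $n$ large the tetrahedron $T(r^{(n)})$ is a slab of vanishing height $\epsilon_n\to 0$ above $P$: the foot of the perpendicular from $v_{i^{*}}^{(n)}$ falls inside the opposite triangle, so the three faces meeting at $v_{i^{*}}^{(n)}$ unfurl to cover nearly a full hemisphere and $\alpha_{i^{*}}(r^{(n)})\to 2\pi$; at each of the remaining three vertices the perpendicular foot falls outside the opposite triangle, the local cone collapses to a thin wedge, and $\alpha_m(r^{(n)})\to 0$. One can make this quantitative via the Van Oosterom--Strang identity $\tan(\alpha_i/2)=6V(r)/D_i(r)$: since $V\to 0$, we have $\alpha_i\to 0$ or $\alpha_i\to 2\pi$, and the sign of $D_i$ (determined by the interior/exterior barycentric position above) selects which.

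The main obstacle, as I see it, is justifying the strict-interior membership $\bar v_{i^{*}}\in\operatorname{int}(\bar v_j\bar v_k\bar v_l)$ from the Soddy equality alone; this is the geometric reflection of the fact that $\bar r_{i^{*}}$ is the \emph{strict} minimum, and it uses the equality case of Descartes rather than mere inequality. Without it one could in principle imagine a collinear degeneration of the four centers for which the limiting solid angles would not be as stated.
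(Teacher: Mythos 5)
First, a point of reference: the paper offers no proof of Lemma \ref{Lemma-Glicken-bdr-converge} at all --- it is imported verbatim from Glickenstein \cite{G2}, Proposition 6 --- so there is no internal argument to measure yours against, and I can only judge the proposal on its own terms. Your overall picture (pass to a flat limit, identify it as a Descartes configuration with the smallest ball nested in the curvilinear gap, then read off the solid angles of a thin slab from the sign of the Van Oosterom--Strang denominator) is sound and could be completed. Two repairs are needed at the outset, though: since ``no $r_i\to 0$'' does not prevent some $r_i\to\infty$, you must first invoke the scale invariance of solid angles and normalize $\|r\|_{l^1}=1$ before extracting a convergent subsequence; and the Cayley--Menger determinant is proportional to $(r_1r_2r_3r_4)^2\,Q_{1234}$ rather than $r_1r_2r_3r_4\,Q_{1234}$ (harmless for the conclusion $V\to 0$, but worth stating correctly).

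The genuine gaps are two. (i) Your ``Soddy equality'' is false. The boundary relation at $Q_{1234}=0$ with $r_{i^*}$ minimal --- which is exactly what Lemma \ref{lemma-ri-small-imply-Di} and the definition of $D_{i^*}$ supply --- is $1/r_{i^*}=1/r_j+1/r_k+1/r_l+2\sqrt{1/(r_jr_k)+1/(r_jr_l)+1/(r_kr_l)}$. The additive relation in $1/\sqrt{r}$ that you wrote is the degenerate Descartes relation for circles tangent to a common \emph{line}, not to a fourth circle; note $\sqrt{a}+\sqrt{b}+\sqrt{c}>\sqrt{a+b+c}$, so the two are genuinely different. (ii) The step you yourself flag as ``the main obstacle'' --- that $\bar v_{i^*}$ lies in the open interior of the triangle $\bar v_j\bar v_k\bar v_l$ --- is left unproven, and it is the crux: without it the sign analysis of the denominator $D_i$ never gets off the ground. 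It is true and can be argued without barycentric computation: at $Q_{1234}=0$ the circle of radius $\bar r_{i^*}$ externally tangent to the three mutually tangent circles is the inner Soddy circle, which is contained in the curvilinear gap; the gap's closure lies inside the triangle of centers (its corners are the pairwise tangency points on the edges and its boundary arcs are concave toward the interior), so the center is interior. Finally, the assertion that interior versus exterior position of the perpendicular foot determines the sign of $D_i$ in the flat limit is correct but is asserted rather than checked. As written, the proposal is an outline resting on one false identity and with its pivotal geometric claim missing.
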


The natural extension of $\alpha$ to $\tilde{\alpha}$ is only $C^0$-continuous. The following example shows that we can't get higher regularity, such as Lipschitz continuity or H\"{o}lder continuity.
\begin{example}
\label{example-only-c-zero}
Fix $r_2=r_3=r_4=1$. Then the critical case is exactly $r_1=2/\sqrt{3}-1$. Hence the point $(2/\sqrt{3}-1,1,1,1)$ lies in $\partial D_1$. Recall Glickenstein's calculation (see the formula (7) in \cite{G1})
$$\frac{\partial\alpha_i}{\partial r_j}=\frac{4r_ir_jr_k^2r_l^2}{3P_{ijk}P_{ijl}V_{ijkl}}
\left(\frac{1}{r_i}\left(\frac{1}{r_j}+\frac{1}{r_k}+\frac{1}{r_l}\right)+
\frac{1}{r_j}\left(\frac{1}{r_i}+\frac{1}{r_k}+\frac{1}{r_l}\right)-\left(\frac{1}{r_k}-\frac{1}{r_l}\right)^2\right),$$
where $\{i,j,k,l\}=\{1,2,3,4\}$, $P_{ijk}=2(r_i+r_j+r_k)$, $P_{ijl}=2(r_i+r_j+r_l)$ and $V_{ijk}$ is the volume.  
Beacuse $V_{ijkl}=0$, we see $\partial\alpha_1/\partial r_2=+\infty$ at this point.
\end{example}

\subsection{A convex $C^1$-extension of the Cooper-Rivin functional}
\label{section-extend-CR-funct}
Now consider a triangulated manifold $(M,\mathcal{T})$, with a ball packing $r=(r_1,\cdots,r_N)\in\mathds{R}^N_{>0}$.
Recall the space of all real ball packings is
\begin{equation*}
 \mathcal{M}_{\mathcal{T}}=\left\{\;r\in\mathds{R}^N_{>0}\;:\;Q_{ijkl}>0, \;\forall \{i,j,k,l\}\in T\;\right\},
\end{equation*}
while the space of all virtual ball packings is $\mathds{R}^N_{>0}\setminus\mathcal{M}_{\mathcal{T}}$, where
\begin{equation*}
Q_{ijkl}=\left(\frac{1}{r_{i}}+\frac{1}{r_{j}}+\frac{1}{r_{k}}+\frac{1}{r_{l}}\right)^2-
2\left(\frac{1}{r_{i}^2}+\frac{1}{r_{j}^2}+\frac{1}{r_{k}^2}+\frac{1}{r_{l}^2}\right).
\end{equation*}
Recall $\alpha_{ijkl}$ is the solid angle at $i$ of an Euclidean conformal tetrahedron $\{ijkl\}$ configured by a real ball packing $r\in\mathcal{M}_{\mathcal{T}}$. Thus $\alpha_{ijkl}(r)$ is a smooth function of all real ball packings.
By Lemma \ref{lemma-xu-extension}, the solid angle $\alpha_{ijkl}$ extends continuously to an extended solid angle $\tilde{\alpha}_{ijkl}$, with the domain of definition extends from $\mathcal{M}_{\mathcal{T}}$ to $\mathds{R}^N_{>0}$. Consequently, the combinatorial scalar curvature at a vertex $i\in V$, that is
\begin{eqnarray*}
  K_i(r)=4\pi-\sum_{\{ijkl\}\in \mathcal{T}_3}\alpha_{ijkl}(r),\;r\in\mathcal{M}_{\mathcal{T}}
\end{eqnarray*}
extends continuously to the extended curvature
\begin{eqnarray}
  \widetilde K_i(r)=4\pi-\sum_{\{ijkl\}\in \mathcal{T}_3}\tilde\alpha_{ijkl}(r),\;r\in\mathds{R}^N_{>0},
\end{eqnarray}
which is defined for all $r\in\mathds{R}^N_{>0}$. Similarly, the Cooper-Rivin functional $\mathcal{S}(r)$, $r\in\mathcal{M}_{\mathcal{T}}$ in (\ref{def-cooper-rivin-funct}), can be extended naturally to the following ``\emph{extended Cooper-Rivin functional}"
\begin{eqnarray}
  \widetilde{\mathcal{S}}(r)=\sum_{i=1}^N \widetilde K_i r_i, \;r\in\mathds{R}^N_{>0}.
\end{eqnarray}
Moreover, the CRG-functional $\lambda(r)$, $r\in\mathcal{M}_{\mathcal{T}}$ can be extended naturally to
\begin{eqnarray}
  \tilde{\lambda}(r)=\frac{\sum_{i=1}^N \widetilde K_i r_i}{\sum_{i=1}^N r_i}, \;r\in\mathds{R}^N_{>0},
\end{eqnarray}
which is called the \emph{extended CRG-functional} and is uniformly bounded by the information of the triangulation $\mathcal{T}$. Since every $\widetilde{K}_i$ is continuous, both $\widetilde{\mathcal{S}}(r)$ and $\tilde{\lambda}(r)$ are continuous on $\mathds{R}^N_{>0}$. We further prove that $\widetilde{\mathcal{S}}(r)$ are in fact convex and $C^1$-smooth on $\mathds{R}^N_{>0}$.\\

We follow the approach pioneered by Luo \cite{L2}. A differential $1$-form $\omega=\sum_{i=1}^n a_i(x)dx_i$ in an open set $U\subset\mathds{R}^n$ is said to be continuous if each $a_i(x)$ is a continuous function on $U$. A continuous $1$-form $\omega$ is called closed if $\int_{\partial \tau} \omega=0$ for any Euclidean triangle $\tau\subset U$. By the standard approximation theory, if $\omega$ is closed and $\gamma$ is a piecewise $C^1$-smooth null-homologous loop in $U$, then $\int_\gamma \omega=0$. If $U$ is simply connected, then the integral $F(x)=\int_a^x\omega$ is well defined (where $a\in U$ is arbitrarily chosen), independent of the choice of piecewise smooth paths in $U$ from $a$ to $x$. Moreover, the function $F(x)$ is $C^1$-smooth so that $\frac{\partial F(x)}{\partial x_i}=a_i(x)$. Luo established the following fundamental $C^1$-smooth and convex extension theory.
\begin{lemma}
\label{lemma-luo's-extension}
(Luo's convex $C^1$-extension, \cite{L2})
Suppose $X\subset \mathds{R}^n$ is an open convex set and $A\subset X$ is an open and simply connected subset of $X$ bounded by a real analytic codimension-1 submanifold in $X$. If $\omega=\sum_{i=1}^n a_i(x)dx_i$ is a continuous closed $1$-form on $A$ so that $F(x)=\int_a^x\omega$ is locally convex on $A$ and each $a_i$ can be extended continuously to $X$ by constant functions to a function $\tilde a_i$ on $X$, then $\widetilde F(x)=\int_a^x \tilde a_i(x)dx_i$ is a $C^1$-smooth convex function on $X$ extending $F$.
\end{lemma}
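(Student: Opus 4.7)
The plan is to verify that the path integral $\widetilde F(x)=\int_a^x\tilde\omega$, with $\tilde\omega=\sum_{i=1}^n\tilde a_i(x)\,dx_i$, is a well-defined $C^1$-smooth convex function on $X$ that restricts to $F$ on $A$. First I would establish that $\tilde\omega$ is a continuous closed $1$-form on $X$, which yields the $C^1$ primitive by the Poincar\'e lemma on a simply connected domain; then I would verify convexity by reducing to a one-dimensional monotonicity statement along straight segments.

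For the closedness on $X$, the key task is to show that $\int_{\partial\tau}\tilde\omega=0$ for every Euclidean triangle $\tau\subset X$. The real-analyticity hypothesis on $\partial A$ ensures that for a generic triangle $\tau$ the intersection $\tau\cap\partial A$ is a finite union of real-analytic arcs that partition $\tau$ into finitely many regions, each lying either in $A$ or in a single connected component of $X\setminus A$. On each region inside $A$, closedness of $\omega$ gives vanishing boundary integral after approximating by refining triangulations. On each region in $X\setminus A$, the functions $\tilde a_i$ are constant (this is precisely the ``extension by constant functions'' hypothesis), so $\tilde\omega$ is an exact constant $1$-form there and its integral on a closed loop is again zero. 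Summing the contributions, interior arcs along $\partial A$ appear twice with opposite orientations, and by continuity of $\tilde a_i$ across $\partial A$ these cancel, leaving $\int_{\partial\tau}\tilde\omega=0$. The non-generic case is obtained by perturbing $\tau$ and passing to the limit using continuity of $\tilde\omega$.

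Once closedness is in hand, convexity of $X$ (and hence simple connectivity) implies that $\widetilde F(x)=\int_a^x\tilde\omega$ is independent of path, belongs to $C^1(X)$ with $\partial_i\widetilde F=\tilde a_i$, and, picking the basepoint $a\in A$, satisfies $\widetilde F|_A=F$ since $\tilde\omega|_A=\omega$. For convexity I would fix $p,q\in X$ and examine $g(t)=\widetilde F((1-t)p+tq)$ on $[0,1]$. By real-analyticity of $\partial A$, for generic $p,q$ the segment meets $\partial A$ in finitely many points, partitioning $[0,1]$ into subintervals on each of which the segment lies in $A$ or in a single component of $X\setminus A$. On subintervals contained in $A$, local convexity of $F$ makes $g'(t)=\langle \nabla F,q-p\rangle$ non-decreasing; on subintervals in $X\setminus A$, $\nabla\widetilde F$ is constant, so $g'$ is constant there. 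Since $\nabla\widetilde F$ is continuous on $X$, the piecewise monotone $g'$ is continuous, hence globally non-decreasing on $[0,1]$, so $g$ is convex. Non-generic segments are recovered by approximation, which gives convexity of $\widetilde F$ on every line segment and therefore global convexity on $X$.

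The main obstacle is the first step: propagating closedness of $\tilde\omega$ across the interface $\partial A$. The real-analyticity of $\partial A$ is doing all the heavy lifting, since it guarantees that $\partial A$ meets a generic triangle or segment in a tame finite configuration on which Stokes-type cancellation for merely continuous $1$-forms can be argued rigorously; with only $C^0$ or even $C^1$ boundary one could a priori get pathological intersections. Once Step 1 is secured, Steps 2 and 3 are essentially standard, relying only on the Poincar\'e lemma for continuous closed forms on simply connected domains and on the one-dimensional characterization of convex $C^1$ functions via non-decreasing derivatives.
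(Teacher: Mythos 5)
This lemma is not proved in the paper at all: it is imported verbatim from Luo \cite{L2} and used as a black box, so there is no in-paper argument to compare yours against. Your sketch in fact reconstructs essentially the strategy of Luo's original proof: establish closedness of $\tilde\omega$ by decomposing a test triangle along $\partial A$ and cancelling interface contributions, obtain the $C^1$ primitive on the (convex, hence simply connected) set $X$, and prove convexity by restricting to line segments and showing the one-dimensional derivative is piecewise non-decreasing and continuous, with real analyticity of $\partial A$ guaranteeing that generic segments and triangles meet $\partial A$ tamely. The overall architecture is correct.

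One step deserves more care than you give it. When you assert that a subregion $R\subset A$ of the triangle has vanishing boundary integral ``by closedness of $\omega$,'' note that $\partial R$ is not a loop in $A$: part of it lies on $\partial A$, where $\omega$ itself is undefined and only $\tilde\omega$ makes sense. You therefore need to approximate $\partial R$ by null-homotopic piecewise $C^1$ loops $\gamma_\epsilon$ lying strictly inside $A$, use $\int_{\gamma_\epsilon}\omega=0$, and then pass to the limit $\int_{\gamma_\epsilon}\tilde\omega\to\int_{\partial R}\tilde\omega$; this limit requires the approximating curves to converge with uniformly bounded length (or in $C^1$), since $\tilde\omega$ is merely continuous. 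This is standard but is exactly the point where continuity of the constant extension across $\partial A$ is used, and ``approximating by refining triangulations'' as written does not quite isolate it. The convexity step is fine: on subintervals in $A$ local convexity of $F$ makes $g'$ non-decreasing, on subintervals in a component of $X\setminus A$ it is constant, and continuity of $g'=\langle\nabla\widetilde F,q-p\rangle$ glues these into global monotonicity; the non-generic segments (those meeting $\partial A$ in infinitely many points, hence containing a subarc of $\partial A$) are correctly handled by perturbation since convexity is closed under pointwise limits.
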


Now we come back to our settings. Since
$$\frac{\partial K_i}{\partial r_j}=\frac{\partial K_j}{\partial r_i}$$
on $\mathcal{M}_\mathcal{T}$ (for example, see \cite{CR,G1}, or see Lemma \ref{Lemma-Lambda-semi-positive}), $\sum_{i=1}^N K_i dr_i$ is a closed $C^\infty$-smooth $1$-form on $\mathcal{M}_\mathcal{T}$. Note $\mathcal{M}_\mathcal{T}$ is simply connected (see \cite{CR}), hence for an arbitrarily chosen $r_0\in \mathcal{M}_{\mathcal{T}}$, the potential functional
\begin{eqnarray}\label{Def-potential}
  F(r)=\int_{r_0}^r\sum_{i=1}^NK_idr_i,\ \ r\in \mathcal{M}_{\mathcal{T}}
\end{eqnarray}
is well defined. Note that
$\nabla_r F=K=\nabla_r\mathcal{S}$, we can easily get $F(r)=\mathcal{S}(r)-\mathcal{S}(r_0)$ for each $r\in\mathcal{M}_\mathcal{T}$. By Lemma \ref{Lemma-Lambda-positive}, the potential functional (\ref{Def-potential}) is locally convex on $\mathcal{M}_\mathcal{T}$ and is strictly locally convex when restricted to the hyperplane $\{x\in\mathds{R}^N:\|x\|_{l^1}=1\}$. For each tetrahedron $\{ijkl\}\in \mathcal{T}_3$, $\alpha_{ijkl}dr_i+\alpha_{jikl}dr_j+\alpha_{kijl}dr_k+\alpha_{lijk}dr_l$ is a smooth closed $1$-form on $\mathcal{M}_\mathcal{T}$. Hence the following integration
\begin{eqnarray*}
F_{ijkl}(r)=\int_{r_0}^r \alpha_{ijkl}dr_i+ \alpha_{jikl}dr_j+ \alpha_{kijl}dr_k+ \alpha_{lijk}dr_l,\ r\in\mathcal{M}_\mathcal{T}
\end{eqnarray*}
is well defined and is a $C^{\infty}$-smooth locally concave function on $\mathcal{M}_\mathcal{T}$. By Lemma \ref{lemma-xu-extension}, each solid angle $\alpha_{ijkl}$ can be extended continuously by constant functions to a generalized solid angle $\tilde{\alpha}_{ijkl}$.
Using Luo's extension Lemma \ref{lemma-luo's-extension}, the following integration
\begin{eqnarray*}
\widetilde F_{ijkl}(r)=\int_{r_0}^r \tilde\alpha_{ijkl}dr_i+\tilde\alpha_{jikl}dr_j+\tilde\alpha_{kijl}dr_k+\tilde\alpha_{lijk}^ldr_l,\ r\in\mathds{R}^N_{>0}
\end{eqnarray*}
is well defined and $C^1$-smooth that extends $F_{ijkl}$. Moreover, $\widetilde F_{ijkl}$ is concave on $\mathds{R}^N_{>0}$. By
\begin{eqnarray*}
  \sum_{i=1}^N\widetilde K_idr_i&=&\sum_{i=1}^N\left(4\pi-\sum_{\{ijkl\}\in \mathcal{T}_3}\tilde\alpha_{ijkl}\right)dr_i\\
  &=&4\pi dr_i-\sum_{i=1}^N\sum_{\{ijkl\}\in \mathcal{T}_3}\tilde\alpha_{ijkl}dr_i\\
  &=&4\pi dr_i-\sum_{\{ijkl\}\in\mathcal{T}_3}
  \left(\tilde\alpha_{ijkl}dr_i+\tilde\alpha_{jikl}dr_j+\tilde\alpha_{kijl}dr_k+\tilde\alpha_{lijk}dr_l\right),
\end{eqnarray*}
the following integration
\begin{equation}
\widetilde F(r)=\int_{r_0}^r\sum_{i=1}^N\widetilde K_i dr_i,\ r\in\mathds{R}^N_{>0}
\end{equation}
is well defined and $C^1$-smooth that extends $F$ defined in formula (\ref{Def-potential}). Moreover, $\widetilde F(r)$ is convex on $\mathds{R}^N_{>0}$. We shall prove that the extended Cooper-Rivin functional $\widetilde{\mathcal{S}}(r)$ differs form $\widetilde F(r)$ by a constant. First we show that $\widetilde{\mathcal{S}}(r)$ is $C^1$-smooth.

\begin{theorem}
\label{thm-tuta-S-C1-convex}
The extended Cooper-Rivin functional $\widetilde{\mathcal{S}}(r)$ is convex on $\mathds{R}^N_{>0}$. Moreover,
$\widetilde{\mathcal{S}}(r)\in C^{\infty}(\mathcal{M}_\mathcal{T})\cap C^{1}(\mathds{R}^N_{>0})$. As a consequence, the extended CRG-functional $\tilde{\lambda}(r)$ is $C^1$-smooth on $\mathds{R}^N_{>0}$ and is convex when restricted to the hyperplane $\{x\in\mathds{R}^N:\|x\|_{l^1}=1\}$.
\end{theorem}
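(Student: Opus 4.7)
The strategy is to identify $\widetilde{\mathcal S}$ with $\widetilde F$ up to an additive constant on all of $\mathds R^N_{>0}$, thereby transferring the already-established $C^1$-smoothness and convexity of $\widetilde F$ to $\widetilde{\mathcal S}$. On $\mathcal M_\mathcal T$ this is immediate: Lemma \ref{Lemma-Lambda-semi-positive} gives $\nabla\mathcal S=K=\nabla F$ on the connected open set $\mathcal M_\mathcal T$, so $\mathcal S-F\equiv\mathcal S(r_0)$ there. The work lies in extending this identity into every virtual region.

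I would work tetrahedron by tetrahedron. For each $\{ijkl\}\in\mathcal T_3$ set
\[
S_{ijkl}(r)=\tilde\alpha_{ijkl}r_i+\tilde\alpha_{jikl}r_j+\tilde\alpha_{kijl}r_k+\tilde\alpha_{lijk}r_l,
\]
so that $\widetilde{\mathcal S}(r)=4\pi\|r\|_{l^1}-\sum_{\{ijkl\}}S_{ijkl}(r)$, and correspondingly decompose $\widetilde F=4\pi\bigl(\|r\|_{l^1}-\|r_0\|_{l^1}\bigr)-\sum_{\{ijkl\}}\widetilde F_{ijkl}$. The claim reduces to showing that each $S_{ijkl}-\widetilde F_{ijkl}$ is constant on $\mathds R^N_{>0}$. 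On the real locus $\{Q_{ijkl}>0\}$, Lemma \ref{Lemma-Lambda-semi-positive} forces the single-tetrahedron identity $\sum_{n\in\{ijkl\}}r_n\,d\alpha_n=0$ (obtained by unpacking $\nabla\mathcal S=K$ contribution-by-contribution and using that each $\alpha_{ijkl}$ depends only on the four radii of that tetrahedron), so that $\nabla S_{ijkl}$ matches the integrand of $\widetilde F_{ijkl}$. On each virtual stratum $D_m$ of this tetrahedron, the extended solid angles $\tilde\alpha$ are constant ($2\pi$ at $m$, zero elsewhere), so $S_{ijkl}$ reduces to the linear function $2\pi r_m$ while $\widetilde F_{ijkl}$ is a primitive of the constant form $2\pi\,dr_m$; the two gradients again agree.

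Hence $\nabla(S_{ijkl}-\widetilde F_{ijkl})=0$ off the analytic hypersurface $\{Q_{ijkl}=0\}$, so the difference is locally constant there. It is also continuous on all of $\mathds R^N_{>0}$ by Lemma \ref{lemma-xu-extension} together with the $C^1$-regularity of $\widetilde F_{ijkl}$ furnished by Lemma \ref{lemma-luo's-extension}. Since $\mathds R^N_{>0}$ is connected and the hypersurface has empty interior, $S_{ijkl}-\widetilde F_{ijkl}$ is globally constant. Summing over all tetrahedra and evaluating at $r_0\in\mathcal M_\mathcal T$ yields the desired identity $\widetilde{\mathcal S}=\widetilde F+\mathcal S(r_0)$ on $\mathds R^N_{>0}$.

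With this pointwise identification, convexity and $C^1$-smoothness of $\widetilde{\mathcal S}$ on $\mathds R^N_{>0}$ follow from the corresponding properties of $\widetilde F$, while $C^\infty$-smoothness on $\mathcal M_\mathcal T$ follows from the smoothness of each $\alpha_{ijkl}$ in the radii on the real region. For the extended CRG-functional $\tilde\lambda=\widetilde{\mathcal S}/\|r\|_{l^1}$, the $C^1$-property is immediate from the quotient with the positive smooth denominator $\|r\|_{l^1}$, and since $\tilde\lambda$ coincides with $\widetilde{\mathcal S}$ on the affine hyperplane $\{\|x\|_{l^1}=1\}$, convexity of $\widetilde{\mathcal S}$ restricts to convexity of $\tilde\lambda$ there. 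The main obstacle in executing the plan is the bookkeeping at the degeneration locus $\{Q_{ijkl}=0\}$: the solid angles are only $C^0$ across it (Example \ref{example-only-c-zero}), so transferring $\nabla S_{ijkl}=\nabla\widetilde F_{ijkl}$ from one side of the boundary to the other relies essentially on the continuity of $\tilde\alpha$ together with the $C^1$ output of Luo's extension lemma; once these are in place, connectedness of $\mathds R^N_{>0}$ does the gluing for free.
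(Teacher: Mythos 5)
Your proposal is correct and follows essentially the same route as the paper: the same per-tetrahedron decomposition into $\widetilde{\mathcal S}_{ijkl}$ and $\widetilde F_{ijkl}$, the Schl\"affli formula on the real locus, the explicit linear form $2\pi r_m$ on each virtual stratum $D_m$, and Luo's extension lemma supplying the convexity and $C^1$-regularity of $\widetilde F$; the paper merely reverses the order of the two final steps, first proving $\partial\widetilde{\mathcal S}_{ijkl}/\partial r_p=\tilde\alpha_{pqst}$ directly across $\{Q_{ijkl}=0\}$ by a derivative-limit (Darboux-type) argument and only then deducing $\widetilde{\mathcal S}=\widetilde F+\mathcal S(r_0)$. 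The one point worth tightening is your gluing step: \emph{continuous and locally constant off a nowhere dense closed set} does not by itself force constancy (cf. the Cantor function), so you should say explicitly that the complement of $\{Q_{ijkl}=0\}$ in the relevant coordinates consists of the finitely many components $\Omega_{ijkl}$ and $\mathrm{int}(D_m)$, each of whose closures meets $\{Q_{ijkl}=0\}$ along points that are also limits of $\Omega_{ijkl}$, so continuity identifies the finitely many constants.
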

\begin{proof}
We just need to show $\widetilde{\mathcal{S}}(r)\in C^1(\mathds{R}^N_{>0})$. For each tetrahedron $\{ijkl\}\in \mathcal{T}_3$, set $$\widetilde{\mathcal{S}}_{ijkl}(r_i,r_j,r_k,r_l)=\tilde\alpha_{ijkl}r_i+\tilde\alpha_{jikl}r_j+\tilde\alpha_{kijl}r_k+\tilde\alpha_{lijk}r_l.$$
For every vertex $p\in\{i,j,k,l\}$, on the open set $\left\{(r_i,r_j,r_k,r_l)\in\mathds{R}^4_{>0}:Q_{ijkl}>0\right\}$ we get
\begin{equation}\label{formula-schlaf-extend}
\frac{\partial\widetilde{\mathcal{S}}_{ijkl}}{\partial r_p}=\tilde\alpha_{pqst}
\end{equation}
by the Schl\"{a}ffli formula (see Appendix \ref{appen-schlafi}), where $q,s,t$ are the other three vertices other than $p$. On the open domain $D_p$ where $\tilde\alpha_{pqst}=2\pi$, we have $\widetilde{\mathcal{S}}_{ijkl}=2\pi r_p$, and hence (\ref{formula-schlaf-extend}) is also valid. On the open domain $D_q$, $D_s$ or $D_t$ where $\tilde\alpha_{pqst}=0$, $\widetilde{\mathcal{S}}_{ijkl}$ equals to $2\pi r_q$, $2\pi r_s$ or $2\pi r_t$, hence we still have (\ref{formula-schlaf-extend}). By the classical Darboux Theorem in mathematical analysis, (\ref{formula-schlaf-extend}) is valid on $\mathds{R}^4_{>0}\cap\partial\left\{(r_i,r_j,r_k,r_l)\in\mathds{R}^4_{>0}:Q_{ijkl}>0\right\}$. Hence (\ref{formula-schlaf-extend}) is always true on $\mathds{R}^4_{>0}$. Because $\tilde\alpha_{pqst}$ is continuous, we see $\widetilde{\mathcal{S}}_{ijkl}$ is $C^1$-smooth on $\mathds{R}^4_{>0}$. Further by
\begin{equation*}
\widetilde{\mathcal{S}}(r)=4\pi\sum_{i=1}^Nr_i-\sum_{\{ijkl\}\in \mathcal{T}_3}\widetilde{\mathcal{S}}_{ijkl}(r_i,r_j,r_k,r_l),
\end{equation*}
we get the conclusion.
\end{proof}

\begin{corollary}\label{coro-extend-schlafi-formula}
The following extended Schl\"{a}ffli formula is valid on $\mathds{R}^N_{>0}$,
$$d\left(\sum_{i=1}^N\widetilde{K}_ir_i\right)=\sum_{i=1}^N\widetilde{K}_idr_i.$$
\end{corollary}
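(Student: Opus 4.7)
The plan is to read off this corollary directly from Theorem \ref{thm-tuta-S-C1-convex} together with the pointwise Schl\"affli identity (\ref{formula-schlaf-extend}). The key observation is that Theorem \ref{thm-tuta-S-C1-convex} already secures the non-trivial regularity statement, namely that $\widetilde{\mathcal{S}}(r)=\sum_{i}\widetilde{K}_ir_i$ is $C^1$-smooth on all of $\mathds{R}^N_{>0}$, including across the real-analytic boundary separating real packings from the various virtual strata $D_i$. Consequently, the differential $d\widetilde{\mathcal{S}}=\sum_i(\partial\widetilde{\mathcal{S}}/\partial r_i)\,dr_i$ is defined globally, and one only needs to identify each partial derivative with $\widetilde{K}_i$.

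To carry this out, I would first recall the tetrahedron-by-tetrahedron decomposition
$$\widetilde{\mathcal{S}}(r)=4\pi\sum_{i=1}^N r_i-\sum_{\{ijkl\}\in\mathcal{T}_3}\widetilde{\mathcal{S}}_{ijkl}(r_i,r_j,r_k,r_l)$$
used in the proof of Theorem \ref{thm-tuta-S-C1-convex}. Differentiating with respect to a fixed radius $r_p$ and applying (\ref{formula-schlaf-extend}), which asserts $\partial\widetilde{\mathcal{S}}_{ijkl}/\partial r_p=\tilde{\alpha}_{pqst}$ on each of the four regimes $\{Q_{ijkl}>0\}$, $D_p$, and $D_q\cup D_s\cup D_t$ (and hence, by Darboux's theorem on the real-analytic interfaces, on all of $\mathds{R}^4_{>0}$), one obtains
$$\frac{\partial\widetilde{\mathcal{S}}}{\partial r_p}=4\pi-\sum_{\{ijkl\}\in\mathcal{T}_3,\;p\in\{i,j,k,l\}}\tilde{\alpha}_{pqst}=\widetilde{K}_p,$$
where the last equality is simply the definition (\ref{Def-extend-curvature}) of the extended curvature. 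Substituting this back into $d\widetilde{\mathcal{S}}$ gives the claimed identity.

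There is really no main obstacle at this stage: the hard content, which is the $C^1$ extension across the degenerate locus and the validity of the pointwise formula (\ref{formula-schlaf-extend}) on $\mathds{R}^4_{>0}$, has already been absorbed into Theorem \ref{thm-tuta-S-C1-convex}. The only mild subtlety worth checking in writing up the proof is the consistent bookkeeping of the labeling $\tilde{\alpha}_{pqst}$ versus $\tilde{\alpha}_{ijkl}$ when summing over tetrahedra incident to $p$, to make sure the coefficient of $dr_p$ reassembles precisely into $\widetilde{K}_p$ rather than some reordered analogue. Once that is verified the corollary follows in one line.
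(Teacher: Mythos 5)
Your proposal is correct and follows exactly the route the paper intends: the corollary is stated without a separate proof precisely because it falls out of the proof of Theorem \ref{thm-tuta-S-C1-convex}, where the identity $\partial\widetilde{\mathcal{S}}_{ijkl}/\partial r_p=\tilde{\alpha}_{pqst}$ is established on all of $\mathds{R}^4_{>0}$ and combined with the decomposition $\widetilde{\mathcal{S}}=4\pi\sum_i r_i-\sum_{\{ijkl\}}\widetilde{\mathcal{S}}_{ijkl}$. Your bookkeeping of $\partial_{r_p}\widetilde{\mathcal{S}}=4\pi-\sum\tilde{\alpha}_{pqst}=\widetilde{K}_p$ is exactly what is needed, so nothing is missing.
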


Corollary \ref{coro-extend-schlafi-formula} implies that $\nabla_r\widetilde{\mathcal{S}}=\widetilde{K}$. Note $\nabla_r\widetilde{F}=\widetilde{K}$ too, hence we obtain
\begin{corollary}
$\widetilde{\mathcal{S}}(r)=\widetilde F(r)+\mathcal{S}(r_0)$ on $\mathds{R}^N_{>0}$.
\end{corollary}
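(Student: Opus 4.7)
The plan is to deduce this equality from the fact that both $\widetilde{\mathcal{S}}$ and $\widetilde{F}$ are $C^1$-smooth primitives of the same continuous vector field $\widetilde{K}$ on the connected domain $\mathds{R}^N_{>0}$, and then pin down the additive constant by evaluating at the basepoint $r_0$.

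First, I would record the two gradient identities separately. By the construction of $\widetilde{F}$ as a path integral of the closed continuous $1$-form $\sum_i \widetilde{K}_i dr_i$ on the simply connected convex set $\mathds{R}^N_{>0}$, the fundamental theorem for line integrals gives $\nabla_r \widetilde{F} = \widetilde{K}$ pointwise on $\mathds{R}^N_{>0}$. On the other hand, the extended Schl\"affli formula (Corollary \ref{coro-extend-schlafi-formula}) asserts $d\widetilde{\mathcal{S}} = \sum_i \widetilde{K}_i\, dr_i$, which by Theorem \ref{thm-tuta-S-C1-convex} is an equality of continuous $1$-forms on $\mathds{R}^N_{>0}$; hence $\nabla_r \widetilde{\mathcal{S}} = \widetilde{K}$ on the same domain.

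Next, consider the difference $G(r) := \widetilde{\mathcal{S}}(r) - \widetilde{F}(r)$. It is $C^1$-smooth on $\mathds{R}^N_{>0}$ by Theorem \ref{thm-tuta-S-C1-convex} and the $C^1$-regularity of $\widetilde{F}$ produced by Luo's extension (Lemma \ref{lemma-luo's-extension}). Its gradient vanishes identically on the connected open set $\mathds{R}^N_{>0}$, so $G$ is a constant function there.

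Finally, I would evaluate the constant at the basepoint $r_0 \in \mathcal{M}_{\mathcal{T}}$. By definition of a path integral starting at $r_0$, $\widetilde{F}(r_0) = 0$, while $\widetilde{\mathcal{S}}(r_0) = \mathcal{S}(r_0)$ because $\widetilde{K}$ agrees with $K$ on $\mathcal{M}_{\mathcal{T}}$. Therefore $G \equiv \mathcal{S}(r_0)$, which yields the claimed identity $\widetilde{\mathcal{S}}(r) = \widetilde{F}(r) + \mathcal{S}(r_0)$ on all of $\mathds{R}^N_{>0}$. There is no real obstacle here; everything rests on Corollary \ref{coro-extend-schlafi-formula}, whose own proof (the nontrivial step) was the content of Theorem \ref{thm-tuta-S-C1-convex} via the Darboux-type argument matching the two formulas for $\partial\widetilde{\mathcal{S}}_{ijkl}/\partial r_p$ across the boundary $\{Q_{ijkl}=0\}$.
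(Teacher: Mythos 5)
Your proof is correct and takes essentially the same route as the paper: both arguments combine $\nabla_r\widetilde{\mathcal{S}}=\widetilde{K}$ (from Corollary \ref{coro-extend-schlafi-formula}) with $\nabla_r\widetilde{F}=\widetilde{K}$ to conclude that the difference is constant on the connected open set $\mathds{R}^N_{>0}$, and the constant is $\mathcal{S}(r_0)$ since $\widetilde{F}(r_0)=0$ and $\widetilde{\mathcal{S}}(r_0)=\mathcal{S}(r_0)$. Your write-up merely makes explicit the evaluation at the basepoint, which the paper leaves implicit.
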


Denote $K(\mathcal{M}_{\mathcal{T}})$ by the image set of the curvature map $K: \mathcal{M}_{\mathcal{T}}\rightarrow \mathds{R}^N$. Using the extended Cooper-Rivin functional, we get the following

\begin{theorem}\label{thm-extend-xu-rigid}
(alternativenss) For each $\bar{K}\in K(\mathcal{M}_{\mathcal{T}})$, up to scaling, $\bar{K}$ is realized by a unique (real or virtual) ball packing in $\mathds{R}^N_{>0}$. In other words, there holds
\begin{equation}
K(\mathcal{M}_{\mathcal{T}})\cap K(\mathds{R}^N_{>0}\setminus\mathcal{M}_{\mathcal{T}})=\emptyset.
\end{equation}
\end{theorem}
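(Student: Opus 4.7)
The plan is to leverage the $C^1$ convexity of the extended Cooper-Rivin functional $\widetilde{\mathcal{S}}$ on $\mathds{R}^N_{>0}$ (Theorem \ref{thm-tuta-S-C1-convex}) together with the strict convexity of $\mathcal{S}$ on $\mathcal{M}_{\mathcal{T}}\cap\{\|r\|_{l^1}=1\}$ (Lemma \ref{Lemma-Lambda-positive}). Suppose for contradiction that $\bar K = K(r_1)=\widetilde K(r_2)$ for some $r_1\in\mathcal{M}_{\mathcal{T}}$ and some $r_2\in\mathds{R}^N_{>0}$ that is not a positive scalar multiple of $r_1$. By scale-invariance of $\widetilde K$, I may rescale both so that $\|r_1\|_{l^1}=\|r_2\|_{l^1}=1$; then $r_1\ne r_2$ in the convex set $\mathcal H=\{r\in\mathds{R}^N_{>0}:\|r\|_{l^1}=1\}$.

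On the line segment $r_t=(1-t)r_1+tr_2\subset\mathcal H$, $t\in[0,1]$, I would introduce the auxiliary function
$$\phi(t)=\widetilde{\mathcal{S}}(r_t)-\sum_{i=1}^N \bar K_i\,r_{t,i}.$$
Since $\widetilde{\mathcal{S}}$ is convex and the subtracted term is linear in $r$, $\phi$ is convex in $t$. Using the extended Schläfli formula (Corollary \ref{coro-extend-schlafi-formula}), $\nabla_r\widetilde{\mathcal{S}}=\widetilde K$, so
$$\phi'(t)=\sum_{i=1}^N\bigl(\widetilde K_i(r_t)-\bar K_i\bigr)(r_{2,i}-r_{1,i}),$$
which vanishes at $t=0$ and $t=1$ by the assumption $\widetilde K(r_1)=\widetilde K(r_2)=\bar K$. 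Since $\phi'$ is nondecreasing (convexity) and vanishes at both endpoints of $[0,1]$, it vanishes identically, so $\phi$ is constant on $[0,1]$.

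On the other hand, $\mathcal{M}_{\mathcal{T}}$ is open and contains $r_1$, so there is $\delta>0$ with $r_t\in\mathcal{M}_{\mathcal{T}}$ for all $t\in[0,\delta)$. On this sub-interval $\widetilde{\mathcal{S}}(r_t)=\mathcal{S}(r_t)$, and by Lemma \ref{Lemma-Lambda-positive} the restriction $\mathcal{S}|_{\mathcal H}$ is \emph{strictly} convex, so $\phi(t)=\mathcal{S}(r_t)-\sum_i \bar K_i r_{t,i}$ is strictly convex in $t$ on $[0,\delta)$. This contradicts the constancy of $\phi$ and forces $r_2=c\,r_1$ for some $c>0$. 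Since $\mathcal{M}_{\mathcal{T}}$ is scale-invariant, $r_2\in\mathcal{M}_{\mathcal{T}}$, giving both the uniqueness-up-to-scaling statement and the disjointness $K(\mathcal{M}_{\mathcal{T}})\cap\widetilde K(\mathds{R}^N_{>0}\setminus\mathcal{M}_{\mathcal{T}})=\emptyset$.

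The main obstacle is reconciling two different flavors of convexity: $\widetilde{\mathcal{S}}$ is only globally convex, not strictly so, so one cannot directly conclude uniqueness from the extended functional alone. The trick is that the segment between any two critical points on the $l^1$-unit hyperplane must hit the open set $\mathcal{M}_{\mathcal{T}}$ near $r_1$, where strict convexity kicks in and yields the contradiction. A small technical point to verify carefully is the identity $\phi'(t)=\sum_i(\widetilde K_i(r_t)-\bar K_i)(r_{2,i}-r_{1,i})$, which uses Corollary \ref{coro-extend-schlafi-formula} and the $C^1$-smoothness in Theorem \ref{thm-tuta-S-C1-convex} across the boundary between $\mathcal{M}_{\mathcal{T}}$ and the virtual region.
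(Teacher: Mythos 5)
Your proposal is correct and follows essentially the same route as the paper: the authors also form the functional $\widetilde{\mathcal{S}}_p(r)=\sum_i(\widetilde K_i-\bar K_i)r_i$ (your $\phi$ up to notation), restrict it to the segment joining the two packings in the hyperplane $\{\|r\|_{l^1}=1\}$, observe that its derivative vanishes at both endpoints and is monotone by global convexity of $\widetilde{\mathcal{S}}$, hence is identically zero, and then derive a contradiction from the strict convexity of $\mathcal{S}$ on the open piece of the segment near the real packing. The only cosmetic difference is that you treat uniqueness among all of $\mathds{R}^N_{>0}$ in one stroke, whereas the paper phrases the argument as excluding a virtual packing with the same curvature.
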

\begin{proof}
We need to show any virtual ball packing can not have curvature $\bar{K}$. If not, assume $\bar{r}'$ is a virtual ball packing with curvature $\bar{K}$. Let $\bar{r}$ be the unique (up to scaling) real ball packing with curvature $\bar{K}$. We may well suppose $\|\bar{r}\|_{l^1}=\|\bar{r}'\|_{l^1}=1$. Now we consider the functional $\mathcal{S}_p(r)=\sum_{i=1}^N(K_i-\bar{K}_i)r_i$, which has a natural extension
$$\widetilde{\mathcal{S}}_p(r)=\sum_{i=1}^N(K_i-\bar{K}_i)r_i.$$
Set $\varphi(t)=\widetilde{\mathcal{S}}_p(\bar{r}+t(\bar{r}'-\bar{r}))$,
then we see $\varphi'(0)=\varphi'(1)=0$. Note $\widetilde{\mathcal{S}}_p(r)$ is convex when constricted to the hyperplane $\{r:\|r\|_{l^1}=1\}$, hence $\varphi'(t)$ is monotone increasing. This leads to $\varphi'(t)\equiv0$ for any $t\in[0,1]$. For some small $\epsilon>0$, the functional $\widetilde{\mathcal{S}}_p(r)=\mathcal{S}_p(r)$ is strictly convex when constricted to $B(\bar{r},\epsilon\|\bar{r}'-\bar{r}\|)\cap\{r:\|r\|_{l^1}=1\}$, and for any $t\in[0,\epsilon)$, the ball packing $\bar{r}+t(\bar{r}'-\bar{r})$ is real. Hence $\varphi'(t)$ is strictly monotone increasing for $t\in[0,\epsilon)$, which contradicts with $\varphi'(1)=0$. Thus we get the conclusion above.
\end{proof}

\begin{remark}
\label{Lemma-xu-rigidity}
Similar to the proof of Theorem \ref{thm-extend-xu-rigid}, one may prove Xu's global rigidity \cite{Xu}, i.e. a ball packing is determined by its combinatorial scalar curvature $K$ up to scaling. Consequently, the ball packing with constant curvature (if it exists) is unique up to scaling.
\end{remark}

Theorem \ref{thm-extend-xu-rigid} and its proof have the following interesting corollaries.

\begin{corollary}
There can't be both a real and a virtual packing with constant curvature. Moreover, the set of all constant curvature virtual packings is a convex set in $\mathds{R}^N_{>0}$.
\end{corollary}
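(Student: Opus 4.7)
The plan is to adapt the argument used in the proof of Theorem \ref{thm-extend-xu-rigid} just above, replacing the fixed curvature vector $\bar{K}$ there by a constant multiple of the all-ones vector $\mathbf{1}$, and to exploit two key ingredients: (i) $\widetilde{\mathcal{S}}$ is $C^1$-smooth and convex on $\mathds{R}^N_{>0}$ with gradient $\widetilde{K}$ (Theorem \ref{thm-tuta-S-C1-convex} together with Corollary \ref{coro-extend-schlafi-formula}); and (ii) $\widetilde{\mathcal{S}}$ is strictly convex when restricted to $\mathcal{M}_{\mathcal{T}}\cap\{r:\|r\|_{l^1}=1\}$, since there it agrees with Cooper--Rivin's $\mathcal{S}$ (Lemma \ref{Lemma-Lambda-positive}).

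For the first assertion I argue by contradiction: assume $\hat{r}\in\mathcal{M}_{\mathcal{T}}$ satisfies $K(\hat{r})=c\mathbf{1}$ and $\hat{r}'\in\mathds{R}^N_{>0}\setminus\mathcal{M}_{\mathcal{T}}$ satisfies $\widetilde{K}(\hat{r}')=c'\mathbf{1}$. By scale invariance of the (extended) curvature I normalize $\|\hat{r}\|_{l^1}=\|\hat{r}'\|_{l^1}=1$ and consider $\varphi(t)=\widetilde{\mathcal{S}}(\hat{r}+t(\hat{r}'-\hat{r}))$ on $t\in[0,1]$. The whole segment lies on the unit $l^1$-hyperplane and $\langle\mathbf{1},\hat{r}'-\hat{r}\rangle=0$, so the extended Schl\"{a}ffli formula gives $\varphi'(0)=\varphi'(1)=0$; convexity of $\widetilde{\mathcal{S}}$ along the segment forces $\varphi'$ to be nondecreasing, hence $\varphi'\equiv 0$. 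But $\hat{r}$ is an interior point of $\mathcal{M}_{\mathcal{T}}$, so for some $\epsilon>0$ the initial portion $\{\hat{r}+t(\hat{r}'-\hat{r}):t\in[0,\epsilon)\}$ lies inside $\mathcal{M}_{\mathcal{T}}\cap\{\|r\|_{l^1}=1\}$, where strict convexity makes $\varphi'$ strictly increasing -- the required contradiction.

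For the second assertion I introduce the linear perturbation $\widetilde{\mathcal{S}}_c(r)=\widetilde{\mathcal{S}}(r)-c\|r\|_{l^1}$, which is $C^1$-smooth and convex on $\mathds{R}^N_{>0}$ with gradient $\widetilde{K}(r)-c\mathbf{1}$. Let $\hat{r}_1,\hat{r}_2$ be two virtual constant-curvature packings with $\widetilde{K}(\hat{r}_i)=c_i\mathbf{1}$. Since $\hat{r}_1$ is a critical point of the convex $\widetilde{\mathcal{S}}_{c_1}$ it is a global minimizer; evaluating at $\hat{r}_2$ yields $(c_2-c_1)\|\hat{r}_2\|_{l^1}\geq 0$, and the symmetric estimate $(c_1-c_2)\|\hat{r}_1\|_{l^1}\geq 0$ forces $c_1=c_2=:c$. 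Then $\hat{r}_1,\hat{r}_2$ are both global minima of $\widetilde{\mathcal{S}}_c$, and by convexity the whole segment $r(t)=(1-t)\hat{r}_1+t\hat{r}_2$ lies in $\mathrm{argmin}\,\widetilde{\mathcal{S}}_c$; since $\widetilde{\mathcal{S}}_c$ is $C^1$ on the open set $\mathds{R}^N_{>0}$, each $r(t)$ is a critical point, i.e. $\widetilde{K}(r(t))=c\mathbf{1}$. Finally, the first assertion (just proved) excludes any $r(t)\in\mathcal{M}_{\mathcal{T}}$, so the entire segment consists of virtual constant-curvature packings, giving convexity of this set.

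The main obstacle is the tension between the merely convex global functional $\widetilde{\mathcal{S}}$ on $\mathds{R}^N_{>0}$ and the strict convexity that holds only on the real stratum $\mathcal{M}_{\mathcal{T}}\cap\{\|r\|_{l^1}=1\}$; indeed $\widetilde{\mathcal{S}}$ fails to be $C^2$ across $\partial\mathcal{M}_{\mathcal{T}}$ by Example \ref{example-only-c-zero}, so one cannot argue via a global Hessian. The way out in both parts is to localize the strict-convexity argument to an arbitrarily short sub-segment that enters $\mathcal{M}_{\mathcal{T}}$: in Part~1 this produces the contradiction directly, while in Part~2, combined with Part~1, it excludes real points from appearing along the convex combination.
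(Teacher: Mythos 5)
Your proof is correct and follows essentially the same route the paper intends: the corollary is stated as a consequence of Theorem \ref{thm-extend-xu-rigid} ``and its proof,'' and you carry out exactly that adaptation, running the segment argument with $\varphi'(0)=\varphi'(1)=0$, global convexity of $\widetilde{\mathcal{S}}$, and strict convexity on the real stratum near the interior point. Your additional observation that any two constant-curvature packings must share the same constant (via the global minimality of the convex perturbation $\widetilde{\mathcal{S}}_c$) is a correct and worthwhile detail that the paper leaves implicit.
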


The following theorem is a supplement of Theorem \ref{Thm-Q-min-iff-exist-const-curv-metric}.

\begin{theorem}
\label{corollary-Q-2}
Assume there exists a real ball packing $\hat{r}\in \mathcal{M}_{\mathcal{T}}$ with constant curvature.
Then the CRG-functional $\lambda(r)$ has a unique global minimal point in $\mathcal{M}_{\mathcal{T}}$ (up to scaling).
\end{theorem}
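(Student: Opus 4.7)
The plan is to use the extended CRG-functional $\tilde{\lambda}$ to upgrade the local minimality of $\hat{r}$ (already known from Theorem \ref{Thm-Q-min-iff-exist-const-curv-metric}) to a global minimality statement, exploiting the convexity provided by Theorem \ref{thm-tuta-S-C1-convex}. The fundamental obstacle the theorem must overcome is that $\mathcal{M}_{\mathcal{T}}$ itself is not convex, so one cannot directly argue along straight segments inside $\mathcal{M}_{\mathcal{T}}$; the extension technique of Section \ref{section-extend-CR-funct} is precisely what lets us leave $\mathcal{M}_{\mathcal{T}}$, travel along a straight path in $\mathds{R}^N_{>0}$, and re-enter.

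First I would reduce to the normalized hyperplane. Since $\lambda$ (and $\tilde{\lambda}$) is scaling invariant, it suffices to compare values on $H=\{r\in\mathds{R}^N_{>0}:\|r\|_{l^1}=1\}$; rescale so that $\|\hat{r}\|_{l^1}=1$. From $\partial_{r_i}\tilde{\lambda}(r)=(\widetilde{K}_i(r)-\tilde{\lambda}(r))/\|r\|_{l^1}$, valid on all of $\mathds{R}^N_{>0}$ by Corollary \ref{coro-extend-schlafi-formula} and the $C^1$-smoothness of $\widetilde{\mathcal{S}}$, the constant curvature condition $K_i(\hat{r})=\lambda(\hat{r})$ for all $i$ implies $\hat{r}$ is a critical point of $\tilde{\lambda}|_H$.

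Next I would invoke convexity. By Theorem \ref{thm-tuta-S-C1-convex}, $\tilde{\lambda}$ restricted to $H$ is a $C^1$ convex function on the convex open set $H$, so any critical point is a global minimum. Hence $\tilde{\lambda}(r)\ge\tilde{\lambda}(\hat{r})$ for every $r\in H$, and in particular $\lambda(r)\ge\lambda(\hat{r})$ for every real packing $r\in\mathcal{M}_{\mathcal{T}}\cap H$. Combined with scaling invariance, this shows $\lambda(\hat{r})=Y_{\mathcal{T}}$ and that $\hat{r}$ realizes the infimum in $\mathcal{M}_{\mathcal{T}}$.

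Finally I would handle uniqueness up to scaling. Suppose $r'\in\mathcal{M}_{\mathcal{T}}\cap H$ satisfies $\lambda(r')=\lambda(\hat{r})$. Then $r'$ is another global minimum of $\tilde{\lambda}|_H$, so by the criticality condition above, $\widetilde{K}_i(r')=\tilde{\lambda}(r')$ for all $i$, i.e.\ $r'$ has constant extended curvature equal to that of $\hat{r}$. Since $\hat{r}\in\mathcal{M}_{\mathcal{T}}$ is real, Theorem \ref{thm-extend-xu-rigid} (alternativeness) forces $r'$ to be a scaling of $\hat{r}$, and the constraint $\|r'\|_{l^1}=\|\hat{r}\|_{l^1}=1$ then gives $r'=\hat{r}$. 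As a cleaner alternative for the final identification, one may instead note that by convexity of $\tilde{\lambda}|_H$, $\tilde{\lambda}$ must be affine (in fact constant) on the segment $[\hat{r},r']\subset H$, while Lemma \ref{Lemma-Lambda-positive} makes $\lambda=\tilde{\lambda}$ strictly convex on a neighborhood of $\hat{r}$ inside $H\cap\mathcal{M}_{\mathcal{T}}$, again forcing $r'=\hat{r}$. The genuinely subtle point throughout is the non-convexity of $\mathcal{M}_{\mathcal{T}}$; the whole argument succeeds only because the extension $\tilde{\lambda}$ is both globally defined on the convex set $\mathds{R}^N_{>0}$ and convex along $H$.
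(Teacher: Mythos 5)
Your proof is correct and follows essentially the same route as the paper: restrict to the normalized hyperplane, observe that the constant curvature packing is a critical point, invoke the global convexity of the extension from Theorem \ref{thm-tuta-S-C1-convex} to get global minimality, and use local strict convexity (equivalently, the rigidity of Theorem \ref{thm-extend-xu-rigid}) for uniqueness. The only cosmetic difference is that the paper phrases the argument in terms of $\widetilde{\mathcal{S}}$ rather than $\tilde{\lambda}$, which agree up to a constant factor on the hyperplane, as you note.
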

\begin{proof}
We restrict our argument on the hyperplane $\{r\in \mathbb{R}^N: ||r||_{l^1}=||\hat{r}||_{l^1}\}$ on which $\mathcal{S}$ and $\lambda$ differ by a constant. Because $\hat{r}$ has constant curvature, by Lemma \ref{lemma-const-curv-equl-cirtical-point}, $\hat{r}$ is a critical point of $\lambda$. In particular, it is a critical point of $\widetilde{\mathcal{S}}$. Theorem \ref{thm-tuta-S-C1-convex} says $\widetilde{\mathcal{S}}$ is global convex on the above hyperplane. Moreover, $\widetilde{\mathcal{S}}$ is local strictly convex near $\hat{r}$, thus we see $\hat{r}$ is the unique global minimum of $\widetilde{\mathcal{S}}$. In particular, it is a global minimum of $\mathcal{S}$.
\end{proof}

\section{The extended flow}
\subsection{Longtime existence of the extended flow}
\label{section-long-exit-extend-flow}
In this subsection, we prove that the solution to the flow (\ref{Def-r-normal-Yamabe flow}) can always be extended to a solution that exists for all time. The basic idea is based on the continuous extension of $K$ to $\widetilde K$. This idea has appeared in the first two authors' former work \cite{GJ1}-\cite{GJ4}.
\begin{theorem}
\label{thm-yang-write}
Consider the normalized combinatorial Yamabe flow (\ref{Def-r-normal-Yamabe flow}). Let $\{r(t)|t\in[0, T)\}$ be the unique maximal solution with $0<T\leq +\infty$. Then we can always extend it to a solution $\{r(t)|t\in[0,+\infty)\}$ when $T<+\infty$. In other words, for any initial real or virtual ball packing $r(0)\in\mathds{R}^N_{>0}$, the solution to the following extended flow
\begin{equation}
\label{Def-Flow-extended}
r_i'(t)=(\tilde \lambda-\widetilde K_i)r_i
  \end{equation}
exists for all time $t\in[0,+\infty)$.
\end{theorem}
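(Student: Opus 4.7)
The plan is to combine uniform a priori bounds on $r(t)$ with the Peano existence theorem to push the solution across the singular set $\partial\mathcal{M}_\mathcal{T}$ where the extended curvature is only $C^0$.

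First, I would establish a priori bounds. Since every extended solid angle $\tilde\alpha_{ijkl}$ lies in $[0,2\pi]$, the extended curvature $\widetilde{K}_i = 4\pi - \sum_{\{ijkl\}\in\mathcal{T}_3}\tilde\alpha_{ijkl}$ is uniformly bounded by a constant $c(\mathcal{T})$ that depends only on $M$ and the combinatorics of $\mathcal{T}$ (via the vertex degrees). Consequently $|\tilde\lambda|$ is bounded by the same kind of constant, so along any solution of \eqref{Def-Flow-extended} the scalar ODE $r_i'=(\tilde\lambda-\widetilde{K}_i)r_i$ gives
\begin{equation*}
r_i(0)\,e^{-2c(\mathcal{T})t}\le r_i(t)\le r_i(0)\,e^{2c(\mathcal{T})t}.
\end{equation*}
In particular, on any finite interval $r(t)$ is confined to a compact subset of $\mathds{R}^N_{>0}$, so neither the ``$0$ boundary'' nor the ``$+\infty$ boundary'' can be reached in finite time.

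Next, suppose the maximal existence interval of the solution $\{r(t)\}$ starting at $r(0)$ is $[0,T)$ with $T<\infty$. By Step 1 the trajectory lies in a compact set $K\subset \mathds{R}^N_{>0}$ on which $\widetilde{K}$ is continuous and thus bounded. Hence $r'(t)$ is bounded on $[0,T)$, so $r(t)$ is uniformly Lipschitz and extends continuously to a limit point $r(T)\in \mathds{R}^N_{>0}$. Because $\widetilde{K}$ is continuous on $\mathds{R}^N_{>0}$ by Lemma \ref{lemma-xu-extension}, the right-hand side of \eqref{Def-Flow-extended} is a continuous vector field near $r(T)$, so the Peano existence theorem produces a local solution of \eqref{Def-Flow-extended} issuing from $r(T)$ on some interval $[T,T+\delta)$. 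Concatenating this with the earlier trajectory yields a solution of \eqref{Def-Flow-extended} on $[0,T+\delta)$, contradicting the maximality of $T$. Therefore $T=+\infty$.

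The main obstacle is that $\widetilde{K}$ is only $C^0$ at points of $\partial\mathcal{M}_\mathcal{T}$ (Example \ref{example-only-c-zero} shows that even Lipschitz continuity fails there), so the standard Picard--Lindel\"of uniqueness theorem is unavailable at the moment of extension, and one must be content with the Peano existence theorem. This is acceptable for the present statement, which only asserts existence of a solution defined for all $t\ge 0$; what must be verified carefully is that the continuous extension constructed at $r(T)$ really satisfies \eqref{Def-Flow-extended} in the classical sense, which follows from the joint continuity of $(\tilde\lambda-\widetilde{K}_i)r_i$ in $r$. A secondary point worth checking is that if $r(0)\in\mathcal{M}_\mathcal{T}$ then the extended trajectory agrees with the original solution of \eqref{Def-r-normal-Yamabe flow} on $[0,T)$, which is immediate because $\widetilde{K}=K$ and $\tilde\lambda=\lambda$ on $\mathcal{M}_\mathcal{T}$.
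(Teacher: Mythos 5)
Your proposal is correct and follows essentially the same route as the paper: uniform boundedness of $\tilde\lambda-\widetilde K_i$ gives the exponential a priori bounds confining $r(t)$ to a compact subset of $\mathds{R}^N_{>0}$ on finite intervals, and Peano's theorem together with the ODE extension/continuation argument (which you simply spell out rather than cite) yields global existence. The only cosmetic difference is that you unpack the continuation theorem by hand via the limit $r(T)$ and concatenation, which is a fine and slightly more self-contained presentation of the same idea.
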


\begin{proof}
The proof is similar with Proposition \ref{prop-no-finite-time-I-singula}. Since all $\tilde \lambda-\widetilde K_i$ are continuous functions on $\mathds{R}^N_{>0}$, by Peano's existence theorem in classical ODE theory, the extended flow equation (\ref{Def-Flow-extended}) has at least one solution on some interval $[0,\varepsilon)$. By the definition of $\widetilde{K}_i$, all $|\tilde \lambda-\widetilde{K}_i|$ are uniformly bounded by a constant $c(\mathcal{T})>0$, which depends only on the information of the triangulation. Hence $r_i(0)e^{-c(\mathcal{T})t}\leq r_i(t)\leq r_i(0)e^{c(\mathcal{T})t}$,
which implies that $r_i(t)$ can not go to $0$ or $+\infty$ in finite time. Then by the extension theorem of solutions in ODE theory, the solution exists for all $t\geq 0$.
\end{proof}

\begin{remark}
Set $r_i=2/\sqrt{3}-1$, and all other $r_j=1$ for $j\in V$ and $j\neq i$ in the triangulation $\mathcal{T}$. Recall Example \ref{example-only-c-zero}, we easily get $\partial K_i/\partial r_j=-\infty$ for all vertex $j$ with $j\thicksim i$. This implies that $\widetilde K_i(r)$ is generally not Lipschitz continuous at the boundary point of $\mathcal{M}_{\mathcal{T}}$. So we don't know whether the solution $\{r(t)\}_{t\geq0}$ to the extended flow (\ref{Def-Flow-extended}) is unique.
\end{remark}

Similar to Proposition \ref{prop-V-descending} and Proposition \ref{prop-G-bound} we have the following proposition, the proof of which is omitted.

\begin{proposition}\label{prop-extend-flow-descending}
Along the extended Yamabe flow (\ref{Def-Flow-extended}), $\|r(t)\|_{l^1}$ is invariant. The extended Cooper-Rivin functional $\widetilde{\mathcal{S}}(r)$ is descending and bounded. Moreover, the extended CRG-functional $\tilde{\lambda}(r)$ is descending and uniformly bounded.
\end{proposition}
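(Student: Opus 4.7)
The plan is to mirror the computations in Proposition \ref{prop-V-descending} and Proposition \ref{prop-G-bound}, but now using the extended Schl\"affli formula of Corollary \ref{coro-extend-schlafi-formula} so that the chain rule is legitimate on all of $\mathds{R}^N_{>0}$, not just on $\mathcal{M}_{\mathcal{T}}$.

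First, I would verify the $\ell^1$-invariance by a one-line computation: along (\ref{Def-Flow-extended}),
\begin{equation*}
\frac{d}{dt}\|r(t)\|_{l^1}=\sum_{i=1}^N r_i'=\sum_{i=1}^N(\tilde\lambda-\widetilde K_i)r_i=\tilde\lambda\|r\|_{l^1}-\widetilde{\mathcal{S}}=0,
\end{equation*}
using the very definition $\tilde\lambda=\widetilde{\mathcal{S}}/\|r\|_{l^1}$.

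Next, I would differentiate the extended CRG-functional. By Theorem \ref{thm-tuta-S-C1-convex}, $\tilde\lambda$ is $C^1$ on $\mathds{R}^N_{>0}$, and a direct computation (using Corollary \ref{coro-extend-schlafi-formula} to differentiate $\widetilde{\mathcal{S}}$) gives
\begin{equation*}
\partial_{r_i}\tilde\lambda=\frac{\widetilde K_i-\tilde\lambda}{\|r\|_{l^1}}.
\end{equation*}
The chain rule then yields
\begin{equation*}
\frac{d\tilde\lambda(r(t))}{dt}=\sum_{i=1}^N\frac{\widetilde K_i-\tilde\lambda}{\|r\|_{l^1}}\cdot(\tilde\lambda-\widetilde K_i)r_i=-\frac{1}{\|r\|_{l^1}}\sum_{i=1}^N r_i(\widetilde K_i-\tilde\lambda)^2\leq 0,
\end{equation*}
so $\tilde\lambda$ is monotonically non-increasing. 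Uniform boundedness of $\tilde\lambda$ follows from $|\widetilde K_i|\leq 4\pi+\,2\pi\,\#\{\{ijkl\}\in\mathcal{T}_3:i\in\{ijkl\}\}$ (each $\tilde\alpha_{ijkl}\in[0,2\pi]$), which depends only on $\mathcal{T}$ and therefore gives $|\tilde\lambda|\leq\|\widetilde K\|_{l^\infty}\leq c(\mathcal{T})$.

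Finally, the statement for $\widetilde{\mathcal{S}}$ is immediate from the two previous items: since $\|r(t)\|_{l^1}\equiv\|r(0)\|_{l^1}$ is a positive constant along the flow and $\widetilde{\mathcal{S}}=\tilde\lambda\,\|r\|_{l^1}$, the monotone decrease and boundedness of $\tilde\lambda$ transfer to $\widetilde{\mathcal{S}}$, with explicit bound $|\widetilde{\mathcal{S}}(r(t))|\leq c(\mathcal{T})\,\|r(0)\|_{l^1}$. There is no real obstacle here; the only subtlety worth flagging is that each step requires the $C^1$-regularity and the extended Schl\"affli identity from Section \ref{section-extend-CR-funct}, without which the chain rule could fail at points of $\mathds{R}^N_{>0}\setminus\mathcal{M}_{\mathcal{T}}$ where the underlying solid angles are only $C^0$ (cf.\ Example \ref{example-only-c-zero}).
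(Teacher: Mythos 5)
Your proof is correct and is exactly the argument the paper intends: the paper omits the proof of this proposition, stating only that it is "similar to Proposition \ref{prop-V-descending} and Proposition \ref{prop-G-bound}", and your computation reproduces those arguments verbatim with $K,\lambda,\mathcal{S}$ replaced by their extensions, correctly flagging that the chain rule on all of $\mathds{R}^N_{>0}$ is licensed by the $C^1$-regularity of $\widetilde{\mathcal{S}}$ (Theorem \ref{thm-tuta-S-C1-convex}) and the extended Schl\"affli formula (Corollary \ref{coro-extend-schlafi-formula}). No gaps.
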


\subsection{Convergence to constant curvature: general case}
\label{subsection-converg-to-const}
In this section, we prove some convergence results for the extended flow (\ref{Def-Flow-extended}). The following result says that the extended Yamebe flow tends to find real or virtual packings with constant curvature. We omit its proof, since it is similar to Proposition \ref{prop-converg-imply-const-exist}.

\begin{theorem}
\label{thm-extend-flow-converg-imply-exist-const-curv-packing}
If a solution $r(t)$ to the extended flow (\ref{Def-Flow-extended}) converges to some $r_{\infty}\in\mathds{R}^N_{>0}$ as $t\to +\infty$, then $r_{\infty}$ is a constant curvature packing (real or virtual).
\end{theorem}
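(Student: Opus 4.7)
The plan is to mimic the proof of Proposition \ref{prop-converg-imply-const-exist} almost verbatim, with all ingredients replaced by their extended $C^0$ counterparts. The essential inputs available to us are: (a) each $\widetilde{K}_i$ is continuous on $\mathds{R}^N_{>0}$ by Lemma \ref{lemma-xu-extension}; (b) $\tilde{\lambda}$ is continuous on $\mathds{R}^N_{>0}$ and, along the extended flow (\ref{Def-Flow-extended}), it is descending and uniformly bounded by Proposition \ref{prop-extend-flow-descending}; (c) the limit $r_\infty$ lies in $\mathds{R}^N_{>0}$, so each component $r_{\infty,i}>0$ stays bounded away from zero and infinity.

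First I would pass to logarithmic coordinates $u_i=\ln r_i$, under which the extended flow becomes the autonomous system $u_i'=\tilde\lambda-\widetilde{K}_i$. Since $r(t)\to r_\infty\in\mathds{R}^N_{>0}$, the vector $u(t)$ converges to the finite limit $(\ln r_{\infty,1},\dots,\ln r_{\infty,N})$. By the mean value theorem applied to $u_i$ on each interval $[n,n+1]$ (with $n\in\mathbb{N}$), there exists $t_n\in(n,n+1)$, hence $t_n\uparrow+\infty$, such that
\begin{equation*}
u_i(n+1)-u_i(n)=u_i'(t_n)=\tilde\lambda(r(t_n))-\widetilde{K}_i(r(t_n)).
\end{equation*}
The left hand side tends to $0$ because $u_i(n)\to\ln r_{\infty,i}$, while by the continuity of $\widetilde{K}_i$ and $\tilde{\lambda}$ and the fact that $r(t_n)\to r_\infty$, the right hand side converges to $\tilde\lambda(r_\infty)-\widetilde{K}_i(r_\infty)$. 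Therefore $\widetilde{K}_i(r_\infty)=\tilde\lambda(r_\infty)$ for every $i\in V$, which is precisely the statement that $r_\infty$ has constant extended curvature, either as a real packing (if $r_\infty\in\mathcal{M}_\mathcal{T}$) or as a virtual one (if $r_\infty\in\mathds{R}^N_{>0}\setminus\mathcal{M}_\mathcal{T}$).

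There is no real obstacle here: the only subtlety compared with Proposition \ref{prop-converg-imply-const-exist} is that $r_\infty$ may lie on the ``tetrahedron collapsing boundary'' $\partial\mathcal{M}_\mathcal{T}$, but the whole point of the extension machinery of Section \ref{section-extend-K} and Section \ref{section-extend-solid-angle} is that $\widetilde{K}_i$ and $\tilde{\lambda}$ remain continuous across this boundary. The hypothesis $r_\infty\in\mathds{R}^N_{>0}$ rules out the other two types of boundary (the $0$ boundary and the $+\infty$ boundary), which is exactly what we need to keep the $u_i$-coordinate argument legitimate.
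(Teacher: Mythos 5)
Your proposal is correct and is exactly the argument the paper intends: the authors omit the proof of this theorem precisely because it is the proof of Proposition \ref{prop-converg-imply-const-exist} repeated verbatim with $K_i$, $\lambda$ replaced by their continuous extensions $\widetilde{K}_i$, $\tilde{\lambda}$, which is what you carry out. Your added remarks — that the mean value theorem argument in the $u_i$-coordinates remains legitimate because $r_\infty\in\mathds{R}^N_{>0}$ excludes the $0$ and $+\infty$ boundaries, and that continuity of $\widetilde{K}_i$ across the tetrahedron-collapsing boundary is the only new ingredient needed — correctly identify the (minor) points that make the adaptation go through.
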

\begin{remark}
$r_{\infty}$ may be a virtual packing. One can't exclude this case generally.
\end{remark}

\begin{definition}
\label{def-tuta-xi}
Given a triangulated manifold $(M^3, \mathcal{T})$, let $\hat{r}$ be a real ball packing with constant curvature. We introduce an extended combinatorial invariant with respect to the triangulation $\mathcal{T}$ as
\begin{equation}
\tilde{\chi}(\hat{r},\mathcal{T})=\inf\limits_{\gamma\in{\mathbb{S}}^{N-1};\|\gamma\|_{l^1}=0\;}\sup\limits_{0\leq t< \tilde{a}_\gamma}\tilde{\lambda}(\hat{r}+t\gamma),
\end{equation}
where $\tilde{a}_{\gamma}$ is the least upper bound of $t$ such that $\hat{r}+t\gamma\in \mathds{R}_{>0}^N$.
\end{definition}

As is explained in the paragraph before Theorem \ref{Thm-xi-invariant-imply-converg}, the extended invariant $\tilde{\chi}(\hat{r},\mathcal{T})$ is also a pure combinatorial-topological invariant. Obviously,
\begin{equation}
\tilde{\chi}(\hat{r},\mathcal{T})>\chi(\hat{r},\mathcal{T}).
\end{equation}
Similar to Theorem \ref{Thm-xi-invariant-imply-converg}, we have
\begin{theorem}\label{Thm-tuta-xi-invariant-imply-converg}
Assume $\hat{r}$ is a real ball packing with constant curvature. Moreover,
\begin{equation}
\tilde{\lambda}(r(0))\leq\tilde{\chi}(\hat{r},\mathcal{T}).
\end{equation}
Then the solution to the extended normalized flow (\ref{Def-Flow-extended}) exists for all time $t\in[0,+\infty)$ and converges exponentially fast to a real ball packing with constant curvature.
\end{theorem}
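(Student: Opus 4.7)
The plan is to mimic the proof of Theorem \ref{Thm-xi-invariant-imply-converg}, but with the extended functional $\tilde\lambda$ and the extended flow in place of their non-extended counterparts, so that the enlarged invariant $\tilde\chi$ controls the orbit inside all of $\mathds{R}_{>0}^N$ rather than merely inside $\mathcal{M}_{\mathcal{T}}$. Long-time existence is free from Theorem \ref{thm-yang-write}, so the task reduces entirely to producing exponential convergence to a real constant-curvature packing. By the scaling invariance of the flow I normalize $\|r(0)\|_{l^1}=\|\hat r\|_{l^1}$; Proposition \ref{prop-extend-flow-descending} then traps $r(t)$ on the hyperplane $H=\{r:\|r\|_{l^1}=\|\hat r\|_{l^1}\}$ throughout, and a direct $C^1$-calculation using $\partial_{r_i}\widetilde{\mathcal S}=\widetilde K_i$ (Corollary \ref{coro-extend-schlafi-formula}) yields the dissipation identity
\begin{equation*}
\tilde\lambda'(t) = -\frac{1}{\|r\|_{l^1}}\sum_i r_i\bigl(\widetilde K_i - \tilde\lambda\bigr)^2 \le 0.
\end{equation*}
If $r(0)=\hat r$ the flow is stationary and there is nothing to prove; otherwise $\tilde\lambda'(0)<0$ produces some $\varepsilon>0$ and $c_0<\tilde\chi(\hat r,\mathcal{T})$ with $\tilde\lambda(r(t))\le c_0$ for all $t\ge\varepsilon$.

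The key step is to show that the sublevel set $L=\{r\in\mathds{R}_{>0}^N\cap H : \tilde\lambda(r)\le c_0\}$ is compactly contained in $\mathds{R}_{>0}^N$. Here Theorem \ref{thm-tuta-S-C1-convex} is decisive: $\tilde\lambda|_H$ is convex and continuous, hence $L$ is closed and convex, and since $\hat r$ minimizes $\tilde\lambda|_H$ the map $t\mapsto\tilde\lambda(\hat r+t\gamma)$ is monotone non-decreasing on every ray from $\hat r$ in $H$. Suppose a sequence in $L$ accumulated at $r^*\in\partial\mathds{R}_{>0}^N\cap H$; writing $r^*=\hat r+t^*\gamma^*$ one checks linearly in coordinates that $t^*=\tilde a_{\gamma^*}$. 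The definition of $\tilde\chi$ forces $\sup_{0\le s<\tilde a_{\gamma^*}}\tilde\lambda(\hat r+s\gamma^*)\ge\tilde\chi>c_0$, so by monotonicity some $s^*<t^*$ satisfies $\tilde\lambda(\hat r+s^*\gamma^*)>c_0$; continuity of $\tilde\lambda$ in the direction, combined with the same monotonicity applied to the nearby rays $\hat r+s\gamma_n$, would then force $\tilde\lambda$ of the accumulating sequence strictly above $c_0$, contradicting membership in $L$. Therefore $\{r(t)\}_{t\ge\varepsilon}\subset L\Subset\mathds{R}_{>0}^N$.

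Compactness plus the dissipation identity now produces $t_n\to\infty$ and $r_\infty\in L$ with $r(t_n)\to r_\infty$ and $\tilde\lambda'(t_n)\to0$; since $r_i(t_n)$ is bounded below by a positive constant on $L$, this forces $\widetilde K_i(r_\infty)=\tilde\lambda(r_\infty)$ for every $i$, so $r_\infty$ is a constant (extended) curvature packing. Because the real constant-curvature packing $\hat r$ exists, the Corollary after Theorem \ref{thm-extend-xu-rigid} rules out any virtual constant-curvature packing, so $r_\infty\in\mathcal{M}_{\mathcal{T}}$; Xu's global rigidity (Remark \ref{Lemma-xu-rigidity}) together with $\|r_\infty\|_{l^1}=\|\hat r\|_{l^1}$ then gives $r_\infty=\hat r$. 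Once some $r(t_n)$ lies in a small neighborhood of $\hat r$ inside $\mathcal{M}_{\mathcal{T}}$, where the extended flow coincides with (\ref{Def-r-normal-Yamabe flow}), the asymptotic stability of Lemma \ref{Thm-3d-isolat-const-alpha-metric} upgrades subsequential convergence to exponential convergence of the full orbit.

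The principal obstacle is the compactness claim in the second paragraph. Because $\mathcal{M}_{\mathcal{T}}$ is non-convex and $\tilde a_\gamma$ is not uniformly bounded below as the direction varies, a naive level-set argument inside $\mathcal{M}_{\mathcal{T}}$ does not suffice; one really needs the global convexity of $\tilde\lambda|_H$ from Theorem \ref{thm-tuta-S-C1-convex} and the strict gap $c_0<\tilde\chi$ produced by the initial strict descent in order to prevent the orbit from accumulating on the coordinate hyperplanes, and only after this is in hand do the rigidity and alternativeness results of Section 4 pick out the real limit $\hat r$.
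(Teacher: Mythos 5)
Your proof is correct and follows essentially the same strategy as the paper's: strict initial descent of $\tilde\lambda$ unless the flow starts at a constant-curvature packing, trapping of the orbit in a sublevel set strictly below $\tilde\chi(\hat r,\mathcal{T})$, extraction of a subsequential limit with constant extended curvature, the alternativeness theorem to force that limit to be a real packing, and asymptotic stability to upgrade to exponential convergence of the full orbit. Your explicit ray-monotonicity argument showing the sublevel set is compactly contained in $\mathds{R}_{>0}^N$ actually supplies more detail than the paper, which asserts in one sentence that $r(t)$ never touches the boundary and instead extracts the vanishing-dissipation time sequence via the auxiliary functional $\widetilde{\mathcal{S}}_Y=\sum_i(\widetilde K_i-Y_{\mathcal{T}})r_i$; these are cosmetic differences.
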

\begin{proof}
If $\tilde{\lambda}(r(0))\leq \chi(\hat{r},\mathcal{T})$, then Theorem \ref{Thm-xi-invariant-imply-converg} implies the conclusion directly. We may assume $\tilde{\lambda}(r(0))\geq \chi(\hat{r},\mathcal{T})>Y_{\mathcal{T}}$. Denote $\tilde{\lambda}(t)=\lambda(r(t))$. It's easy to get
\begin{equation}
\label{for-tilde-lambda'}
\tilde{\lambda}'(t)=-\frac{1}{\|r\|_{l^1}}\sum_ir_i(\widetilde{K}_i-\tilde{\lambda})^2\leq0.
\end{equation}
We show $\tilde{\lambda}'(0)<0$. Otherwise $\tilde{\lambda}'(0)=0$ by (\ref{for-tilde-lambda'}), and $\widetilde{K}_i=\tilde{\lambda}$ for all $i\in V$. Hence the real or virtual packing $r(0)$ has constant curvature. By Theorem \ref{thm-extend-xu-rigid} the alternativeness, $r(0)$ must be a real ball packing. Hence by Theorem \ref{corollary-Q-2}, we get $\tilde{\lambda}(r(0))=Y_{\mathcal{T}}$, which is a contradiction. Hence $\tilde{\lambda}'(0)<0$. Therefore, $\tilde{\lambda}(r(t))$ is strictly descending along (\ref{Def-Flow-extended}) for at least a small time interval $t\in[0,\epsilon)$. Thus $r(t)$ will never touche the boundary of $\mathds{R}_{>0}^N$ along (\ref{Def-Flow-extended}). By classical ODE theory, the solution $r(t)$ exists for all time $t\in[0,+\infty)$. Moreover, $\{r(t)\}_{t\geq0}$ is compactly supported in $\mathds{R}_{>0}^N$. Consider the functional $\widetilde{\mathcal{S}}_Y=\sum_i(\widetilde{K}_i-Y_{\mathcal{T}})r_i$. By the extended Schl\"{a}ffli formula in Corollary \ref{coro-extend-schlafi-formula}, we get $d\widetilde{\mathcal{S}}_Y=\sum_i(\widetilde{K}_i-Y_{\mathcal{T}})dr_i$. As a consequence, along (\ref{Def-Flow-extended}) we have
$$\widetilde{\mathcal{S}}'_Y(t)=-\|r\|^{-1}_{l^1}\sum_ir_i(\widetilde{K}_i-\tilde{\lambda})^2\leq0.$$
Hence then $\widetilde{\mathcal{S}}_Y(t)$ is descending. Because $\widetilde{\mathcal{S}}_Y\geq0$, so $\widetilde{\mathcal{S}}_Y(+\infty)$ exists. Hence there is a time sequence $t_n\uparrow+\infty$, such that
$$\widetilde{\mathcal{S}}'_Y(t_n)=\widetilde{\mathcal{S}}_Y(n+1)-\widetilde{\mathcal{S}}_Y(n)\rightarrow0.$$
Note $\tilde{\lambda}(+\infty)$ exists by (\ref{for-tilde-lambda'}) and Proposition \ref{prop-extend-flow-descending}. It follows that
$$r_i(t_n)(\widetilde{K}_i(t_n)-\tilde{\lambda}(+\infty))\rightarrow0$$
at each vertex $i\in V$. Because $\{r(t)\}_{t\geq0}$ is compactly supported in $\mathds{R}_{>0}^N$, we may choose a subsequence of $\{t_n\}_{n\geq1}$, which is still denoted as $\{t_n\}_{n\geq1}$ itself, so that $r(t_n)\rightarrow r^*\in\mathds{R}_{>0}^N$. Because the extended curvature $\widetilde{K}_i$ is continuous, so
$\widetilde{K}_i(t_n)\rightarrow\widetilde{K}_i(r^*)$. This implies that $r^*$ is a real or virtual ball packing with constant curvature.  By Theorem \ref{thm-extend-xu-rigid} the alternativeness, $r^*$ must be a real ball packing. Because $r^*$ is asymptotically stable (see Lemma \ref{Thm-3d-isolat-const-alpha-metric}), and $r(t)$ goes to $r^*$ along the time sequence $\{t_n\}$, then $r(t)$ converges to $r^*$ as $t$ goes to $+\infty$. By Lemma \ref{Lemma-ODE-asymptotic-stable}, the convergence rate of $r(t)$ is exponential.
\end{proof}



\begin{definition}
The extended combinatorial Yamabe invariant $\widetilde{Y}_{\mathcal{T}}$ (with respect to a triangulation $\mathcal{T}$) is
\begin{equation}
\widetilde{Y}_{\mathcal{T}}=\inf_{r\in \mathds{R}_{>0}^N} \tilde{\lambda}(r)=\inf_{r\in \mathds{R}_{>0}^N}\frac{\sum_{i=1}^N \tilde K_i r_i}{\sum_{i=1}^N r_i}.
\end{equation}
\end{definition}

From the definition of $\widetilde{Y}_{\mathcal{T}}$, we see $\widetilde{Y}_{\mathcal{T}}\leq Y_{\mathcal{T}}$. Moreover, by Corollary \ref{corollary-Q-2} we have
\begin{corollary}\label{Thm-Y-equal-tuta-Y}
Assume there exists a real packing $\hat{r}\in \mathcal{M}_{\mathcal{T}}$ with constant curvature. Then
\begin{equation}
\widetilde{Y}_{\mathcal{T}}=Y_{\mathcal{T}}.
\end{equation}
\end{corollary}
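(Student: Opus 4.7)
The plan is to establish the nontrivial direction $\widetilde Y_{\mathcal T}\ge Y_{\mathcal T}$, since the reverse inequality $\widetilde Y_{\mathcal T}\le Y_{\mathcal T}$ is immediate from $\mathcal M_{\mathcal T}\subset\mathds R_{>0}^N$ together with the fact that $\tilde\lambda$ restricts to $\lambda$ on $\mathcal M_{\mathcal T}$. The main leverage is the $C^1$-smooth convex extension furnished by Theorem \ref{thm-tuta-S-C1-convex}, which by hypothesis lets us replace a local/non-convex analysis on $\mathcal M_{\mathcal T}$ by a global convex analysis on the hyperplane $H:=\{r\in\mathds R_{>0}^N:\|r\|_{l^1}=\|\hat r\|_{l^1}\}$.

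First I would reduce to a single hyperplane using scaling invariance. The solid angles are scale invariant, hence so is $\widetilde K$, and therefore $\tilde\lambda(cr)=\tilde\lambda(r)$ for every $c>0$ and $r\in\mathds R_{>0}^N$; in particular,
\begin{equation*}
\widetilde Y_{\mathcal T}=\inf_{r\in H}\tilde\lambda(r).
\end{equation*}
Second, I would identify the minimizer on $H$. On $H$ the functionals $\widetilde{\mathcal S}$ and $\tilde\lambda$ differ by the positive multiplicative constant $\|\hat r\|_{l^1}^{-1}$, so they share the same critical points and the same minimizers. By Lemma \ref{lemma-const-curv-equl-cirtical-point}, the constant-curvature packing $\hat r\in\mathcal M_{\mathcal T}\cap H$ is a critical point of $\lambda=\tilde\lambda|_{\mathcal M_{\mathcal T}}$, hence of $\tilde\lambda|_H$. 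By Theorem \ref{thm-tuta-S-C1-convex}, $\tilde\lambda|_H$ is $C^1$ and convex on $H$, so any critical point is a global minimum; thus
\begin{equation*}
\inf_{r\in H}\tilde\lambda(r)=\tilde\lambda(\hat r)=\lambda(\hat r).
\end{equation*}

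Third, I would invoke Theorem \ref{corollary-Q-2} to evaluate $\lambda(\hat r)$. That theorem, under the same hypothesis, identifies $\hat r$ as the global minimum of $\lambda$ on $\mathcal M_{\mathcal T}$, so $\lambda(\hat r)=Y_{\mathcal T}$. Combining the three steps gives $\widetilde Y_{\mathcal T}=\lambda(\hat r)=Y_{\mathcal T}$, finishing the proof.

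The potential obstacle I anticipate is not a deep one but worth stating carefully: one must verify that the critical-point property of $\hat r$ on the smooth locus $\mathcal M_{\mathcal T}$ transfers to the $C^1$-extension $\tilde\lambda$ on all of $H$. This is handled by the fact that $\tilde\lambda$ is genuinely $C^1$ across $\partial\mathcal M_{\mathcal T}$ (Theorem \ref{thm-tuta-S-C1-convex}, via the extended Schl\"afli formula in Corollary \ref{coro-extend-schlafi-formula}), so $\nabla(\tilde\lambda|_H)(\hat r)=\nabla(\lambda|_H)(\hat r)=0$. Once that is noted, convexity on $H$ does the rest, and the non-convexity of $\mathcal M_{\mathcal T}$, which is the usual obstruction to passing from local to global, is bypassed by working entirely with the extended functional.
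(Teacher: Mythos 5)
Your proposal is correct and follows essentially the same route as the paper: the trivial inequality $\widetilde{Y}_{\mathcal{T}}\leq Y_{\mathcal{T}}$ plus the observation that, by the convex $C^1$-extension of Theorem \ref{thm-tuta-S-C1-convex}, the critical point $\hat{r}$ is a global minimizer of $\tilde{\lambda}$ on the hyperplane, which is exactly the content of (the proof of) Theorem \ref{corollary-Q-2} that the paper cites. Your explicit remark that the critical-point property transfers to the extension because $\tilde{\lambda}$ is genuinely $C^1$ across $\partial\mathcal{M}_{\mathcal{T}}$ is a worthwhile clarification, but not a different argument.
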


\begin{conjecture}
Assume $\widetilde{Y}_{\mathcal{T}}=Y_{\mathcal{T}}$, then there exists a ball packing (real or virtual) with constant curvature.
\end{conjecture}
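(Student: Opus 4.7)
The plan is to apply the direct method of the calculus of variations to the extended CRG-functional $\tilde{\lambda}$, which by Theorem~\ref{thm-tuta-S-C1-convex} is $C^1$-smooth on $\mathds{R}^N_{>0}$, scale invariant, and convex when restricted to the hyperplane $H_1:=\{r\in\mathds{R}^N:\|r\|_{l^1}=1\}$. First I would take a minimizing sequence $r^{(n)}\in\mathds{R}^N_{>0}\cap H_1$ with $\tilde{\lambda}(r^{(n)})\to\widetilde{Y}_{\mathcal{T}}$, and by compactness of $\overline{\mathds{R}^N_{\geq 0}}\cap H_1$ extract a convergent subsequence $r^{(n)}\to r^*\in \overline{\mathds{R}^N_{\geq 0}}\cap H_1$. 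The proof then splits according to whether $r^*\in\mathds{R}^N_{>0}$ or $r^*$ lies on the boundary stratum $\{r_i=0\}$.

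In the interior case $r^*\in\mathds{R}^N_{>0}$, continuity of $\tilde{\lambda}$ gives $\tilde{\lambda}(r^*)=\widetilde{Y}_{\mathcal{T}}$, so $r^*$ is a global minimizer of the $C^1$-smooth function $\tilde{\lambda}$ on the open set $\mathds{R}^N_{>0}$. By the extended Schl\"affli formula (Corollary~\ref{coro-extend-schlafi-formula}), $\partial_{r_i}\tilde{\lambda}=(\widetilde{K}_i-\tilde{\lambda})/\|r\|_{l^1}$ throughout $\mathds{R}^N_{>0}$, so a critical point of $\tilde{\lambda}$ is exactly a (real or virtual) constant curvature packing. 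Hence $\widetilde{K}_i(r^*)=\tilde{\lambda}(r^*)$ for every $i$, producing the desired packing. Equivalently one can invoke the Lagrange condition on $H_1$ together with the scale-invariance identity $\sum_i r_i\,\partial_{r_i}\tilde{\lambda}(r)=0$, which forces the Lagrange multiplier to vanish and gives the same conclusion.

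The hard part is the boundary case, where some $r_{i_0}^*=0$. Here I would use the hypothesis $\widetilde{Y}_{\mathcal{T}}=Y_{\mathcal{T}}$ to derive a contradiction. Set $I=\{i:r_i^*=0\}$ and $J=V\setminus I$. For large $n$, every tetrahedron $\{ijkl\}$ with $i\in I$ and $r_j^{(n)},r_k^{(n)},r_l^{(n)}$ bounded away from $0$ eventually lies in the virtual cone $D_i$ by Lemma~\ref{lemma-Di-imply-ri-small}, so $\tilde{\alpha}_{ijkl}(r^{(n)})\to 2\pi$ and the other three solid angles tend to $0$. This lets one compute the limit $\widetilde{Y}_{\mathcal{T}}=\lim_n\tilde{\lambda}(r^{(n)})$ as a type of reduced averaged curvature supported on $J$. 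One would then aim to construct a real packing $\bar r\in\mathcal{M}_{\mathcal{T}}$ with $\lambda(\bar r)<\widetilde{Y}_{\mathcal{T}}$ by choosing $r_i=\varepsilon$ for $i\in I$ (small enough to preserve $Q_{ijkl}>0$ on every tetrahedron meeting $I$) and perturbing on $J$ in a direction of strict decrease; such a packing would contradict $Y_{\mathcal{T}}=\widetilde{Y}_{\mathcal{T}}$.

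The principal obstacle is that $\widetilde{K}$ extends continuously across $\{r_i=0\}$, so $\tilde{\lambda}$ does not blow up at the boundary and no naive coercivity is available to rule out boundary minimizers. A quantitatively sharp perturbation must be constructed from the combinatorics of the star of $I$ in $\mathcal{T}$, and it seems plausible that additional combinatorial hypotheses (in the spirit of the earlier bounded-degree-variation conjecture in the introduction) are ultimately required to make this step work in full generality; this may well be the reason the assertion is posed only as a conjecture.
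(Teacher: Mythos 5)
This statement is posed in the paper as an open conjecture; the authors give no proof, so there is nothing to compare your attempt against, and your attempt must stand on its own. It does not: the interior case of your dichotomy is correct but is exactly the implication $(3)\Rightarrow(1)$ of Theorem \ref{Thm-tuta-yamabe-invarint}, already proved in the paper, and notably it never uses the hypothesis $\widetilde{Y}_{\mathcal{T}}=Y_{\mathcal{T}}$. The entire content of the conjecture is therefore concentrated in your boundary case, which is where the argument breaks down.

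Two concrete problems. First, the reduction of the degenerate limit via the virtual cones is incomplete: Lemma \ref{lemma-Di-imply-ri-small} only forces a tetrahedron into $D_i$ when $r_i^{(n)}\to 0$ while the other three radii stay bounded away from $0$. If two vertices $i,j$ of the same tetrahedron both lie in your set $I$ and $r_i^{(n)},r_j^{(n)}$ shrink at comparable rates, then $Q_{ijkl}=(\sum 1/r)^2-2\sum 1/r^2\to+\infty$ and the tetrahedron stays real all the way to the boundary, so the solid angles need not converge to $0$ or $2\pi$ and your ``reduced averaged curvature supported on $J$'' is not well defined. Second, and more seriously, the contradiction you aim for is unreachable in principle: you propose to construct a real packing $\bar r\in\mathcal{M}_{\mathcal{T}}$ with $\lambda(\bar r)<\widetilde{Y}_{\mathcal{T}}$, but since $\lambda=\tilde\lambda$ on $\mathcal{M}_{\mathcal{T}}\subset\mathds{R}^N_{>0}$ and $\widetilde{Y}_{\mathcal{T}}=\inf_{\mathds{R}^N_{>0}}\tilde\lambda$, every real packing satisfies $\lambda(\bar r)\geq\widetilde{Y}_{\mathcal{T}}$ unconditionally. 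No such competitor can exist under any hypothesis, so this route cannot produce a contradiction from the assumption that all minimizing sequences degenerate, and the hypothesis $\widetilde{Y}_{\mathcal{T}}=Y_{\mathcal{T}}$ is never actually brought to bear. What is missing — and what the paper itself identifies as the obstruction (the failure of properness of $\widetilde{\mathcal{S}}$ on the hyperplane $\{\sum_i r_i=1\}$, see the end of Section \ref{section-regular-converge}) — is a genuine coercivity or exclusion argument at the boundary strata $\{r_i=0\}$; you are right to suspect that this is why the assertion remains a conjecture.
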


We say the extended combinatorial Yamabe invariant $\widetilde{Y}_{\mathcal{T}}$ is \emph{attainable} if the extended CRG-functional $\tilde{\lambda}(r)$ has a global minimum in $\mathds{R}^N_{>0}$. Similar to Theorem \ref{Thm-Q-min-iff-exist-const-curv-metric} and Theorem
\ref{corollary-Q-2}, we have
\begin{theorem}\label{Thm-tuta-yamabe-invarint}
Given a triangulated manifold $(M^3, \mathcal{T})$, the following four descriptions are mutually equivalent.
\begin{enumerate}
  \item There exists a real or virtual ball packing $\hat{r}$ with constant curvature.
  \item The extended CRG-functional $\tilde{\lambda}(r)$ has a local minimum in $\mathds{R}^N_{>0}$.
  \item The extended CRG-functional $\tilde{\lambda}(r)$ has a global minimum in $\mathds{R}^N_{>0}$.
  \item The extended Yamabe invariant $\widetilde{Y}_{\mathcal{T}}$ is attainable by a real or virtual ball packing.
\end{enumerate}
Moreover, if $\widetilde{Y}_{\mathcal{T}}$ is attained by a virtual packing, the set of virtual packings that realized $\widetilde{Y}_{\mathcal{T}}$ equals to the set of constant curvature virtual packings, which form a convex set in $\mathds{R}^N_{>0}$.
\end{theorem}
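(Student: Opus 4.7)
The structure follows that of Theorem \ref{Thm-Q-min-iff-exist-const-curv-metric}, but the extension machinery of Section \ref{section-extend-CR-funct} now lets us promote the local statement to a global one. The implications (3)$\Leftrightarrow$(4) and (3)$\Rightarrow$(2) are tautological from the definition of $\widetilde{Y}_{\mathcal{T}}$. For (2)$\Rightarrow$(1), since $\tilde{\lambda}\in C^1(\mathds{R}^N_{>0})$ by Theorem \ref{thm-tuta-S-C1-convex} and $\mathds{R}^N_{>0}$ is open, any local minimum $\hat{r}$ satisfies $\partial_{r_i}\tilde{\lambda}(\hat{r})=0$. The extended Schl\"affli formula (Corollary \ref{coro-extend-schlafi-formula}) gives the identity $\partial_{r_i}\tilde{\lambda}=(\widetilde{K}_i-\tilde{\lambda})/\|r\|_{l^1}$ throughout $\mathds{R}^N_{>0}$, forcing $\widetilde{K}_i(\hat{r})=\tilde{\lambda}(\hat{r})$ for every $i$, i.e.\ $\hat{r}$ has constant curvature.

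The core step is (1)$\Rightarrow$(3). Given a real or virtual packing $\hat{r}$ with constant curvature, I would restrict to the affine hyperplane $H=\{r\in\mathds{R}^N_{>0}:\|r\|_{l^1}=\|\hat{r}\|_{l^1}\}$. On $H$, the functional $\tilde{\lambda}$ equals $\widetilde{\mathcal{S}}$ up to a positive constant factor, and Theorem \ref{thm-tuta-S-C1-convex} yields that $\widetilde{\mathcal{S}}|_H$ is globally convex on the convex set $H$. By Corollary \ref{coro-extend-schlafi-formula} and the constant-curvature hypothesis, the gradient of $\widetilde{\mathcal{S}}|_H$ vanishes at $\hat{r}$, so $\hat{r}$ is a critical point of a convex function on a convex set and hence a global minimum of $\widetilde{\mathcal{S}}|_H$ and thus of $\tilde{\lambda}|_H$. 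Finally, the scale invariance $\tilde{\lambda}(cr)=\tilde{\lambda}(r)$ for every $c>0$ upgrades this hyperplane minimum to a global minimum on all of $\mathds{R}^N_{>0}$, so $\tilde{\lambda}(\hat{r})=\widetilde{Y}_{\mathcal{T}}$.

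For the \emph{moreover} clause, assume $\widetilde{Y}_{\mathcal{T}}$ is realized by some virtual packing $r^*$. Any global minimizer $r'\in\mathds{R}^N_{>0}$ is an interior critical point of $\tilde{\lambda}$, hence is constant curvature by the (2)$\Rightarrow$(1) argument; evaluation at a constant-curvature packing gives $\tilde{\lambda}(r')$ equal to its common curvature value, so $r'$ and $r^*$ both have curvature vector $(\widetilde{Y}_{\mathcal{T}},\dots,\widetilde{Y}_{\mathcal{T}})$. By the alternativeness Theorem \ref{thm-extend-xu-rigid}, since $r^*$ is virtual, $r'$ cannot be real. Conversely, every constant-curvature virtual packing is a global minimizer by the argument for (1)$\Rightarrow$(3) above. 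Hence the set of minimizers of $\tilde{\lambda}$ coincides with the set of constant-curvature virtual packings, and the convexity of the latter is exactly the corollary established immediately after Theorem \ref{thm-extend-xu-rigid}.

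The main obstacle, already dispatched in Section \ref{section-extend-CR-funct}, is the global convexity of $\widetilde{\mathcal{S}}$ on all of $\mathds{R}^N_{>0}$: this is precisely what lets a critical point be promoted to a global minimum, in contrast to Theorem \ref{Thm-Q-min-iff-exist-const-curv-metric} where the non-convexity of $\mathcal{M}_{\mathcal{T}}$ restricted us to local statements. Once Theorem \ref{thm-tuta-S-C1-convex} and the alternativeness Theorem \ref{thm-extend-xu-rigid} are in hand, the present theorem is essentially a bookkeeping exercise combining convex analysis on a hyperplane with scale invariance.
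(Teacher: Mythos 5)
Your proposal is correct and follows exactly the route the paper intends: the paper omits the proof with the remark that it is ``similar to'' Theorems \ref{Thm-Q-min-iff-exist-const-curv-metric} and \ref{corollary-Q-2}, and your argument is precisely that adaptation, using the identity $\partial_{r_i}\tilde{\lambda}=(\widetilde{K}_i-\tilde{\lambda})/\|r\|_{l^1}$ from Corollary \ref{coro-extend-schlafi-formula}, the global convexity of $\widetilde{\mathcal{S}}$ on the hyperplane from Theorem \ref{thm-tuta-S-C1-convex}, scale invariance, and the alternativeness Theorem \ref{thm-extend-xu-rigid} for the \emph{moreover} clause. No gaps.
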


\subsection{Convergence with regular triangulation}
\label{section-regular-converge}
The main purpose of this section is to prove the following theorem:
\begin{theorem}\label{Thm-regular-triangu-converge}
Assume the triangulation $\mathcal{T}$ is regular. Then the solution $\{r(t)\}_{t\geq0}$ to the extended Yamabe flow (\ref{Def-Flow-extended}) converges exponentially fast to the unique real packing with constant curvature as $t$ goes to $+\infty$.
\end{theorem}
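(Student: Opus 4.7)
The plan is as follows. Because $\mathcal{T}$ is regular, the uniform packing $\hat{r}=(c,\ldots,c)$ satisfies $Q_{ijkl}=8/c^2>0$ and is a real ball packing; by vertex transitivity every vertex sees the same $d$ regular Euclidean tetrahedra, so $\widetilde{K}_i(\hat{r})=4\pi-d\alpha_0$ for every $i$, where $\alpha_0$ is the solid angle of a regular Euclidean tetrahedron at a vertex. By Theorem \ref{thm-extend-xu-rigid} and Theorem \ref{corollary-Q-2}, $\hat{r}$ is the unique constant-curvature ball packing---real or virtual---up to scaling. Since the extended flow (\ref{Def-Flow-extended}) preserves $\|r(t)\|_{l^1}$ (Proposition \ref{prop-extend-flow-descending}), any limit must equal the scaling $c\hat{r}$ with $c\|\hat{r}\|_{l^1}=\|r(0)\|_{l^1}$. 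Along the flow,
$$\tilde{\lambda}'(t)=-\|r\|_{l^1}^{-1}\sum_i r_i(\widetilde{K}_i-\tilde{\lambda})^2\le 0,$$
and by Corollary \ref{Thm-Y-equal-tuta-Y} we have $\tilde{\lambda}(r(t))\ge \widetilde{Y}_{\mathcal{T}}=Y_{\mathcal{T}}=4\pi-d\alpha_0$, so $\tilde{\lambda}(t)$ converges and $\int_0^\infty\sum_i r_i(\widetilde{K}_i-\tilde{\lambda})^2\,dt<\infty$. Hence there is $t_n\to\infty$ with $\sum_i r_i(t_n)(\widetilde{K}_i(t_n)-\tilde{\lambda}(t_n))^2\to 0$. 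Once the orbit is shown to stay in a compact subset of $\mathbb{R}^N_{>0}$, a convergent subsequence $r(t_n)\to r^*$ satisfies $\widetilde{K}_i(r^*)=\tilde{\lambda}(r^*)$ for every $i$, hence $r^*=c\hat{r}$ by alternativeness and uniqueness; asymptotic stability (Lemma \ref{Thm-3d-isolat-const-alpha-metric}) then upgrades this to exponential convergence of the whole orbit.

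The crux is therefore to show $\liminf_{t\to\infty}\min_i r_i(t)>0$, the upper bound $r_i(t)\le\|r(0)\|_{l^1}$ being immediate from $\|r\|_{l^1}$-invariance. For this I would exploit a \emph{barrier} driven by the degenerate extension of $\widetilde{K}$. Let $i_0$ be a vertex realizing $\min_i r_i(t^*)$. When $r_{i_0}(t^*)$ is sufficiently small compared with the other radii, Lemma \ref{lemma-ri-small-imply-Di} places $(r_{i_0},r_j,r_k,r_l)$ inside the virtual region $D_{i_0}$ for every tetrahedron $\{i_0jkl\}$ at $i_0$; hence $\tilde{\alpha}_{i_0jkl}=2\pi$, and summing over the $d$ tetrahedra gives $\widetilde{K}_{i_0}(t^*)=4\pi-2\pi d$. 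Combined with $\tilde{\lambda}(t^*)\ge 4\pi-d\alpha_0$,
$$\left.\frac{d}{dt}\ln r_{i_0}\right|_{t^*}=\tilde{\lambda}-\widetilde{K}_{i_0}\;\ge\;d(2\pi-\alpha_0)\;>\;0,$$
so any vertex whose radius dips below the degeneracy threshold is pushed back up at a uniform exponential rate. An ODE comparison then yields a uniform positive lower bound on $\min_i r_i(t)$ for all $t\ge 0$.

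The main obstacle is the \emph{cluster case}, when several adjacent radii shrink simultaneously. Then $(r_{i_0},r_j,r_k,r_l)$ may fail to enter $D_{i_0}$ and $\tilde{\alpha}_{i_0jkl}$ need not attain $2\pi$: for instance with $r_{i_0}=r_j=\varepsilon$ and $r_k=r_l=O(1)$ a direct computation from the Van Oosterom--Strackee formula gives $\alpha_{i_0}\to\pi$, not $2\pi$, and if a tetrahedron at $i_0$ lies entirely inside the cluster (all four radii comparable and small) its solid angle contribution reverts to $\alpha_0$. To close this gap I would replace the full-degeneracy barrier by the weaker bound $\sum_{\{i_0jkl\}\in\mathcal{T}_3}\tilde{\alpha}_{i_0jkl}>d\alpha_0+\delta$ for a positive $\delta$ that depends only on $d$ and the ratio $\min_i r_i/\max_i r_i$, exploiting vertex transitivity of $\mathcal{T}$ to make these combinatorial constants uniform across vertices. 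A bootstrap argument that successively controls whichever vertex is the \emph{strict} global minimum---peeling the cluster off one vertex at a time, and using $\|r\|_{l^1}$-invariance to guarantee that non-cluster vertices always exist---then propagates the barrier to the whole orbit. Once this uniform lower bound is established, the argument of the first paragraph finishes the proof.
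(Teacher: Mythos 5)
Your reduction in the first paragraph is exactly the paper's: once the orbit is trapped in a compact subset of $\mathds{R}^N_{>0}$, a subsequential limit has constant extended curvature, alternativeness (Theorem \ref{thm-extend-xu-rigid}) forces it to be the real uniform packing, and Lemma \ref{Thm-3d-isolat-const-alpha-metric} upgrades subsequential convergence to exponential convergence of the whole orbit. You have also correctly located the crux in the uniform lower bound on $\min_i r_i(t)$. But your treatment of that crux has a genuine gap, which you yourself flag and do not close. The full-degeneracy barrier $\widetilde{K}_{i_0}=4\pi-2\pi d$ requires every tetrahedron at $i_0$ to lie in $D_{i_0}$, i.e.\ $r_{i_0}$ to be below the threshold $f_{i_0}(r_j,r_k,r_l)$ of Lemma \ref{lemma-ri-small-imply-Di} in each of them, and this fails precisely in the cluster case you describe. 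Your proposed repair --- a strict bound $\sum\tilde{\alpha}_{i_0jkl}>d\bar{\alpha}+\delta$ with $\delta$ depending on the ratio $\min_i r_i/\max_i r_i$ --- is circular: that ratio is exactly the quantity you are trying to bound away from zero, so a $\delta$ that degenerates with it furnishes no barrier; and the ``peeling'' bootstrap is only named, not carried out. As written, the compactness step is not proved.

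The paper closes this step with a different and much softer device: two comparison principles (Lemma \ref{Lemma-compare-regular-1} and Lemma \ref{Lemma-compare-regular-2}) asserting that in \emph{any} conformal tetrahedron, real or virtual, the extended solid angle at a vertex of maximal radius is $\leq\bar{\alpha}$ and at a vertex of minimal radius is $\geq\bar{\alpha}$ --- with no smallness threshold and no strictness required. If $r_i(t)$ is the global minimum and $r_j(t)$ the global maximum, then $r_i$ is minimal and $r_j$ maximal within every tetrahedron containing them, so by regularity (both vertices meet exactly $d$ tetrahedra) one gets $\widetilde{K}_j-\widetilde{K}_i=\sum\tilde{\alpha}_i-\sum\tilde{\alpha}_j\geq d\bar{\alpha}-d\bar{\alpha}=0$, whence $\frac{d}{dt}\bigl(r_i/r_j\bigr)\geq0$ and $\min_{p,q}r_p(t)/r_q(t)$ is non-decreasing along the extended flow. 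Together with the conservation of $\|r\|_{l^1}$ this yields the compact containment immediately, with no case analysis on how many radii shrink simultaneously. If you want to salvage your route, note that the estimate you actually need is precisely Lemma \ref{Lemma-compare-regular-2} (whose proof, via Glickenstein's derivative formula and a three-step monotone deformation to the regular tetrahedron, is the real work of this section), and that even with it you should compare the minimal vertex against the maximal one rather than against $\tilde{\lambda}$, since the inequality $\sum\tilde{\alpha}_{i}\geq d\bar{\alpha}$ is not strict and cannot be made uniformly strict.
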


Before giving the proof of the above theorem, we need to study some deep relations between $r$ and $\tilde{\alpha}$. We also need to compare a conformal tetrahedron to a regular tetrahedron. By definition, a regular tetrahedron is one with all four radii equal. Hence all four solid angles are equal in a regular tetrahedron. We denote this angle by $\bar{\alpha}$, i.e.
\begin{equation}
\bar{\alpha}=3\cos^{-1}\frac{1}{3}-\pi.
\end{equation}
Let $\tau=\{1234\}$ be a conformal tetrahedron (real or virtual) patterned by four mutually externally tangent balls with radii $r_1$, $r_2$, $r_3$ and $r_4$. For each vertex $i\in\{1,2,3,4\}$, let $\alpha_i$ be the solid angle at $i$. Recall $\tilde{\alpha}_i$ is the continuous extension of $\alpha_i$.

\begin{lemma}\label{Lemma-Glicken-big-r-small-angle}
(Glickenstein, Lemma 7 \cite{G2})  $\alpha_i\geq\alpha_j$ if and only if $r_i\leq r_j$.
\end{lemma}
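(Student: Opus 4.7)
The plan is to establish the equivalence by a symmetry-plus-monotonicity argument along a one-parameter family of conformal tetrahedra. First I would handle the diagonal case: if $r_i=r_j$ with $r_k,r_l$ held fixed, then the six edge lengths $l_{ik}=r_i+r_k=r_j+r_k=l_{jk}$ and $l_{il}=l_{jl}$ are exchanged by swapping $i\leftrightarrow j$, while $l_{ij},l_{kl}$ are preserved. Thus the conformal tetrahedron admits a Euclidean reflection across the perpendicular bisecting plane of $\{ij\}$, which exchanges the vertices $i$ and $j$ and preserves $k,l$. This isometry carries the solid angle at $i$ onto the solid angle at $j$, so $\alpha_i=\alpha_j$ at $r_i=r_j$. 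This also establishes the ``only if'' content at the boundary of strict inequality.

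Next, suppose without loss of generality that $r_i<r_j$, keep $r_k,r_l$ fixed, and consider the path
\[
(r_i(t),r_j(t))=(r_i+t,\,r_j-t),\qquad 0\le t\le \tfrac{r_j-r_i}{2},
\]
for which all four radii remain strictly positive and the tetrahedron remains conformal (one can check that $Q_{ijkl}$ stays positive along this segment since $Q$ is positive at both endpoints and the path lies in the simply-connected set $\Omega_{1234}$; if needed one may argue by connectedness using that the only way to leave $\Omega_{1234}$ would be $Q=0$, which can then be ruled out by Lemma \ref{lemma-ri-small-imply-Di} using $r_i(t)\le r_j(t)$). Put $f(t)=\alpha_i(t)-\alpha_j(t)$. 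By the diagonal case $f(\tfrac{r_j-r_i}{2})=0$, so it suffices to show that $f$ is strictly decreasing on $[0,\tfrac{r_j-r_i}{2})$, which then yields $f(0)>0$, i.e.\ $\alpha_i>\alpha_j$.

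To check monotonicity I would compute
\[
f'(t)=-\partial_{r_i}\alpha_i+\partial_{r_j}\alpha_i+\partial_{r_i}\alpha_j-\partial_{r_j}\alpha_j
=-\partial_{r_i}\alpha_i+2\,\partial_{r_j}\alpha_i-\partial_{r_j}\alpha_j,
\]
using the Schl\"affli-type symmetry $\partial_{r_j}\alpha_i=\partial_{r_i}\alpha_j$ (which is the per-tetrahedron version of $\partial_{r_j}K_i=\partial_{r_i}K_j$ recorded in Lemma \ref{Lemma-Lambda-semi-positive}). The off-diagonal derivative $\partial_{r_j}\alpha_i$ is given by Glickenstein's explicit formula displayed in Example \ref{example-only-c-zero}, and the diagonal derivatives can be reduced to off-diagonal ones via the homogeneity identity
\[
r_i\,\partial_{r_i}\alpha_i+r_j\,\partial_{r_j}\alpha_i+r_k\,\partial_{r_k}\alpha_i+r_l\,\partial_{r_l}\alpha_i=0,
\]
which follows from the $0$-homogeneity of the solid angle $\alpha_i$ under uniform scaling of $r$. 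Substituting these and clearing the common positive prefactor $4r_ir_jr_k^2r_l^2/(3P_{ijk}P_{ijl}V_{ijkl})$, the sign of $f'(t)$ reduces to the sign of a symmetric rational expression in $1/r_i,1/r_j,1/r_k,1/r_l$; along the segment $r_i<r_j$ with $r_k,r_l$ fixed, this expression should be manifestly negative after grouping terms.

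The main obstacle is the verification of that final algebraic sign: the formula for $\partial_{r_j}\alpha_i$ contains a term $-(1/r_k-1/r_l)^2$ of the ``wrong'' sign, and handling it requires comparing it against the positive cross terms using the conformality hypothesis $Q_{ijkl}>0$ (which bounds the spread of the reciprocals $1/r_\bullet$ in precisely the right way). A useful fallback, if the direct computation becomes unwieldy, is to invoke the per-tetrahedron Hessian structure: from $d\mathcal{S}_T=\sum_m\alpha_m dr_m$ for the single-tetrahedron quantity $\mathcal{S}_T=\sum_m\alpha_m r_m$, $\mathcal{S}_T$ is $1$-homogeneous in $r$ with $r\in\ker(\mathrm{Hess}\,\mathcal{S}_T)$, and the contribution of $\mathcal{S}_T$ to the globally strictly convex Cooper--Rivin functional on the unit-mass hyperplane (Lemma \ref{Lemma-Lambda-positive}) forces $\mathrm{Hess}\,\mathcal{S}_T$ to be negative semidefinite on the complement of $\mathrm{span}(r)$ in that hyperplane. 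Evaluating this definiteness on the direction $e_i-e_j$ (which is in that hyperplane) yields precisely $\partial_{r_i}\alpha_i+\partial_{r_j}\alpha_j-2\partial_{r_j}\alpha_i\le 0$, i.e.\ $f'(t)\le 0$, with strict inequality away from the kernel, completing the monotonicity and hence the lemma.
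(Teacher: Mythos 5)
The paper offers no proof of this lemma at all---it is quoted verbatim from Glickenstein \cite{G2} (Lemma 7)---so your proposal has to be judged on its own. The symmetry step is fine, and the monotonicity $f'(t)=(e_i-e_j)^T\,\frac{\partial(\alpha_1,\dots,\alpha_4)}{\partial(r_1,\dots,r_4)}\,(e_i-e_j)<0$ does follow from Cooper--Rivin's theorem that this per-tetrahedron matrix is negative semi-definite of rank $3$ with kernel $\mathrm{span}(r)$ (the same fact the paper invokes in the proof of Lemma \ref{Lemma-compare-regular-1}, and $e_i-e_j\notin\mathrm{span}(r)$ since $r>0$). Note, however, that your stated justification of that semi-definiteness---reading off the sign of a single tetrahedron's Hessian from the convexity of the global sum---runs in the wrong logical direction, and your ``direct computation'' route is left at ``should be manifestly negative,'' which is not a proof; you need to cite the per-tetrahedron Cooper--Rivin result directly.

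The genuine gap is the claim that the segment $(r_i+t,\,r_j-t)$ stays in $\Omega_{1234}$. It need not. Take $(r_i,r_j,r_k,r_l)=(4,16,1,4)$: then $Q=\left(\tfrac{25}{16}\right)^2-2\cdot\tfrac{289}{256}=\tfrac{47}{256}>0$, a real conformal tetrahedron, while the equalized endpoint $(10,10,1,4)$ has $Q=\left(\tfrac{29}{20}\right)^2-2\cdot\tfrac{433}{400}=-\tfrac{1}{16}<0$. The segment therefore crosses $Q=0$; there $V_{ijkl}\to0$, the derivative formula of Example \ref{example-only-c-zero} blows up, and the anchor $f=0$ at $r_i=r_j$ is unavailable because that configuration is virtual. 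Your fallback---excluding $Q=0$ via Lemma \ref{lemma-ri-small-imply-Di} and $r_i(t)\le r_j(t)$---also fails: in this example the strict minimum at the crossing is $r_k=1$, perfectly consistent with that lemma. The argument can be repaired, but only with an idea you did not supply: stop at the first exit time $t_1$ and apply Lemma \ref{Lemma-Glicken-bdr-converge}. The solid angle tending to $2\pi$ there sits at the strictly minimal radius, which cannot be $r_j$ (as $r_i(t_1)\le r_j(t_1)$), and cannot be $r_i$ either, since $f$ strictly decreasing with $\lim_{t\to t_1^-}f(t)=2\pi$ would force $f(0)\ge 2\pi$, impossible for solid angles of a nondegenerate tetrahedron; in the remaining case $\alpha_i,\alpha_j\to0$, so $f(t_1^-)=0$ and monotonicity still yields $f(0)>0$. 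As written, though, the step fails.
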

It's easy to show that Glickenstein's Lemma \ref{Lemma-Glicken-big-r-small-angle} also holds true for virtual tetrahedrons, that is, $\tilde{\alpha}_i\geq\tilde{\alpha}_j$ if and only if $r_i\leq r_j$. The following two lemmas are very important. They establish two comparison principles for the extended solid angles between a general tetrahedron (real or virtual) and a regular one.

\begin{lemma}\label{Lemma-compare-regular-1}
(first comparison principle) If $r_j$ is maximal, then $\tilde{\alpha}_j\leq\bar{\alpha}$.
\end{lemma}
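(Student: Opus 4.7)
The plan is to split the argument according to whether the four radii configure a real or a virtual tetrahedron. The virtual case is essentially free: if $Q_{1234}\leq 0$, Lemma \ref{lemma-ri-small-imply-Di} places $r$ in some $D_m$ where $r_m$ is the strict minimum; since $r_j$ is maximal we must have $m\neq j$, so by the definition of the extension, $\tilde\alpha_j=0\leq\bar\alpha$ automatically. Hence the essential work is in the real case $Q_{1234}>0$, for which $\tilde\alpha_j=\alpha_j$ is the genuine Euclidean solid angle.

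For the real case I would argue in two steps. First, I would show that each of the three face angles at vertex $j$ is at most $\pi/3$. This is a direct consequence of the law of cosines and the tangency relation $l_{mn}=r_m+r_n$: for any face $\{jmn\}$ containing $j$, one computes
\[
\cos\theta_j^{mn}=\frac{l_{jm}^2+l_{jn}^2-l_{mn}^2}{2\,l_{jm}l_{jn}}
=1-\frac{2r_mr_n}{(r_j+r_m)(r_j+r_n)},
\]
and since $r_m\leq r_j$ and $r_n\leq r_j$ give $\tfrac{r_m}{r_j+r_m}\leq\tfrac12$ and $\tfrac{r_n}{r_j+r_n}\leq\tfrac12$, one concludes $\cos\theta_j^{mn}\geq\tfrac12$, whence $\theta_j^{mn}\leq\pi/3$.

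Second, I would interpret $\alpha_j$ as the area of the spherical triangle $T_j$ cut out on the unit sphere centered at $j$ by the three rays from $j$ to the other vertices; its three side lengths are exactly the face angles $\theta_j^{ik},\theta_j^{il},\theta_j^{kl}$. By the preceding step, all three sides of $T_j$ are at most $\pi/3$. The conclusion will then follow from the spherical-geometry fact that, among all spherical triangles with side lengths at most $\pi/3$, the area is maximized by the equilateral triangle with all sides equal to $\pi/3$, whose area is exactly $3\arccos(\tfrac13)-\pi=\bar\alpha$.

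The hard part will be this last spherical-geometry lemma: \emph{a priori} the area of a spherical triangle is not monotone in a single side (it vanishes at both endpoints of the admissible range), so one cannot just dilate one side at a time. I would prove it from L'Huilier's identity
\[
\tan^2\!\tfrac{E}{4}=\tan\tfrac{s}{2}\tan\tfrac{s-a}{2}\tan\tfrac{s-b}{2}\tan\tfrac{s-c}{2},\qquad s=\tfrac{a+b+c}{2},
\]
by maximizing the right-hand side over the compact set $\{(a,b,c)\in[0,\pi/3]^3:\text{triangle inequality}\}$. Symmetry forces any interior critical point to be equilateral, and a direct computation shows that in the restricted regime $a,b,c\leq\pi/3$ the right-hand side is strictly increasing in each side (it reduces to a simple quadratic inequality such as $v(1+u^2)-2u>0$ with $v=\tfrac13$ and $u=\tan^2(c/4)$), so the maximum is attained at $(\pi/3,\pi/3,\pi/3)$ with value $\tan^2(\bar\alpha/4)$.
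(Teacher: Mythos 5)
Your route is genuinely different from the paper's, which never touches spherical trigonometry: there, the real case is handled by applying the Schl\"{a}ffli formula and the negative semi-definiteness of $\partial(\alpha_1,\dots,\alpha_4)/\partial(r_1,\dots,r_4)$ to show that $\varphi(r)=\sum_{i=1}^4(\alpha_i-\bar\alpha)r_i$ is concave with unique maximum value $0$ at the regular packing, and then Glickenstein's Lemma \ref{Lemma-Glicken-big-r-small-angle} gives $\alpha_j=\min_i\alpha_i\leq\sum_i\alpha_ir_i/\sum_ir_i\leq\bar\alpha$. Your virtual case coincides with the paper's and is fine. Your two preparatory steps in the real case are also correct: the identity $\cos\theta_j^{mn}=1-\frac{2r_mr_n}{(r_j+r_m)(r_j+r_n)}\geq\frac12$ is right, as is the identification of $\alpha_j$ with the area of the spherical triangle whose sides are the three face angles at $j$, and the value $3\arccos\frac13-\pi$ for the equilateral triangle of side $\pi/3$.

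The gap is in the final spherical-geometry lemma. The claim that L'Huilier's right-hand side is strictly increasing in each side throughout the regime $a,b,c\leq\pi/3$ is false: that regime contains degenerate triangles with $c=a+b$ (for instance $a=b=\pi/6$, $c=\pi/3$), where the area is $0$, while nearby admissible triangles with $c<\pi/3$ have positive area, so the area must decrease in $c$ somewhere on that segment. Quantitatively, writing $F$ for L'Huilier's product, one has $\partial_c\ln F=\frac12\bigl[\frac{1}{\sin s}+\frac{1}{\sin(s-a)}+\frac{1}{\sin(s-b)}-\frac{1}{\sin(s-c)}\bigr]$, which tends to $-\infty$ as $c\uparrow a+b$; the quadratic inequality you cite cannot repair this. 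The lemma itself is true, but it needs a different argument: pairwise subtraction of the partial derivatives shows any interior critical point must be equilateral, and at an equilateral point $\partial_a\ln F=\frac12\bigl[\frac{1}{\sin(3t/2)}+\frac{1}{\sin(t/2)}\bigr]>0$, so there are \emph{no} interior critical points; the maximum therefore sits on the boundary, the boundary pieces where a side vanishes or the triangle inequality degenerates contribute zero area, so some side equals $\pi/3$, and iterating this reduction on the faces leaves only the edge $b=c=\pi/3$, $a\in[0,\pi/3]$, on which the derivative in $a$ genuinely is positive. With that replacement your proof closes; as written, the monotonicity step fails.
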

\begin{proof}
Let $r_j=\max\{r_1, r_2, r_3, r_4\}$ be maximal. If the tetrahedron $\tau$ is virtual, then either $\tilde{\alpha}_j=0$ or $\tilde{\alpha}_j=2\pi$. By the definition of the solid angle $\tilde{\alpha}$ (see Section \ref{section-extend-solid-angle}), $\tilde{\alpha}_j=2\pi$ implies that $\tau\in D_j$. By Lemma \ref{lemma-Di-imply-ri-small}, $r_j$ is strictly minimal, which is a contradiction. Hence we get $\tilde{\alpha}_j=0<\bar{\alpha}$. If the conformal tetrahedron $\tau$ is real, by Lemma \ref{Lemma-Glicken-big-r-small-angle}, $\alpha_j$ is the minimum of $\{\alpha_1, \alpha_2, \alpha_3, \alpha_4\}$. Denote $r=(r_1,r_2,r_3,r_4)$. Consider the functional
$$\varphi(r)=\sum_{i=1}^4(\alpha_i-\bar{\alpha})r_i,\;r\in\Omega_{1234}.$$
By the Schl\"{a}ffli formula, we get $d\varphi=\sum_{i=1}^4(\alpha_i-\bar{\alpha})dr_i$. Taking the differential,  we have
$$\text{Hess}\varphi=\frac{\partial(\alpha_1,\alpha_2,\alpha_3,\alpha_4)}{\partial(r_1,r_2,r_3,r_4)}.$$
By Lemma \ref{Lemma-Lambda-semi-positive}, Hess$\varphi$ is negative semi-definite with rank $3$ and the kernel $\{tr:t\in\mathds{R}\}$. Hence $\varphi$ is strictly local concave when restricted to the hyperplane $\{r:\sum_{i=1}^4r_i=1\}$. Similarly (see Section \ref{section-extend-CR-funct} and Theorem \ref{thm-tuta-S-C1-convex}), the extended functional
$$\tilde{\varphi}(r)=\sum_{i=1}^4(\tilde{\alpha}_i-\bar{\alpha})r_i,\;r\in\mathds{R}_{>0}^4$$
is $C^1$-smooth and is concave on $\mathds{R}_{>0}^4$. Because it equals to $\varphi$ on $\Omega_{1234}$, it is $C^{\infty}$-smooth on $\Omega_{1234}$ and is strictly concave on
$\Omega_{1234}\cap\{r:\sum_{i=1}^4r_i=1\}$. Note $d\tilde{\varphi}=\sum_{i=1}^4(\tilde{\alpha_i}-\bar{\alpha})dr_i$, we see $\nabla\tilde{\varphi}=\tilde{\alpha}-\bar{\alpha}$, implying that the regular radius $\bar{r}=(1,1,1,1)$ is the unique critical point of $\tilde{\varphi}$. It follows that $\tilde{\varphi}$ has a unique maximal point at $\bar{r}$. Thus
$\varphi(r)\leq\varphi(\bar{r})=0$. By Glickenstein's Lemma \ref{Lemma-Glicken-big-r-small-angle}, we see $\alpha_j=\min\{\alpha_1,\alpha_2,\alpha_3,\alpha_4\}$. The conclusion follows from
$$\min\{\alpha_1,\alpha_2,\alpha_3,\alpha_4\}\leq\frac{\sum_{i=1}^4\alpha_ir_i}{\sum_{i=1}^4 r_i}\leq\bar{\alpha}.$$
\end{proof}

\begin{lemma}\label{Lemma-compare-regular-2}
(second comparison principle) If $r_i$ is minimal, then $\tilde{\alpha}_i\geq\bar{\alpha}$.
\end{lemma}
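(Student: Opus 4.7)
The plan is to split the argument into two regimes, with a short virtual case handled by invoking the classification of $\mathds{R}^4_{>0}\setminus\Omega_{1234}$, and a real case handled by a straight-line homotopy from the regular unit tetrahedron to $r$.

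\textbf{Virtual case.} Suppose $Q_{1234}\leq 0$. By Lemma \ref{lemma-ri-small-imply-Di} the minimum among $r_1,r_2,r_3,r_4$ is strictly attained at a single index, and the tetrahedron then lies in the corresponding $D_p$. Since $r_i$ is (weakly) minimal by hypothesis, necessarily $p=i$ and $r\in D_i$; by the definition of $\tilde\alpha_i$ given in Section \ref{section-extend-solid-angle} this yields $\tilde\alpha_i=2\pi\geq\bar\alpha$ at once.

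\textbf{Real case.} Suppose $Q_{1234}>0$. Using scaling invariance of $\alpha_i$, I may assume that $r_i=1$ and $r_j,r_k,r_l\geq 1$. Consider the straight-line path $r(t)=\bigl(1,\,1+t(r_j-1),\,1+t(r_k-1),\,1+t(r_l-1)\bigr)$ for $t\in[0,1]$, so $r(0)=(1,1,1,1)$ is the unit regular tetrahedron (hence $\alpha_i(r(0))=\bar\alpha$) and $r(1)=r$. The first step is to check that the entire path stays inside $\Omega_{1234}$. Since $r_i(t)\equiv 1$ is always (weakly) minimal along the path, the only way for $r(t)$ to exit $\Omega_{1234}$ is to enter $\overline{D_i}$, i.e.\ to satisfy $1\leq f_i(r_j(t),r_k(t),r_l(t))$; but $f_i$ is strictly increasing in each of its arguments and each $r_p(t)$ is non-decreasing in $t$, so $f_i(r_j(t),r_k(t),r_l(t))$ is non-decreasing in $t$. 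At $t=1$ the hypothesis that $r\in\Omega_{1234}$ with $r_i$ minimal forces $f_i(r_j,r_k,r_l)<1$ (by Lemma \ref{lemma-ri-small-imply-Di}), so $f_i<1$ throughout the path, whence $r(t)\in\Omega_{1234}$ for every $t\in[0,1]$.

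The second step is to show $\frac{d}{dt}\alpha_i(r(t))\geq 0$. Writing
\[
\frac{d}{dt}\alpha_i(r(t))=\sum_{p\neq i}\frac{\partial\alpha_i}{\partial r_p}(r(t))\,(r_p-1),
\]
and using $r_p-1\geq 0$, it suffices to prove $\partial\alpha_i/\partial r_p>0$ for $p\neq i$ whenever $r_i=\min$. Using Glickenstein's formula for $\partial\alpha_i/\partial r_p$ recalled in Example \ref{example-only-c-zero}, the prefactor is manifestly positive on $\Omega_{1234}$, and the bracket for $p=j$ rearranges as
\[
\frac{2}{r_ir_j}+\Bigl(\frac{1}{r_ir_k}-\frac{1}{r_k^2}\Bigr)+\Bigl(\frac{1}{r_ir_l}-\frac{1}{r_l^2}\Bigr)+\frac{1}{r_jr_k}+\frac{1}{r_jr_l}+\frac{2}{r_kr_l},
\]
which is strictly positive because $r_i\leq r_k$ and $r_i\leq r_l$ make the two parenthesized terms non-negative; the cases $p=k,l$ are completely symmetric. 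Combining the two steps gives $\alpha_i(r)=\alpha_i(r(1))\geq\alpha_i(r(0))=\bar\alpha$, as required.

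The main obstacle is the path-stays-in-$\Omega_{1234}$ claim, which is essential because $\alpha_i$ is only smooth on $\Omega_{1234}$; it rests on the monotonicity of $f_i$ in its arguments together with the equivalence (supplied by Lemma \ref{lemma-ri-small-imply-Di}) that for a packing whose smallest radius is $r_i$, membership in $\Omega_{1234}$ is exactly $r_i>f_i$. Without this observation, the straight-line interpolation could in principle cross $D_i$, and one would have to account separately for the jump to $\tilde\alpha_i=2\pi$ and argue by continuity at the crossing.
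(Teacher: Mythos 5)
Your proof is correct. The virtual case is identical to the paper's: both reduce to $r\in D_i$ via Lemma \ref{lemma-ri-small-imply-Di} and read off $\tilde\alpha_i=2\pi$. The real case rests on the same analytic input as the paper's, namely the positivity of Glickenstein's formula for $\partial\alpha_i/\partial r_p$ ($p\neq i$) when $r_i$ is minimal; where you diverge is in the choice of deformation and, more importantly, in how non-degeneration along it is certified. The paper deforms $r$ to the regular tetrahedron in three coordinate-wise steps (raise $r_i$ to $r_j$, lower $r_k$ to $r_j$, lower $r_l$ to the common value), and rules out degeneration by a soft continuity argument: if the path reached $\partial\Omega_{1234}$ then $\tilde\alpha_i$ would have to tend to $2\pi$, contradicting that $\alpha_i<2\pi$ is descending. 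You instead run a single straight-line path from the regular tetrahedron out to $r$ and certify that it never leaves $\Omega_{1234}$ by a hard inequality: the only component of the complement the path could enter is $D_i$, membership in which is exactly $r_i\le f_i$, and $f_i(r_j(t),r_k(t),r_l(t))$ is non-decreasing along the path and $<1$ at the far endpoint. Your version buys a more self-contained argument, since it does not lean on the continuity of the extended angle (Lemma \ref{lemma-xu-extension}) but only on the explicit description of $D_i$, at the cost of the small extra observation that $f_i$ is increasing in each variable; the paper's version is shorter to state but its ``otherwise $\tilde\alpha_i=2\pi$'' step implicitly invokes continuity of the extension at $\partial D_i$. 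Either route is sound.
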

\begin{proof}
Let $r_i=\min\{r_1, r_2, r_3, r_4\}$ be minimal. Let $j$, $k$ and $l$ be the other three vertices in $\{1,2,3,4\}$ which is different with $i$. We first prove the following two facts:
\begin{enumerate}
  \item If the conformal tetrahedron $\tau$ is virtual, then $\tilde{\alpha}_i=2\pi$;
  \item If $\tau$ is real, then $\partial\alpha_i/\partial r_j$, $\partial\alpha_i/\partial r_k$ and $\partial\alpha_i/\partial r_l$ are positive, while $\partial\alpha_i/\partial r_i$ is negative.
\end{enumerate}

To get the above fact 1, we assume $\tau$ is virtual, then either $\tilde{\alpha}_i=0$ or $\tilde{\alpha}_i=2\pi$. However, $\tilde{\alpha}_i=0$ is impossible. In fact, if this happens, then $\tilde{\alpha}_p=2\pi$ for some $p\in\{1,2,3,4\}$ with $p\neq i$. By the definition of the extended solid angles $\tilde{\alpha}$ (see Section \ref{section-extend-solid-angle}), we know $\tau\in D_p$. By Lemma \ref{lemma-Di-imply-ri-small}, $r_p$ is strictly minimal. This contradicts with the assumption that $r_i$ is minimal. The only possible case left is $\tilde{\alpha}_i=2\pi$, which implies the conclusion.

To get the above fact 2, we assume $\tau$ is non-degenerate, or say real. Glickenstein (see formula (7) in \cite{G1}) once calculated
$$\frac{\partial\alpha_i}{\partial r_j}=\frac{4r_ir_jr_k^2r^2_l}{3P_{ijk}P_{ijl}V_{ijkl}}
\left(\frac{1}{r_i}\left(\frac{1}{r_j}+\frac{1}{r_k}+\frac{1}{r_l}\right)+\frac{1}{r_j}\left(\frac{1}{r_i}+\frac{1}{r_k}+\frac{1}{r_l}\right)-
\left(\frac{1}{r_k}-\frac{1}{r_l}\right)^2\right),$$
where $P_{ijk}=2(r_i+r_j+r_k)$ is the perimeter of the triangle $\{ijk\}$, $V_{ijkl}$ is the volume of the conformal tetrahedron $\tau$. Since $r_i$ is minimal, we get
\begin{align*}
&\frac{1}{r_i}\left(\frac{1}{r_j}+\frac{1}{r_k}+\frac{1}{r_l}\right)+\frac{1}{r_j}\left(\frac{1}{r_i}+\frac{1}{r_k}+\frac{1}{r_l}\right)-
\left(\frac{1}{r_k}-\frac{1}{r_l}\right)^2\\[8pt]
&>\frac{1}{r_ir_j}-\frac{1}{r^2_j}+\frac{1}{r_ir_l}-\frac{1}{r^2_l}=\frac{r_j-r_i}{r_ir^2_j}+\frac{r_l-r_i}{r_ir^2_l}\geq0.
\end{align*}
Hence $\partial\alpha_i/\partial r_j>0$. Similarly, we have $\partial\alpha_i/\partial r_k>0$ and $\partial\alpha_i/\partial r_l>0$. From
$$\frac{\partial\alpha_i}{\partial r_i}r_i+\frac{\partial\alpha_i}{\partial r_j}r_j+
\frac{\partial\alpha_i}{\partial r_k}r_k+\frac{\partial\alpha_i}{\partial r_l}r_l=0,$$
we further get $\partial\alpha_i/\partial r_i<0$. Thus we get the above two facts.

Now we come back to the initial setting, where $r_i=\min\{r_1, r_2, r_3, r_4\}$ is minimal. Since the final conclusion is symmetric with respect to $r_j$, $r_k$ and $r_l$, we may suppose
$$r_i\leq r_j\leq r_k\leq r_l.$$
If the initial conformal tetrahedron $\tau$ is virtual, then we get the conclusion already. If the initial conformal tetrahedron $\tau$ is real, then $\alpha_i<2\pi$ by definition. We use a continuous method to get the final conclusion. We approach it by three steps:

Step 1. Let $r_i$ increase to $r_j$. From the above fact 2 we get $\partial\alpha_i/\partial r_i<0$. It follows that $\alpha_i$ is descending. Moreover, along this procedure there maintains $r_i\leq r_j\leq r_k\leq r_l$, hence the degeneration will never happen along this procedure (otherwise $\tilde{\alpha}_i=2\pi$ by the above fact 1, contradicting with that $\alpha_i<2\pi$ is descending). By this time, we get
$$r_i=r_j\leq r_k\leq r_l.$$

Step 2. Let $r_k$ decrease to $r_j=r_i$. From the above fact 2 we get $\partial\alpha_i/\partial r_k>0$. It follows that $\alpha_i$ is descending. Moreover, along this procedure there maintains $r_i=r_j\leq r_k\leq r_l$, hence the degeneration will never happen (otherwise $\tilde{\alpha}_i=2\pi$ by the above fact 1, contradicting with pthat $\alpha_i<2\pi$ is descending). By this time, we get
$$r_i=r_j=r_k\leq r_l.$$

Step 3. Let $r_l$ decreases to $r_k=r_j=r_i$. Similar to Step 2, from $\partial\alpha_i/\partial r_l>0$ we see that $\alpha_i$ is descending. Moreover, along this procedure there maintains $r_i=r_j=r_k\leq r_l$, hence there is no degeneration happen (otherwise $\tilde{\alpha}_i=2\pi$, contradicting with that $\alpha_i<2\pi$ is descending). By the time, we finally get
$$r_i=r_j=r_k=r_l.$$

Note along these procedure, $\alpha_i$ is always descending and at last tends to $\bar{\alpha}$. Hence we havep $\alpha_i\geq\bar{\alpha}$, which is the conclusion.
\end{proof}

Now it's time to prove Theorem \ref{Thm-regular-triangu-converge}. The following proof can be viewed as to derive a Harnack-type inequality.\\

\noindent\emph{Proof of Theorem \ref{Thm-regular-triangu-converge}.}
Assume at some time $t$, $r_i(t)$ is minimal while $r_j(t)$ is maximal. By the two comparison principles Lemma \ref{Lemma-compare-regular-1} and Lemma \ref{Lemma-compare-regular-2}, we see for each tetrahedron with vertex $i$ that $\tilde{\alpha}_i\geq\bar{\alpha}$ and each tetrahedron with vertex $j$ that $\bar \alpha\geq\tilde{\alpha}_j$.
Compare the extended flow $d\ln r_i/dt=\tilde \lambda-\widetilde{K}_i$ at $i$ and $d\ln r_j/dt=\widetilde \lambda-\widetilde{K}_j$ at $j$, their difference is
\begin{align*}
\frac{d}{dt}\left(\min\limits_{p,q\in V}\Big\{\frac{r_{p}(t)}{r_{q}(t)}\Big\}\right)&=\frac{d}{dt}\left(\frac{r_i(t)}{r_j(t)}\right)\\[5pt]
&=\frac{r_i(t)}{r_j(t)}\big(\widetilde{K}_j-\widetilde{K}_i\big)\\[5pt]
&=\frac{r_i(t)}{r_j(t)}\left(\sum \tilde{\alpha}_i-\sum \tilde{\alpha}_j\right)\geq0.
\end{align*}
It follows that $\min\limits_{p,q\in V}\Big\{\cfrac{r_{p}(t)}{r_{q}(t)}\Big\}$ is non-descending along the extended flow (\ref{Def-Flow-extended}). Hence there is a constant $c>1$ so that
$$c^{-1}r_p(t)\leq r_q(t)\leq c r_p(t).$$
This implies that the solution $\{r(t)\}_{t\geq0}$ lies in a compact subset of $\mathds{R}^N_{>0}$. Similar to the proof of Theorem \ref{Thm-nosingular-imply-converg} and Theorem \ref{Thm-tuta-xi-invariant-imply-converg},
we can find a particular real or virtual ball packing $\hat{r}\in\mathds{R}^N_{>0}$ that has constant curvature. Note all $r_i=1$ provides a real ball packing with constant curvature, and the alternativeness Theorem \ref{thm-extend-xu-rigid} says that a constant curvature real packing and a constant curvature virtual packing can't exist simultaneously, we know $\hat{r}$ is a real packing. Thus we get the conclusion.\qed

\begin{remark}
We can't exclude the possibility that $r(t)$ runs outside $\mathcal{M}_{\mathcal{T}}$ on the way to $\hat{r}$. Consider the extreme case that the initial packing $r(0)$ is virtual. However, any solution $r(t)$ will eventually runs into $\mathcal{M}_{\mathcal{T}}$ and converges to $\hat{r}$ exponentially fast.
\end{remark}

We want to prove the convergence of (\ref{Def-Flow-extended}) under the only assumption that there exists a real packing with constant curvature. Indeed, we can do so for combinatorial Yamabe (or Ricci, Calabi) flows on surfaces, see \cite{Ge}-\cite{GJ4}\cite{GX2}-\cite{ge-xu}. In three dimension, the trouble comes from that $\widetilde{\mathcal{S}}(r)$ is not proper on the hyperplane $\{r:\sum_ir_i=1\}$. We guess there is a suitable continuous extension of $K_i$, so that $\widetilde{\mathcal{S}}(r)$ is proper, and hence the corresponding extended flow converges.

\subsection{A conjecture for degree difference $\leq10$ triangulations}
Inspired by the two comparison principles Lemma \ref{Lemma-compare-regular-1} and Lemma \ref{Lemma-compare-regular-2} and the proof of Theorem \ref{Thm-regular-triangu-converge}, we pose the following
\begin{conjecture}
\label{conjec-differ-small-11}
Let $d_i$ be the vertex degree at $i$. Assume $|d_i-d_j|\leq 10$ for each $i,j\in V$. Then there exists a real or virtual ball packing with constant curvature.
\end{conjecture}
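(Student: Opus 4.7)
The strategy is to run the extended normalized Yamabe flow (\ref{Def-Flow-extended}) from an arbitrary initial real packing $r(0)\in\mathcal{M}_{\mathcal{T}}$ and to establish a uniform Harnack-type bound $r_{\max}(t)/r_{\min}(t)\leq C$ for all $t\geq 0$. By Theorem \ref{thm-yang-write} the solution exists for all $t\geq 0$, and by Theorem \ref{thm-extend-flow-converg-imply-exist-const-curv-packing} such a compactness bound is all that is needed: picking a sequence $t_n\to\infty$ along which $\tilde\lambda'(t_n)\to 0$ and extracting a convergent subsequence produces a (real or virtual) packing $r_\infty$ with $\widetilde{K}_i(r_\infty)=\tilde\lambda(r_\infty)$ at every vertex $i$.

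The crucial numerical input is the chain
\begin{equation*}
11\,\bar{\alpha}\;<\;2\pi\;<\;12\,\bar{\alpha},
\end{equation*}
verifiable directly from $\bar\alpha=3\cos^{-1}(1/3)-\pi\approx 0.5513$; equivalently $2\pi-\bar\alpha>10\bar\alpha$. Letting $i(t)$ and $j(t)$ denote a global min-radius and max-radius vertex at time $t$, the two comparison principles (Lemmas \ref{Lemma-compare-regular-1} and \ref{Lemma-compare-regular-2}) applied tetrahedron by tetrahedron give
\begin{equation*}
\frac{d}{dt}\log\frac{r_i(t)}{r_j(t)}\;=\;\widetilde{K}_j-\widetilde{K}_i\;=\;\sum_{\tau\ni i}\tilde\alpha_{i,\tau}-\sum_{\tau\ni j}\tilde\alpha_{j,\tau}\;\geq\;(d_i-d_j)\bar\alpha\;\geq\;-10\bar\alpha
\end{equation*}
under the hypothesis $|d_i-d_j|\leq 10$. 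Crucially, whenever even a single tetrahedron $\tau$ incident to $i$ is virtual with $i$ in the gap (so $\tilde\alpha_{i,\tau}=2\pi$ by the extension in Section \ref{section-extend-solid-angle}), this single contribution exceeds the baseline $\bar\alpha$ by $2\pi-\bar\alpha>10\bar\alpha$, which more than compensates for the degree defect. Thus the appearance of any virtual tetrahedron at the min vertex strictly forces $\frac{d}{dt}\log(r_i/r_j)>0$, so the ratio $r_{\min}/r_{\max}$ is repelled from $0$ in such regimes.

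Building on this, the first substantial task is to show that there exists a threshold $c_\ast=c_\ast(\mathcal{T})>1$ such that $r_{\max}/r_{\min}>c_\ast$ forces at least one tetrahedron incident to the global min vertex $i$ to be virtual. The criterion for degeneration, Lemma \ref{lemma-ri-small-imply-Di}, says that $\tau\ni i$ is virtual with $i$ in the gap precisely when $r_i\leq f_i(r_j,r_k,r_l)$ for the other three radii of $\tau$, and $f_i$ scales like $\min(r_j,r_k,r_l)$ for large argument. By connectivity of the $1$-skeleton, a large global ratio forces a large ratio along some edge, and following an edge path from a max-radius vertex toward $i$, ratios compound so that some pigeonhole step produces a tetrahedron incident to $i$ whose other three radii are sufficiently larger than $r_i$ to force the degeneration inequality. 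A quantitative $c_\ast$ should depend only on the combinatorial diameter of $\mathcal{T}$ (and the common degree bound), and one may need to replace the global pair $(i,j)$ by a pair lying in a common tetrahedron and maximizing $r_j/r_i$ to streamline the propagation.

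The main obstacle is precisely this local-to-global ratio propagation: converting a large global radius ratio into forced degeneration at some specific tetrahedron incident to the min vertex, with threshold $c_\ast$ controlled uniformly by the data of $\mathcal{T}$. Once $c_\ast$ is in hand, the differential inequality above yields $r_{\max}(t)/r_{\min}(t)\leq\max\{c_\ast,\,r_{\max}(0)/r_{\min}(0)\}$ for all $t\geq 0$, hence compactness of $\{r(t)\}_{t\geq 0}$ in $\mathds{R}^N_{>0}$. Proposition \ref{prop-extend-flow-descending} supplies the descending and bounded energy $\tilde\lambda$, and the remainder of the argument then proceeds exactly as in the second half of the proof of Theorem \ref{Thm-regular-triangu-converge} to produce the desired (real or virtual) constant curvature packing, establishing the conjecture.
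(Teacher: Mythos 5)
This statement is posed in the paper as a conjecture, not a theorem: the authors give only the heuristic you have reproduced and explicitly flag the missing step ("the main difficulty is how to show at the minimum $r_i(t)$, there is a tetrahedron $\{ijkl\}$ going to degenerate"). Your proposal follows exactly that heuristic and, to your credit, you name the same obstruction yourself — but that obstruction is the whole content of the conjecture, so what you have written is not a proof.

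Concretely, the gap is the claim that a large global ratio $r_{\max}/r_{\min}$ forces some tetrahedron incident to the global minimum vertex $i$ to become virtual. Without that, the comparison principles only give $\frac{d}{dt}\log(r_i/r_j)\geq (d_i-d_j)\bar{\alpha}\geq -10\bar{\alpha}$, which permits $r_i/r_j$ to decay exponentially forever; the improved bound $2\pi-11\bar{\alpha}>0$ is only available once a tetrahedron in the star of $i$ has degenerated with $i$ in the gap. Degeneration at $i$ is a genuinely local condition: by Lemma \ref{lemma-ri-small-imply-Di} it requires $1/r_i\geq 1/r_j+1/r_k+1/r_l+2\sqrt{1/(r_jr_k)+1/(r_jr_l)+1/(r_kr_l)}$ in a single tetrahedron, i.e.\ \emph{all three} of the other radii must exceed $r_i$ by a definite factor (about $3+2\sqrt{3}\approx 6.46$ when they are equal). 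Nothing in the hypothesis prevents a configuration in which every tetrahedron in the star of the global minimum has all four radii comparable, while the global ratio is enormous because radii grow gradually across the complex; your edge-path/pigeonhole idea does not repair this, since a large ratio along a single edge $\{pq\}$ does not force any tetrahedron containing $p$ to degenerate (the two remaining radii can be comparable to $r_p$, killing the inequality). Note also that the paper's neighboring theorem (all degrees $\leq 11$) avoids this issue precisely because there the degenerating tetrahedron's own minimal vertex already satisfies $\sum_{\tau\ni i}\tilde{\alpha}_{i,\tau}\geq 2\pi> d_p\bar{\alpha}$ for \emph{every} $p$, so one never needs to locate the degeneration at the global minimum; under the weaker hypothesis $|d_i-d_j|\leq 10$ one needs the full lower bound $2\pi+(d_i-1)\bar{\alpha}$, which requires $i$ to be minimal in all of its incident tetrahedra. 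A secondary point: even where your differential inequality applies, $\min_{p,q}r_p/r_q$ is only Lipschitz in $t$, so the "differentiate at the extremal pair" step needs the usual Dini-derivative justification. Until the local-to-global degeneration step is actually proved, the statement remains a conjecture.
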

In the abvoe conjecture, the basic assumption is \emph{combinatorial}, while the conclusion is \emph{geometric}. Thus it builds a connection between combinatoric and geometry. It says that the combinatoric effects geometry. It is based on the following intuitive but not so rigorous observation:
If the solution $r(t)$ lies in a compact subset of $\mathds{R}^N_{>0}$, then $r(t)$ converges to some real or virtual packing $\hat{r}$ with constant curvature. This had been used in the proof of Theorem \ref{Thm-regular-triangu-converge}. If $r(t)$ does not lie in any compact subset of $\mathds{R}^N_{>0}$, then it touches the boundary of $\mathds{R}^N_{>0}$. Because $\sum_ir_i(t)$ is invariant, so all $r_i(t)$ are bounded above. Thus at least one coordinates of $r(t)$, for example $r_i(t)$, goes to $0$ as the time goes to infinity. We may assume $r_i(t)$ is minimal, while $r_p(t)$ is maximal. It seems that there is at least one tetrahedron $\{ijkl\}$ going to degenerate. It's easy to see $\sum\tilde{\alpha}_i\geq 2\pi+(d_i-1)\bar{\alpha}$. Hence by the assumption that all degree differences are no more than $10$, there holds
\begin{equation*}
\sum\tilde{\alpha}_i-\sum\tilde{\alpha}_p\geq 2\pi+(d_i-1)\bar{\alpha}-d_p\bar{\alpha}>2\pi-11\bar{\alpha}>0.
\end{equation*}
Therefore
$$\frac{d}{dt} \left(\frac{r_{i}(t)}{r_{p}(t)}\right)=\left(\frac{r_i(t)}{r_p(t)}\right)\big(\sum\tilde{\alpha}_i-\sum\tilde{\alpha}_p\big)>0.$$
This shows that $r_i(t)/r_p(t)$ has no tendency of descending, which contradicts with $r_i(t)$ goes to $0$. From above analysis, we see the main difficulty is how to show at the minimum $r_i(t)$, there is a tetrahedron $\{ijkl\}$ going to degenerate. This may be overcome by combinatorial techniques. However, we can adjust the above explanation to show
\begin{theorem}
If each vertex degree is no more than $11$, there exists a real or virtual ball packing with constant curvature.
\end{theorem}
\begin{proof}
We sketch the proof by contradiction. If there is a tetrahedron $\{ijkl\}$ tends to degenerate at infinity, then there is at least a solid angle, for example $\alpha_{ijkl}$ tends to $2\pi$. Hence $r_i$ is the strictly minimum of $\{r_i,r_j,r_k,r_l\}$. $r_i$ tends to $0$ at infinity, otherwise $r(t)$ will lie in a compact set of $\mathds{R}^N_{>0}$. Let $r_p(t)$ be maximal, then
$$\frac{d}{dt} \left(\frac{r_{i}(t)}{r_{p}(t)}\right)=
\left(\frac{r_i(t)}{r_p(t)}\right)\Big(\sum\tilde{\alpha}_i-\sum\tilde{\alpha}_p\Big)\geq \left(\frac{r_i(t)}{r_p(t)}\right)(2\pi-d_p\bar{\alpha})>0.$$
This leads to a contradiction.
\end{proof}

\subsection{A prescribed curvature problem}
For any $\bar{K}=(\bar{K}_1, \cdots, \bar{K}_N)$, we want to know if it can be realized as the combinatorial scalar curvature of some real or virtual ball packing $\bar{r}$. In other words, is there a packing $\bar{r}$ so that the corresponding curvature $K(\bar{r})=\bar{K}$. Similarly, we consider the following\\[8pt]
\noindent
\textbf{Prescribed Curvature Problem:} \emph{Is there a real ball packing with the prescribed combinatorial scalar curvature $\bar{K}$ in the combinatorial conformal class $\mathcal{M}_{\mathcal{T}}$? How to find it?}\\[8pt]
By Xu's global rigidity (see Remark \ref{Lemma-xu-rigidity}), a real ball packing $\bar{r}$ that realized $\bar{K}$ is unique up to scaling. To study the above ``Prescribed Curvature Problem", we introduce:
\begin{definition}
Any $\bar{K}\in\mathds{R}^N_{>0}$ is called a prescribed curvature. Given any prescribed curvature $\bar{K}$, the prescribed Cooper-Rivin functional is defined as $$\mathcal{S}_p(r)=\sum_{i=1}^N(K_i-\bar{K}_i)r_i.$$
The prescribed CRG-functional is defined as $\lambda_p(r)=\mathcal{S}_p(r)/\|r\|_{l^1}$.
\end{definition}
We summarize some results related to the ``Prescribed Curvature Problem" as follows. We omit their proofs since they are similar to the results in previous sections.

\begin{theorem}\label{Thm-Q-bar-curv}
Given a triangulated manifold $(M^3, \mathcal{T})$. The following three properties are mutually equivalent.
\begin{enumerate}
  \item The ``Prescribed Curvature Problem" is solvable. That is, there exists a real packing $\bar{r}$ that realizes $\bar{K}$.
  \item The prescribed CRG-functional $\lambda_p(r)$ has a local minimum in $\mathcal{M}_{\mathcal{T}}$.
  \item The prescribed CRG-functional $\lambda_p(r)$ has a global minimum in $\mathcal{M}_{\mathcal{T}}$.
\end{enumerate}
\end{theorem}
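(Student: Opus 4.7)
The proof will mirror the template of Theorem \ref{Thm-Q-min-iff-exist-const-curv-metric} and Theorem \ref{corollary-Q-2}, with $\mathcal{S}_p$ and $\lambda_p$ playing the roles of $\mathcal{S}_c$ and $\lambda$. First I would record the key identities coming from the Schl\"afli formula (Lemma \ref{Lemma-Lambda-semi-positive}):
\begin{equation*}
\nabla_r \mathcal{S}_p = K - \bar{K}, \qquad \partial_{r_i}\lambda_p = \frac{(K_i-\bar{K}_i) - \lambda_p}{\|r\|_{l^1}}, \qquad \mathrm{Hess}_r\,\mathcal{S}_p = \Lambda.
\end{equation*}
In particular, the critical points of $\lambda_p$ are exactly those $r\in\mathcal{M}_{\mathcal{T}}$ at which $K_i - \bar{K}_i$ is independent of $i$, and by Lemma \ref{Lemma-Lambda-positive} the Hessian of $\mathcal{S}_p$ is strictly positive definite on any hyperplane $\{\|r\|_{l^1}=\mathrm{const}\}$. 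Also, $\lambda_p$ is scale-invariant, so minimization on $\mathcal{M}_{\mathcal{T}}$ can be localized to such a hyperplane.

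For $(1)\Rightarrow(3)$, given a real packing $\bar{r}$ with $K(\bar{r})=\bar{K}$, introduce the extension
\begin{equation*}
\widetilde{\mathcal{S}}_p(r) = \sum_{i=1}^N (\widetilde{K}_i - \bar{K}_i)\, r_i, \qquad r\in\mathds{R}^N_{>0}.
\end{equation*}
Since $\widetilde{\mathcal{S}}_p$ differs from $\widetilde{\mathcal{S}}$ only by the linear term $-\sum_i\bar{K}_i r_i$, Theorem \ref{thm-tuta-S-C1-convex} gives that $\widetilde{\mathcal{S}}_p$ is $C^1$-smooth on $\mathds{R}^N_{>0}$ and convex on the hyperplane $H=\{\|r\|_{l^1}=\|\bar{r}\|_{l^1}\}$. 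The point $\bar{r}$ is a critical point of $\widetilde{\mathcal{S}}_p|_H$ at which the functional agrees with $\mathcal{S}_p$ and is strictly locally convex, so $\bar{r}$ is the unique global minimum of $\widetilde{\mathcal{S}}_p|_H$ and hence of $\mathcal{S}_p$ on $H\cap\mathcal{M}_{\mathcal{T}}$. Scale invariance of $\lambda_p$ then lifts this to a global minimum of $\lambda_p$ on all of $\mathcal{M}_{\mathcal{T}}$.

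The implication $(3)\Rightarrow(2)$ is immediate. For $(2)\Rightarrow(1)$, a local minimum $\bar{r}\in\mathcal{M}_{\mathcal{T}}$ of $\lambda_p$ is a critical point, so $K_i(\bar{r})-\bar{K}_i$ equals the constant $c:=\lambda_p(\bar{r})$ for every $i\in V$. The main obstacle, which is absent in the constant curvature analogue, is to force $c=0$ so that the prescribed curvature is realized exactly. My strategy is to exploit convexity of $\widetilde{\mathcal{S}}_p|_H$ to propagate the local minimum of $\lambda_p$ on $\mathcal{M}_{\mathcal{T}}$ to a global minimum of $\widetilde{\mathcal{S}}_p$ on all of $H$, at which $\widetilde{\mathcal{S}}_p(\bar{r})=c\,\|\bar{r}\|_{l^1}$ coincides with the Lagrange multiplier. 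Combining this with the alternativeness theorem \ref{thm-extend-xu-rigid} and global rigidity of $K$ should rule out the possibility $c\neq 0$, since a nonzero $c$ would produce a real packing $\bar{r}$ whose curvature differs from the target by a nontrivial constant that is incompatible with the uniqueness (up to scaling) of packings realizing a prescribed curvature vector. I expect this pinning down of the Lagrange multiplier to be the most delicate step of the proof.
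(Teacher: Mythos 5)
Your implications $(1)\Rightarrow(3)\Rightarrow(2)$ are correct and are exactly the argument the paper intends: the authors omit the proof of Theorem \ref{Thm-Q-bar-curv}, saying only that it is ``similar to the results in previous sections,'' and your use of $\nabla_r\mathcal{S}_p=K-\bar K$, $\mathrm{Hess}_r\,\mathcal{S}_p=\Lambda$, and the convex $C^1$-extension $\widetilde{\mathcal{S}}_p$ reproduces the proofs of Theorem \ref{Thm-Q-min-iff-exist-const-curv-metric} and Theorem \ref{corollary-Q-2} with the extra linear term carried along harmlessly.

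The gap is in $(2)\Rightarrow(1)$: you have correctly located it, but your proposed patch does not close it. At a critical point $\bar r$ of $\lambda_p$ one only obtains $K_i(\bar r)-\bar K_i=c$ for all $i$, with $c=\lambda_p(\bar r)$, and nothing forces $c=0$. The appeal to Theorem \ref{thm-extend-xu-rigid} and to global rigidity cannot work, because both results compare two packings with the \emph{same} curvature vector; they say nothing about curvature vectors differing by a multiple of $(1,\dots,1)$. Concretely, if $\bar K+c(1,\dots,1)$ lies in $K(\mathcal{M}_{\mathcal{T}})$ for some $c\neq 0$ while $\bar K$ does not, then the packing realizing $\bar K+c(1,\dots,1)$ is a critical point of $\lambda_p$ whose Hessian there equals $\Lambda/\|r\|_{l^1}$, hence (by positivity of $\Lambda$ transverse to the scaling direction and convexity of $\widetilde{\mathcal{S}}_p$ on the hyperplane) a local and even global minimum of $\lambda_p$ with minimum value $c$; so $(2)$ and $(3)$ hold while $(1)$ fails. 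Unlike the constant curvature case, where ``$K_i$ equals the constant $\lambda$'' is already the desired conclusion whatever $\lambda$ is, here the additive constant is a genuine extra degree of freedom, and the image $K(\mathcal{M}_{\mathcal{T}})$ is an $(N-1)$-dimensional set which a line in the direction $(1,\dots,1)$ may well meet only away from $\bar K$. The honest repair is either to weaken $(1)$ to ``there is a real packing with $K-\bar K$ constant,'' or to add the hypothesis that the infimum of $\lambda_p$ is $0$, which pins the Lagrange multiplier. This defect is inherited from the paper's own (omitted) proof, so you are in good company, but the step you flag as delicate is in fact not provable by the tools you cite.
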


\begin{proposition}
If the prescribed combinatorial Yamabe flow
\begin{equation}\label{Def-Flow-modified}
\frac{dr_i}{dt}=(\bar{K}_i-K_i)r_i
\end{equation}
converges, then $r(+\infty)$, the solution to (\ref{Def-Flow-modified}) at infinity, solves the ``Prescribed Curvature Problem". Conversely, if the ``Prescribed Curvature Problem" is solvable by a real packing $\bar{r}$, and if the initial real packing $r(0)$ deviates from $\bar{r}$ not so much. then the solution $r(t)$ to (\ref{Def-Flow-modified}) exists for all time and converges exponentially fast to $\bar{r}$.
\end{proposition}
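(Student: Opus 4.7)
The plan is to prove the two directions independently; both are ODE-theoretic in nature.

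For the forward implication, assume the solution $r(t)$ of (\ref{Def-Flow-modified}) exists for all $t\geq 0$ and converges to a real packing $r_\infty \in \mathcal{M}_{\mathcal{T}}$. I would mimic the proof of Proposition \ref{prop-converg-imply-const-exist}. In logarithmic coordinates $u_i = \ln r_i$, the flow reads $\dot u_i = \bar K_i - K_i(e^u)$, and convergence of $r(t)$ gives convergence of each $u_i(t)$ to $\ln r_{i,\infty}$. By the mean value theorem on successive intervals $[n,n+1]$ there exists a sequence $t_n\uparrow \infty$ with $u_i'(t_n) = u_i(n+1)-u_i(n) \to 0$, hence $K_i(r(t_n))\to \bar K_i$. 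Continuity of $K$ on $\mathcal{M}_{\mathcal{T}}$ then forces $K_i(r_\infty)=\bar K_i$ for every $i\in V$, so $r_\infty$ solves the Prescribed Curvature Problem.

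For the converse, assume $\bar r \in \mathcal{M}_{\mathcal{T}}$ is a real packing with $K(\bar r)=\bar K$. I would carry out the local stability analysis of Lemma \ref{Thm-3d-isolat-const-alpha-metric}, adapted to the prescribed flow. Writing the flow as $\dot r = \Gamma(r)$ with $\Gamma_i(r) = (\bar K_i - K_i(r))\,r_i$, this is an autonomous ODE with $\Gamma(\bar r)=0$. Its Jacobian at $\bar r$ is $D_r\Gamma|_{\bar r} = -\mathrm{diag}(\bar r)\,\Lambda$, where $\Lambda = \mathrm{Hess}_r \mathcal S|_{\bar r}$. By Lemma \ref{Lemma-Lambda-semi-positive}, $\Lambda$ is positive semi-definite of rank $N-1$ with kernel spanned by $\bar r$. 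The similarity $-\mathrm{diag}(\bar r)\Lambda \sim -\mathrm{diag}(\bar r)^{1/2}\Lambda\,\mathrm{diag}(\bar r)^{1/2}$ then shows $D_r\Gamma|_{\bar r}$ has $N-1$ strictly negative eigenvalues and a single zero eigenvalue, the latter pointing along the scaling direction $\bar r$. After removing this scaling degeneracy (by a center-manifold reduction, or equivalently by passing to the normalized flow on the probability simplex), Lemma \ref{Lemma-ODE-asymptotic-stable} applies: initial data sufficiently close to $\bar r$ produces a global solution $r(t)$ converging exponentially fast to a limit in the scaling family $\{c\bar r : c>0\}$, each member of which again realizes $\bar K$.

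The main obstacle is precisely this zero eigenvalue, produced by the scale invariance $K(c\bar r)=K(\bar r)$: strict asymptotic stability of the specific packing $\bar r$ fails, since every $c\bar r$ is also a fixed point. The proposition's wording ``converges exponentially fast to $\bar r$'' should therefore be read as convergence to a nearby $c^*\bar r$ with $c^*\to 1$ as $r(0)\to\bar r$. The cleanest rigorous path is to reformulate on the normalized flow $\dot{\tilde r}_i = \tilde r_i\bigl[(\bar K_i - K_i) - \sum_j(\bar K_j - K_j)\tilde r_j\bigr]$ on the simplex $\{\tilde r:\|\tilde r\|_{l^1}=1\}$, where the scaling mode is quotiented out and the linearization becomes strictly stable; lifting this back to the original flow and using that $\dot{\|r\|}_{l^1}/\|r\|_{l^1} = \sum_j(\bar K_j - K_j)\tilde r_j \to 0$ at the limit identifies the scale factor $c^*$ and yields the exponential rate.
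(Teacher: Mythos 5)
Your proposal is correct and follows essentially the same route as the paper: the forward direction repeats the mean-value-theorem argument of Proposition \ref{prop-converg-imply-const-exist}, and the converse repeats the linearization and asymptotic-stability argument of Lemma \ref{Thm-3d-isolat-const-alpha-metric} with $D_r\Gamma|_{\bar r}=-\mathrm{diag}(\bar r)\Lambda$. Your handling of the zero eigenvalue along the scaling direction is in fact more careful than the paper's (which simply declares that scaling invariance lets one treat all eigenvalues as negative), and you are right that the limit should be read as $c^*\bar r$ for some $c^*>0$ rather than $\bar r$ itself, since the prescribed flow, unlike the normalized flow, does not preserve $\|r\|_{l^1}$.
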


\begin{definition}
Let $\bar{r}\in \mathcal{M}_{\mathcal{T}}$ be a real ball packing. Define a prescribed combinatorial-topological invariant (with respect to $\bar{r}$ and $\mathcal{T}$) as
\begin{equation}
\chi(\bar{r},\mathcal{T})=\inf\limits_{\gamma\in{\mathbb{S}}^{N-1};\|\gamma\|_{l^1}=1\;}\sup\limits_{0\leq t< a_{\bar{r},\gamma}}\lambda_p(\bar{r}+t\gamma),
\end{equation}
where $a_{\bar{r},\gamma}$ is the least upper bound of $t$ such that $\bar{r}+t\gamma\in \mathcal{M}_{\mathcal{T}}$.
\end{definition}

\begin{theorem}\label{Thm-prescrib-xi-invariant-imply-converg}
Let $\bar{r}$ be a real ball packing with curvature $\bar{K}=K(\bar{r})$. Consider the prescribed flow (\ref{Def-Flow-modified}). If the initial real packing $r(0)$ satisfies
\begin{equation}
\lambda_p(r(0))\leq\chi(\bar{r},\mathcal{T}).
\end{equation}
Then the solution to (\ref{Def-Flow-modified}) exists for all time $t\geq 0$ and converges exponentially fast to $\bar{r}$.
\end{theorem}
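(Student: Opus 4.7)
The plan is to mirror the proof of Theorem~\ref{Thm-xi-invariant-imply-converg}, with the prescribed Cooper-Rivin functional $\mathcal{S}_p$ replacing the CRG-functional $\lambda$ as the Lyapunov quantity and with $\chi(\bar r,\mathcal{T})$ playing the role of an ``energy barrier'' that confines the orbit to $\mathcal{M}_\mathcal{T}$. Once $\{r(t)\}_{t\ge 0}$ is trapped in a compact subset of $\mathcal{M}_\mathcal{T}$, a compactness argument analogous to the one in the proof of Theorem~\ref{Thm-nosingular-imply-converg} produces a subsequential limit $r_\infty$ with $K(r_\infty)=\bar K$; by the global rigidity of the curvature map (Remark~\ref{Lemma-xu-rigidity}), $r_\infty$ must be a positive scaling of $\bar r$, and the asymptotic stability of $\bar r$ then upgrades this to full exponential convergence.

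For the monotonicity input, the Schl\"affli identity $\nabla_r\mathcal{S}_p=K-\bar K$ gives along~(\ref{Def-Flow-modified})
\[
\frac{d}{dt}\mathcal{S}_p(r(t))=\sum_i(K_i-\bar K_i)\dot r_i=-\sum_i r_i(K_i-\bar K_i)^2\le 0,
\]
with equality if and only if $K(r(t))=\bar K$. For the stability input, linearising (\ref{Def-Flow-modified}) at $\bar r$ produces the Jacobian $-\Sigma\Lambda|_{\bar r}$ with $\Sigma=\mathrm{diag}(\bar r_i)$, which by Lemma~\ref{Lemma-Lambda-semi-positive} has $N-1$ strictly negative eigenvalues and a single zero eigenvalue along the scaling ray; since $K$ (and hence (\ref{Def-Flow-modified})) is scale invariant, $\bar r$ is asymptotically stable modulo the line of equilibria $\{c\bar r:c>0\}$, and Lemma~\ref{Lemma-ODE-asymptotic-stable} then supplies exponential convergence from any sufficiently close initial datum.

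The technical heart of the argument, and what I expect to be the main obstacle, is the trapping step: converting the hypothesis $\lambda_p(r(0))\le\chi(\bar r,\mathcal{T})$ into the assertion that $r(t)$ stays away from $\partial\mathcal{M}_\mathcal{T}$. Unlike the Yamabe setting, $\lambda_p$ itself is \emph{not} monotone along (\ref{Def-Flow-modified}); a direct calculation yields $\dot\lambda_p=\dot{\mathcal{S}}_p/\|r\|_{l^1}+\lambda_p^2$, whose sign is indefinite. My approach is to exploit the scaling invariance $\lambda_p(cr)=\lambda_p(r)$ and project $r(t)$ onto the normalised slice $\Sigma_0=\{r:\|r\|_{l^1}=\|r(0)\|_{l^1}\}$ via $\tilde r(t)=(\|r(0)\|_{l^1}/\|r(t)\|_{l^1})\,r(t)$; the projected curve satisfies the ODE $\dot{\tilde r}_i=(\bar K_i-K_i+\lambda_p)\tilde r_i$ and obeys $\lambda_p(\tilde r)=\lambda_p(r)$, while on $\Sigma_0$ the quantities $\mathcal{S}_p$ and $\lambda_p$ differ only by the positive constant $\|r(0)\|_{l^1}$. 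The convexity of the extended potential $\widetilde{\mathcal{S}}_p$ restricted to $\Sigma_0$ established in Theorem~\ref{thm-tuta-S-C1-convex}, together with the very definition of $\chi(\bar r,\mathcal{T})$, should force the sub-level set $\{\lambda_p\le\lambda_p(r(0))\}\cap\Sigma_0$ to be compactly contained in $\mathcal{M}_\mathcal{T}\cap\Sigma_0$; a strict-descent argument then keeps $\tilde r(t)$ in this compact set, and combined with the uniform bound $r_i(0)e^{-C(\mathcal{T})t}\le r_i(t)\le r_i(0)e^{C(\mathcal{T})t}$ (from the fact that $|\bar K_i-K_i|$ is uniformly bounded), this confines $r(t)$ itself to a compact subset of $\mathcal{M}_\mathcal{T}$. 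With compactness in hand, the conclusion follows from the stability and monotonicity arguments of the previous paragraph.
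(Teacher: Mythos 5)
Your overall architecture --- monotonicity of $\mathcal{S}_p$ from the Schl\"affli identity, an energy-barrier argument with $\chi(\bar r,\mathcal{T})$ to trap the orbit in a compact subset of $\mathcal{M}_\mathcal{T}$, a subsequential limit identified via rigidity, and asymptotic stability to upgrade to exponential convergence --- is exactly the paper's intended route: the paper's own proof is a single line recording $\mathcal{S}_p'(t)=-\sum_i r_i(\bar K_i-K_i)^2\le 0$ and referring back to the proofs of Theorem \ref{Thm-nosingular-imply-converg} and Theorem \ref{Thm-xi-invariant-imply-converg}.

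However, the step you yourself flag as the technical heart contains a genuine gap as written. First, your claim that $\dot\lambda_p=\dot{\mathcal{S}}_p/\|r\|_{l^1}+\lambda_p^2$ has indefinite sign is false: by Cauchy--Schwarz, $\mathcal{S}_p^2=\bigl(\sum_i\sqrt{r_i}(K_i-\bar K_i)\cdot\sqrt{r_i}\bigr)^2\le\|r\|_{l^1}\sum_i r_i(K_i-\bar K_i)^2=-\|r\|_{l^1}\,\dot{\mathcal{S}}_p$, so $\lambda_p^2\le-\dot{\mathcal{S}}_p/\|r\|_{l^1}$ and in fact $\dot\lambda_p=-\|r\|_{l^1}^{-1}\sum_i r_i(K_i-\bar K_i-\lambda_p)^2\le0$. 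Second, and more seriously, your proposed workaround is circular: since $\lambda_p$ is scale invariant, $\lambda_p(\tilde r(t))=\lambda_p(r(t))$, so the ``strict-descent argument'' that is supposed to keep $\tilde r(t)$ in the sub-level set is precisely the assertion that $\lambda_p(r(t))$ is non-increasing --- the very statement you have just disclaimed. The descent of $\mathcal{S}_p(r(t))$ does not substitute for it, because $\mathcal{S}_p$ is $1$-homogeneous rather than scale invariant, so $\mathcal{S}_p(\tilde r(t))=\|r(0)\|_{l^1}\lambda_p(r(t))$ and its monotonicity is again equivalent to that of $\lambda_p$. Once the Cauchy--Schwarz identity above is recorded, the detour through the normalised slice becomes unnecessary and the proof of Theorem \ref{Thm-xi-invariant-imply-converg} carries over essentially verbatim with $K_i-\bar K_i$ in place of $K_i$, which is what the paper does. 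One further small point in your favour: since the equilibria of (\ref{Def-Flow-modified}) form the ray $\{c\bar r:c>0\}$ and the flow does not preserve $\|r\|_{l^1}$, the limit one actually extracts is a scaling $c\bar r$ rather than $\bar r$ itself, so your phrasing of the conclusion is the more careful one.
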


\begin{theorem}
Given any initial ball packing (real or virtual) $r(0)\in\mathds{R}^N_{>0}$, the following extended prescribed combinatorial Yamabe flow
\begin{equation}
  \label{Def-Flow-modified-extended}
   r_i'(t)=(\bar K_i-\tilde K_i)r_i
\end{equation}
has a solution $r(t)$ with $t\in[0,+\infty)$. In other words, the solution to (\ref{Def-Flow-modified}) can always be extended to a solution that exists for all time $t\geq 0$.
\end{theorem}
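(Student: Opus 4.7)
The plan is to mirror the proof of Theorem \ref{thm-yang-write} essentially verbatim, since the extended prescribed flow (\ref{Def-Flow-modified-extended}) differs from the extended normalized Yamabe flow (\ref{Def-Flow-extended}) only in replacing $\tilde{\lambda} - \widetilde{K}_i$ by the vector $\bar{K}_i - \widetilde{K}_i$, and the two key properties we used there (continuity and uniform boundedness on $\mathds{R}^N_{>0}$) hold equally well in the present setting.

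First I would establish short-time existence. By Lemma \ref{lemma-xu-extension}, every extended solid angle $\tilde{\alpha}_{ijkl}$ is continuous on $\mathds{R}^N_{>0}$, hence so is $\widetilde{K}_i = 4\pi - \sum_{\{ijkl\}\in\mathcal{T}_3} \tilde{\alpha}_{ijkl}$. Therefore the right-hand side $(\bar{K}_i - \widetilde{K}_i)r_i$ is a continuous function of $r$ on $\mathds{R}^N_{>0}$, and Peano's existence theorem furnishes a solution on some interval $[0,\varepsilon)$ for any initial data $r(0)\in\mathds{R}^N_{>0}$. Note that uniqueness is not claimed, since $\widetilde{K}_i$ need not be Lipschitz (as Example \ref{example-only-c-zero} illustrates), but this is irrelevant for the existence statement.

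Next I would prove the $a$ priori bound that prevents finite-time blow-up or collapse. Since $\tilde{\alpha}_{ijkl}\in[0,2\pi]$ for every tetrahedron, $\widetilde{K}_i$ is uniformly bounded on all of $\mathds{R}^N_{>0}$ by a constant depending only on the combinatorics of $\mathcal{T}$. Together with the fixed prescribed vector $\bar{K}$, this yields a constant $c = c(\mathcal{T},\bar{K}) > 0$ with $|\bar{K}_i - \widetilde{K}_i(r)| \leq c$ for all $r\in\mathds{R}^N_{>0}$ and all $i\in V$. Dividing the flow equation through by $r_i$ gives $|(\ln r_i)'(t)| \leq c$, and integrating yields
\begin{equation*}
r_i(0)\,e^{-ct} \;\leq\; r_i(t) \;\leq\; r_i(0)\,e^{ct}
\end{equation*}
for all $t$ in the maximal existence interval. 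Consequently no coordinate $r_i(t)$ can approach $0$ or $+\infty$ in finite time, so the solution stays in a compact subset of $\mathds{R}^N_{>0}$ on any finite time interval.

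Finally, I would invoke the standard continuation principle for ODEs with continuous right-hand side: as long as a solution remains in a compact set on which the vector field is defined and bounded, it can be extended. Combining this with the exponential bound above, the solution extends to all of $[0,+\infty)$. No substantial obstacle is expected — the argument is a direct adaptation of the one used for Theorem \ref{thm-yang-write}, and the only place where care is needed is the remark that uniqueness of the extended solution is not guaranteed, which, however, does not affect the long-time existence statement asserted in the theorem.
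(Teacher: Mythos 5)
Your proposal is correct and follows essentially the same route the paper intends: the paper omits the proof precisely because it is the argument of Theorem \ref{thm-yang-write} with $\tilde\lambda$ replaced by the fixed vector $\bar K$ — Peano existence from continuity of $\widetilde K$, the uniform bound $|\bar K_i-\widetilde K_i|\le c(\mathcal T,\bar K)$ giving $r_i(0)e^{-ct}\le r_i(t)\le r_i(0)e^{ct}$, and the ODE continuation theorem. Your remark that uniqueness may fail because $\widetilde K$ is only $C^0$ matches the paper's own caveat.
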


One can also extend the prescribed CRG-functional $\lambda_p(r)$, $r\in\mathcal{M}_{\mathcal{T}}$ to $\tilde{\lambda}_p(r)$, $r\in\mathds{R}^N_{>0}$, and introduce a combinatorial-topological invariant (with respect to a real ball packing $\bar{r}$ and the triangulation $\mathcal{T}$)
\begin{equation}
\tilde{\chi}(\bar{r},\mathcal{T})=\inf\limits_{\gamma\in{\mathbb{S}}^{N-1};\|\gamma\|_{l^1}=1\;}\sup\limits_{0\leq t< \tilde{a}_{\bar{r},\gamma}}\tilde{\lambda}_p(\bar{r}+t\gamma),
\end{equation}
where $\tilde{a}_{\bar{r},\gamma}$ is the least upper bound of $t$ such that $\bar{r}+t\gamma\in \mathds{R}_{>0}^N$. Similar to Theorem \ref{Thm-tuta-xi-invariant-imply-converg}, we have
\begin{theorem}
\label{Thm-extend-prescrib-xi-invariant-imply-converg}
Let $\bar{r}$ be a real ball packing with curvature $\bar{K}=K(\bar{r})$. If the initial ball packing $r(0)$ (real or virtual) satisfies
\begin{equation}
\tilde{\lambda}_p(r(0))\leq\tilde{\chi}(\bar{r},\mathcal{T}),
\end{equation}
then the solution to (\ref{Def-Flow-modified-extended}) exists for all time $t\geq 0$ and converges exponentially fast to $\bar{r}$.
\end{theorem}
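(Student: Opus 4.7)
The plan is to mirror the proof of Theorem \ref{Thm-tuta-xi-invariant-imply-converg}, using the scaling invariance of $\tilde K$ to compensate for the fact that the un-normalized prescribed flow (\ref{Def-Flow-modified-extended}) does not preserve $\|r\|_{l^1}$. First I would introduce the extended prescribed Cooper-Rivin functional $\tilde{\mathcal S}_p(r):=\sum_i(\tilde K_i-\bar K_i)r_i$ on $\mathds R^N_{>0}$, so that $\tilde\lambda_p=\tilde{\mathcal S}_p/\|r\|_{l^1}$. By the extended Schl\"affli formula (Corollary \ref{coro-extend-schlafi-formula}),
\begin{equation*}
\frac{d\tilde{\mathcal S}_p}{dt}=-\sum_i r_i(\tilde K_i-\bar K_i)^2\le 0,\qquad \frac{d\|r\|_{l^1}}{dt}=-\tilde{\mathcal S}_p(r),
\end{equation*}
and Cauchy-Schwarz $(\tilde{\mathcal S}_p)^2\le\|r\|_{l^1}\sum_i r_i(\tilde K_i-\bar K_i)^2$ combined with these identities yields $d\tilde\lambda_p/dt\le 0$ along (\ref{Def-Flow-modified-extended}).

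Because $\tilde\lambda_p$ is scale invariant, the rescaled curve $\hat r(t):=\|\bar r\|_{l^1}\,r(t)/\|r(t)\|_{l^1}$ lies on the hyperplane $\mathcal H:=\{r\in\mathds R^N_{>0}:\|r\|_{l^1}=\|\bar r\|_{l^1}\}$ and satisfies $\tilde\lambda_p(\hat r(t))=\tilde\lambda_p(r(t))\le\tilde\chi(\bar r,\mathcal T)$. Since $\tilde{\mathcal S}_p$ differs from $\tilde{\mathcal S}$ by a linear term, Theorem \ref{thm-tuta-S-C1-convex} implies that $\tilde\lambda_p|_{\mathcal H}$ is $C^1$-smooth and convex; moreover $\nabla\tilde{\mathcal S}_p(\bar r)=\tilde K(\bar r)-\bar K=0$, so $\bar r$ is the unique minimum of $\tilde\lambda_p|_{\mathcal H}$ with $\tilde\lambda_p(\bar r)=0$. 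Combining this convexity with the mountain-pass definition of $\tilde\chi(\bar r,\mathcal T)$ shows that the closed sublevel set $\{\hat r\in\mathcal H:\tilde\lambda_p(\hat r)\le\tilde\chi(\bar r,\mathcal T)\}$ is compactly contained in $\mathds R^N_{>0}$, so $\{\hat r(t)\}_{t\ge 0}$ is relatively compact in $\mathds R^N_{>0}$.

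Standard ODE arguments as in Theorem \ref{Thm-nosingular-imply-converg} and Theorem \ref{Thm-tuta-xi-invariant-imply-converg} now apply: along some time sequence $t_n\uparrow+\infty$ one has $d\tilde{\mathcal S}_p/dt\to 0$, so any accumulation point $\hat r^\ast$ of $\hat r(t_n)$ satisfies $\tilde K(\hat r^\ast)=\bar K$; alternativeness (Theorem \ref{thm-extend-xu-rigid}) forces $\hat r^\ast$ to be real, and Xu's rigidity (Remark \ref{Lemma-xu-rigidity}) then pins $\hat r^\ast=\bar r$ since both lie on $\mathcal H$. The linearization of the associated normalized prescribed flow at $\bar r$ has the form $-\Sigma\Lambda$ restricted to the tangent space of $\mathcal H$, which is strictly negative definite by Lemma \ref{Lemma-Lambda-positive}; Lemma \ref{Lemma-ODE-asymptotic-stable} then upgrades the subsequential convergence to exponential convergence $\hat r(t)\to\bar r$. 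To conclude for $r(t)$ itself, note that $d\log\|r\|_{l^1}/dt=-\tilde{\mathcal S}_p(\hat r(t))/\|\bar r\|_{l^1}$ and the right-hand side decays exponentially, so $\log\|r(t)\|_{l^1}$ is Cauchy, $\|r(t)\|_{l^1}\to L>0$, and $r(t)\to (L/\|\bar r\|_{l^1})\bar r$ exponentially; that is, convergence to a (unique up to scaling) real packing with the prescribed curvature.

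The main obstacle is the compactness step in the second paragraph: showing that the $\tilde\chi$-sublevel set of $\tilde\lambda_p$ on $\mathcal H$ is compactly contained in $\mathds R^N_{>0}$. This is where the convex $C^1$-extension (Theorem \ref{thm-tuta-S-C1-convex}) and the mountain-pass style invariant $\tilde\chi$ interact, and it is the analytic heart of the argument already used in Theorem \ref{Thm-tuta-xi-invariant-imply-converg}. The borderline case $\tilde\lambda_p(r(0))=\tilde\chi(\bar r,\mathcal T)$ also requires a brief separate argument: if $\tilde\lambda_p'(0)=0$ then $r(0)$ already satisfies $\tilde K(r(0))=\bar K$, whence alternativeness and Xu's rigidity force $r(0)$ to be a scaling of $\bar r$ and there is nothing to prove.
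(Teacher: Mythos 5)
Your proposal follows essentially the route the paper intends: the paper's own proof is a one-line deferral (``use $S'(t)=-\sum_i r_i(\bar K_i-K_i)^2\leq 0$ and modify the proof of Theorem \ref{Thm-nosingular-imply-converg}''), and what you wrote is the honest expansion of that instruction along the lines of Theorem \ref{Thm-tuta-xi-invariant-imply-converg}. Two things you make explicit that the paper does not are worthwhile: the identity $d\|r\|_{l^1}/dt=-\widetilde{\mathcal{S}}_p(r)$ combined with Cauchy--Schwarz to get $d\tilde\lambda_p/dt\leq 0$ for the \emph{un-normalized} flow (\ref{Def-Flow-modified-extended}), and the passage to the rescaled curve on the hyperplane $\{r:\|r\|_{l^1}=\|\bar r\|_{l^1}\}$ together with the final bootstrap showing $\log\|r(t)\|_{l^1}$ is Cauchy. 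The paper silently ignores that (\ref{Def-Flow-modified-extended}) does not preserve $\|r\|_{l^1}$, so on this point you are more careful than the source.

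One step needs repair. The claim that the closed sublevel set $\{\tilde\lambda_p\leq\tilde{\chi}(\bar r,\mathcal T)\}$ on the hyperplane is compactly contained in $\mathds{R}^N_{>0}$ is false in general: if the infimum defining $\tilde{\chi}(\bar r,\mathcal T)$ is attained (or nearly attained) by a direction $\gamma^*$ along which $\tilde\lambda_p$ increases strictly up to the limiting value $\tilde{\chi}(\bar r,\mathcal T)$ at the boundary, then the whole ray $\bar r+t\gamma^*$, $0\leq t<\tilde a_{\bar r,\gamma^*}$, lies in that sublevel set and accumulates on $\partial\mathds{R}^N_{>0}$. What is true, and what the proof of Theorem \ref{Thm-tuta-xi-invariant-imply-converg} actually uses, is that for each fixed $\delta>0$ the set $\{\tilde\lambda_p\leq\tilde{\chi}(\bar r,\mathcal T)-\delta\}$ on the hyperplane is compactly contained: if a sequence $\bar r+s_n\gamma_n$ in this set converged to a boundary point, then convexity of $\tilde\lambda_p$ on the hyperplane with minimum at $\bar r$ makes $\tilde\lambda_p$ non-decreasing along each ray, and passing to a limiting direction $\gamma^*$ would give $\sup_{0\leq s<\tilde a_{\bar r,\gamma^*}}\tilde\lambda_p(\bar r+s\gamma^*)\leq\tilde{\chi}(\bar r,\mathcal T)-\delta$, contradicting the definition of $\tilde{\chi}$ as an infimum. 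The needed $\delta$ comes exactly from the dichotomy you already state in your last paragraph: either $\tilde\lambda_p'(0)=0$, in which case $r(0)$ is already a scaling of $\bar r$ and there is nothing to prove, or $\tilde\lambda_p'(0)<0$, in which case $\tilde\lambda_p(r(t))\leq\tilde\lambda_p(r(\epsilon))<\tilde{\chi}(\bar r,\mathcal T)$ for all $t\geq\epsilon$ and one takes $\delta=\tilde{\chi}(\bar r,\mathcal T)-\tilde\lambda_p(r(\epsilon))$. With that reordering the compactness step closes, and the remainder of your argument (subsequential limit with curvature $\bar K$, alternativeness forcing it to be real, rigidity pinning it to a scaling of $\bar r$, and Lemma \ref{Lemma-ODE-asymptotic-stable} upgrading to exponential convergence) is correct.
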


\section{Appendix}
\subsection{The Schl\"{a}ffli formula}
\label{appen-schlafi}
Given an Euclidean tetrahedron $\tau$ with four vertices $1,2,3,4$. For each edge $\{ij\}$, denote $l_{ij}$ as the edge length of $\{ij\}$, and denote $\beta_{ij}$ as the dihedral angle at the edge $\{ij\}$. The classical schl\"{a}ffli formula reads as $\sum_{i\thicksim j}l_{ij}d\beta_{ij}=0$,
where the sum is taken over all six edges of $\tau$. If $\tau$ is configured by four mutually externally tangent ball packings $r_1, r_2, r_3$ and $r_4$, then on one hand,
\begin{align*}
d\Big(\sum_{i\thicksim j}l_{ij}\beta_{ij}\Big)=\sum_{i\thicksim j}\beta_{ij}dl_{ij}&=\sum_{i\thicksim j}\beta_{ij}\big(dr_i+dr_j\big)\\
&=\sum_i\Big(\sum_{j:j\thicksim i}\beta_{ij}\Big)dr_i=\sum_i\big(\alpha_i+\pi\big)dr_i.
\end{align*}
On the other hand,
$$\sum\limits_{i\thicksim j}l_{ij}\beta_{ij}=\sum\limits_{i\thicksim j}\beta_{ij}(r_i+r_j)=\sum_{i}\Big(\sum\limits_{j:j\thicksim i}\beta_{ij}\Big)r_i=\sum_{i}\big(\alpha_i+\pi\big)r_i.$$
Hence we obtain $d\Big(\sum_i\alpha_ir_i\Big)=\sum_{i}\alpha_idr_i$. Consider a triangulated manifold $(M^3,\mathcal{T})$ with ball packings $r$, we can further get $d\Big(\sum_iK_ir_i\Big)=\sum_{i}K_idr_i$.

\subsection{The proof of Lemma \ref{Lemma-Lambda-positive}}
\label{appendix-2}
\begin{proof} Denote $\mathscr{U}=\{u\in\mathds{R}^N|\sum_iu_i=0\}$.
Set $\alpha=(1,\cdots,1)^T/\sqrt{N}$. Choose $A\in O(N)$, such that $A\alpha=(0,\cdots,0,1)^T$, meanwhile, $A$ transforms $\mathscr{U}$ to $\{\zeta\in\mathds{R}^N|\zeta_N=0\}$. It's easy to see $A$ transforms $\{r\in\mathds{R}^N|\sum_ir_i=1\}$ to $\{\zeta\in\mathds{R}^N|\zeta_N=1/\sqrt{N}\}$. Define $g(\zeta_1,\cdots,\zeta_{N-1})\triangleq S(A^T(\zeta_1,\cdots,\zeta_{N-1},1/\sqrt{N})^T)$, we can finish the proof by showing that $Hess_{\zeta}(g)$ is positive definite. Because $A\alpha=(0,\cdots,0,1)^T$, we get $\alpha^T=(0,\cdots,0,1)A$, which implies that we can partition $A$ into two blocks with $A^T=\left[B^{N\times(N-1)}, \alpha\right]$. By direct calculation, we get $\nabla_{\zeta}g=B^TK$ and $Hess_{\zeta}(g)=B^T\Lambda B$. Next we prove $B^T\Lambda B$ positive definite. Assuming $x^TB^T\Lambda Bx=0$, where $x$ is a nonzero $(N-1)\times1$ vector. From Lemma \ref{Lemma-Lambda-semi-positive}, there exists a $c\neq0$ such that $Bx=cr$. On the other hand, from
\begin{equation*}
\begin{bmatrix}
I_{N-1}& 0\,\,\\
0 & 1\,\,
\end{bmatrix}=AA^T=
\begin{bmatrix}
B^T \\ \alpha^T
\end{bmatrix}
\big[B,\alpha\big]=
\begin{bmatrix}
B^TB & B^T\alpha \\
\alpha^TB & \alpha^T\alpha
\end{bmatrix}
\end{equation*}
we know $\alpha^TB=0$. Then $0=\alpha^TBx=c\alpha^Tr=c/\sqrt{N}$, which is a contradiction. Hence $x^TB^T\Lambda Bx=0$ has no nonzero solution. Note that $B^T\Lambda B$ is positive semi-definite due to Lemma \ref{Lemma-Lambda-semi-positive}, thus $B^T\Lambda B$ is positive definite.
\end{proof}

\noindent
\textbf{Acknowledgements:} The first two authors want to thank Bennett Chow for reminding us that (\ref{Def-norm-Yamabe-Flow}) was first introduced in Glickenstein's thesis \cite{G0}. The first author would like to thank Professor Feng Luo, Jie Qing, Xiaochun Rong for many helpful conversations. The first author is partially supported by the NSFC of China (No.11501027). The second author was supported by the Fundamental Research Funds for the Central Universities (No. 2017QNA3001) and NSFC (No. 11701507).

\noindent Huabin Ge, hbge@bjtu.edu.cn\\[2pt]
\emph{Department of Mathematics, Beijing Jiaotong University, Beijing 100044, P.R. China}\\[2pt]

\noindent Wenshuai Jiang, wsjiang@zju.edu.cn\\[2pt]
\emph{School of Mathematical Sciences, Zhejiang University, Zheda Road 38, Hangzhou, 310027, P.R. China}\\[2pt]

\noindent Liangming Shen, lmshen@math.ubc.ca\\[2pt]
\emph{Department of Mathematics, The University of British Columbia, Room 121, 1984 Mathematics Road, Vancouver, B.C., Canada V6T 1Z2}

\begin{thebibliography}{50}
\setlength{\itemsep}{-1pt} \small

\bibitem{AMM} P.M. Alsing, J.R. McDonald, W.A. Miller, \emph{The simplicial Ricci tensor}. Class. Quantum Grav. 28 (2011), 17 pp.

\bibitem{Bobenko} A. Bobenko, U. Pinkall, B. Springborn, \emph{Discrete conformal maps and ideal hyperbolic polyhedra}, Geom. Topol., 19 (2015), 2155-2215.

\bibitem{Bourgain} J. Bourgain, E. Fuchs, \emph{A proof of the positive density conjecture for integer Apollonian circle packings}, J. Amer. Math. Soc. 24 (2011), no. 4, 945-967.

\bibitem{Bour-Kon} J. Bourgain, A. Kontorovich, \emph{On the local-global conjecture for integral Apollonian gaskets. With an appendix by P\'{e}ter P. Varj\'{u}}, Invent. Math. 196 (2014), no. 3, 589-650.

\bibitem{CGY} D. Champion, D. Glickenstein, A. Young,  \emph{Regge's Einstein-Hilbert functional on the double tetrahedron}, Differential Geometry and its Applications, 29 (2011), 108-124.


\bibitem{CL1} B. Chow, F. Luo, \emph{Combinatorial Ricci flows on surfaces}, J. Differential Geometry, 63 (2003), 97-129.

\bibitem{CR} D. Cooper, I. Rivin, \emph{Combinatorial scalar curvature and rigidity of ball packings}, Math. Res. Lett. 3 (1996), 51-60.

\bibitem{Ge} H. Ge, \emph{Combinatorial Calabi flows on surfaces}, Trans. Amer. Math. Soc. 370 (2018), no. 2, 1377-1391.

\bibitem{Ge-hua} H. Ge, B. Hua, \emph{3-dimensional combinatorial Yamabe flow in hyperbolic background geometry}, in preparation.

\bibitem{GJ1} H. Ge, W. Jiang, \emph{On the deformation of discrete conformal factors on surfaces}, Calc. Var. Partial Differential Equations 55 (2016), no. 6, Art. 136, 14 pp.

\bibitem{GJ2} H. Ge, W. Jiang, \emph{On the deformation of inversive distance circle packings}, Preprint, arXiv:1604.08317v1.

\bibitem{GJ3} H. Ge, W. Jiang, \emph{On the deformation of inversive distance circle packings, II}, J. Funct. Anal. 272 (2017), no. 9, 3573-3595.

\bibitem{GJ4} H. Ge, W. Jiang, \emph{On the deformation of inversive distance circle packings, III}, J. Funct. Anal. 272 (2017), no. 9, 3596-3609.

\bibitem{GJ5} H. Ge, W. Jiang, \emph{On the deformation of perpandicular ball packings}, in preparation.

\bibitem{GeMa} H. Ge, S. Ma, \emph{Discrete $\alpha$-Yamabe flow in 3-dimension}, Front. Math. China 12 (2017), no. 4, 843-858.

\bibitem{GMZ} H. Ge, J. Mei and D. Zhou, \emph{A note on Discrete Einstein metrics}, arXiv:1508.06164v2.

\bibitem{GX1} H. Ge, X. Xu, \emph{Discrete quasi-Einstein metrics and combinatorial curvature flows in 3-dimension},  Adv. Math. 267 (2014), 470-497.

\bibitem{GX2} H. Ge, X. Xu, \emph{A combinatorial Yamabe problem on two and three dimensional manifolds}, Preprint, arXiv:1504.05814.

\bibitem{GX3} H. Ge, X. Xu, \emph{$\alpha$-curvatures and $\alpha$-flows on low dimensional triangulated manifolds}, Calc. Var. Partial Differential Equations 55 (2016), no. 1, Art. 12, 16 pp.

\bibitem{GX4} H. Ge, X. Xu, \emph{A discrete Ricci flow on surfaces with hyperbolic background geometry}, Int. Math. Res. Not. IMRN 2017, no. 11, 3510-3527.

\bibitem{ge-xu} H. Ge, X. Xu, \emph{On a combinatorial curvature for surfaces with inversive distance circle packing metrics}, J. Funct. Anal. 275 (2018), no. 3, 523-558.

\bibitem{GZX} H. Ge, X. Xu; S. Zhang, \emph{Three-dimensional discrete curvature flows and discrete Einstein metrics}, Pacific J. Math. 287 (2017), no. 1, 49-70.

\bibitem{G0} D. Glickenstein, \emph{Precompactness of the Ricci flow and a maximum principle on combinatorial Yamabe flow}, Thesis (Ph.D.)-University of California, San Diego. 2003. 104 pp. ISBN: 978-0496-39041-0.

\bibitem{G1} D. Glickenstein, \emph{A combinatorial Yamabe flow in three dimensions}, Topology 44 (2005), No. 4, 791-808.

\bibitem{G2} D. Glickenstein, \emph{A maximum principle for combinatorial Yamabe flow}, Topology 44 (2005), No. 4, 809-825.

\bibitem{G4} D. Glickenstein, \emph{Geometric triangulations and discrete Laplacians on manifolds}, Preprint, arXiv:math/0508188v1.


\bibitem{Guo} R. Guo, \emph{Local rigidity of inversive distance circle packing}, Trans. Amer. Math. Soc. 363 (2011), no. 9, 4757¨C4776.

\bibitem{K-Oh} A. Kontorovich, H. Oh, Hee, \emph{Apollonian circle packings and closed horospheres on hyperbolic 3-manifolds.
With an appendix by Oh and Nimish Shah}, J. Amer. Math. Soc. 24 (2011), no. 3, 603-648.

\bibitem{L1} F. Luo, \emph{Combinatorial Yamabe flow on surfaces}, Commun. Contemp. Math. 6 (2004), no. 5, 765-780.

\bibitem{L2} F. Luo, \emph{Rigidity of polyhedral surfaces, III}, Geom. Topol., 15 (2011), 2299-2319.

\bibitem{LuoYang} F. Luo, T. Yang, \emph{Volume and rigidity of hyperbolic polyhedral $3$-manifolds}, J. Topol. 11 (2018), no. 1, 1-29.

\bibitem{MM} J.R. McDonald, W.A. Miller, \emph{A geometric construction of the Riemann scalar curvature in Regge calculus}.
Class. Quantum Grav. 25 (2008), no. 19, 6 pp.

\bibitem{Miler} W.A. Miller, \emph{The Hilbert action in Regge calculus}. Class. Quantum Grav. 14 (1997), L199-L204.

\bibitem{P1} L.S. Pontryagin, \emph{Ordinary Differential Equation}, Addison-Wesley Publishing Company Inc., Reading, 1962.

\bibitem{Re} T. Regge, \emph{General relativity without coordinates}. II Nuovo Cimento 19 (1961), 558-571.

\bibitem{Ri} I. Rivin, \emph{An extended correction to  ``Combinatorial Scalar Curvature and Rigidity of Ball Packings," (by D. Cooper and I. Rivin)}, Preprint, arXiv:math/0302069v2.

\bibitem{Sch} R. Schrader, \emph{Piecewise linear manifolds: Einstein metrics and Ricci flows}, J. Phys. A: Math. Theor. 49 (2016).

\bibitem{Str} J. Streets, \emph{The gradient flow of the $L^2$ curvature functional with small initial energy}, J. Geom. Anal. 22 (2012), no. 3, 691¨C725.

\bibitem{T1} W. Thurston, \emph{Geometry and topology of 3-manifolds}, Princeton lecture notes 1976, http://www.msri.org/publications/books/gt3m.

\bibitem{ToWe} V. Tosatti, B. Weinkove, \emph{The Calabi flow with small initial energy}, Math. Res. Lett. 14 (2007), no. 6, 1033¨C1039.

\bibitem{Xu} X. Xu, \emph{On the global rigidity of sphere packings on 3-dimensional manifolds},
Preprint, arXiv:1611.08835v1.

\end{thebibliography}
\end{document}